\documentclass[12pt]{amsart}
\usepackage[T1]{fontenc}
\usepackage[utf8]{inputenc}
\usepackage[english]{babel}
\usepackage{amssymb, amsmath,mathtools,mathabx}
\usepackage{mathrsfs}%stix}
\usepackage{esint}
\usepackage{amscd}
\usepackage{verbatim}
\usepackage{stmaryrd}
\usepackage[dvipsnames]{xcolor}
\usepackage{pgf,tikz}
\usetikzlibrary{arrows}

\usepackage[margin=1in]{geometry}

\usepackage[shortlabels]{enumitem}

\usepackage[colorlinks,linkcolor={cyan},citecolor={magenta},urlcolor={purple},]{hyperref}

\usepackage[colorinlistoftodos,prependcaption,textsize=tiny]{todonotes}

\usepackage{forest}
\usepackage{tikz}
\tikzset{>=latex}

\usepackage{tabularx}
\newcolumntype{L}{>{\arraybackslash}X}
\usepackage{multirow}

\theoremstyle{plain}
\newtheorem{theorem}{Theorem}[section]
\theoremstyle{remark}
\newtheorem{remark}[theorem]{Remark}

\theoremstyle{plain}
\newtheorem{corollary}[theorem]{Corollary}
\newtheorem{lemma}[theorem]{Lemma}
\newtheorem{proposition}[theorem]{Proposition}
\newtheorem{definition}[theorem]{Definition}

\numberwithin{equation}{section}

\def\N{{\mathbb N}}
\def\Z{{\mathbb Z}}

\def\R{{\mathbb R}}
\def\C{{\mathbb C}}

\newcommand{\E}{{\mathbb E}}
\renewcommand{\P}{{\mathbb P}}
\newcommand{\F}{{\mathscr F}}
\newcommand{\g}{\gamma}

\newcommand{\om}{\omega}
\renewcommand{\O}{\Omega}

\renewcommand{\a}{\kappa}

\newcommand{\loc}{{\rm loc}}

\newcommand{\Tor}{\mathbb{T}}
\newcommand{\T}{\mathbb{T}}

\newcommand{\A}{\mathcal{A}}
\newcommand{\im}{\mathrm{i}}
\newcommand{\lambdalin}{\lambda_{{\rm lin}}}
\newcommand{\InD}{\mathcal{X}_{{\rm in}}}

\usepackage{stmaryrd}

\newcommand{\vcd}{\wh{v}_{\mathrm{cut}}}
\newcommand{\vcdi}{\wh{v}_{\mathrm{cut},i}}
\newcommand{\vd}{\wh{v}}
\newcommand{\wdd}{\wh{w}}
\newcommand{\taud}{\wh{\tau}}
\newcommand{\vdi}{\wh{v}_{i}}
\newcommand{\vcut}{v_{\mathrm{cut}}}
\newcommand{\wcut}{w_{\mathrm{cut}}}
\newcommand{\vcuti}{v_{\mathrm{cut},i}}
\newcommand{\vmod}{w}
\newcommand{\vmodi}{w_i}
\newcommand{\vmodhat}{\wh{w}}
\newcommand{\vmodhati}{\wh{w}_i}

\newcommand{\Ent}{\mathcal{E}}
\newcommand{\sol}{v}
\newcommand{\D}{\mathcal{D}}
\newcommand{\dd}{\mathrm{d}}

\newcommand{\ellip}{\nu}
\newcommand{\embed}{\hookrightarrow}

\newcommand{\s}{\delta}
\newcommand{\W}{\mathcal{W}}
\newcommand{\q}{\mathcal{Q}}
\newcommand{\y}{\boldsymbol{y}}
\newcommand{\veq}{v_{\infty}}
\newcommand{\veqi}{v_{\infty,i}}

\newcommand{\const}{D}

\newcommand{\Cyl}{\mathcal{B}}
\newcommand{\one}{\mathbf{1}}
\newcommand{\wt}{\widetilde}
\newcommand{\wh}{\widehat}

\newcommand{\mreg}{{\mathrm{MR}}_{p,\a}^{\s,q}}

\newcommand{\Borel}{\mathscr{B}}

\newcommand{\coleq}{\mathrel{\mathop:}=}
\newcommand{\eqcol}{=\mathrel{\mathop:}}

\allowdisplaybreaks

\begin{document}

\author{Antonio Agresti}
\address{Department of Mathematics Guido Castelnuovo\\ Sapienza University of Rome\\
P.le Aldo Moro 5\\ 00185 Rome\\ Italy}
\email{antonio.agresti@uniroma1.it}

\author{Michael Kniely}
\address{Department of Mathematics and Scientific Computing \\
University of Graz, Heinrichstra\ss e 36, 8010 Graz, Austria} 
\email{michael.kniely@uni-graz.at}

\author{Bao Quoc Tang}
\address{Department of Mathematics and Scientific Computing \\
University of Graz, Heinrichstra\ss e 36, 8010 Graz, Austria} 
\email{quoc.tang@uni-graz.at}

\thanks{The first author is a member of GNAMPA (INdAM) and acknowledges support from INdAM through the GNAMPA 2026 project ``Fluidodinamica stocastica: irregolarità, trasporto e fenomeni di regolarizzazione''}

\date\today

\title[Global classical solutions by transport noise]{Global classical solutions by transport noise for reaction--diffusion systems with entropy dissipation}

\keywords{Regularization by noise, global well-posedness, enhanced dissipation, stochastic reaction--diffusion equations, complex balanced reaction networks, transport noise, entropy dissipation.}

\subjclass[2020]{Primary: 60H50, Secondary: 60H15, 35K57, 35B65}

\begin{abstract}
The existence of global classical solutions for reaction--diffusion systems arising from chemical reaction networks remains a major open problem in the deterministic setting, especially for reactions with high polynomial growth.
We prove that a suitably chosen, physically motivated transport noise yields unique strong solutions for complex balanced chemical reaction networks that are global in time with arbitrarily high probability. These solutions possess paths in $C^{\theta}_t C^{\infty}_x$ for all $\theta<1/2$ and are, in particular, classical in space. Furthermore, we show that a suitable transport noise can enhance the dissipation of spatial fluctuations at an arbitrarily prescribed exponential rate. Our proofs rely on a combination of scaling-limit arguments, maximal $L^p(L^q)$-regularity, and entropy--entropy dissipation estimates.
\end{abstract}

\maketitle

\tableofcontents

\section{Introduction}
\label{s:intro}
In this work, we study the global existence of classical solutions to the following stochastic reaction--diffusion equations (RDEs) on the $d$-dimensional torus $\T^d=\R^d/\Z^d$ with $d\geq 2$:
\begin{equation}
\label{eq:reaction_diffusion_intro}
\left\{
\begin{aligned}
\dd v_i - \nu_i \Delta v_i \,\dd t
& = f_i(v)\,\dd t +\sqrt{c_d \nu }\sum_{k\in \Z_0^d}\sum_{\alpha=1}^{d-1} \theta_k (\sigma_{k,\alpha}\cdot\nabla) v_i\circ \dd W^{k,\alpha}_t
& \text{ on }&\Tor^d,\\ 
v_i(0)&=v_{0,i} &\text{ on }&\Tor^d.
\end{aligned}\right.
\end{equation}
Here $(\O,\A,(\F_t)_{t\geq 0},\P)$ is a filtered probability space, $v=(v_i)_{i=1}^{\ell}:[0,\infty)\times\O\times\T^d\to\R^\ell$ denotes the vector of concentrations, $c_d=d/(d-1)$,
$\Z_0^d=\Z^d\setminus\{0\}$, and $\nu>0$ is the noise intensity. The family $(\sigma_{k,\alpha})_{k,\alpha}$ consists
of smooth divergence-free vector fields, while $(W^{k,\alpha})_{k,\alpha}$
is a family of complex-valued Brownian motions, which are specified in Subsection~\ref{ss:noise_description}.
Finally, $f_i$ is a given nonlinearity, which models the reaction, see Subsection \ref{subsec:CRN}. 
One key feature of these nonlinearities is that they may have polynomial growth of arbitrarily high order, i.e., there exists $h\ge 1$ such that
\begin{equation*}
   |f_i(v)| \lesssim 1 + |v|^{h} \ \text{ for all }  v\in [0,\infty)^\ell \text{ and }i\in \{1,\ldots, \ell\}.
\end{equation*}
As discussed below, polynomial growth of order $h>2$ in the reaction terms represents a major obstacle to global well-posedness, both for deterministic and stochastic RDEs.

\smallskip

Reaction--diffusion systems constitute a classical framework for describing
the interplay between local reactions and spatial transport in a wide range of
applications, including chemical kinetics, population dynamics, ecology,
pattern formation, and mathematical biology; see, for instance,
\cite{erdi1989,mendez2010,murray2003,R84_global}
and the references therein. From the viewpoint of chemical reaction network
theory, the nonlinearities $f_i$ are determined by the underlying reaction
network and its kinetics; we refer, e.g., to \cite{feinberg2019} for a
systematic account of this connection.

\smallskip

In many applications, the reacting species are transported by an ambient
fluid, and the interplay between fluid mixing and chemical reactions can have
a substantial effect on the reaction dynamics; see, e.g.,
\cite{BaldygaBourne99,Fox03,Peters00}. When the underlying flow is turbulent,
the velocity field exhibits fluctuations over a broad range of spatial and
temporal scales, including scales much smaller than those at which the
transported concentrations are resolved; see, e.g., \cite{Warhaft00,Dimotakis05}. Therefore, a natural way to model the action of a turbulent fluid is to consider advection via a random field which is white-in-time and correlated-in-space.
Such stochastic transport models have a long history in the theory of passive scalars and turbulent diffusion; see,
for instance, \cite{F15_book,MK99_simplified}. 
In particular, white-in-time, spatially correlated velocity fields form the basis of the synthetic
turbulence models introduced by Kraichnan \cite{K68,K94}.
Rigorous derivations of transport-type noise from multiscale fluid models can be found, e.g., in \cite{DP22_two_scale,FP21}. In the case of RDEs, a heuristic derivation based on a similar scale-separation argument is given in \cite[Subsection 1.3]{AV23_RDE_local}.

\smallskip

In the absence of noise ($\nu = 0$), the existence of global-in-time smooth solutions to reaction--diffusion systems \eqref{eq:reaction_diffusion_intro} is a classical topic. While the local existence of smooth solutions is standard (see, e.g., \cite{R84_global}), the issue of possible finite-time blow-up poses significant challenges. 
Although reaction--diffusion systems arising from chemical reaction networks often possess mass conservation laws and natural entropy structures (see, e.g., \cite{DFT17,HJ72,P10_survey}), such properties are generally not sufficient to prevent singularity formation. Indeed, in \cite{PS97_blow_up}, the authors demonstrated that mass dissipation alone is insufficient to rule out finite-time blow-up of smooth solutions.

Since then, the quest to determine the optimal growth rate $h$ for global well-posedness has attracted considerable attention. The case of quadratic growth ($h=2$) and mass control has been successfully settled in \cite{CGV19_global,FMT20,souplet2018global}, where global classical solutions were established. Conversely, for any super-quadratic growth rate $h>2$, it was shown in \cite{pierre2023examples} that one can construct nonlinearities with mass control leading to finite-time blow-up in higher dimensions, provided the diffusion coefficients are sufficiently distinct. However, these counterexamples rely on artificially constructed nonlinearities rather than genuine chemical reactions. Consequently, the global existence of classical solutions for realistic networks with high growth rates remains a major open problem.

To bypass these difficulties, recent efforts have focused on exploiting specific structural properties, such as intermediate sum conditions \cite{bouton2025global,morgan2020boundedness} or entropic structures. Regarding the latter, the breakthrough result \cite{F15_global_renormalized} established the existence of global renormalized solutions to \eqref{eq:reaction_diffusion_intro} equipped with an entropic structure and $\nu = 0$, irrespective of the growth rate $h$. Furthermore, these solutions enjoy a weak--strong uniqueness property \cite{F17_weak_strong_uniqueness}. Nevertheless, the unconditional uniqueness of renormalized solutions remains entirely open. 
This scenario is therefore reminiscent of the 3D Navier--Stokes problem: global weak (or renormalized) solutions are available, while their uniqueness remains unresolved (see the breakthrough result \cite{ABC22_annals} in the presence of forcing, and the recent preprint \cite{hou2025nonuniqueness} for the unforced case), whereas classical solutions are unique but only known to exist locally in time  \cite{NS_problem,LePi}.

\smallskip

Here, we prove that a suitable transport noise improves the well-posedness theory of RDEs considerably. The possibility of improving a well-posedness theory by noise is, by now, a standard topic in stochastic analysis, and this is often referred to as \emph{regularization by noise}, see \cite{F15_book}. Additional discussion on the latter is given in Subsections \ref{ss:reg_noise_intro} and \ref{ss:novelties_intro} below.

Our main result can be informally stated as follows. A rigorous version is presented in Theorem \ref{t:global} after the necessary preliminaries.

\begin{theorem}[Global classical solutions -- Informal version of Theorem \ref{t:global}]\label{thm1:informal}
\label{t:global_intro}
Consider system \eqref{eq:reaction_diffusion_intro} where the nonlinearities $f_i$ arise from complex balanced chemical reaction networks without boundary equilibria (see Definition \ref{def:complex_boundary_equilibria}) and have arbitrary growth rates $h>0$. Fix $N\geq 1$, $\varepsilon\in (0,1)$, and a compact set $K\subseteq \q \R^\ell_{>0}$, where $\q$ is the matrix associated with the Wegscheider matrix determining the conservation laws of the underlying chemical reaction (see Subsection \ref{subsec:CRN}). Then, there are $\nu>0$ and $\theta\in \ell^2(\Z^d_0)$ such that $\#\{k\,:\, \theta_k\neq 0\}<\infty$ and for all initial data $v_0\in L^\infty(\T^d;\R^\ell)$ satisfying $\int_{\T^d} v_{0,i}\,\dd x \neq 0$ for all $i\in \{1,\dots,\ell\}$, $\q (\int_{\T^d} v_{0}\,\dd x )\in K$,
$$
v_0\geq 0 \text{ a.e. on $\T^d$ (component-wise)} \quad \text{ and } \quad \|v_0\|_{L^\infty(\T^d;\R^\ell)}\leq N,
$$
there exists a unique solution $v:[0,\tau)\times \O\times \T^d\to \R^\ell$ to \eqref{eq:reaction_diffusion_intro} that is \emph{classical in space}, i.e., 
\begin{align}
\label{eq:regularity_intro_2}
v&\in C^{1/2-,\infty}((0,\tau)\times \T^d;\R^\ell)\ \text{ a.s., }
\end{align}
and is \emph{global in time} with probability larger than $1-\varepsilon$, i.e., 
\begin{equation}
\label{eq:global_intro_1}
\P(\tau=\infty)>1-\varepsilon.
\end{equation}
\end{theorem}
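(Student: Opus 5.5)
The plan is to argue by comparison with the limiting deterministic system obtained in the scaling limit of the transport noise, in the spirit of regularization by noise through enhanced dissipation. I will use the noise in two ways. First, in Itô form the Stratonovich transport noise produces an additional Laplacian $\nu\Delta v_i$, since $c_d$ is chosen so that the Itô correction is exactly $\nu\Delta$ when $\sum_k\theta_k^2=1$ and the fields are isotropic. Second, as $\|\theta\|_{\ell^\infty}\to 0$ with $\|\theta\|_{\ell^2}=1$, the martingale part becomes weak in negative Sobolev norms. The limiting system is the deterministic RDE with enhanced diffusivities $\nu_i+\nu$. The key structural point is that complex balanced networks without boundary equilibria admit the relative entropy $\Ent(v|\veq)=\sum_i\int (v_i\log(v_i/\veqi)-v_i+\veqi)\,\dd x$, together with an entropy--entropy dissipation inequality $\D\geq \lambda\,\Ent$. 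The constant $\lambda$ is uniform over equilibria $\veq$ whose conservation laws $\q\veq$ lie in the compact set $K$. Since the diffusive part of $\D$ scales like $(\min_i\nu_i+\nu)$ times the Fisher information, a large $\nu$ makes the spatial fluctuations $v-\overline v$ decay at an exponential rate as large as we like. The conserved quantities $\q\int v$ are preserved pathwise, because the transport noise is divergence free and the reactions respect the Wegscheider relations.

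\textbf{Steps.} (1) I will first establish local well-posedness, positivity, and instantaneous smoothing $C^{1/2-,\infty}$ via stochastic maximal $L^p(L^q)$-regularity in critical spaces with weights $\a$. I will also state blow-up criteria: if $\tau<\infty$, then $\sup_{t<\tau}\|v(t)\|_{L^q}=\infty$ for some $q$ large, depending on $h$ and $d$. (2) For the deterministic enhanced system, I will show that for $\nu$ large the solution exists globally and stays close to its equilibrium $\veq$, uniformly over data with $\|v_0\|_\infty\le N$ and $\q\int v_0\in K$. Entropy decay gives $L^1$-closeness to $\veq$ exponentially fast, with rate $\gtrsim\nu$. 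Before the nonlinearity can act, on a time interval of length $O(1)$ where local bounds hold, strong diffusion smooths the solution to $L^\infty$. Interpolating with $L^1$ decay then yields $L^\infty$-closeness to $\veq$ after a short time. Near $\veq$ the dynamics is linearly stable, which gives global existence. (3) I will compare the stochastic and the deterministic solution via the scaling-limit argument: on a finite horizon $[0,T]$ I will show that $\P(\sup_{t\le T\wedge\tau}\|v-v^{\rm det}\|_{L^q}>\delta)<\varepsilon$ once $\|\theta\|_{\ell^\infty}$ is small. The proof uses the regularity estimates from Step 1 and smallness of the noise term in $H^{-\gamma}$, with a stopping-time and compactness argument that is uniform in the data. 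Truncating $\theta$ to finitely many modes changes little. (4) At time $T$, on the good event, $v(T)$ is $\delta$-close to $\veq$ in a high norm. From there I will use an entropy-based estimate for the stochastic system itself near equilibrium: the transport noise preserves $\int\Phi(v)$ in Stratonovich form, so the entropy still dissipates pathwise. Combined with maximal regularity around the constant state, this will yield a stochastic stability result showing that small perturbations stay bounded for all time with high probability, hence $\P(\tau=\infty)>1-\varepsilon$.

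\textbf{Main obstacle.} I expect Step 2/3 to be the hardest: obtaining estimates uniform in the initial data on $\{\|v_0\|_\infty\le N\}$ while the nonlinearity has arbitrary growth $h$. Large diffusion must act before any blow-up, which requires a quantitative local existence time independent of $\nu$ together with entropy control. I would try to combine the entropy $L\log L$ bound with a duality estimate to get $L^{2+}$ control, bootstrap to $L^\infty$ using the large diffusivity, and then choose $\nu$ first and $\theta$ afterwards.
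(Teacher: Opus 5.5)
Your overall architecture matches the paper's: the deterministic system with diffusivities $\nu_i+\nu$, a finite-horizon scaling limit as $\|\theta\|_{\ell^\infty}\to0$, and then near-equilibrium stability for the stochastic system. However, Step~4 as you plan it does not close.

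\textbf{Step 4: the threshold must not depend on $\theta$.} The smallness threshold $\eta$ of the stability result has to be fixed \emph{before} $\theta$. Indeed, $\theta$ is picked afterwards from a sequence with $\|\theta^{(n)}\|_{\ell^2}=1$ and $\|\theta^{(n)}\|_{\ell^\infty}\to0$, precisely so that $v$ enters the $\eta$-ball around $v_\infty$ near time $T$. That scaling limit comes from compactness and has no rate.

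Constants in stochastic maximal $L^p(L^q)$-regularity depend on the smoothness of the noise coefficients. This smoothness deteriorates along such sequences, because the active modes escape to $|k|\to\infty$. So a threshold $\eta(\theta)$ obtained from ``maximal regularity around the constant state'' may shrink faster than the scaling-limit error, and the choice of parameters becomes circular.

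Pathwise entropy dissipation is $\theta$-independent, but it controls only $L\log L$. Ruling out blow-up requires propagating smallness in $L^q$ with $q>\frac{d(h-1)}{2}$.

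The missing idea is the following. Set $w=v-v_\infty$. Both $\|w_i\|_{L^q}^q$ and $\sum_i\|w_i\|_{L^2}^2/v_{\infty,i}$ satisfy pathwise It\^o identities in which the noise contributes nothing: the fields are divergence free, and the quadratic-variation term cancels the drift $\nu\Delta$ exactly. Combine these identities with the spectral gap of the linearised reaction--diffusion operator on $\{\q\overline{z}=0\}$, which is uniform over $\q v_\infty\in K$, and with a time-uniform interpolation inequality. With $q_0=q\frac{d+2}{d}$ and $\mathscr{X}_t=\|w\|_{L^{q_0}(\tau_0,t;L^{q_0})}^q$, this gives $\mathscr{X}_t\le C\|w(\tau_0)\|_{L^q}^2(1+\|w(\tau_0)\|_{L^q}^{q-2})+C\mathscr{X}_t^{q_0/q}$. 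A continuity argument and the $L^q$ blow-up criterion then yield $\tau=\infty$ a.s.\ on $\{\|w(\tau_0)\|_{L^q}\le\eta\}$, with $\eta$ independent of both $\theta$ and $\nu$.

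\textbf{The same issue in Step 3.} Tightness must come from $\theta$-uniform $L^q$ energy bounds for an equation with a cutoff on a subcritical norm such as $\|v\|_{L^r(0,t;L^q)}$, not from the $\theta$-dependent Step~1 estimates. Also, closeness in $\sup_t L^q$, uniformly over rough data, is more than the argument gives. Closeness in $L^r(T-1,T;L^q)$ suffices: from it you extract a stopping time in $[T-1,T)$ at which $v$ is $\eta$-close to $v_\infty$.

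\textbf{Step 2 rests on a false claim.} The entropy decay rate towards $v_\infty$ cannot be $\gtrsim\nu$ uniformly in the data. Spatially homogeneous data evolve by the reaction ODE whatever the diffusivity, so approaching $v_\infty$ takes a reaction time depending on $N$ and $K$. This time need not fit into the $\nu$-independent interval on which your local $L^\infty$ bounds hold. Large $\nu$ only accelerates the decay of $v-\overline{v}$.

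Hence you cannot reach the linearly stable regime ``before the nonlinearity acts''. You need bounds for the high-diffusivity system that are uniform in time and in the data. The paper obtains them from the $L^q$ energy identity for $\hat v-v_\infty$, absorbing the superlinear term into the large dissipation. The entropy--entropy dissipation inequality supplies exponential $L^1$ decay, and in particular $\hat v-v_\infty\in L^1(\R_+;L^1)$. Making that inequality uniform over $K$ requires continuity of $M\mapsto v_\infty(M)$. Interpolation then gives $\|\hat v(t)-v_\infty\|_{L^s}\lesssim e^{-\lambda_0 t}$ for $s<q$.

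For $L^\infty$ data there is a simpler alternative: bootstrap $\|\hat v\|_{L^\infty}\le M$, with $\mu=\min_i(\nu_i+\nu)$. The mean $\overline{\hat v(t)}$ stays bounded because $\Ent(\overline{\hat v}|v_\infty)\le\Ent(\hat v|v_\infty)\le\Ent(v_0|v_\infty)$. Meanwhile Duhamel bounds the fluctuations by $Ce^{-c\mu t}N+C(M)/\mu$, which closes the bootstrap for $\mu$ large.
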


As commented above, such a result is not known in general in the deterministic setting ($\nu=0$).
The absence of boundary equilibria can be viewed as a non-degeneracy condition on the underlying chemical reaction network, excluding equilibria at which one or more species have zero concentration. 
Concrete examples of chemical reactions for which global classical well-posedness is not known in the deterministic setting, but to which our results apply, are discussed in Subsection \ref{ss:examples_reaction_diffusion_intro}.

\smallskip

It is worth pointing out that transport noise does not influence conservation laws of the chemical reactions, namely, the following pathwise equalities hold, a.s.\ for all $t<\tau$,
$$
\q\,\overline{ v(t,\cdot)}=
\q\,\overline{ v_0} ,
$$ 
where $\overline{\cdot} =\int_{\T^d} \cdot \,\dd x$ denotes the spatial average, and
\begin{align*}
\|v_i(t)\|_{L^q(\T^d)}^q+
q(q-1)\nu_i\int_0^t \int_{\T^d} |v_i|^{q-2} |\nabla v_i|^2\,\dd x \,\dd s
 = \|v_{0,i}\|_{L^q(\T^d)}^q
 +q\int_0^t \int_{\T^d} |v_i|^{q-2}v_i f_i(v)\,\dd x\,\dd s,
\end{align*}
for any $q\geq 2$ and $i\in \{1,\dots,\ell\}$.
In particular, one obtains exactly the same energy and $L^q$-estimates as in the deterministic case ($\nu=0$).
The key to understanding the regularizing effect of transport noise lies in its enhanced dissipation effect.
Roughly speaking, enhanced dissipation refers to the acceleration, due to advection, of the decay of spatial fluctuations compared with the case of pure diffusion.
Enhanced dissipation has been extensively studied in the deterministic setting; see the seminal work \cite{CKRZ08_mixing}, the contributions \cite{BCZ17,FI19}, and the recent surveys \cite{CZC24,MFN26}.
In the stochastic setting, important contributions include \cite{BBPS21_PTRF,BBP22_AP}, where passive scalars advected by stochastic Navier--Stokes flows are considered. 

To illustrate enhanced dissipation in the setting of \eqref{eq:reaction_diffusion_intro}, let us first consider a passive scalar $\phi$ subject to the same noise:
\begin{equation}
\label{eq:linear_PDE_intro}
\dd \phi =\Delta \phi\,\dd t+\sqrt{c_d \nu }\sum_{k\in \Z_0^d}\sum_{\alpha=1}^{d-1} \theta_k (\sigma_{k,\alpha}\cdot\nabla) \phi\circ \dd W^{k,\alpha}_t, \qquad \phi(\cdot,0)=\phi_0.
\end{equation}
%Let $\overline{\phi}_0=\int_{\T^d} \phi_0\,\dd x$ be the mean of the initial data. 
Due to the divergence-free condition of the vector fields $\sigma_{k,\alpha}$, smooth solutions of \eqref{eq:linear_PDE_intro} satisfy, almost surely, for all $t>0$,
$$
\frac{1}{2}\|\phi(t)-\overline{\phi}_0\|_{L^2(\T^d)}^2+ \int_0^t \int_{\T^d} |\nabla \phi|^2 \,\dd x \,\dd s = 
\frac{1}{2}\|\phi_0-\overline{\phi}_0\|_{L^2(\T^d)}^2.
$$
The above and Poincar\'e's inequality yield 
\begin{equation}
\label{eq:basic_decay_L2}
\|\phi(t)-\overline{\phi}_0\|_{L^2(\T^d)}^2\leq e^{- 8 \pi^2 t } \|\phi_0-\overline{\phi}_0\|_{L^2(\T^d)}^2.
\end{equation}
In the absence of noise $\nu=0$, the above estimate is optimal. However, as shown in \cite{FGL24_quantitative,L21}, for each $\chi>0$, there exist $\nu\gg 1$ and $\theta\in \ell^2(\Z^d_0)$ such that, almost surely, for all $t>0$,
\begin{equation}
\label{eq:basic_decay_enhanced}
\|\phi(t)-\overline{\phi}_0\|_{L^2(\T^d)}^2\leq \const e^{- \chi t} \|\phi_0-\overline{\phi}_0\|_{L^2(\T^d)}^2 ,
\end{equation}
where $\const$ is a random constant with finite (sufficiently high) moments. The reader is referred to \cite{zelati2025stochastic,GY21} for additional results in this direction.
Comparing \eqref{eq:basic_decay_L2} and \eqref{eq:basic_decay_enhanced}, one sees that the transport noise enhances the convergence of $\phi(t)$ towards its spatial mean $\overline{\phi}_0$.

\smallskip

In the context of reaction-diffusion systems, the study of enhanced dissipation is much more limited. To the best of our knowledge, the only work in this direction is \cite{KS22_stirring}, where the authors considered a specific reaction--diffusion system modelling a single irreversible reaction and showed that, under suitable assumptions on the initial data and on the deterministic advecting vector fields, stirring enhances the convergence towards the equilibrium of the underlying reaction.

In general, however, the rate of convergence to equilibrium cannot be arbitrarily enhanced. Indeed, for spatially constant initial data, the dynamics are entirely determined by the reaction. Hence, no velocity field can accelerate such convergence uniformly over all initial data.
A natural notion of enhanced dissipation for RDEs, which is the one we pursue here, is accelerated convergence towards the instantaneous spatial average, namely the fast decay of
\begin{equation}
\label{eq:oscillation_energy}
\| v(t,\cdot)-\overline{v}(t)\|_{L^2(\T^d;\R^\ell)}
\qquad\text{ where }\qquad
\overline{v}(t) \coleq \int_{\T^d}v(t,x)\,\dd x.
\end{equation}
For passive scalars as in \eqref{eq:linear_PDE_intro}, hence in the absence of reactions, the two viewpoints essentially coincide: the spatial average is conserved, and the corresponding constant state is precisely the equilibrium towards which the solution converges. 
Physically, a fast decay of the quantity in \eqref{eq:oscillation_energy} means that the solution rapidly homogenizes towards its spatial mean.

\smallskip

Our second main result shows that transport noise can make these spatial fluctuations decay at an arbitrarily prescribed exponential rate for large noise intensity and suitably chosen noise coefficients.

\begin{theorem}[Enhanced dissipation -- Informal version of Theorem \ref{t:enhanced_dissipation_reaction}]\label{thm2:informal}
	Consider system \eqref{eq:reaction_diffusion_intro} where the nonlinearities $f_i$ arise from complex balanced chemical reaction networks without boundary equilibria (see Definition \ref{def:complex_boundary_equilibria}) and have arbitrary growth rates $h>0$. Fix $N\ge 1$, $\varepsilon\in (0,1)$,  a compact set $K\subseteq \q\R^\ell_{>0}$ (see Subsection \ref{subsec:CRN}), $\chi\in(0,\infty)$ and $b\in (1,\infty)$. Then, there exist $\nu>0$ and $\theta\in \ell^2(\Z^d_0)$ such that $\#\{k\,:\, \theta_k\neq 0\}<\infty$ and the unique solution to \eqref{eq:reaction_diffusion_intro} obtained in Theorem \ref{thm1:informal} satisfies
	\begin{equation*}
		\|v(t,\cdot)-\overline{v}(t)\|_{L^2(\T^d;\R^\ell)}
		\leq \const e^{-\chi t} \|v_0\|_{L^2(\T^d;\R^\ell)}\ \text{ a.s.\ for all }t<\mu,
	\end{equation*}
	where $\mu$ is a stopping time and $\const$ is a random variable with
	\begin{equation*}
		\mathbb{P}(\mu = \infty) > 1 - \varepsilon \quad \text{ and } \quad \mathbb{E}\const^b < \infty.
	\end{equation*}
\end{theorem}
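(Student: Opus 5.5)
I plan to derive the estimate by combining the global-existence machinery of Theorem \ref{thm1:informal} with a passive-scalar-type enhanced dissipation bound, treating the reaction term as a perturbation on a good event. The first step is to fix the stopping time
\[
\mu \coloneqq \inf\Bigl\{t<\tau : \|v(t)\|_{L^\infty(\T^d;\R^\ell)} > M\Bigr\},
\]
with $M=M(N,K)$ chosen as in the proof of Theorem \ref{thm1:informal}. There, the scaling-limit argument (large $\nu$ with $\theta$ approximating a Kraichnan-type spectrum) shows that the solution stays close to the solution of the effective deterministic system, in which the diffusion is enhanced. The effective system is globally bounded thanks to the complex balanced entropy structure. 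Hence $\P(\mu=\infty)>1-\varepsilon$ for suitable $\nu,\theta$.

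Next, on $[0,\mu)$ the reaction is Lipschitz with constant $L_M$, i.e.\ $|f(v)-f(w)|\le L_M|v-w|$ for $|v|,|w|\le M$. Writing $u=v-\overline{v}$, the spatial average satisfies $\dd\overline{v}=\overline{f(v)}\,\dd t$, since the transport noise has zero mean by the divergence-free condition. Hence each $u_i$ solves the linear transport-noise equation \eqref{eq:linear_PDE_intro}, with diffusivity $\nu_i$, forced by
\[
g_i=f_i(v)-\overline{f_i(v)}.
\]
This forcing is mean-free and satisfies $\|g_i\|_{L^2}\le \|f_i(v)-f_i(\overline v)\|_{L^2}\le L_M\|u\|_{L^2}$, where the first inequality holds because $f_i(\overline v)$ is constant and subtracting the mean is an $L^2$-projection.

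I then apply Duhamel with the stochastic flow $S(t,s)$ of the linear problem. Following \cite{FGL24_quantitative,L21}, for any $\chi'>0$ there are $\nu,\theta$ with
\[
\|S(t,s)\phi\|_{L^2}\le \const\, e^{-\chi'(t-s)}\|\phi\|_{L^2}
\]
for mean-free $\phi$, where $\E\const^{b}<\infty$. The delicate point is that this bound must hold uniformly in the starting time $s$, with a single random constant. I would obtain it from the quantitative estimate in expectation, via a union bound over integer times and Borel--Cantelli, paying a small loss in the rate. Then
\[
\|u(t)\|\le \const e^{-\chi' t}\|u_0\|+\const L_M\int_0^t e^{-\chi'(t-s)}\|u(s)\|\,\dd s,
\]
and Gronwall applied to $e^{\chi' t}\|u(t)\|$ gives decay at rate $\chi'-\const L_M$. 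Since $\const$ is random, this is not immediately uniform.

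The main obstacle is precisely the randomness of $\const$ inside the Gronwall exponent. I would first try to avoid it with an energy approach instead of Duhamel. On $[0,\mu)$ the It\^o/Stratonovich energy identity for $u$ yields
\[
\tfrac{\dd}{\dd t}\|u\|^2+2\min_i\nu_i\|\nabla u\|^2\le 2L_M\|u\|^2,
\]
pathwise. This gives decay of $\|u\|$ at worst at a fixed exponential growth rate $L_M$ over short windows. I would then use the enhanced decay over windows $[n\tau_0,(n+1)\tau_0]$ of fixed length $\tau_0$, where the linear flow contracts by a factor $\le e^{-2\chi\tau_0}$ with high probability, independently over windows. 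The reaction perturbation over one window contributes at most $(e^{L_M\tau_0}-1)$ times the norm, and this can be made small relative to $e^{-2\chi\tau_0}$ by choosing $\nu$ large after fixing $\tau_0$ small.

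A geometric number of bad windows, controlled through independence of noise increments and a large-deviation count, then yields a random constant $\const$ with finite $b$-th moment. Finally, $\|u_0\|\le\|v_0\|$ gives the stated form.
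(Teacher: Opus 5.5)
\textbf{The gap is your first step.} Everything afterwards rests on a stopping time $\mu$ defined by a sup-norm threshold $M$ with $\P(\mu=\infty)>1-\varepsilon$. Since you fix $\tau_0$ from $L_M$ \emph{before} choosing $\nu$ and $\theta$, this probability bound must hold uniformly for all large $\nu$ and all $\theta$ with small $\|\theta\|_{\ell^\infty}$. The proof of Theorem \ref{thm1:informal} provides nothing of this kind, for two reasons.
\begin{itemize}
\item The scaling limit (Theorem \ref{t:scaling_limit_cutoff}) gives closeness to the effective system only in probability in $L^r(0,T_0;L^q)$, on a finite interval. This does not yield $L^\infty$-closeness.
\item Boundedness of the deterministic effective system says nothing about the stochastic solution on $[T_0,\infty)$. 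The half-line is covered by the close-to-equilibrium Theorem \ref{t:small_implies_global}, which only provides $L^{q_*}$-type bounds (Corollary \ref{cor:uniform_estimates_half_line}).
\end{itemize}
The $\theta$-uniform information available is at the level of the $L^p$ energy identities and integrated $L^{q_*}$ norms. A uniform sup-norm bound would need an additional argument, for instance a pathwise Moser iteration exploiting that these identities carry no martingale term, and you do not provide one. Without it, neither $L_M$ nor your window argument is available.

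\textbf{How the paper avoids sup-norm bounds.} It works with the cutoff equation \eqref{eq:reaction_diffusion_cut_off_Lp}, stopped when $\|w-v_\infty\|_{L^r(0,t;L^{q_*})}+\|w-v_\infty\|_{L^2(0,t;L^2)}$ exceeds $R$. By Corollary \ref{cor:uniform_estimates_half_line}, this happens with probability at most $\varepsilon$, uniformly in $\nu\geq\nu_0$ and $n\geq n_0$. It then derives $\theta$-uniform $L^q$ and weighted gradient bounds (Proposition \ref{prop:global_cut_off_entropy}). Your Lipschitz bound is replaced by
\[
\|f_i(w)-\overline{f_i(w)}\|_{L^2}\lesssim \big(1+\|w-v_\infty\|_{L^{r_0(h-1)}}^{h-1}\big)\|\nabla \wt{w}\|_{L^2},
\]
which uses the zero mean through the $H^{-1}$ norm of $\nabla f_i(w)$. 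Its coefficient lies in the critical space $L^{2(h-1)}_tL^{r_0(h-1)}_x$, with a deterministic bound on unit windows (Lemma \ref{l:interpolation_Lr0}).

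This makes the Gronwall constant $C_0$ of Proposition \ref{prop:quenched_estimate_w} deterministic; that is how the random-constant obstacle you identified is resolved. Because the coefficient is only time-integrable, the deterministic convolution is handled by the $L^1_tH^{-1}$ estimate of Lemma \ref{l:smr_L2_L1H}. This gives $\E\|\wt{w}(n+1)\|_{L^2}^2\leq\eta\,\E\|\wt{w}(n)\|_{L^2}^2$ (Proposition \ref{prop:iteration_lemma}), followed by Borel--Cantelli.

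\textbf{A smaller point.} Your good windows are not independent. The available linear estimates (e.g.\ Lemma \ref{l:estimate_stoch_convol_linfty}) are mean-square bounds for a given datum, not operator-norm bounds. You would have to condition on $\F_{n\tau_0}$ and count bad windows with a supermartingale argument, or simply iterate the conditional mean-square contraction as the paper does.
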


In the following, we discuss related works on stochastic RDEs and on regularization by noise (Subsection \ref{ss:reg_noise_intro}) followed by a discussion of the main novelties of our work (Subsection \ref{ss:novelties_intro}). An outline of the proofs is given in Subsection \ref{ss:proof_strategy}, see also Figure \ref{fig:proof_architecture}.

\subsection{Previous works}
\label{ss:reg_noise_intro} 
There is an extensive literature on stochastic RDEs, and we only mention here some of the results that are closer to the present manuscript. In the presence of transport noise, local well-posedness and high-order regularity results were established in \cite{AV23_RDE_local}, while global well-posedness for several classes of dissipative and weakly dissipative systems was obtained in \cite{AV24_dissipative}.
In the case of non-trace-class noise, fundamental results were established in \cite{Cer}, and more recently optimal well-posedness and regularity were obtained in \cite{AGV26}. Global well-posedness under suitable triangular mass-control conditions was recently established in \cite{milesis2026global}.
Finally, in the case of nonlinearities with quadratic growth and H\"older-continuous multiplicative noise, but not of transport type, global well-posedness in space dimension two was obtained in \cite{leocata2024global}.

\smallskip

In the context of reaction--diffusion systems, the possibility that noise improves the well-posedness theory compared with the corresponding deterministic problem, a phenomenon usually referred to as \emph{regularization by noise}, has attracted considerable attention. Stabilization by multiplicative non-trace-class noise was established in \cite{Cer05}. Pathwise uniqueness in the presence of irregular drifts and non-degenerate noise was established in \cite{DFPR13} in an abstract Hilbert-space setting, and in \cite{CDF13} for stochastic reaction--diffusion equations with a H\"older drift. More recently, regularization by noise, not of transport type, has been obtained for RDEs and related SPDEs with distributional drifts; see, e.g., \cite{anzeletti2025uniqueness,BOS25,DHL26}.

\smallskip

In the presence of transport noise, a basic mechanism behind its possible regularizing effect was discovered in \cite{G20}. There, the author proved that, under a suitable scaling of the noise coefficients, solutions to stochastic transport equations converge to solutions of a deterministic parabolic equation with an additional effective diffusivity. This procedure is by now well-known and is typically referred to as the \emph{scaling limit}.
The additional diffusion appearing in the limit is key to understanding the enhanced dissipation effect of transport noise discussed above. Building on this approach, several results on delayed blow-up and global well-posedness by transport noise have been obtained for nonlinear PDEs; see, e.g., \cite{A24_global_small,FGL21,FL19,L23_regularization_NS} and the references therein.

The work closest to the present one is \cite{A22}, where the first author exploited the scaling-limit approach in an $L^p(L^q)$-setting, as required in the context of RDEs, to prove delayed blow-up for reaction--diffusion systems with mass control. A detailed comparison with that work is given below; see also Subsection \ref{ss:proof_strategy} for the proof strategy.

\subsection{Novelties}
\label{ss:novelties_intro}
To prove Theorems \ref{t:global_intro} and \ref{thm2:informal}, we move beyond the finite-time delayed-blow-up framework developed for RDEs in \cite{A22}.
There, an analogue of Theorem \ref{t:global_intro} was established on any prescribed finite interval $[0,T]$, but not on the entire time half-line. The main advance is to combine this finite-time regularization mechanism with the entropy structure of complex balanced reaction networks, thereby upgrading delayed blow-up to global well-posedness with high probability. This global-in-time control is also essential for the asymptotic enhanced dissipation estimate in Theorem \ref{thm2:informal}.

\begin{itemize}
\item \emph{Global classical solutions by transport noise.}
To rule out singularities with large probability on the entire time horizon, we further exploit the structure of complex balanced reaction networks through entropy methods. More precisely, Theorem \ref{t:global_intro} is obtained by combining $L^p(L^q)$-techniques with entropy dissipation and entropy--entropy dissipation estimates (see Section \ref{s:entropy_review}). In particular, we prove a version of the latter estimates that is uniform over compact sets of conserved mass vectors; see Theorem \ref{t:entropy_dissipation} and Lemma \ref{l:continuous_cinfty}.
A key ingredient is the close-to-equilibrium result of Theorem \ref{t:small_implies_global}, which allows us to pass from a finite to an infinite time horizon. This result is also of independent interest as it extends the deterministic close-to-equilibrium theory to stochastic RDEs with \emph{rough} transport noise, and removes restrictions on the growth of the nonlinearities imposed in \cite{tang2018close}.

\smallskip

\item \emph{Enhanced dissipation by transport noise.}
Exploiting global well-posedness, we obtain the quantitative enhanced dissipation result of Theorem \ref{thm2:informal}. The proof builds on quantitative estimates for transport noise developed in \cite{FGL24_quantitative,L21}, which we adapt to the setting of nonlinear reaction--diffusion systems. A key point is to identify the cancellation that arises when considering fluctuations around the instantaneous spatial average $v(t,\cdot)-\overline{v}(t)$. Indeed, the corresponding reaction term takes the centered form $f_i(v)-\overline{f_i(v)}$, whose zero-mean structure is crucial for controlling the nonlinear deterministic convolution. This also reflects the distinction discussed above between enhanced homogenization and enhanced convergence towards a reaction equilibrium.
As illustrated in \eqref{eq:linear_PDE_intro}--\eqref{eq:oscillation_energy}, this provides a natural nonlinear counterpart of enhanced dissipation for passive scalars: the transport noise rapidly homogenizes the concentrations towards their instantaneous spatial averages, while the latter continue to evolve according to the reaction dynamics.
\end{itemize}

\subsection{Concrete reaction networks with large stoichiometric coefficients}
\label{ss:examples_reaction_diffusion_intro}
We briefly discuss some concrete chemical reactions with large stoichiometric coefficients for which global existence of classical solutions to the deterministic reaction--diffusion system \eqref{b-2} is currently open, while Theorems \ref{t:global_intro} and \ref{thm2:informal} apply.

\smallskip

The first example is the synthesis of adenosine triphosphate (ATP) from adenosine diphosphate (ADP) together with inorganic phosphate ($\mathrm{P_i}$) and $n \geq 3$ protons ($\mathrm{H^+}$) \cite{Boyer97, YMH01}. This process is highly nontrivial and at the same time essential for the storage of energy in any form of life. A central reversible reaction is 
\begin{align*}
\mathrm{ADP + P_i} + n \, \mathrm{H_A^+} \underset{k_2}{\overset{k_1}{\rightleftarrows}}
 \mathrm{ATP + H_2 O} + n \, \mathrm{H_B^+}. 
\end{align*}
This is a single reversible reaction with disjoint species on the two sides and therefore belongs to the detailed-balance class considered here; in particular, the relevant positive stoichiometric compatibility classes contain no boundary equilibria; see \cite{fellner2017explicit}.
The reaction above is only one part of the full ATP synthesis mechanism.
The reverse, ATP hydrolysis, is used in nature to gain energy by transforming ATP back to ADP (with reaction rate constant $k_2 > 0$). The indices $\mathrm A$ and $\mathrm B$ of $\mathrm H^+$ refer to different origins of the involved protons; see \cite{Boyer97, YMH01} for further details. According to the mass-action law, the reaction--diffusion system \eqref{eq:reaction_diffusion_intro} has nonlinearities given by 
$$
f_i (v_1,\dots,v_6)
=\gamma_i R(v) \ \  \text{ for }\ i\in \{1,\dots,6\},
$$
where 
$
R(v)=k_1v_1v_2v_3^n-k_2v_4v_5v_6^n,
$ 
$\gamma=(-1,-1,-n,1,1,n)$ and $v_1,\dots,v_6$ are the concentrations of $\mathrm{ADP}$, $\mathrm{P_i}$, $\mathrm{H_A^+}$, $\mathrm{ATP}$, $\mathrm{H_2 O}$, and $\mathrm{H_B^+}$, respectively. 
The nonlinearities on the right-hand side have order $n+2 \geq 5$, and global existence of classical solutions for the deterministic counterpart of  \eqref{eq:reaction_diffusion_intro} (i.e., $\nu=0$) is not known in general.

\smallskip

As a second example, directly related to combustion processes in fluids, we consider the reversible reaction
\begin{equation}
\label{eq:combustion1}
\mathrm{O_2 + 2\,N_2}
\underset{k_2}{\overset{k_1}{\rightleftarrows}}
\mathrm{2\,N + 2\,NO},
\end{equation}
which appears as a kinetic reaction in reduced models for methane combustion in spark-ignition engines; see, e.g., \cite[Table 3]{SYS00} (see also \cite[Table 4]{Akbarian2018}). 
Denoting by $v_1,v_2,v_3,v_4$ the concentrations of $\mathrm{O_2}$, $\mathrm{N_2}$, $\mathrm{N}$, and $\mathrm{NO}$, respectively, set
$
\wt{R}(v) \coleq k_1v_1v_2^2-k_2v_3^2v_4^2.
$
Then, the reaction nonlinearities are given by
$$
f_1(v)=-\wt{R}(v),\qquad
f_2(v)=-2\wt{R}(v),\qquad
f_3(v)=2\wt{R}(v),\qquad
f_4(v)=2\wt{R}(v).
$$
In particular, the nonlinearities have polynomial growth of order $h=4$. Note that \eqref{eq:combustion1} is a single and reversible reaction with disjoint species on the two sides and therefore falls within the detailed-balance class considered above; in particular, the relevant positive stoichiometric compatibility classes do not contain boundary equilibria; see also \cite{fellner2017explicit} for the general single-reaction setting.
Moreover, the corresponding system has a cubic intermediate-sum structure. In dimensions $d\geq2$, the available deterministic theory does not yield global classical solutions for arbitrary positive diffusion coefficients in this regime; see, e.g., \cite{STY23} and the references therein. To the best of our knowledge, global classical well-posedness for the corresponding deterministic reaction--diffusion system with arbitrary positive diffusion coefficients remains open. Hence, Theorems \ref{t:global_intro} and \ref{thm2:informal} provide global classical solutions with high probability in a regime currently not covered by the deterministic theory.

\subsection{Notation} Here, we collect the basic notation used in the manuscript.  
For two quantities $x$ and $y$, we write $x\lesssim y$, if there exists a constant $C>0$ independent of $x$ and $y$ such that $x\le Cy$. If such a $C$ depends on the parameters $p_1,\dots,p_n$ we either mention it explicitly or indicate this by writing $C_{p_1,\dots,p_n}$ and correspondingly $x\lesssim_{p_1,\dots,p_n}y$ whenever $x\le C_{p_1,\dots,p_n}y$. We write $x\eqsim_{p_1,\dots,p_n} y$ whenever $x\lesssim_{p_1,\dots,p_n} y$ and $y\lesssim_{p_1,\dots,p_n}x$. Moreover, we write $a\vee b=\max\{a,b\}$ and $a\wedge b=\min\{a,b\}$.

\subsubsection*{Probabilistic setting.}
Below, $(\O, \mathcal{A},(\F_t)_{t\geq 0}, \P)$ denotes a filtered probability space carrying a sequence of independent standard Brownian motions, which changes depending on the SPDE under consideration, and $\E[\cdot]=\int_{\O} \cdot \,\dd \P$ for the associated expected value. 
A process $\phi:[0,\infty)\times \O\to X$ is progressively measurable if $\phi|_{[0,t]\times \O}$ is $\Borel([0,t])\otimes \F_t$-measurable for all $t\geq 0$, where $\Borel$ is the Borel $\sigma$-algebra on $[0,t]$ and $X$ a Banach space. Moreover, a stopping time $\tau$ is a measurable map $\tau:\O\to [0,\infty]$ such that $\{\tau\leq t\}\in \F_t$ for all $t\geq 0$. Finally, a stochastic process $\phi:[0,\tau)\times \O\to X$ is progressively measurable if $\one_{[0,\tau)\times \O}\,\phi$ is progressively measurable where $
[0,\tau)\times \O \coleq \{(t,\om)\in[0,\infty)\times \O\,:\,\,0\leq t<\tau(\om)\}$
and $\one_{[0,\tau)\times \O}$ (or simply $\one_{[0,\tau)}$) stands for the extension by zero outside $[0,\tau)\times \O$. The definitions of the stochastic intervals $(0,\tau)\times \O$ and $[0,\tau]\times \O$ are similar.

\subsubsection*{Function spaces.}
Let $X$ be a Banach space. 
We write $L^p(S,\mu;X)$ for the Bochner space of strongly measurable, $p$-integrable $X$-valued functions for a measure space $(S,\mu)$ and $p\in (1,\infty)$; see, e.g., \cite[Section 1.2b]{Analysis1}. As usual, $\one_A$ denotes the indicator function of $A\subseteq S$. 
Fix $\a\in \R$. We denote by $w_{\a}$ the associated power weight:
$$
 w_\a(t) \coleq |t|^\a \quad \text{ for }t\in \R. 
$$ 
If $S=(s,t)$ for some $-\infty \leq s<t\leq \infty$, $p\in (1,\infty)$, and $\dd \mu= w_\a \,\dd r $, we simply write either $L^p((s,t),w_{\a} ;X )$ or  $L^p(s,t,w_{\a} ;X )$ instead of $L^p((s,t),w_{\a}\,\dd r ;X )$. In particular,
$$
\|f\|_{L^p(s,t,w_{\a};X)}= \Big(\int_{s}^t \|f(r)\|_X^p \,w_\a(r) \,\dd r \Big)^{1/p}.
$$
We denote by $W^{1,p}(s,t,w_\a;X)$ the set of all $f\in L^p(s,t,w_{\a};X)$ such that $f'\in L^p(s,t,w_{\a};X)$ endowed with the natural norm, see \cite[Section 2.5]{Analysis1} for distributional derivatives of $X$-valued maps. 
As usual, for $I\subseteq (-\infty,\infty)$, we say $f\in L^{p}_{\loc}(I;X)$ if $f\in L^{p}(J;X)$ for all compact sets $J\subseteq I$. A similar notation is employed if $L^{p}$ is replaced by either $W^{1,p}$. 

\smallskip

We write $\T^d$ for the $d$-dimensional torus $\R^d/ \Z^d$. Fix $q\in (1,\infty)$. The usual Sobolev space is denoted by $W^{k,q}(\T^d)$ for an integer smoothness index $k\in \N_{\geq 0}$. For the definition of the Besov spaces $B^{s}_{q,p}(\R^d)$ and Bessel-potential spaces $H^{s,q}(\R^d)$, the reader is referred to \cite[Section 2.1.2]{Runst_Sickel} and for the periodic spaces $B^{s}_{q,p}(\T^d)$ and $H^{s,q}(\T^d)$ to \cite[Section 3.5.4]{schmeisser1987topics}. Recall that $H^{-s,q}(\T^d)=(H^{s,q'}(\T^d))^*$, where $q'$ is the Holder conjugate exponent of $q$, and $H^{s,q}(\T^d)=W^{s,q}(\T^d)$ if $s$ is an integer. Moreover, we also set $W^{-k,q}(\T^d)=H^{-k,q}(\T^d)$ when $k\in \N_{\geq 0}$.

For all $\vartheta_0,\vartheta_1>0$ and $-\infty<s<t<\infty$, we let 
\begin{align*}
C^{\vartheta_0,\vartheta_1}((s,t)\times \T^d)
&\coleq C^{\vartheta_0}(s,t;C(\T^d))\cap C([s,t];C^{\vartheta_1}(\T^d)),\\ 
C^{\vartheta_0,\infty}((s,t)\times \T^d)
&\coleq \textstyle{\cap}_{\vartheta_1<\infty}\,
C^{\vartheta_0,\vartheta_1}((s,t)\times \T^d),
\end{align*}
and $C^{\vartheta_0,\vartheta_1}_{\loc}((s,t)\times \T^d) \coleq \cap_{\varepsilon>0}\, C^{\vartheta_0,\vartheta_1}((s+\varepsilon,t-\varepsilon)\times \T^d)$, and similarly if $\vartheta_1=\infty$. 

Finally, in the case of $\R^\ell$-valued maps, the above definitions extend trivially.

\section{Preliminaries}

\subsection{Reaction--diffusion systems modelling reaction networks}\label{subsec:CRN}
Let $2\le \ell\in \mathbb N$ and $S_1, S_2, \ldots, S_{\ell}$ be given different chemicals. We consider a mixture of these chemicals, which react with each other in the following $m$ reactions
\begin{equation}\label{reaction-full}
    y_{r,1}S_1 + y_{r,2}S_2 + \cdots + y_{r,\ell}S_\ell \xrightarrow{k_r} y_{r,1}'S_1 + y_{r,2}'S_2 + \cdots + y_{r,\ell}'S_\ell, \quad r = 1, \ldots, m, 
\end{equation}
where $k_r>0$ is the reaction rate constant, $y_{r,i}, y_{r,i}' \in \N_{\geq 0}$ are stoichiometric coefficients. By using the shorthand notations $\y_r \coleq (y_{r,1},\ldots, y_{r,\ell})$ and $\y_r' \coleq (y_{r,1}', \ldots, y_{r,\ell}')$, the reaction \eqref{reaction-full} can be written more compactly as
\begin{equation}\label{reaction-short}
    \y_r \xrightarrow{k_r} \y_r', \quad r = 1,\ldots, m.
\end{equation}
In our main results, we can also use stoichiometric coefficients in $\{0\}\cup [1,\infty)$, with the only difference being a limited spatial regularity for the solutions, see the comments below Proposition \ref{prop:LWP}.
Assume that the reactions take place on the torus $\T^d$ and denote by $v_i(t,x)$ the concentration of $S_i$ at $x\in \T^d$ and $t\ge 0$. The (deterministic) reaction--diffusion system modelling these chemical reactions reads as
\begin{equation}\label{b-2}
\left\{
    \begin{aligned}
        \partial_t v_i - d_i\Delta v_i &= f_i(v) \quad &\text{ on }&\T^d,\\
        v_i(0) &= v_{i,0} \quad  &\text{ on }&\T^d,
    \end{aligned}
    \right.
\end{equation}
where $-d_i\Delta v_i$ represents the molecular diffusion of $S_i$, according to Fick's second law, with the diffusion coefficient $d_i>0$, and $f_i(v)$ represents the reactions following the law of mass action, i.e.,
\begin{equation}\label{b-1}
    f_i(v) = \sum_{r=1}^mk_r(y_{r,i}' - y_{r,i})v^{\y_r} \quad \text{ with } \quad v^{\y_r} \coleq \prod_{j=1}^{\ell}v_j^{y_{r,j}}
\end{equation}
for $v\in [0,\infty)^\ell$. For convenience, throughout the manuscript, we extend the nonlinearities $f_i$ from $[0,\infty)^\ell$ to $\R^\ell$ by replacing $v$ by $(v_1\vee 0, \dots, v_\ell\vee 0)$ in the above formula whenever $v\not\in [0,\infty)^\ell$.
By setting $h \coleq \max_{r=1,\ldots, m}|\y_r|$ with $|\y_r| = y_{r,1} + \ldots + y_{r,\ell}$, we see that the nonlinearities have at most polynomial growth rates of order $h$, i.e.,
\begin{equation}
\label{b-11}
    |f_i(v)| \lesssim 1 + |v|^{h} \ \text{ for all } v \in \R^{\ell} \text{ and } i \in \{1, \ldots, \ell\}.
\end{equation}
To demonstrate an example of a chemical reaction network, we consider the general single reversible reaction
\begin{equation}\label{general-reversible}
    \alpha_1S_1 + \alpha_2S_2 + \cdots + \alpha_{\ell}S_{\ell} \underset{k_2}{\overset{k_1}{\rightleftarrows}} \beta_1 S_1 + \beta_2 S_2 + \cdots + \beta_{\ell}S_{\ell},
\end{equation}
where $\alpha_i, \beta_i\in \{0\}\cup [1,\infty)$ for all $i \in \{1, \ldots, \ell\}$. This reaction network consists of two reactions $\y_1 \xrightarrow{k_1} \y_1'$ and $\y_2 \xrightarrow{k_2} \y_2'$ where $\y_1 = \y_2' = (\alpha_1, \ldots, \alpha_\ell)$ and $\y_2 = \y_1' = (\beta_1, \ldots, \beta_{\ell})$. The corresponding reaction--diffusion system reads as
\begin{equation}\label{b3}
    \left\{
    \begin{aligned}
    &\partial_t v_i - d_i\Delta v_i = f_i(v) = (\beta_i - \alpha_i)\Big(k_1\prod_{j=1}^{\ell}v_j^{\alpha_j} - k_2\prod_{j=1}^{\ell}v_j^{\beta_j} \Big)&\text{ on }&\T^d,\\
    &v_i(0) = v_{i,0} &\text{ on }& \T^d.
    \end{aligned}
    \right.
\end{equation}
System \eqref{b3} includes the reactions in the Subsection \ref{ss:examples_reaction_diffusion_intro} as special cases.
Comments on the well-posedness of the above system can be found in the text preceding Theorem \ref{t:global_intro}.

\smallskip

Next, we discuss conservation laws of the general system \eqref{b-2}--\eqref{b-1} and its complex balanced equilibria. We define the Wegscheider matrix 
$
\W \coleq
(\y_{r}'-\y_{r})_{r\in \{1,\dots,m\}}\in \R^{m\times \ell}
$ 
and let $\ell' = \mathrm{codim} \, \W$. If $\ell' > 0$, then there exists a non-unique matrix 
$
\q \coleq (\boldsymbol q_j)_{j \in \{1,\ldots,\ell'\}} \in \R^{\ell' \times \ell}, 
$
where $\{\boldsymbol q_1, \ldots, \boldsymbol q_{\ell'}\} \subset \mathbb R^\ell$ is a basis of $\mathrm{ker} \, \W$. We now easily obtain the identity 
\begin{align}
\label{eq:ranWT=kerQ}
\mathrm{ran} \, \W^{\top} = \mathrm{ker} \, \q.
\end{align}
Let $f(v) \coleq (f_i(v))_{i=1}^{\ell}$ where $f_i$ is as in \eqref{b-1}. By construction, 
\begin{equation}
\label{eq:Q_anniled_f}
f(v)\in 
\mathrm{ran} \, \W^{\top} \quad \text{ for all }
v\in \R_{\geq 0}^{\ell}.
\end{equation}
It is clear that for all (sufficiently smooth) solutions of \eqref{b-2}--\eqref{b-1}, we have 
$$
\partial_t \q  \overline{v} (t)=0, \quad \text{ where }\quad \overline{v}(t) \coleq \int_{\T^d} v(t,x)\,\dd x.
$$
In particular, we have $\ell'$ linearly independent conservation laws represented by $\q \overline{v} (t)=\q \overline{v_{0}}$ for all $t\geq 0$.

\begin{definition}[Complex balanced and boundary equilibria]
\label{def:complex_boundary_equilibria}
A constant state $v_{\infty}=(v_{\infty,i})_{i=1}^{\ell}\in \R_{\geq 0}^{\ell}$ is called
\begin{itemize}
\item an \emph{equilibrium} point of the system \eqref{b-2}--\eqref{b-1} if
$
f(v_{\infty})=0
$; 
\item a \emph{boundary equilibrium} if $v_{\infty}$ is an equilibrium of \eqref{b-2}--\eqref{b-1} and 
$$
v_{\infty,i}=0 \ \ \text{ for some } \ i\in \{1,\dots,\ell\};
$$
\item a \emph{complex balanced equilibrium} if for each complex $\y\in \{\y_r, \y_r': r=1,\ldots, m\}$, there is a balance between the inflow and the outflow at $\y$ at the equilibrium $v_{\infty}$:
$$
\underbrace{\sum_{\{r\,:\, \y = \y_r\}} k_r \prod_{j=1}^{\ell} v_{\infty,j}^{y_{r,j}} }_{\textsc{total outflow from $\y$}}=
\underbrace{\sum_{\{r\,:\, \y = \y_r'\}} k_r \prod_{j=1}^{\ell} v_{\infty,j}^{y_{r,j}}}_{\textsc{total inflow into $\y$}},
$$
where $\{r\,:\, \y = \y_r\}$ and $\{r\,:\, \y = \y_r'\}$ denote the sets of reactions having the complex $\y$ as reactant and product, respectively.
\end{itemize}
\end{definition}

It is known (see, e.g., \cite{horn1972necessary,HJ72}) that if \eqref{reaction-full} has an equilibrium which is complex balanced, then all other possible equilibria are also complex balanced. Thus, we can define the main object of study of the current work.
\begin{definition}[Complex balanced chemical reaction networks]
We say that a chemical reaction network \eqref{reaction-full}, or the chemical reaction--diffusion system \eqref{b-2}--\eqref{b-1}, is complex balanced if there exists a positive complex balanced equilibrium $v_{\infty}\in \R^\ell_{>0}$. 
\end{definition}
Moreover, we have the following uniqueness result.
\begin{lemma}\cite[Proposition 5.1]{feinberg1995existence}\label{lem1}
    Assume that the reaction network \eqref{reaction-full} is complex balanced. Then, for each fixed initial mass vector $M \in \q \R^{\ell}_{> 0}$, there exists a unique positive equilibrium $v_\infty \in \R_{>0}^{\ell}$. 
\end{lemma}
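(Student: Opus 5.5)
The plan is to use the classical Horn--Jackson--Feinberg argument. Existence comes from a strictly convex variational problem, uniqueness from the orthogonality structure encoded in \eqref{eq:ranWT=kerQ}.

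\textbf{Reference equilibrium and reduction.} Since the network is complex balanced, there is a positive complex balanced equilibrium $v^*\in\R^\ell_{>0}$. By Horn--Jackson, the positive equilibria of a complex balanced network are exactly the states $v\in\R^\ell_{>0}$ with $\log v-\log v^*\in(\mathrm{ran}\,\W^\top)^\perp=\ker\W$. For sufficiency, if $\log(v/v^*)=\mu\in\ker\W$, then $v^{\y_r}=(v^*)^{\y_r}e^{\mu\cdot\y_r}$. The factor $e^{\mu\cdot\y}$ is constant along each linkage class, because $\mu\cdot(\y_r'-\y_r)=0$. Hence complex balance transfers from $v^*$ to $v$. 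For necessity, I would use the entropy identity
\[
\sum_i f_i(v)\log\frac{v_i}{v_i^*}
=\sum_r k_r (v^*)^{\y_r}\Big(e^{\mu\cdot \y_r}\,\mu\cdot(\y_r'-\y_r)\Big)
\le \sum_r k_r (v^*)^{\y_r}\big(e^{\mu\cdot\y_r'}-e^{\mu\cdot\y_r}\big)=0,
\]
which holds with $\mu=\log(v/v^*)$. The inequality is $e^a(b-a)\le e^b-e^a$, and the final equality is exactly complex balance of $v^*$, summed complex by complex. At an equilibrium the left side vanishes. Equality in the convexity inequality then forces $\mu\cdot\y_r=\mu\cdot\y_r'$ for all $r$, i.e.\ $\mu\in\ker\W$.

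\textbf{Existence via convex minimization.} Set $E(v)=\sum_i\big(v_i\log(v_i/v_i^*)-v_i+v_i^*\big)$ on the compatibility class $\mathcal C_M=\{v\ge0:\q v=M\}$. Here $M\in\q\R^\ell_{>0}$, so $\mathcal C_M$ contains a positive point. Positive conservation laws are assumed implicitly, so that $\mathcal C_M$ is compact; if $\mathcal C_M$ were unbounded, coercivity of $E$ still gives a minimizer. $E$ is strictly convex and continuous, so it has a unique minimizer $\hat v$. Since $\partial_{v_i}E=\log(v_i/v_i^*)\to-\infty$ as $v_i\to0$, moving from a boundary point toward an interior point of $\mathcal C_M$ strictly decreases $E$. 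Hence $\hat v\in\R^\ell_{>0}$. The Lagrange condition then reads $\log(\hat v/v^*)\in(\ker\q)^\perp$. By \eqref{eq:ranWT=kerQ}, $(\ker\q)^\perp=(\mathrm{ran}\,\W^\top)^\perp=\ker\W$, so $\hat v$ is a positive equilibrium by the first step.

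\textbf{Uniqueness.} Suppose $v^1,v^2\in\mathcal C_M\cap\R^\ell_{>0}$ are equilibria. By the first step, $\log v^1-\log v^2\in\ker\W$. Also $v^1-v^2\in\ker\q=\mathrm{ran}\,\W^\top$, which is orthogonal to $\ker\W$. Therefore
\[
\sum_i(\log v^1_i-\log v^2_i)(v^1_i-v^2_i)=0 .
\]
Since $\log$ is strictly increasing, every summand is nonnegative, so each summand vanishes and $v^1=v^2$.

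\textbf{Main obstacle.} The delicate step is necessity in the first step: showing that every positive equilibrium lies in $v^*e^{\ker\W}$. I would handle it with the entropy identity and the sharp convexity inequality as above. Some care is needed with the linkage-class bookkeeping when summing complex balance. The boundary exclusion in the existence step is also subtle, but it only needs the blow-up of $\partial E$ at the boundary.
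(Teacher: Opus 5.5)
Your proof is correct. It is the classical Horn--Jackson/Feinberg argument that the paper invokes by citing \cite[Proposition 5.1]{feinberg1995existence} rather than reproving it: the characterization $\log v-\log v^*\in\ker\W=(\mathrm{ran}\,\W^\top)^\perp$ of positive equilibria (the same characterization the paper uses in the proof of Lemma~\ref{l:continuous_cinfty}), existence via the strictly convex, coercive relative entropy on $\mathcal C_M$, and uniqueness via monotonicity of $\log$ together with $\ker\q=\mathrm{ran}\,\W^\top$. You do not need the assumption that $\mathcal C_M$ is compact, which the paper does not make, because your coercivity remark already gives a minimizer.
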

Note that although the positive complex balanced equilibrium is unique, for a fixed initial mass vector, there might exist (possibly infinitely many) boundary complex balanced equilibria (or just boundary equilibria for short). For instance, consider \eqref{general-reversible} with $\ell = 2$, $\alpha_1 = \beta_2 = 1$ and $\alpha_2 = \beta_1 = 2$. The corresponding reaction--diffusion system is
\begin{equation*}
\left\{
    \begin{aligned}
        \partial_t v_1 - d_1\Delta v_1 &= k_1v_1 v_2^2 - k_2v_1^2v_2 \qquad &\text{ on }&\T^d,\\
        \partial_t v_2 - d_2\Delta v_2& = -k_1v_1 v_2^2 + k_2v_1^2v_2\qquad &\text{ on }&\T^d,\\
        v_i(0,x)& = v_{i,0}(x) \qquad&\text{ on }&\T^d.
    \end{aligned}
    \right.
\end{equation*}
The previous system has one conservation law $\int_{\T^d}(v_1(t) + v_2(t))\,\dd x = \int_{\T^d}(v_{1,0}+ v_{2,0})\,\dd x \eqcol M$. For each fixed initial mass $M>0$, there exists a unique positive equilibrium $v_\infty = (v_{\infty,1}, v_{\infty,2})$ solving $v_{\infty,1} + v_{\infty,2} = M$ and $k_1v_{\infty,2} = k_2v_{\infty,1}$. However, there are two boundary equilibria $(0,M)$ and $(M,0)$. In \cite{FT18_normalized_entropy}, it has been shown that in the case of complex balanced systems \emph{without} boundary equilibria, renormalized solutions converge to the positive equilibrium exponentially in $L^1$. This will be used to obtain our objectives in the current paper.

\smallskip

We conclude this subsection by remarking that there is an extensive literature on large time dynamics of complex balanced chemical reaction networks. They include both the setting of ODEs, see e.g.\ \cite{Anderson11,Craciun09,Pantea,Siegel04,Sontag}, or PDEs, see e.g.\ \cite{DFT17,FT18_normalized_entropy,glitzky1996free,glitzky1997energetic,groger1986existence,groger1983asymptotic,groger1992free}. Without boundary equilibria, the convergence towards equilibrium has been completely solved, see e.g. \cite{FT18_normalized_entropy}. The case with boundary equilibria gives rise to the so-called {\it Global Attractor Conjecture}, which is still remained open despite many partial answers, see e.g. \cite{Anderson11,FT18_normalized_entropy,Pantea}.
 
\subsection{Description of the noise}
\label{ss:noise_description}
In this subsection, we describe the quantities $(\theta_k,\sigma_{k,\alpha},W^{k,\alpha})$ appearing in \eqref{eq:reaction_diffusion}. Here, we follow \cite{FGL21,FL19}. 
Recall that $\Z_0^d=\Z^d\setminus\{0\}$. Throughout this paper, we consider $\theta=(\theta_k)_{k\in \Z^d_0}\in \ell^2(\Z^d_0)$, and we assume that $\theta$ is normalized and radially symmetric, i.e.,
\begin{equation}
\label{eq:theta_normalized_symmetric}
\|\theta\|_{\ell^2(\Z^d_0)}=1 \quad \text{ and } \quad \theta_{j}=\theta_k \  \text{ for all $j,k\in\Z_0^d$ \  such that }|j|=|k|.
\end{equation}
Moreover, for simplicity of exposition, we assume that $\#\{k\,:\, \theta_k\neq 0\}<\infty$. However, this can be weakened, see \cite[Remark 3.8]{A22}.

Next, we define the family of vector fields $(\sigma_{k,\alpha})_{k,\alpha}$. 
 Let $\Z_{+}^d$ and $\Z_-^d$ be a partition of $\Z_0^d$ such that $-\Z_+^d=\Z_-^d$. For any $k\in \Z_+^d$, select a complete orthonormal basis  $\{a_{k,\alpha}\}_{\alpha\in \{1,\dots,d-1\}}$ of the hyperplane $k^{\bot} \coleq \{k'\in \R^d\,:\, k\cdot k'=0\}$, 
and set $a_{k,\alpha} \coleq a_{-k,\alpha} $ for $k\in \Z^d_-$. Then, let
\begin{equation}
\label{eq:sigma_choice_vector_field}
\sigma_{k,\alpha} \coleq a_{k,\alpha} e^{2\pi \im k\cdot x}  \ \  \text{ for all } \ \ x\in \Tor^d,\ k\in \Z^d_0,\ \alpha\in \{1,\dots,d-1\}.
\end{equation}
Clearly, $\sigma_{k,\alpha}$ are smooth and divergence-free vector fields.
Finally, we introduce the family of complex Brownian motions $(W^{k,\alpha})_{k,\alpha}$. 
Let $(B^{k,\alpha})_{k,\alpha}$ be a family of independent standard Brownian motions on a filtered probability space $(\O,\A,(\F_t)_{t\geq 0},\P)$. Then, we set 
$$
W^{k,\alpha}
\coleq
\left\{
\begin{aligned}
&B^{k,\alpha}+\im B^{-k,\alpha},  &\quad & k\in \Z^d_+,  \\
&B^{-k,\alpha}-\im B^{k,\alpha},  & \quad& k\in \Z^d_-.
\end{aligned}
\right.
$$
As in \cite[Remark 1.1]{FGL21} or \cite[Section 2.3]{FL19}, from \eqref{eq:theta_normalized_symmetric} and the definition of the vector fields $\sigma_{k,\alpha}$, we can, at least formally, convert the Stratonovich integration into an It\^o integration. Indeed, if $v$ is a progressively measurable process with sufficiently regular paths, formally we have:
\begin{align}
\label{eq:Ito_stratonovich_change}
\sqrt{c_d \ellip}\sum_{k,\alpha}\theta_k (\sigma_{k,\alpha}\cdot \nabla) v\circ \dd W_t^{k,\alpha}=\ellip \Delta v\, \dd t+  \sqrt{c_d \ellip}\sum_{k,\alpha}\theta_k (\sigma_{k,\alpha}\cdot \nabla) v\,  \dd W_t^{k,\alpha}.
\end{align}
To prove the above equality, one uses that $\nabla \cdot\sigma_{k,\alpha}=0$ and the identity (see \cite[eq.\ (2.3)]{FL19} or \cite[eq.\ (3.2)]{FGL21})
\begin{equation}
\label{eq:ellipticity_noise}
\sum_{k,\alpha} \theta_k^2 \sigma_{k,\alpha}^{n} \overline{\sigma_{k,\alpha}^{m}}
=
\sum_{k,\alpha} \theta_k^2 a_{k,\alpha}^{n} a_{k,\alpha}^{m}
 =\frac{1}{c_d}\delta_{n,m}\ \   \text{ on }\Tor^d  \text{ for all $m, n \in \{1, \ldots, d\}$}.
\end{equation}

Note that the It\^o correction in \eqref{eq:Ito_stratonovich_change} is a dissipative operator
$\nu \Delta$, which is independent of $\theta$. The latter fact is both surprising and crucial for our purposes. However, it is important to observe that the term $\ellip\Delta v_i$ does not add any extra diffusion to the dynamics but it counterbalances It\^o's corrections in energy estimates, see the discussion below Theorem \ref{t:global_intro}. %To see this, it is enough to perform a standard energy estimate and recognize that the additional dissipation is precisely balanced by the It\^o correction arising from the It\^o noise.

Throughout the remainder of this paper, we will interpret the Stratonovich formulation of the noise on the LHS of \eqref{eq:Ito_stratonovich_change} as its RHS, i.e.\ as an It\^o noise plus a diffusion term.

\section{Statement of the main results}
In this section, we state our main results on the following stochastic RDEs for the unknown concentration $v=(v_i)_{i=1}^{\ell}:\R_+\times \O\times \Tor^d\to [0,\infty)^{\ell}$,
\begin{equation}
\label{eq:reaction_diffusion}
\left\{
\begin{aligned}
\dd v_i -\nu_i \Delta v_i \,\dd t &= f_i(v)\, \dd t+\sqrt{c_d \nu}\sum_{k,\alpha} \theta_k (\sigma_{k,\alpha} \cdot\nabla) v_i\circ \dd W^{k,\alpha}_{t}   & \text{ on }&\Tor^d,\\
v_{i}(0)&=v_{0,i}& \text{ on }&\Tor^d,
\end{aligned}
\right.
\end{equation}
where $i\in \{1,\dots,\ell\}$, $d\geq 2$, $c_d \coleq \frac{d}{d-1}$ and $f_i$ are the nonlinearities associated with the above-mentioned chemical reaction network \eqref{b-1}. Finally, $(W^{k,\alpha})_{k,\alpha}$ and $(\sigma_{k,\alpha})_{k,\alpha}$ are as described in Subsection \ref{ss:noise_description}.
Recall from Section \ref{s:intro} or \cite[Subsection 1.3]{AV23_RDE_local} that the transport noise in \eqref{eq:reaction_diffusion} serves as a simplified model for a turbulent flow in which the reactions take place.

\subsection{Solution concepts and local well-posedness}
Here, we introduce the solution concept for \eqref{eq:reaction_diffusion}, and recall a local well-posedness result from \cite{AV23_RDE_local}, which serves as a starting point for our investigation.
As usual in SPDEs, we interpret the stochastic RDEs \eqref{eq:reaction_diffusion} as a stochastic evolution equation on a suitable Banach space driven by a cylindrical noise. To this end, note that, for real-valued functions $v_i$, we formally have, 
\begin{equation*}
\sum_{k,\alpha} \theta_k (\sigma_{k,\alpha} \cdot\nabla) v_i\,\dd W^{k,\alpha}_{t}
= \sum_{k\in \Z^d_+,\alpha} 2\theta_k (\Re \sigma_{k,\alpha}\cdot \nabla) v_i\, \dd B^{k,\alpha}_t
+\sum_{k\in \Z^d_-,\alpha} 2\theta_k (\Im \sigma_{k,\alpha}\cdot \nabla) v_i\, \dd B^{k,\alpha}_t,
\end{equation*}
where $(B^{k,\alpha})_{k,\alpha}$ are a family of standard independent Brownian motions, and $\Z^d_{\pm}$ are as in Subsection \ref{ss:noise_description} (in particular, $\Z^d_0=\Z^d_+\cup \Z^d_-$). 
In particular, the stochastic RDEs \eqref{eq:reaction_diffusion} can be reformulated in terms of the (real) cylindrical Brownian motion $\Cyl_{\ell^2}$ on $L^2(\R_+;\ell^2)$, where $\ell^2=\ell^2(\Z^d\times \{1,\dots,d-1\})$, defined via the expression
$$
\Cyl_{\ell^2}(f)=\sum_{k,\alpha}\int_{\R_+} f_{k,\alpha}(s)\,\dd B^{k,\alpha}_s \ \  \text{ for }\ \ f=(f_{k,\alpha})_{k,\alpha}\in L^2(\R_+;\ell^2);
$$
see, e.g., \cite[Definition 2.11 and Example 2.12]{AV19_QSEE_1}. For notational convenience, we also let 
\begin{equation*}
\xi_{k,\alpha}=2 \Re \sigma_{k,\alpha} \ \text{ if }\ k\in\Z^d_+ \qquad \text{ and } \qquad \xi_{k,\alpha}=2\Im\sigma_{k,\alpha} \ \text{ if }\ k\in\Z^d_-. 
\end{equation*}
Although the above shows that the noise in \eqref{eq:reaction_diffusion} can be reformulated using only real-valued Brownian motions, we still employ the complex-valued formulation as it is the one used in the related literature (see, e.g., \cite{FGL21,FL19,L21}) and is more convenient for the subsequent analysis.

\smallskip

With the above notation, we can define solutions to the stochastic RDEs \eqref{eq:reaction_diffusion}.
Recall that we interpret the Stratonovich integration in \eqref{eq:reaction_diffusion} as an It\^o one plus the It\^o--Stratonovich correction, see \eqref{eq:Ito_stratonovich_change}.
To handle the possibly high growth of reactions in stochastic RDEs \eqref{eq:reaction_diffusion}, we employ the $L^p(L^q)$-setting for stochastic PDEs.

\begin{definition}[$(p,\a,\s,q)$-solutions]
\label{def:solution}
Let $p,q\in (2,\infty)$, $\a\in [0,\frac{p}{2}-1)$ and $\delta\in [1,2)$. Suppose that $v_0\in B^{2-\delta-2\frac{1+\a}{p}}_{q,p}(\T^d;\R^\ell)$, and let
$$
\tau:\O\to [0,\infty] \quad \text{ and }\quad 
v:[0,\tau)\times \O\to H^{2-\delta,q}(\T^d;\R^\ell)
$$
be a stopping time and a progressively measurable process, respectively.
\begin{enumerate}[{\rm(1)}]
\item\label{it:solution1} We say that $(v,\tau)$ is a \emph{local $(p,\a,\s,q)$-solution} to \eqref{eq:reaction_diffusion} if there exists a sequence of stopping times $(\tau_n)_{n}$ such that $\tau_n \uparrow \tau$ a.s.\ and such that the following holds for all $i\in \{1,\dots,\ell\}$:
\begin{itemize}
\item a.s.\ for all $n\geq 1$:%, it holds that
$$
v_i\in L^p(0,\tau_n,w_{\a};H^{2-\delta,q}(\T^d))\ \  \text{ and }\ \  
f_i (v)\in L^p(0,\tau_n,w_{\a};H^{-\delta,q}(\T^d));
$$
\item a.s.\ for all $n\geq 1$ and $t\in [0,\tau_n]$:
\begin{align*}
v_i(t) &=v_{0,i}+\int_0^t\big[ (\nu+\nu_i) \Delta v_i + f_i(v)\big]\,\dd s \\ &\quad +\sqrt{c_d\nu} \int_0^t \one_{[0,\tau_n]} \big(\theta_k(\xi_{k,\alpha}\cdot \nabla) v_i\big)_{k,\alpha}\,\dd \Cyl_{\ell^2}
\end{align*}
in the space $H^{-\delta, q}(\T^d)$.
\end{itemize}
\item We say that $(v,\tau)$ is a \emph{$(p,\a,\s,q)$-solution} to \eqref{eq:reaction_diffusion} if for any other local $(p,\a,\s,q)$-solution $(v',\tau')$, we have $\tau'\leq \tau$ a.s.\ and $v=v'$ a.e.\ on $[0,\tau')\times \O$.
\item We say that $(v,\tau)$ is a \emph{global} $(p,\a,\s,q)$-solution to \eqref{eq:reaction_diffusion} if $(v,\tau)$ is a $(p,\a,\s,q)$-solution and $\tau=\infty$ a.s. In this situation, we write $v$ instead of $(v,\tau)$.
\end{enumerate}
\end{definition}

It follows from the above that $(p,\a,\s,q)$-solutions are unique. Moreover, as commented in \cite[Definition 2.3]{AV23_RDE_local}, due to the assumed regularity for local $(p,\a,\delta,q)$-solutions, all the integrals appearing in the last item of \ref{it:solution1} are well-defined. 

The assumption on the initial data $v_0$ is optimal in the $L^p(L^q)$-approach to SPDE; see, e.g., \cite[Subsection 3.3]{AV25_survey}. In particular, the parameter $\delta$ is necessary to allow for initial data of zero smoothness. As $B_{q,r_1}^{\sigma}\embed B^{\sigma}_{q,r_2}$ for all $\sigma\in \R$ and $1\leq r_1<r_2\leq \infty$, the time weight $\a$ allows one to enlarge the space of initial data.

\smallskip

For future reference, we recall the following consequence of \cite[Theorem 2.13 and Proposition 3.1]{AV23_RDE_local} and the elementary embedding (see \cite[Theorem 3.3]{A22} for details)
\begin{equation}
\label{eq:embedding_Lq_B0qp}
L^q(\T^d)\subseteq B^0_{q,r}(\T^d)\ \  \text{ for all }\ r\geq q.
\end{equation}

\begin{proposition}[Local well-posedness and regularity]
\label{prop:LWP}
Let \eqref{reaction-full} be a complex balanced chemical reaction network, and let $h>1$ be the order of the reactions in \eqref{b-1}--\eqref{b-11}. Let $\theta$ be normalized radially symmetric, i.e., \eqref{eq:theta_normalized_symmetric} holds, and $\#\{k\,:\,\theta_k\neq 0\}<\infty$.
Assume that $p,q\in (2,\infty)$ and $\delta\in (1,2)$ satisfy
\begin{equation}
\label{eq:pq_condition_local_WP}
q\geq \frac{d(h-1)}{2}\vee \frac{d}{d-\delta}\qquad \text{ and }\qquad p\geq \frac{2}{2-\delta}\vee q.
\end{equation}
Set $\kappa_{p,\delta}=p(1-\frac{\delta}{2})-1$. Then, for all 
$
v_0\in L^q(\T^d;\R^\ell) 
$
such that $v_0\geq 0$ componentwise a.e.\ on $\T^d$, 
there exists a $(p,\a_{p,\delta},\delta,q)$-solution to \eqref{eq:reaction_diffusion} such that 
\begin{align}
\label{eq:LWP1}
v&\in  C([0,\tau);B^0_{q,p}(\T^d;\R^\ell))\cap H^{\vartheta,p}_{\loc}([0,\tau),w_{\a_{p,\delta}};H^{2-2\vartheta-\delta,q}(\T^d;\R^\ell))\text{ a.s.\ for all }\vartheta<1/2,\\
\label{eq:LWP2}
v&\in C^{\vartheta,\infty}_{\loc}((0,\tau)\times \T^d;\R^\ell)\text{ a.s.\ for all }\vartheta<1/2,\\
\label{eq:LWP3}
v&\geq 0 \text{ componentwise a.e.\ on } (0,\tau)\times \T^d.
\end{align}
\end{proposition}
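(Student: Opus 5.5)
The proof will be a reduction to the abstract local theory of \cite{AV23_RDE_local}, with the embedding \eqref{eq:embedding_Lq_B0qp} used to place the initial datum in the right space; we do not re-derive maximal regularity. We proceed in four steps.

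\emph{Step 1: initial data.} Take $v_0\in L^q(\T^d;\R^\ell)$. Since $p\geq q$, \eqref{eq:embedding_Lq_B0qp} gives $v_0\in B^0_{q,p}(\T^d;\R^\ell)$. With $\a=\a_{p,\delta}=p(1-\frac{\delta}{2})-1$ we get
\[
2-\delta-2\tfrac{1+\a}{p}=2-\delta-(2-\delta)=0,
\]
so $B^0_{q,p}$ is exactly the initial-data space required in Definition \ref{def:solution}. The condition $p\geq \frac{2}{2-\delta}$ forces $\a\geq 0$, and $\delta>1$ gives $\a<\frac p2-1$. Hence $(p,\a_{p,\delta},\delta,q)$ is an admissible choice.

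\emph{Step 2: verifying the hypotheses of \cite[Theorem 2.13]{AV23_RDE_local}.} The leading operator is $(\nu+\nu_i)\Delta$ and the noise is $\sqrt{c_d\nu}\,\theta_k(\xi_{k,\alpha}\cdot\nabla)$. The vector fields are smooth, and only finitely many are nonzero since $\#\{k:\theta_k\neq0\}<\infty$. By \eqref{eq:ellipticity_noise}, the stochastic parabolicity condition holds with constant $\nu$: after the It\^o correction the effective diffusion $\nu_i+\nu-\nu=\nu_i>0$ stays positive. The nonlinearity $f$ is a polynomial (after the positive-part extension it is locally Lipschitz) with growth $h$ as in \eqref{b-11}. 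The remaining input is the criticality condition of the $L^p(L^q)$ theory: that $f$ maps $H^{2-\delta,q}$-type trace spaces to $H^{-\delta,q}$ with the subcritical or critical weight. By Sobolev embedding this reduces to $q\geq \frac{d(h-1)}{2}$, which guarantees that the scaling-critical space $L^{d(h-1)/2}$ is contained in $L^q\subseteq B^0_{q,p}$. The condition $q\geq \frac{d}{d-\delta}$ ensures that $L^1$-type products embed into $H^{-\delta,q}$. Under \eqref{eq:pq_condition_local_WP} these are precisely the assumptions of that theorem, and it yields a $(p,\a_{p,\delta},\delta,q)$-solution.

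\emph{Step 3: regularity \eqref{eq:LWP1}.} This comes from the maximal regularity trace and mixed-derivative embedding $H^{\vartheta,p}(w_\a;H^{2-2\vartheta-\delta,q})$ together with continuity in the trace space $B^0_{q,p}$.

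\emph{Step 4: smoothness and positivity.} The smoothness \eqref{eq:LWP2} follows from the bootstrap of \cite[Proposition 3.1]{AV23_RDE_local}, which uses the instantaneous regularization due to the weights and smoothness of the vector fields. Since the stoichiometric coefficients are integers, $f$ is smooth on $[0,\infty)^\ell$, so the bootstrap can be iterated to all orders. Positivity \eqref{eq:LWP3} follows from the quasi-positivity of mass-action kinetics: $f_i(v)\geq 0$ whenever $v_i=0$ and $v\geq 0$. Combined with the positive-part extension, the positivity result in \cite{AV23_RDE_local} applies.

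The main obstacle is Step 2, namely checking that the pair $(p,q)$ of \eqref{eq:pq_condition_local_WP} places $f$ in the admissible (critical) regime of the abstract theory with weight $\a_{p,\delta}$ for initial data merely in $L^q$. Our approach is to compute the critical weight condition of \cite{AV23_RDE_local} explicitly and compare it with $q\geq d(h-1)/2$; \cite[Theorem 3.3]{A22} carries out the same verification.
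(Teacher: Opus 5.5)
Your reduction is essentially the paper's own argument. The paper states Proposition \ref{prop:LWP} as a direct consequence of \cite[Theorem 2.13 and Proposition 3.1]{AV23_RDE_local} together with the embedding \eqref{eq:embedding_Lq_B0qp} (using $p\geq q$ and $2-\delta-2\frac{1+\kappa_{p,\delta}}{p}=0$), and cites \cite[Theorem 3.3]{A22} for the details.

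Two heuristic remarks in your Step 2 point the wrong way. This does not affect the argument, since the verification is delegated to those theorems.
\begin{itemize}
\item For $q\geq\frac{d(h-1)}{2}$ one has $L^q(\T^d)\subseteq L^{d(h-1)/2}(\T^d)$, not the reverse inclusion you wrote. The correct reading is that $B^0_{q,p}$ is critical or subcritical.
\item The condition $q>\frac{d}{d-\delta}$ does not give an embedding of $L^1$ into $H^{-\delta,q}$; that embedding would actually require $q<\frac{d}{d-\delta}$. What the condition gives is the scaling-sharp embedding $L^\eta\hookrightarrow H^{-\delta,q}$ with $\frac{d}{\eta}=\delta+\frac{d}{q}$ and $\eta>1$, as in the proof of Lemma \ref{l:subcriticality_Lq}.
\end{itemize}
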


In particular, due to \eqref{eq:LWP2}, \emph{all} $(p,\a_{p,\delta},\delta,q)$-solutions to stochastic RDEs \eqref{eq:reaction_diffusion} instantaneously become $C^{\infty}$ in space as long as $(p,\delta,q)$ satisfy \eqref{eq:pq_condition_local_WP}. Therefore, we refer to them as {\it classical solutions}. 
The space $L^q$ with $q=\frac{d(h-1)}{2}$ is particularly relevant for RDEs with nonlinearity growth $h$. Indeed, as shown in \cite[Subsection 1.4]{AV23_RDE_local}, the system \eqref{eq:reaction_diffusion} is (locally) invariant under the rescaling 
$$
v\mapsto \lambda^{1/(h-1)}v(\lambda \cdot ,\lambda^{1/2}\cdot)
$$
for $\lambda>0$. The space $L^{\frac{d}{2}(h-1)}$ is therefore (locally) invariant under the induced mapping on the initial data $v_0\mapsto \lambda^{1/(h-1)}v_0(\lambda^{1/2}\cdot)$. More precisely, 
$$
\lambda^{1/(h-1)}\|v_0(\lambda^{1/2}\cdot)\|_{L^{\frac{d}{2}(h-1)}(\R^d;\R^\ell)}
= \|v_0\|_{L^{\frac{d}{2}(h-1)}(\R^d;\R^\ell)}.
$$
Therefore, as is common in the study of PDEs, we say that the space $L^{\frac{d}{2}(h-1)}(\T^d;\R^\ell)$ is critical for \eqref{eq:reaction_diffusion}. Further comments on criticality and its role are given in Subsection \ref{ss:proof_strategy} below. As commented at the beginning of Subsection \ref{subsec:CRN}, in the reaction \eqref{reaction-full} we can allow for stoichiometric coefficients $y_{r,i},y_{r,i}'\in \{0\}\cup [1,\infty)$. However, the regularity statement in \eqref{eq:LWP2} has to be replaced by $v\in C^{\vartheta,\xi}_{\loc}((0,\tau)\times \T^d;\R^\ell)$ a.s.\ for all $\vartheta<1/2$, where $\xi\geq 1$ depends only on the values of the stoichiometric coefficients, see \cite[Theorem 4.2]{AV23_RDE_local}.

\smallskip

The following result complements Proposition \ref{prop:LWP} and improves the regularity at time $t=0$ in \eqref{eq:LWP2}, compensating the loss of regularity in the embedding \eqref{eq:embedding_Lq_B0qp}.

\begin{lemma}[Improved regularity at the initial time -- Lebesgue initial data]
\label{lem:Lebesgue_initial_data_continuity}
Under the assumption of Proposition \ref{prop:LWP}, the $(p,\a_{p,\delta},\delta,q)$-solution $(v,\tau)$ to \eqref{eq:reaction_diffusion} additionally satisfies
\begin{align*}
v&\in C([0,\tau);L^q(\T^d;\R^\ell))\cap L^2_{\loc}([0,\tau);H^{1}(\T^d;\R^\ell)) \text{ a.s.\ }\\
|v_i|^{q-2}|\nabla v_i|^2 &\in L^1_{\loc}([0,\tau)\times \T^d) \text{ a.s.\ for all }i\in \{1,\dots,\ell\}.
\end{align*}
\end{lemma}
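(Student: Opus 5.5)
We prove the lemma through a stochastic $L^q$-energy identity for each component. We first localize and then remove the localization. From Proposition~\ref{prop:LWP}, \eqref{eq:LWP2} gives $v\in C^{\vartheta,\infty}_{\loc}((0,\tau)\times\T^d)$. Hence on any interval $[s,t]\subset(0,\tau)$ the solution is smooth in space, and Itô's formula for $\|v_i\|_{L^q}^q$ is rigorous. Using the divergence-free property of $\sigma_{k,\alpha}$ and the identity \eqref{eq:ellipticity_noise}, the Itô correction produced by the noise exactly cancels the term $\nu\Delta v_i$. The stochastic integral $\int\!\!\int |v_i|^{q-2}v_i(\sigma_{k,\alpha}\cdot\nabla)v_i\,\dd x$ vanishes, because it equals $\frac1q\int\sigma_{k,\alpha}\cdot\nabla|v_i|^q\,\dd x=0$. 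This yields, for $0<s<t<\tau$, the pathwise identity
\begin{equation*}
\|v_i(t)\|_{L^q}^q+q(q-1)\nu_i\int_s^t\!\!\int_{\T^d}|v_i|^{q-2}|\nabla v_i|^2\,\dd x\,\dd r=\|v_i(s)\|_{L^q}^q+q\int_s^t\!\!\int_{\T^d}|v_i|^{q-2}v_if_i(v)\,\dd x\,\dd r .
\end{equation*}
The same argument with $q$ replaced by $2$ gives the analogous identity for $\|v_i\|_{L^2}^2$ and $\int|\nabla v_i|^2$.

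Next we let $s\downarrow0$. We need the reaction term to be integrable up to time $0$. We would take it from \eqref{eq:LWP1}: $v\in H^{\vartheta,p}_{\loc}([0,\tau),w_{\a_{p,\delta}};H^{2-2\vartheta-\delta,q})$ and $v\in L^p_{\loc}([0,\tau),w_{\a};H^{2-\delta,q})$, together with the definition of solution, which ensures $f_i(v)\in L^p(0,\tau_n,w_\a;H^{-\delta,q})$. We then use Sobolev embeddings in the regime \eqref{eq:pq_condition_local_WP}, and in particular the criticality $q\ge d(h-1)/2$. From these we aim to show $|v|^{q-1}|f(v)|\lesssim |v|^{q-1}+|v|^{q-1+h}\in L^1_{\loc}([0,\tau)\times\T^d)$. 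Concretely, we interpolate between $L^\infty_t B^0_{q,p}$ (which is close to $L^q$) and the weighted $L^p_tH^{2-\delta,q}_x$ bound. This controls $\|v\|_{L^{q+h-1}((0,T)\times\T^d)}$ near $t=0$, and integrability of the weight $r^{\a}$ against these powers is the scaling condition $p\ge 2/(2-\delta)$. This is the step we expect to be the main obstacle, since the time weight and the Besov initial space only barely allow the needed integrability. If the direct interpolation fails, we would instead work with a mollified or truncated version of $|x|^q$ (for instance $(\varepsilon+x^2)^{q/2}$ capped at a large level) and obtain a one-sided inequality with $f_i$ entering only through its positive part, which for mass action is bounded by $C(1+|v|^h)$ times $v$-factors.

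With the integrability in hand, the identity together with Fatou's lemma bounds $\sup_{t\le T}\|v_i(t)\|_{L^q}$ and $\int_0^T\!\!\int|v_i|^{q-2}|\nabla v_i|^2$ on $[0,T\wedge\tau_n]$, which is the second claim. The $q=2$ version gives $v\in L^2_{\loc}([0,\tau);H^1)$, and the required integrability there is weaker. For continuity in $L^q$ at $t=0$, the identity shows that $\|v_i(t)\|_{L^q}^q\to\|v_{0,i}\|_{L^q}^q$ as $t\downarrow0$, because the right-hand side integrals vanish in the limit. We also have $v(t)\to v_0$ in $B^0_{q,p}$ by \eqref{eq:LWP1}, so $v(t)\rightharpoonup v_0$ weakly in $L^q$. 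Since $L^q$ is uniformly convex, weak convergence together with convergence of norms yields strong convergence. Continuity on $(0,\tau)$ follows from \eqref{eq:LWP2}, which gives $v\in C([0,\tau);L^q)$.
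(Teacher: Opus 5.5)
There is a genuine gap at the passage $s\downarrow 0$, and the argument is circular there. Your interior identity has $\|v_i(s)\|_{L^q}^q$ on the right-hand side. To send $s\downarrow0$ you need $\limsup_{s\downarrow0}\|v_i(s)\|_{L^q}^q\le\|v_{0,i}\|_{L^q}^q$, or at least finiteness of this limsup. The only information at $t=0$ is $v(s)\to v_0$ in $B^0_{q,p}$ from \eqref{eq:LWP1}, and this space is strictly larger than $L^q$ (cf.\ \eqref{eq:embedding_Lq_B0qp}). So you get only distributional convergence, and hence only $\|v_{0,i}\|_{L^q}\le\liminf_{s\downarrow0}\|v_i(s)\|_{L^q}$, which is the wrong direction.

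Integrability of $|v|^{q+h-1}$ near $t=0$ does not close this. The identity only shows that $\lim_{s\downarrow0}\|v_i(s)\|_{L^q}^q$ exists in $[0,\infty]$. It equals $\|v_i(t)\|_{L^q}^q$ plus the full dissipation on $(0,t)$ minus the reaction integral on $(0,t)$, with no link to $\|v_{0,i}\|_{L^q}^q$. A priori it could be infinite (together with the dissipation), or strictly larger than $\|v_{0,i}\|_{L^q}^q$ if energy concentrates at $t=0$. Consequently, both of your claims presuppose an energy (in)equality started at $t=0$ with $\|v_{0,i}\|_{L^q}^q$ on the right: that Fatou's lemma bounds $\sup_{t\le T}\|v_i(t)\|_{L^q}$, and that the identity yields $\|v_i(t)\|_{L^q}^q\to\|v_{0,i}\|_{L^q}^q$. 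That energy inequality from $t=0$ is exactly what the lemma asserts.

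The missing ingredient is an approximation of the initial datum, which is what the paper does.
\begin{itemize}
\item Take regular $v_0^n\to v_0$ in $L^q$. The corresponding solutions $v^n$ lie in $C([0,\tau^n);L^\infty)\cap L^2(0,\tau^n;H^1)$, so It\^o's formula gives the $L^r$-energy inequality ($2\le r\le q$) starting from $t=0$.
\item Let $n\to\infty$ using the local continuous-dependence estimates of \cite[Proposition 3.3]{AV23_RDE_local} in $C_tB^0_{q,p}$ and in $L^{q+h_0-1}_{t,x}$. The identities for $v^n$ live on $n$-dependent intervals, so pass to a fast subsequence and use Borel--Cantelli to obtain a common stochastic interval $[0,\tau_*/2]$.
\item Lower semicontinuity then gives the inequality for $v$ with $\|v_{0,i}\|_{L^r}^r$ on the right. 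Hence $\limsup_{t\downarrow0}\|v_i(t)\|_{L^r}\le\|v_{0,i}\|_{L^r}$, and your uniform-convexity step concludes.
\end{itemize}

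Your integrability step also has a problem in the critical case $q=\frac{d(h-1)}{2}$. The pointwise-in-time interpolation between $L^\infty_tB^0_{q,p}$ and $L^p(w_{\kappa};H^{2-\delta,q})$ sits exactly at the endpoint. With $m=q+h-1$, Sobolev forces $\theta m\ge\frac{2}{2-\delta}$, whereas H\"older against $t^{\kappa}$ with $\kappa>0$ needs $\theta m<\frac{2}{2-\delta}$; the leftover weight is $t^{-1}$. The paper avoids this as follows. It uses the fractional time regularity in \eqref{eq:LWP1} with $\vartheta=\kappa_0/p_0$. By compatibility it chooses $h_0=1+\frac{2q}{d}$ and $p_0=q+h_0-1$. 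This gives the embedding $H^{\kappa_0/p_0,p_0}(0,t,w_{\kappa_0};H^{2/p_0,q})\hookrightarrow L^{p_0}(0,t;L^{p_0})$.
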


In particular, the above result implies $|v_i|^{q/2}\in L^2_{\loc}([0,\tau);H^1(\T^d;\R^\ell))$ a.s. The proof of Lemma \ref{lem:Lebesgue_initial_data_continuity} follows a standard approximation procedure and, for completeness, is provided in Appendix \ref{app:continuity_Lq} (see \cite[Proposition 5.3]{AV24_dissipative} for a similar situation).

\subsection{Statement of the main results}
\label{ss:statement_main}
We are now ready to state the main results of the current manuscript. 
First, we prove that there exists a suitable choice of the noise coefficients $\theta=(\theta_k)_{k\in \Z^d_0}$ for which there exist global classical (in space) solutions to the stochastic RDEs \eqref{eq:reaction_diffusion}. 
Below, for convenience, we let
\begin{align}
\label{eq:initial_data_Xin}
\InD^q \coleq \Big\{v_0\in L^q(\T^d;\R^\ell)\,:\ 
&v_{0,i}\geq 0 \text{ a.e. on }\T^d \text{ and } \\ &\int_{\T^d} v_{0,i}\,\dd x \neq 0 \text{ for all } i\in \{1,\dots,\ell\} \Big\}. \nonumber
\end{align}

\begin{theorem}[Global classical solutions by transport noise]
\label{t:global}
Let \eqref{reaction-full} be a complex balanced chemical reaction network, and consider the reaction nonlinearities given by \eqref{b-1}. Fix a compact set $K\subseteq \q \R^{\ell}_{> 0}$ and 
$$
q>\frac{d(h-1)}{2}\vee 2, \qquad N \geq 1, \quad\text{ and }\quad \varepsilon\in (0,1).
$$ 
Assume that for each $M\in \q\R^\ell_{> 0}$, there are no boundary equilibria $\wt{v}_\infty$ such that $\q\wt{v}_\infty= M$ (see Definition \ref{def:complex_boundary_equilibria}).
Then, there exist $\nu>0$ and $\theta\in \ell^2$ such that $\#\{k\,:\,\theta_k\neq 0\}<\infty$ for which the following assertion holds: for all $\delta\in (1,2)$, $p\geq \frac{2}{2-\delta}\vee q$ such that $q>\frac{d}{d-\delta}$, and initial data $v_0$ satisfying
$$
v_0\in \InD^q, \qquad  \|v_0\|_{L^q(\T^d;\R^\ell)}\leq N, \qquad \text{ and }\qquad 
\q \Big(\int_{\T^d} v_0(x)\,\dd x\Big) \in K,
$$
the unique $(p,\a_{p,\delta},\s,q)$-solution with $\a_{p,\delta}=p(1-\frac{\delta}{2})-1$ to the {\normalfont{stochastic RDEs}} \eqref{eq:reaction_diffusion} is \emph{global in time with large probability}:
$$
\P(\tau=\infty)>1-\varepsilon.
$$
\end{theorem}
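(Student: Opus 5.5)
The proof combines three ingredients: a finite-horizon scaling-limit argument, entropy--entropy dissipation, and a close-to-equilibrium global existence result, glued together at a deterministic time $T$.

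\emph{Step 1: the deterministic limit reaches equilibrium.} Consider the deterministic system with enhanced diffusion,
$$\partial_t u_i-(\nu_i+\nu)\Delta u_i=f_i(u).$$
For $\nu$ large this has global smooth solutions on any prescribed horizon, for all data with $\|u_0\|_{L^q}\le N$ and $u_0\ge0$, via the $L^p(L^q)$ arguments of \cite{A22}. The complex balanced entropy
$$\Ent(u|\veq)=\sum_i\int u_i\log(u_i/\veqi)-u_i+\veqi$$
is nonincreasing. Since there are no boundary equilibria, the entropy--entropy dissipation inequality (Theorem \ref{t:entropy_dissipation}) holds uniformly for $\q\overline{u_0}\in K$; Lemma \ref{l:continuous_cinfty} gives continuity of $\veq$ in $M$. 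Together these yield exponential decay of $\Ent(u(t)|\veq)$, with rate and constants uniform over the admissible data. Combined with parabolic smoothing and interpolation, this gives a deterministic time $T=T(N,K,\eta)$ such that $\|u(T)-\veq\|_{L^q}\le\eta/2$. Here $\eta$ is the smallness threshold of the close-to-equilibrium result, Theorem \ref{t:small_implies_global}.

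\emph{Step 2: scaling limit on $[0,T]$.} I fix $\nu$ as in Step 1 and choose $\theta=\theta^{(n)}$ radially symmetric with $\|\theta^{(n)}\|_{\ell^\infty}\to0$ and finitely many nonzero modes. Following \cite{A22}, the stochastic solutions $v^{(n)}$ then converge in probability to $u$ in $C([0,T];B^0_{q,p})\cap L^p(0,T,w_\a;H^{2-\delta,q})$, uniformly over the admissible initial data. The argument is a stochastic maximal regularity estimate for the noise term in a negative Sobolev norm, which is small as $\|\theta\|_{\ell^\infty}\to0$, together with a stopping-time/continuity argument for the nonlinearity. The gain in integrability from $B^0_{q,p}$ back to $L^q$ at time $T$ uses the instantaneous regularization in \eqref{eq:LWP2} on $[T/2,T]$. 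Consequently there is $n$ such that, with probability at least $1-\varepsilon/2$, we have $\tau>T$ and $\|v(T)-\veq\|_{L^q}\le\eta$, where $\veq$ is the equilibrium associated with $M=\q\overline{v_0}$, which is conserved pathwise.

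\emph{Step 3: restart and conclude.} By the strong Markov property, I restart the equation at time $T$ with $\mathcal F_T$-measurable data that is $\eta$-close to equilibrium. Theorem \ref{t:small_implies_global} states that such data yield a global solution with probability at least $1-\varepsilon/2$ for the same $\nu,\theta$. This requires uniformity of that theorem over $M\in K$, which the compactness of $K$ provides. Uniqueness of $(p,\a,\delta,q)$-solutions lets the two pieces be glued into a single solution, and the independence from $p,\delta$ follows from Proposition \ref{prop:LWP}. Altogether $\P(\tau=\infty)>1-\varepsilon$.

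\emph{Main obstacle.} I expect the hardest point to be the uniformity in Steps 1--3. The constants from the entropy method and from the close-to-equilibrium theorem must not depend on $\theta$, and $\theta$ must be chosen \emph{after} $\eta$ and $T$ are fixed. This means the smallness threshold in Theorem \ref{t:small_implies_global} must hold uniformly over all normalized $\theta$, relying only on the conserved $L^q$ balance laws and entropy dissipation, which are pathwise insensitive to the noise. I would verify this first.
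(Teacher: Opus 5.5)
Your architecture matches the paper's. It has three pieces: uniform exponential convergence for the high-diffusivity deterministic system via Theorem \ref{t:entropy_dissipation} (this is Proposition \ref{prop:global_high_viscosity}), a finite-horizon scaling limit, and then Theorem \ref{t:small_implies_global}. The worry you flag is unfounded: $\eta$ there is $\theta$-independent, because its proof uses only pathwise It\^o identities in which the transport noise cancels.

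\textbf{The gap is in passing from Step 2 to Step 3.} The scaling limit actually available, Theorem \ref{t:scaling_limit_cutoff} (from \cite[Theorem 6.1]{A22}), gives convergence only in the time-integrated norm $L^r(0,T;L^q)$. You assert convergence in $C([0,T];B^0_{q,p})\cap L^p(0,T,w_{\a};H^{2-\delta,q})$, uniformly over the $L^q$-ball. That is the zero-regularity (energy) level of the maximal-regularity space, and there the estimates on the stochastic convolution carry no smallness. Compare Lemma \ref{l:estimate_stoch_convol_linfty}: the $H^1$ bound is not small, and smallness appears only after interpolating with $H^{-\beta}$. The paper also remarks that even $C([0,T];L^q)$ convergence would need stochastic Meyers estimates and more regular data.

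Your upgrade from $B^0_{q,p}$ to $L^q$ at time $T$ via \eqref{eq:LWP2} also fails. That statement is qualitative and almost sure, with no bounds uniform in $\theta^{(n)}$; one expects higher norms to degenerate as the noise concentrates on high modes. So nothing justifies $\|v(T)-\veq\|_{L^q}\le\eta$ with probability close to one at a \emph{deterministic} time $T$, and Step 3 cannot start.

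The paper avoids pointwise-in-time convergence altogether:
\begin{enumerate}
\item Fix $q_*\in(\frac{d(h-1)}{2}\vee\frac{d}{d-\delta},q)$ and take $\eta$ from Theorem \ref{t:small_implies_global} with $q_*$ in place of $q$. Since $\vd$ is only shown to decay in $L^{q_0}$ for $q_0<q$, working in $L^{q_*}$ also makes your parabolic-smoothing upgrade in Step 1 unnecessary.
\item Choose $T_0$ with $\sup_{t\ge T_0-1}\|\vd(t)-\veq\|_{L^{q_*}}\le\eta/2$.
\item Closeness to $\vd$ in $L^r(0,T_0;L^q)$ gives $\|v-\veq\|_{L^r(T_0-1,T_0;L^{q_*})}\le\eta$ on an event of probability $>1-\varepsilon$. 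On that event some random $t(\omega)\in(T_0-1,T_0)$ has $\|v(t(\omega))-\veq\|_{L^{q_*}}\le\eta$.
\item The first such time, $\tau_*$ in \eqref{eq:definition_tau_star}, is a stopping time, because $\vcut\in C((0,\infty);L^{q_*})$ when $\a_{p,\delta}>0$; see \eqref{eq:continuity_in_Lq_cutoff}. The cutoff equation and the identification $v=\vcut$ before the cutoff activates also give $\tau\ge T_0$ on this event.
\item Apply Theorem \ref{t:small_implies_global} directly at $\tau_*$. It is stated for stopping times and its conclusion holds pathwise on $\{\|v(\tau_*)-\veq\|_{L^{q_*}}\le\eta\}$, so no Markov restart is needed.
\end{enumerate}
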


From \eqref{eq:LWP2} in Proposition \ref{prop:LWP}, it follows that the solution to \eqref{eq:reaction_diffusion} with $\theta$ and $\nu$ as in Theorem \ref{t:global} is global and \emph{classical} in space with large probability. 
A priori estimates accompanying the above global well-posedness with high probability result are given in Corollary \ref{cor:uniform_estimates_half_line}. 
The proof of the above result is given in Subsection \ref{ss:proof_global_well_posedness}.

As commented in Section \ref{s:intro}, the above result is not known in the deterministic setting $\nu=0$ when $h>2$.
The reader is referred to Subsection \ref{ss:examples_reaction_diffusion_intro} for concrete examples of chemical reactions to which our results apply, but for which the corresponding deterministic result is unknown.

\smallskip

Our second main result captures the enhanced dissipation effect of transport noise on the reaction--diffusion dynamics. The reader is referred to \eqref{eq:linear_PDE_intro}--\eqref{eq:oscillation_energy} for a discussion on physical aspects and the mathematical formulation.

\begin{theorem}[Enhanced dissipation by transport noise]
\label{t:enhanced_dissipation_reaction}
Let \eqref{reaction-full} be a complex balanced chemical reaction network, and consider the reaction nonlinearities given by \eqref{b-1}. Fix a compact set $K\subseteq \q \R^{\ell}_{> 0}$,
$q>\frac{d(h-1)}{2}\vee 2$, $N \geq 1$, 
$$
 \varepsilon\in (0,1),\qquad
\chi\in (0,\infty), \quad \text{ and }\quad b \in (1,\infty).
$$
Assume that for each $M\in \q\R^\ell_{> 0}$, there are no boundary equilibria $\wt{v}_\infty$ such that $\q\wt{v}_\infty=M$ (see Definition \ref{def:complex_boundary_equilibria}).
If $d\geq 5$, then additionally assume that $q>(d-2)(h-1)$. 
Then, there exist $\nu>0$ and $\theta\in \ell^2$ such that $\#\{k\,:\,\theta_k\neq 0\}<\infty$ for which the following assertion holds: for $\delta\in (1,2)$, $q>\frac{d}{d-\delta}$, and initial data $v_0$ satisfying
$$
v_0\in \InD^q , \qquad  \|v_0\|_{L^q(\T^d;\R^\ell)}\leq N, \qquad \text{ and }\qquad 
\q \Big(\int_{\T^d} v_0(x)\,\dd x\Big) \in K,
$$
let $(v,\tau)$ denote the unique $(p,\a_{p,\delta},\s,q)$-solution  for some $p\geq \frac{2}{2-\delta}\vee q$ and with $\a_{p,\delta}=p(1-\frac{\delta}{2})-1$ to the {\normalfont{stochastic RDEs}} \eqref{eq:reaction_diffusion}. Then, there exists a stopping time $\mu\leq \tau$, and a random variable $\const$ such that
\begin{equation}
\label{eq:enhanced_dissipation_reaction1}
\P(\mu=\infty)>1-\varepsilon 
\qquad \text{ and }\qquad
\E \const^b\lesssim_{K,q,N,\varepsilon,\chi,b}1,
\end{equation}
for which we have enhanced dissipation:
\begin{equation}
\label{eq:enhanced_dissipation_reaction2}
\Big\|v(t,x)-\int_{\T^d} v(t,x)\,\dd x \Big\|_{L^2(\T^d;\R^\ell)}
\leq \const e^{-\chi t} \|v_0\|_{L^2(\T^d;\R^\ell)}\ \text{ a.s.\ for all }t<\mu.
\end{equation}
\end{theorem}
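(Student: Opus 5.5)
The plan is to build on Theorem \ref{t:global} and its a priori estimates (Corollary \ref{cor:uniform_estimates_half_line}). We study the fluctuation $u\coleq v-\overline{v}(t)$, where $\overline{v}(t)=\int_{\T^d}v(t,x)\,\dd x$. Transport noise is divergence-free, so the Itô noise has zero mean and $\overline{v}$ solves the ODE $\dd\overline{v}_i=\overline{f_i(v)}\,\dd t$. Hence $u_i$ satisfies
\begin{equation*}
\dd u_i=(\nu+\nu_i)\Delta u_i\,\dd t+\big(f_i(v)-\overline{f_i(v)}\big)\dd t+\sqrt{c_d\nu}\sum_{k,\alpha}\theta_k(\sigma_{k,\alpha}\cdot\nabla)u_i\,\dd W^{k,\alpha}_t,
\end{equation*}
since $\nabla v_i=\nabla u_i$. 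This is the linear passive-scalar equation \eqref{eq:linear_PDE_intro}, with diffusivity $\nu_i$, forced by the mean-zero source $g_i\coleq f_i(v)-\overline{f_i(v)}$.

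We write $u$ in mild form using the random stochastic flow $\mathcal{S}_i(t,s)$ of the linear equation:
\begin{equation*}
u_i(t)=\mathcal{S}_i(t,0)u_i(0)+\int_0^t\mathcal{S}_i(t,s)g_i(s)\,\dd s.
\end{equation*}
From \cite{FGL24_quantitative,L21} we import the quantitative enhanced dissipation estimate. For every $\chi'>0$ and moment $b'$, there exist $\nu$ and $\theta$ (with $\theta$ concentrated on high modes) such that $\|\mathcal{S}_i(t,s)\phi\|_{L^2}\le \const_s e^{-\chi'(t-s)}\|\phi\|_{L^2}$ for all mean-zero $\phi$, where the random constants have uniformly bounded $b'$-moments. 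We need a version uniform in the starting time $s$, which follows from stationarity of the noise increments together with a Borel--Cantelli/discretization argument on integer times $s\in\N$. This is needed to control the Duhamel integral. The mean-zero structure of $g_i$ is exactly what allows us to apply the decay to the forcing.

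The main step is controlling the forcing: we need $\|g(s)\|_{L^2}\lesssim \|u(s)\|_{L^2}$, up to a random factor. We will use a stopping time
\begin{equation*}
\mu\coleq\inf\big\{t<\tau:\|v(t)\|_{L^\infty}\ge R\big\}.
\end{equation*}
On $[0,\mu)$ the map $f$ is Lipschitz on $[0,R]^\ell$, so $\|f(v)-f(\overline{v})\|_{L^2}\le L_R\|u\|_{L^2}$. Since $f(\overline{v})$ is constant in space, $g=f(v)-f(\overline{v})-\overline{(f(v)-f(\overline{v}))}$, which gives $\|g\|_{L^2}\le 2L_R\|u\|_{L^2}$. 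A Gronwall argument on the Duhamel formula then yields decay at rate $\chi'-C L_R\,\const$. We therefore choose $\chi'$ large relative to $\chi$ and $L_R$, handling the random constant by moments and Hölder's inequality.

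The hard part is showing $\P(\mu=\infty)>1-\varepsilon$ with $R$ independent of $\nu,\theta$. The $L^q$ a priori bounds from Theorem \ref{t:global} give global-in-time high-probability control. We then bootstrap $L^q\to L^\infty$ on unit time intervals via parabolic regularity, using $q>\frac{d(h-1)}{2}$, which is where the extra condition $q>(d-2)(h-1)$ for $d\ge5$ should enter. The bootstrap must be uniform in $\nu$, which is plausible because the transport noise preserves all $L^r$ norms. If $L^\infty$ control is too costly, we will instead stop when $\|v\|_{L^{r}}$ is large and use $|f(v)-f(\overline{v})|\lesssim(1+|v|^{h-1}+|\overline{v}|^{h-1})|u|$ with Hölder and Gagliardo--Nirenberg, absorbing the gradient via the energy dissipation. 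Finally, the factor $\|u_0\|_{L^2}\le\|v_0\|_{L^2}$ gives \eqref{eq:enhanced_dissipation_reaction2}, and the moment bound $\E\const^b$ follows from the chosen $b'$.
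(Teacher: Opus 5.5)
Your reduction to the fluctuation equation for $u=v-\overline{v}$ is correct. The core step, however, does not go through: pushing the forcing $g_i$ through the random flow $\mathcal S_i(t,s)$ and closing with a pathwise Gronwall argument.

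\emph{The imported bounds are per datum.} The enhanced-dissipation bounds you import are statements about a \emph{fixed} initial datum. As in the proof of \cite[Theorem 2.3]{L21}, whose Borel--Cantelli step the paper reproduces in Subsection \ref{ss:proof_enhanced}, they come from a mean-square contraction on unit intervals followed by Borel--Cantelli along the trajectory of that datum. So the random constant depends on the datum.

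In $\int_0^t\mathcal S_i(t,s)g_i(s)\,\dd s$ the flow acts on a continuum of random, noise-dependent data. You would therefore need an operator-norm bound $\|\mathcal S_i(t,s)\|_{L^2_0\to L^2_0}\le D e^{-\chi'(t-s)}$ for all $s\le t$, with a single random $D$. This is strictly stronger than what \cite{FGL24_quantitative,L21} provide. Stationarity plus Borel--Cantelli over integer starting times does not yield it, because countably many starting times give no control of the supremum over the unit sphere of $L^2_0$.

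\emph{The Gronwall rate is random.} Even granting such a $D$, Gronwall gives the random rate $\chi'-CL_RD$, and $D$ is not small. Below the time scale on which contraction is proved, the flow is merely an $L^2$-contraction. A pathwise rate $\chi''$ built from unit-step contractions therefore requires $D\gtrsim e^{\chi''}$ on events of positive probability, and $\chi''-CL_Re^{\chi''}$ need not be positive. Taking moments and using H\"older cannot turn a random exponent into the deterministic rate $\chi$ required in \eqref{eq:enhanced_dissipation_reaction2}. Conditioning on $\F_s$ would remove the uniformity-in-data issue, since $\mathcal S_i(t,s)$ is independent of $\F_s$. But then you work in mean square and still have to beat $L_R$, which needs contraction on time scales $\ll L_R^{-1}$. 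In effect you are back to proving a one-step contraction for the nonlinear problem.

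That is what the paper does, without any stochastic flow. It uses the It\^o mild form \eqref{eq:mild_formulation_SPDE_wtv} with the deterministic semigroup $e^{(\nu+\nu_i)t\Delta}$, the It\^o correction supplying the extra diffusivity $\nu$. On $[n,n+1]$ the two terms are then handled as follows:
\begin{itemize}
\item The reaction enters as a deterministic convolution of the mean-zero forcing $f_i(w)-\overline{f_i(w)}$, measured in $L^1(n,n+1;H^{-1})$ and damped by $\nu^{-1}$ (Lemma \ref{l:smr_L2_L1H}).
\item The noise enters as a stochastic convolution of $\nabla\wt{w}$, damped by $\|\theta\|_{\ell^\infty}^{2\gamma}$ (Lemma \ref{l:estimate_stoch_convol_linfty}).
\end{itemize}
Both are controlled by $\|\nabla\wt w\|_{L^2(n,n+1;L^2)}\lesssim\|\wt w(n)\|_{L^2}$, a pathwise bound uniform in $\theta$ (Proposition \ref{prop:quenched_estimate_w}). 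This yields $\E\|\wt w(n+1)\|_{L^2}^2\le\eta\,\E\|\wt w(n)\|_{L^2}^2$ directly for the nonlinear fluctuation (Proposition \ref{prop:iteration_lemma}). The reaction is never a Lipschitz perturbation competing with the decay rate, and Borel--Cantelli is applied only once, at the end.

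Two secondary points.
\begin{itemize}
\item As defined, your $\mu$ equals $0$ whenever $v_0\notin L^\infty$, because $v(t)\to v_0$ in $L^q$ as $t\downarrow0$. It must therefore be started at a positive time. A noise-uniform Moser bound for $t\ge1$ is plausible, since the $L^r$ balance laws are noise-free.
\item The paper never uses $L^\infty$ control. It bounds $\|w-v_\infty\|_{L^{2(h-1)}(n,n+1;L^{r_0(h-1)})}$ through Lemma \ref{l:interpolation_Lr0}, and that is where $q>(d-2)(h-1)$ for $d\ge5$ enters.
\end{itemize}
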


Choosing $\chi\gg \max_{1\leq i\leq \ell}\nu_i$, the estimate \eqref{eq:enhanced_dissipation_reaction2} ensures that the transport noise in \eqref{eq:reaction_diffusion} enhances the decay of the spatial fluctuations compared to the case of pure diffusion, see \eqref{eq:basic_decay_L2}. 
The additional integrability condition for $d\geq 5$ appears to be purely technical, and we do not pursue its removal here.
The proof of Theorem \ref{t:enhanced_dissipation_reaction} is given in Subsection \ref{ss:proof_enhanced}.

Finally, although the random constant $\const$ may depend on $v_0$ and not only on its $L^q$-norm, the corresponding moment estimate is uniform over initial data in the class considered in Theorem \ref{t:enhanced_dissipation_reaction}. More precisely,
$$
\E \Big[\one_{\O_\infty} \sup_{t>0} \Big( e^{\chi t }\Big\|v(t,x)-\int_{\T^d} v(t,x)\,\dd x \Big\|_{L^2}\Big)^b \Big]
\lesssim_{K,q,N,\varepsilon,\chi,b} \|v_0\|_{L^2}^b,
$$
and $\O_\infty=\{\mu=\infty\}$ satisfies $\P(\O_\infty)>1-\varepsilon$. 

\smallskip

We conclude this subsection by discussing the extension of our results in the presence of boundary equilibria. The proof strategy is outlined in Subsection \ref{ss:proof_strategy}.

\begin{remark}[The case of boundary equilibria]
\label{rem:boundary_equilibria}
Theorems \ref{t:global} and \ref{t:enhanced_dissipation_reaction} are formulated for chemical reaction networks without boundary equilibria. However, the same arguments extend to networks with boundary equilibria provided that the corresponding deterministic system with sufficiently large diffusivity admits a global solution that remains uniformly bounded away from zero.
Indeed, the absence of boundary equilibria is used only to verify condition \ref{cond:eep-estimate-2} in Theorem \ref{t:entropy_dissipation}, which in turn enters our argument only through Proposition \ref{prop:global_high_viscosity}. A uniform positive lower bound for the deterministic solution provides an alternative way to verify condition \ref{cond:eep-estimate-1} and, therefore, allows the proof to proceed in the presence of boundary equilibria.
\end{remark}

\begin{figure}[htpb]
    \centering
    \begin{tikzpicture}[
        >=latex,
        mathnode/.style={
            align=center,
            text width=4.9cm,
            inner sep=3pt,
            font=\small
        },
        mainnode/.style={
            align=center,
            text width=5.3cm,
            inner sep=4pt,
            font=\small
        }
    ]

    \node[mainnode] (N0) {
        Uniform entropy--entropy dissipation\\
        \emph{Theorem \ref{t:entropy_dissipation}} (\S\ref{s:entropy_review})
    };

    \node[mathnode, below=0.9cm of N0] (N2) {
        Global well-posedness of\\ deterministic RDEs with high diffusivity\\
        \emph{Proposition \ref{prop:global_high_viscosity}} (\S\ref{ss:global_with_high_diffusivity})
    };

    \node[mathnode, left=0.25cm of N2] (N1) {
        Scaling limit for\\ stochastic RDEs\\ with cutoff\\
        \emph{Theorem \ref{t:scaling_limit_cutoff}} (\S\ref{ss:cutoff2})
    };

    \node[mathnode, right=0.25cm of N2] (N3) {
        Close-to-equilibrium implies\\ global for stochastic RDEs\\
        \emph{Theorem \ref{t:small_implies_global}} 
        (\S\ref{s:global_close_equilibria})
    };

    \node[mainnode, below=1.0cm of N2] (N4) {
        Global classical solutions by transport noise\\
        \textbf{Theorem \ref{t:global}}
        (\S\ref{ss:proof_global_well_posedness})
    };

    \node[mathnode] at (N1 |- N4) (N5) {
        Uniform estimates \\ for fluctuations \\
        \emph{Propositions \ref{prop:quenched_estimate_w}
        and \ref{prop:iteration_lemma}} (\S\ref{s:enhanced})
    };

    \node[mainnode, below=1.0cm of N4] (N6) {
        Enhanced dissipation\\
        \textbf{Theorem \ref{t:enhanced_dissipation_reaction}} (\S\ref{ss:proof_enhanced})
    };

    \draw[->] (N0) -- (N2);

    \draw[->] (N1.south) -- ([xshift=-1.15cm]N4.north);
    \draw[->] (N2) -- (N4);
    \draw[->] (N3.south) -- ([xshift=1.15cm]N4.north);

    \draw[->] (N4) -- (N6);
    \draw[->] (N5.south) -- ([xshift=-1.15cm]N6.north);

    \end{tikzpicture}

    \caption{Proof architecture. The main results (Theorems \ref{t:global} and \ref{t:enhanced_dissipation_reaction}) are highlighted in bold. Relevant subsections are indicated in parentheses.}
    \label{fig:proof_architecture}
\end{figure}

\subsection{Proof strategy}
\label{ss:proof_strategy}
The proof architecture is summarized in Figure \ref{fig:proof_architecture}. We follow the diagram from top to bottom. As a first ingredient, we show that the constants in the entropy--entropy dissipation estimates for complex balanced reaction networks developed in \cite{DFT17,FT18_normalized_entropy} can be made uniform with respect to initial data satisfying the conditions of Theorems \ref{t:global} and \ref{t:enhanced_dissipation_reaction}.  Obtaining this uniformity requires tracking the constants in the entropy--entropy dissipation argument. For this, we prove that the unique positive equilibrium of a complex balanced chemical reaction network depends continuously on the mass vector; see Lemma \ref{l:continuous_cinfty}. These estimates are then used to prove global well-posedness and exponential convergence to equilibrium for deterministic RDEs with sufficiently large diffusivity; see Proposition \ref{prop:global_high_viscosity}.

To prove Theorem \ref{t:global}, we also need two other ingredients. First, Theorem \ref{t:scaling_limit_cutoff} adapts the finite-time scaling-limit approach of \cite{A22} in which, for suitable noise coefficients, stochastic RDEs with cutoff (see \eqref{eq:reaction_diffusion_cut_off} below) are approximated by deterministic RDEs with enhanced diffusivity. Here, the key point is that the approximation is established in a space of the form $L^r(0,T;L^q)$ with $T>0$ large, and parameters $(r,q)$ satisfying the subcritical condition
$$
-\frac{2}{r}-\frac{d}{q}>-\frac{2}{h-1};
$$
see the discussion below Proposition \ref{prop:LWP} and \cite[Subsection 2.3]{A22}.
Second, Theorem \ref{t:small_implies_global} shows that closeness to a positive equilibrium implies global well-posedness with high probability. Its proof combines Moser-type arguments in the spirit of \cite{A22,DG15_boundedness} with the spectral-gap approach of \cite{tang2018close}. Importantly, the result is uniform with respect to the normalized transport-noise coefficients and allows nonlinearities of arbitrary polynomial growth. Combining the scaling limit, the deterministic high-diffusivity dynamics, and the close-to-equilibrium argument allows us to upgrade the finite-time result of Theorem \ref{t:scaling_limit_cutoff} to global well-posedness in Theorem \ref{t:global}. Details are given in Subsection \ref{ss:proof_global_well_posedness}.

Finally, the lower part of Figure \ref{fig:proof_architecture} concerns enhanced dissipation. Starting from the global control provided by Theorem \ref{t:global}, we study the energy of the spatial fluctuations
$$
\big\|v(t,\cdot)-\overline{ v}(t)\big\|_{L^2}^2.
$$ 
The argument builds on quantitative estimates for transport noise from \cite{FGL24_quantitative,L21}, adapted here to the nonlinear reaction--diffusion setting in an $L^r(L^q)$-framework. Two points are important when dealing with reaction nonlinearities of high polynomial growth. First, one needs to control the $L^{2(h-1)}(0,1;L^{d(h-1)})$-norm, which has the same parabolic scaling as the critical initial-data space $L^{\frac{d}{2}(h-1)}$ and is therefore critical; see Remark \ref{rem:criticality_pathwise_estimates}. This is achieved through the interpolation estimate of Lemma \ref{l:interpolation_Lr0} using the uniform bounds obtained in Proposition \ref{prop:global_cut_off_entropy}; see also \eqref{eq:boundedness_Lr0_proof}. Second, the reaction term appears in the centered form $f(v)-\overline{f(v)}$, whose zero-mean structure yields the cancellation needed to control the deterministic convolution. Proposition \ref{prop:quenched_estimate_w} provides pathwise $L^2$-estimates for the spatial fluctuations, while Proposition \ref{prop:iteration_lemma} gives a quantitative moment energy contraction that can be iterated in time. Their combination with Theorem \ref{t:global} yields Theorem \ref{t:enhanced_dissipation_reaction}.

\section{Uniform entropy--entropy dissipation estimates}
\label{s:entropy_review}
Fix a complex balanced chemical reaction network. 
For a measurable function $v=(v_i)_{i=1}^{\ell}:\T^d\to [0,\infty)^{\ell}$ and a positive equilibrium $v_{\infty}=(v_{\infty,i})_{i=1}^{\ell}\in (0,\infty)^{\ell}$, we define the relative entropy functional $\Ent(v|v_{\infty})$ as 
\begin{equation}
\label{eq:entropy}
\Ent (v|v_{\infty}) \coleq 
\sum_{1\leq i\leq \ell} \int_{\T^d} \Big(v_i \log \frac{v_i}{v_{\infty,i}}-v_i+v_{\infty,i}\Big)\,\dd x,
\end{equation}
where $0\log 0 \coleq 0$. Let us recall that the entropy functional is a standard Lyapunov functional for a nonlinear reaction--diffusion system, i.e., for smooth solutions of \eqref{b-2}--\eqref{b-1}, we have  
$\frac{\dd }{\dd t} \Ent(v(t)|v_{\infty})\leq 0$. 
The entropy dissipation functional $\D(v)=-\frac{\dd}{\dd t}\Ent(v(t)|v_{\infty}) \geq 0$ associated to \eqref{eq:entropy} is given by 
\begin{equation}
\begin{aligned}
\label{eq:dissipation}
&\D(v) \coleq 
\sum_{1\leq i\leq \ell} \nu_i \int_{\T^d} \frac{|\nabla v_i|^2}{v_i}\,\dd x
+ \sum_{1\leq r\leq m } k_r \sol_{\infty}^{\y_r} \int_{\T^d} \Psi\Big(\frac{\sol^{\y_r}}{\sol_{\infty}^{\y_r}},\frac{\sol^{\y_r'}}{\sol_{\infty}^{\y_r'}}\Big)\, \dd x\geq 0,\\
&\text{where }\quad 
\Psi(x,y) \coleq x\log(x/y)-x+y\geq 0,
\end{aligned}
\end{equation}
and was first derived in the above formulation in \cite{DFT17}. In the latter work, the entropy and the dissipation functionals have been used to study the convergence to equilibria of solutions to chemical reaction networks via the entropy--entropy dissipation inequality. We shall use such results in the deterministic and stochastic setting. 

\begin{theorem}[Entropy--entropy dissipation inequality \cite{DFT17,FT18_normalized_entropy}]
\label{t:entropy_dissipation}
Fix a complex balanced chemical reaction network. Let $N \geq 1$, $v_\mathrm{inf} \geq 0$, and $K \subseteq \q \R_{> 0}^{\ell}$ be a compact subset of $\R^{\ell'}$ such that at least one of the following conditions holds: 
\begin{enumerate}[label={\rm(C\arabic*)}]
\item\label{cond:eep-estimate-1} $v_\mathrm{inf} > 0$; 
\item\label{cond:eep-estimate-2} for all $M \in K$, there are no boundary equilibria $\wt{v}_\infty$ satisfying $\q \wt{ v}_\infty = M$.
\end{enumerate}
Then, there exists a constant $\lambda>0$ depending only on $N$, $v_\mathrm{inf}$, and $K$ such that 
\begin{align*}
\D(v) \geq \lambda\, 
\Ent(v|v_{\infty}) 
\end{align*}
holds for all measurable functions $v:\T^d\to [0,\infty)^{\ell}$ satisfying $\Ent(\overline{v}|v_{\infty})\leq N$, $\q \overline{v} = M \in K$, and $\overline{v_j} \geq v_\mathrm{inf}$ for all $j \in \{ 1, \dotsc, \ell\}$, where we recall the notation $\overline{v_j} = \int_{\T^d} v_j(x)\,\dd x$. 
\end{theorem}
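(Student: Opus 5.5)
We will follow the constructive strategy of \cite{DFT17,FT18_normalized_entropy} and track how every constant depends on the data $(N,v_{\mathrm{inf}},K)$. We work with the positive equilibrium $v_\infty=v_\infty(M)$ given by Lemma~\ref{lem1}, where $M=\q\overline{v}\in K$. The first step is to show that the map $M\mapsto v_\infty(M)$ is continuous on $\q\R^\ell_{>0}$; this is the content of Lemma~\ref{l:continuous_cinfty}. The argument uses the implicit function theorem applied to the system
\[
\q v_\infty=M,\qquad \log v_\infty-\log v_\infty^{*}\in \mathrm{ran}\,\W^\top,
\]
where $v_\infty^{*}$ is a fixed reference complex balanced equilibrium. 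The second relation encodes the standard fact that complex balanced equilibria are parametrized by this log-linear relation. The Jacobian of the system is invertible by the same strict convexity that yields uniqueness. Consequently, on the compact set $K$ there are constants $0<c_K\le C_K<\infty$ with $c_K\le v_{\infty,i}(M)\le C_K$ for all $i$ and all $M\in K$. Every constant appearing below then depends continuously on $v_\infty$ and can be bounded uniformly over $K$.

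Next we carry out the usual two-step decomposition of the entropy:
\[
\Ent(v|v_\infty)=\Ent(v|\overline v)+\Ent(\overline v|v_\infty).
\]
The first term is controlled by the diffusion part of $\D(v)$ through the logarithmic Sobolev inequality on $\T^d$. This gives $\sum_i\nu_i\int|\nabla v_i|^2/v_i\ge c\,\Ent(v|\overline v)$ with $c$ depending only on $\min_i\nu_i$. For the second term, we work with the square-root variables $V_i=\sqrt{v_i}$ and use the elementary bound $\Psi(x,y)\ge(\sqrt x-\sqrt y)^2$. Together with the additivity of the $L^2$-fluctuation control $\|V_i-\overline{V_i}\|_{L^2}^2\lesssim\|\nabla V_i\|_{L^2}^2$, this reduces the task to a finite-dimensional inequality. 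Writing $\overline{V_i^2}=\overline{v_i}$, the inequality reads
\[
\sum_r k_rv_\infty^{\y_r}\Big(\sqrt{\tfrac{\overline{V}^{2\y_r}}{v_\infty^{\y_r}}}-\sqrt{\tfrac{\overline{V}^{2\y_r'}}{v_\infty^{\y_r'}}}\Big)^2+\sum_i\|\nabla V_i\|^2\ \ge\ \lambda'\,\Ent(\overline v|v_\infty).
\]
It is derived with the approximation $\overline{V_i}^2\approx\overline{V_i^2}$ up to errors bounded by $\|\nabla V_i\|^2$. The constraint $\Ent(\overline v|v_\infty)\le N$ guarantees that $\overline{v}$ stays in a compact subset of $[0,\infty)^\ell$, uniformly in $M\in K$ by the first step. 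On this set, the inequality
\[
\sum_r(\sqrt{\mu^{\y_r}}-\sqrt{\mu^{\y_r'}})^2\ \ge\ c\sum_i(\sqrt{\mu_i}-1)^2,\qquad \mu_i=\overline{v_i}/v_{\infty,i},
\]
holds under the constraint $\q(v_\infty\mu)=M$. Its left-hand side vanishes exactly at equilibria, so the task is a compactness argument on the set $\{(M,\mu)\}$.

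The main obstacle is this final uniform finite-dimensional inequality. We argue by contradiction. Suppose there are sequences $M_n\in K$ and $\overline v_n$ with ratio of left- to right-hand side tending to zero. By compactness they converge to some $(M_*,\overline v_*)$. The left-hand side must then vanish at the limit, so $\overline v_*$ is an equilibrium with $\q\overline v_*=M_*$. Under \ref{cond:eep-estimate-1}, $\overline v_*\ge v_{\mathrm{inf}}>0$, so it is positive and therefore equals $v_\infty(M_*)$ by Lemma~\ref{lem1}. Under \ref{cond:eep-estimate-2}, boundary equilibria are excluded, and the same conclusion follows. Near $v_\infty(M_*)$ we finish with a linearization. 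The quadratic form $\sum_r\big(\langle \y_r-\y_r',\eta\rangle\big)^2$ in the log-perturbation $\eta$ is positive definite on the tangent space $\{\q(v_\infty\eta)=0\}$, because of the identity~\eqref{eq:ranWT=kerQ}. Its smallest eigenvalue depends continuously on $M$, which excludes the degenerate ratio. Finally, the entropy bound $N$ and the bounds on $v_\infty$ control $\Ent(\overline v|v_\infty)$ by a multiple of $\sum_i(\sqrt{\mu_i}-1)^2$. Combining this with the previous steps yields $\lambda=\lambda(N,v_{\mathrm{inf}},K)$.
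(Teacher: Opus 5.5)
This is essentially the paper's route. You run the proof of \cite[Theorem 1.1]{FT18_normalized_entropy}: entropy splitting, the log-Sobolev inequality, and reduction to a finite-dimensional inequality at $\overline v\in[v_{\mathrm{inf}},\wh{K}]^\ell$. You then make every constant uniform in $M\in K$ through continuity of $M\mapsto v_\infty(M)$ and a uniform positive lower bound on the constant $H_1$, using the equilibrium characterization away from $v_\infty$ and a linearization near it. Like the paper, you take the reduction step on trust. Note, however, that replacing $\|V^{\y_r}-V^{\y_r'}\|_{L^2}$ by its value at the means requires the domain decomposition of \cite{DFT17}, not only the approximation $\overline{V_i}^2\approx\overline{V_i^2}$.

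One statement is wrong as written. Complex balanced equilibria satisfy $\log v_\infty-\log v_\infty^*\in(\mathrm{ran}\,\W^\top)^\perp=\ker\W=\mathrm{ran}\,\q^\top$ (cf.\ \eqref{eq:s-perp}), not $\log v_\infty-\log v_\infty^*\in\mathrm{ran}\,\W^\top$. With your subspace the system is not square in general, and the claim that the Jacobian is invertible fails.

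Once this is corrected, your two sub-arguments differ mildly from the paper's. Write $v=v_\infty^*e^{\q^\top\gamma}$. The map $\gamma\mapsto\q(v_\infty^*e^{\q^\top\gamma})$ is the gradient of the strictly convex function $\gamma\mapsto\sum_i v_{\infty,i}^*e^{(\q^\top\gamma)_i}$, whose Hessian $\q\,\mathrm{diag}(v)\,\q^\top$ is positive definite. It is therefore a bijective local diffeomorphism onto $\q\R^\ell_{>0}$ (onto by Lemma \ref{lem1}). This gives continuity, even smoothness, of $v_\infty(M)$ more directly than the elementary contradiction argument of Lemma \ref{l:continuous_cinfty}.

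Likewise, your sequential compactness argument runs jointly in $(M_n,\overline v_n)$ and uses $v_\infty(M_n)\to v_\infty(M_*)$ in the linearization, so it yields the uniform constant in one stroke. The paper instead constructs $H_0(M)$ for each fixed $M$, via the matrices $A,\hat A$ and an $M$-dependent neighbourhood of $v_\infty$. It then appeals to continuity of $M\mapsto H_0$ and compactness of $K$ in Lemma \ref{l:continuous_H0}. Your version avoids having to justify that continuity.
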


The proof follows from the same line of argument as the proof of \cite[Theorem 1.1]{FT18_normalized_entropy} together with the continuity of the equilibrium $v_\infty$ as a function of the mass vector $M \in K$. We collect the necessary ingredients in the Lemmas \ref{l:continuous_cinfty}--\ref{l:continuous_H0} and we present the proof of Theorem \ref{t:entropy_dissipation} subsequently. 

\begin{lemma}
\label{l:continuous_cinfty} 
Assume that the reaction network \eqref{reaction-full} is complex balanced. For $M\in \q\R^\ell_{>0}$, let $v_\infty(M)$ be the positive equilibrium given by Lemma \ref{lem1}. Then, the mapping $M\mapsto v_\infty(M)$ from $\q \R^{\ell}_{> 0}$ to $\R^\ell_{>0}$ is continuous.
\end{lemma}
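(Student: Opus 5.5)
Continuity will follow from the classical structure of the positive equilibria of complex balanced networks (Horn--Jackson), combined with an implicit function argument. Fix a positive complex balanced equilibrium $v^*\in\R^\ell_{>0}$. Then the set of all positive equilibria is
\[
E \coleq \{ v\in \R^\ell_{>0} : \log v - \log v^* \in \ker \W\} ,
\]
where $\log$ acts componentwise and $\ker\W=\mathrm{ran}\,\q^\top$ is spanned by $\boldsymbol q_1,\dots,\boldsymbol q_{\ell'}$. I take this characterization as known (\cite{HJ72,horn1972necessary}); it is also what underlies Lemma \ref{lem1}. Thus every positive equilibrium has the form $v=v^*e^{\q^\top\mu}$ (componentwise product) for a unique $\mu\in\R^{\ell'}$. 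Uniqueness of $\mu$ holds because $\q^\top$ is injective, since the rows of $\q$ are linearly independent. By Lemma \ref{lem1}, $v_\infty(M)$ is then $v^*e^{\q^\top\mu(M)}$, where $\mu(M)$ is the unique solution of
\[
\Phi(\mu)\coleq \q\big(v^* e^{\q^\top\mu}\big)=M.
\]

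Next I will show that $\Phi:\R^{\ell'}\to\q\R^\ell_{>0}$ is a $C^\infty$ bijection with invertible derivative. Its Jacobian is
\[
D\Phi(\mu)=\q\,\mathrm{diag}\big(v^*e^{\q^\top\mu}\big)\,\q^\top.
\]
This matrix is symmetric positive definite: for $\xi\neq0$ we have $\xi^\top D\Phi\,\xi=\sum_i v_i|(\q^\top\xi)_i|^2>0$, because all $v_i>0$ and $\q^\top\xi\neq0$. Surjectivity onto $\q\R^\ell_{>0}$ is exactly the existence part of Lemma \ref{lem1}, and injectivity is its uniqueness part. Alternatively, $\Phi=\nabla G$ with $G(\mu)=\sum_i v^*_i e^{(\q^\top\mu)_i}$ strictly convex, which gives injectivity directly.

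By the inverse function theorem, $\Phi$ is a local diffeomorphism at every point. Since it is a bijection onto the open set $\q\R^\ell_{>0}$, its inverse is locally given by the smooth local inverses, so $\mu(\cdot)=\Phi^{-1}$ is $C^\infty$ (in particular continuous) on $\q\R^\ell_{>0}$. Openness of this set is clear, since $\q$ is surjective onto $\R^{\ell'}$ and hence an open map. Composing with the continuous map $\mu\mapsto v^*e^{\q^\top\mu}$ gives the continuity of $M\mapsto v_\infty(M)$.

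The main obstacle is justifying the log-linear description of $E$ if one does not simply cite it. If needed, I would prove it by the standard argument. If $v$ and $v^*$ are both complex balanced, the entropy-type identity
\[
\sum_r k_r v^{*\y_r}\,\Psi\big(\cdot,\cdot\big)\text{-structure, i.e. } \sum_r k_r v^{\y_r}\big(\log \tfrac{v^{\y_r}}{v^{*\y_r}}-\log\tfrac{v^{\y_r'}}{v^{*\y_r'}}\big)=0,
\]
together with complex balance, forces $(\y_r-\y_r')\cdot(\log v-\log v^*)=0$ for all $r$, i.e.\ $\log v-\log v^*\in\ker\W$. The converse inclusion (that every such $v$ is complex balanced) is a direct substitution.

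A fallback that avoids the diffeomorphism structure is a compactness argument. Take $M_n\to M$. Then $v_\infty(M_n)$ stays bounded, by the Lyapunov/conservation structure (for instance via a positive conservation vector when one exists, or via the explicit form above). It also stays away from the boundary, because any limit point is an equilibrium with mass $M$ and is either positive, hence equal to $v_\infty(M)$, or a boundary equilibrium. The boundary case is excluded only under hypotheses such as \ref{cond:eep-estimate-2}. For this reason I prefer the implicit function route, which needs no such assumption.
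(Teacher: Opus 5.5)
Your proof is correct, but it takes a different route from the paper's. Both arguments rest on the same black box, the log-linear description $v_\infty(M)=v^*e^{s}$ with $s\in(\mathrm{ran}\,\W^\top)^\perp=\ker\W=\mathrm{ran}\,\q^\top$.

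\textbf{The paper's route.} It argues directly at a fixed $M^*$. It splits $v_\infty(M)-v^*=r^\perp+r$ with $r^\perp\in(\ker\q)^\perp$ and $r\in\ker\q$. Then $\q r^\perp=M-M^*$ forces $r^\perp\to 0$. Next it uses the orthogonality $r\cdot s=0$ and the elementary behaviour of $x\mapsto(e^x-1)x$ in a contradiction argument. This gives $|s_i|\le 6\ell|r^\perp|/\min_j v^*_j$ once $|r^\perp|$ is small.

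\textbf{Your route.} You parametrize the equilibria by $\mu\in\R^{\ell'}$ and note that the mass map $\Phi(\mu)=\q(v^*e^{\q^\top\mu})$ is the gradient of the strictly convex $G$. Its Jacobian $\q\,\mathrm{diag}(v)\,\q^\top$ is positive definite; this is the same matrix the paper shows to be invertible in the lemma that follows. You then apply the inverse function theorem. Your observation that a globally injective map agrees near each point with its smooth local inverse is exactly what makes $\Phi^{-1}$ smooth.

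\textbf{Comparison.} Your route gives a stronger conclusion, namely $M\mapsto v_\infty(M)$ is $C^\infty$ (indeed real analytic), via a short, standard convex-analysis argument. Through strict convexity it also recovers the uniqueness part of Lemma \ref{lem1}. The paper's route is entirely elementary and gives an explicit local Lipschitz bound $|s|\lesssim_{\ell,v^*}|r^\perp|\lesssim|M-M^*|$. In fact the identity $r\cdot s=0$ it exploits is a quantitative monotonicity property of $s\mapsto v^*(e^{s}-1)$. That map is the gradient of the convex function $\sum_i v^*_i(e^{s_i}-s_i)$, so it is the same convexity that underlies your $G$.

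You are right that the compactness fallback would need the absence of boundary equilibria. That is a good reason to avoid it, since the lemma is stated without that assumption.

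\textbf{A minor point on your sketch.} In your sketch of the log-linear characterization, only $v^*$ needs to be complex balanced; $v$ need only be an equilibrium. The step forcing $(\y_r-\y_r')\cdot z=0$, with $z=\log v-\log v^*$, comes from $f(v)\cdot z\le\sum_r k_r v^{*\y_r}\big(e^{\y_r'\cdot z}-e^{\y_r\cdot z}\big)=0$. This uses $e^a(b-a)\le e^b-e^a$, with equality iff $a=b$. Since you, like the paper, cite this characterization anyway, the imprecision does not affect correctness.
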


\begin{proof}
Let $v_\infty^* \in \R_{>0}^\ell$ be a complex balanced equilibrium and $M^* \in \R^{\ell'}$ such that $\q v_\infty^* = M^*$. We first recall from Lemma \ref{lem1} that for all $M \in \q \R^{\ell}_{> 0}$, there exists a unique complex balanced equilibrium $v_\infty \in \R_{>0}^\ell$ such that $\q v_\infty = M$. 

We now prove that $v_\infty \rightarrow v_\infty^*$ for $M \rightarrow M^*$. The crucial ingredient is the existence of a vector $s^\perp \in (\mathrm{ran} \, \W^\top)^\perp$ satisfying 
\begin{align}
	\label{eq:s-perp}
	v_{\infty,i} = v_{\infty,i}^* e^{s_i^\perp}
\end{align} 
for all $i = 1, \dots, \ell$; see \cite[Theorem 2.3 (2.)]{YC18} and also \cite{feinberg1995existence,HJ72}. 
Recalling \eqref{eq:ranWT=kerQ}, there exist unique $r^\perp \in (\mathrm{ran} \, \W^\top)^\perp = (\mathrm{ker}\, \q)^\perp$ and $r \in \mathrm{ran} \, \W^\top = \mathrm{ker} \, \q$ such that 
\begin{align}
	\label{eq:r-perp_r}
	v_\infty - v_\infty^* = r^\perp + r. 
\end{align} 
Invoking $\q v_\infty = M$, this entails 
\begin{align}
	\label{eq:r-perp}
	\q r^\perp = M - M^*. 
\end{align}
The identities \eqref{eq:s-perp} and \eqref{eq:r-perp_r} imply 
\begin{align}
	\label{eq:r_s-perp_r-perp}
	r_i = v_{\infty,i}^* \big( e^{s_i^\perp} - 1 \big) - r_i^\perp
\end{align}
for all $i = 1, \dots, \ell$. For $M \rightarrow M^*$, we immediately get $r^\perp \rightarrow 0$ from \eqref{eq:r-perp}, and we aim to show that also $s^\perp \rightarrow 0$. To this end, we suppose that 
\begin{align}
	\label{eq:s_i0-perp}
	\big| s_{i_0}^\perp \big| > \frac{6\ell|r^\perp|}{\underline v}
\end{align}
holds with $\underline v \coleq \min_i v_{\infty,i}^*$ for some $1 \leq i_0 \leq \ell$. We will see that this leads to a contradiction, which eventually establishes the claim of this lemma. For convenience, we introduce the functions $f : \R \rightarrow \R$, $g : \R \rightarrow \R$, 
\begin{align*}
	f(x) &\coleq \underline v (e^x - 1) x - |r^\perp| |x|, \\ 
	g(x) &\coleq \frac{\underline v}{2} x^2 - |r^\perp| |x|.
\end{align*}
Starting from \eqref{eq:r_s-perp_r-perp}, we estimate (for $r^\perp$ sufficiently small) 
\begin{align*}
	r_i s_i^\perp &\geq \min_{s_i^\perp \in \R} f(s_i^\perp) \geq \min_{s_i^\perp \in \R} g(s_i^\perp) = - \frac{|r^\perp|^2}{2 \underline v}, 
\end{align*}
where the second estimate only holds for the minima of $f$ and $g$ but not for the functions themselves; here, we use the asymptotic behavior 
\begin{align}
	\label{eq:(e^x-1)x-asymptotics}
	(e^x - 1) x \sim 
	\begin{cases}
		|x| &\text{for}\ x \rightarrow -\infty, \\ 
		x^2 &\text{for}\ x \rightarrow 0, \\ 
		x e^x &\text{for}\ x \rightarrow +\infty.
	\end{cases}
\end{align}
Hence, $\sum_{i \neq i_0} r_i s_i^\perp \geq - \ell (2 \underline v)^{-1} |r^\perp|^2$. Note that $g$ attains its minimum $-|r^\perp|^2/(2 \underline v)$ at $\pm |r^\perp|/\underline v$, while $g(\pm 2 |r^\perp|/\underline v) = 0$. Observe that the function 
\begin{align*}
	h(x) \coleq \frac{|r^\perp|}{6} |x| 
\end{align*}
satisfies $h(\pm 3 |r^\perp|/\underline v) = |r^\perp|^2/(2 \underline v) < g(\pm 3 |r^\perp|/\underline v)$. For $r^\perp$ sufficiently small, \eqref{eq:(e^x-1)x-asymptotics} guarantees $g(\pm 3 |r^\perp|/\underline v) < f(\pm 3 |r^\perp|/\underline v)$ and $f(x) > h(x)$ on $\R \backslash (-3 |r^\perp| / \underline v, 3 |r^\perp| / \underline v)$. We now employ \eqref{eq:s_i0-perp} to derive 
\begin{align*}
	r_{i_0} s_{i_0}^\perp \geq f(s_{i_0}^\perp) > h(s_{i_0}^\perp) > \frac{\ell|r^\perp|^2}{\underline v}.
\end{align*}
Thus, $r \cdot s^\perp > \ell (2 \underline v)^{-1} |r^\perp|^2 > 0$, which contradicts $r \in \mathrm{ran} \, \W^\top$ and $s^\perp \in (\mathrm{ran} \, \W^\top)^\perp$. 
\end{proof}

\begin{lemma}
Let $M\in \q \R^{\ell}_{> 0}$ and $v_\infty$ be the positive complex balanced equilibrium determined by $M$, and let $\hat K>0$ be fixed. Then, recalling the notation from \eqref{b-1}, it holds that
\begin{align*}
    \Xi \coleq \liminf_{\Sigma_{\hat K,M} \ni \vec v \rightarrow v_\infty} \frac{\sum_{r = 1}^m \Big( \sqrt{\frac{\vec v}{v_\infty}}^{\y_r} - \sqrt{\frac{\vec v}{v_\infty}}^{\y_r'} \Big)^2}{\sum_{i = 1}^\ell \Big( \sqrt{\frac{\vec v_i}{v_{\infty,i}}} - 1 \Big)^2} \geq \sigma_{\hat A^{-1}}^2 \hat \sigma_\W^2 > 0, 
\end{align*}
where $\Sigma_{\hat K,M} \coleq \{ \vec v = (\vec v_1,\ldots, \vec v_{\ell})\in [0, \hat K]^\ell \, | \, \q \vec v = M \}$, $\sigma_{\hat A^{-1}}$ is the smallest singular value of the inverse of $\hat A$ defined in \eqref{eq:def-A-matrix} and \eqref{eq:def-hat-a-matrix}, and where
$\hat \sigma_\W$ is the smallest non-zero singular value of $\W$. 
\end{lemma}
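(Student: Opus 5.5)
Writing $\vec v_i = v_{\infty,i}(1+\mu_i)^2$ with $\mu_i \coleq \sqrt{\vec v_i/v_{\infty,i}}-1$, the plan is to linearize the numerator at $\mu=0$ and compare the resulting quadratic form with $|\mu|^2$ on the tangent directions of $\Sigma_{\hat K,M}$. Since $\vec v\to v_\infty$ exactly when $\mu\to 0$, we have
\[
\sqrt{\tfrac{\vec v}{v_\infty}}^{\y} = \prod_{j}(1+\mu_j)^{y_j} = 1 + \y\cdot \mu + O(|\mu|^2).
\]
Hence each term of the numerator equals $\big((\y_r-\y_r')\cdot\mu + O(|\mu|^2)\big)^2$. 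Summed over $r$, the numerator is $|\W\mu|^2 + O(|\mu|^3)$, where the rows of $\W$ are $\y_r'-\y_r$. The denominator is exactly $|\mu|^2$. It therefore suffices to show
\[
\liminf_{\mu\to 0}\frac{|\W\mu|^2}{|\mu|^2}\ \ge\ \sigma_{\hat A^{-1}}^2\hat\sigma_\W^2 .
\]

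The constraint $\q\vec v = M = \q v_\infty$ will be used next. Since $\vec v_i - v_{\infty,i} = v_{\infty,i}(2\mu_i+\mu_i^2)$, we get $\q\big(D(2\mu+\mu^2)\big)=0$ with $D=\mathrm{diag}(v_{\infty,i})$. So $u\coleq D(2\mu+\mu^2)$ lies in $\ker\q = \mathrm{ran}\,\W^\top$, which is the orthogonal complement of $\ker\W$. On this subspace $|\W u|\ge \hat\sigma_\W |u|$.

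We then transfer this bound from $u$ to $\mu$. Write $u = \hat A\mu$, where $\hat A$ is the (presumably diagonal, $\mu$-dependent) matrix from \eqref{eq:def-A-matrix}--\eqref{eq:def-hat-a-matrix}, e.g. $\hat A=\mathrm{diag}\big(v_{\infty,i}(2+\mu_i)\big)$, which is invertible near $\mu=0$. Since $\W$ generally does not commute with $\hat A$, we do not apply $\hat\sigma_\W$ to $\mu$ directly. Instead, we decompose $\mu=\mu^\parallel+\mu^\perp$ with $\mu^\perp\in\ker\W$. The constraint $u\in(\ker\W)^\perp$ forces $\mu=\hat A^{-1}u$, and we bound $|\mu|\le \|\hat A^{-1}\||u|$ and $|\W\mu|\ge \hat\sigma_\W|\mu^\parallel|$. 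The key inequality is then
\[
|\mu^\parallel| \ \ge\ c\,|\mu|, \qquad c=\sigma_{\hat A^{-1}}\ \text{(in the normalization of the paper)},
\]
which holds because $\mu^\perp\in\ker \W$, $\hat A\mu\perp\ker\W$, and $\hat A$ is positive definite, so $\mu$ cannot be nearly parallel to $\ker\W$. Concretely, $0=\langle \hat A\mu,\mu^\perp\rangle$ gives $\langle \hat A\mu^\perp,\mu^\perp\rangle = -\langle \hat A\mu^\parallel,\mu^\perp\rangle$, hence $|\mu^\perp|\le \kappa(\hat A)|\mu^\parallel|$. Combining these gives the quotient bound with a constant that is positive and expressed through singular values of $\hat A^{-1}$ and $\hat\sigma_\W$. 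Passing $\mu\to0$ replaces $\hat A$ by its limit $2D$ and kills the $O(|\mu|^3)$ remainder, which yields the stated $\liminf$. Strict positivity is immediate since both singular values are positive.

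The main obstacle is the constant bookkeeping. The precise form $\sigma_{\hat A^{-1}}^2\hat\sigma_\W^2$ depends on the exact definition of $\hat A$ in \eqref{eq:def-A-matrix}--\eqref{eq:def-hat-a-matrix}, and the orthogonality argument may naturally produce a condition-number-type constant instead. If so, I would redo the argument in the variable $u$: write $\W\mu = \W\hat A^{-1}u$ with $u\in(\ker \W)^\perp$, and estimate $|\W\hat A^{-1}u|$ from below. This is the natural route to a product of $\sigma_{\hat A^{-1}}$ and $\hat\sigma_\W$ when $\hat A$ is chosen (as I expect the paper does) so that $\hat A^{-1}$ maps $\mathrm{ran}\,\W^\top$ suitably. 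The linearization and the use of $\ker\q=\mathrm{ran}\,\W^\top$ from \eqref{eq:ranWT=kerQ} are routine.
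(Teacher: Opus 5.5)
Your linearization (numerator $=|\W\mu|^2+O(|\mu|^3)$, denominator $=|\mu|^2$) is correct, as is the observation that the conservation law puts $D(2\mu+\mu^2)$, with $D=\mathrm{diag}(v_{\infty,i})$, into $\ker\q=\mathrm{ran}\,\W^\top=(\ker\W)^\perp$; this is also the paper's starting point. Your orthogonality argument with the positive definite matrix $\mathrm{diag}(v_{\infty,i}(2+\mu_i))$ is also correct. However, what it actually proves is
\[
\Xi\ \geq\ \frac{\hat\sigma_{\W}^2}{1+\kappa^2},\qquad \kappa\coleq\frac{\max_i v_{\infty,i}}{\min_i v_{\infty,i}},
\]
and not the stated bound $\sigma_{\hat A^{-1}}^2\hat\sigma_{\W}^2$. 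The step ``$c=\sigma_{\hat A^{-1}}$ (in the normalization of the paper)'' is asserted, not derived, and it rests on a wrong guess of $\hat A$: the matrix in \eqref{eq:def-A-matrix} is not diagonal. For your diagonal guess the identification is even false in general. In the limit $\sigma_{\min}((2D)^{-1})=(2\max_i v_{\infty,i})^{-1}$, which exceeds $1$ whenever $\max_i v_{\infty,i}<1/2$, while $|\mu^\parallel|/|\mu|\le 1$ always. The fallback ``redo the argument in the variable $u$'' is only announced, so the inequality as stated is not established.

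The missing ingredient is the explicit map that reconstructs $\eta$ from its component in $(\ker\W)^\perp$. Work with $\eta=(\vec v-v_\infty)/v_\infty$. In this variable the conservation law is exactly linear, $\q D\eta=0$, and the quotient still tends to $|\W\eta|^2/|\eta|^2$. Write $\eta=\eta^\perp+\q^\top\gamma$ with $\eta^\perp\in(\ker\W)^\perp$, using that the rows of $\q$ span $\ker\W$. Since $\q D\q^\top$ is invertible, $\q D(\eta^\perp+\q^\top\gamma)=0$ gives $\gamma=-(\q D\q^\top)^{-1}\q D\eta^\perp$, i.e.\ $\eta=A\eta^\perp$ with $A=I-\q^\top(\q D\q^\top)^{-1}\q D$. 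As $A$ is injective on $(\ker\W)^\perp$, a bijective extension $\hat A$ yields $|\eta^\perp|=|\hat A^{-1}\eta|\ge\sigma_{\hat A^{-1}}|\eta|$. Hence $|\W\eta|=|\W\eta^\perp|\ge\hat\sigma_{\W}\,\sigma_{\hat A^{-1}}|\eta|$.

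Your condition-number constant is a legitimate alternative. It is positive and continuous in $v_\infty$, so it would serve equally well in Lemma~\ref{l:continuous_H0}, and it avoids the non-canonical extension $\hat A$. But it proves a different inequality from the one stated.
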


\begin{proof}
As in the proof of \cite[Lemma 2.8]{FT18_normalized_entropy}, we note that 
\begin{align*}
\Xi = \liminf_{\Sigma_{\hat K,M} \ni \vec v \rightarrow v_\infty} \Big| \W \frac{\eta}{|\eta|} \Big|_2^2
\end{align*}
holds, where $\eta \coleq \frac{\vec v - v_\infty}{v_\infty}$. We decompose $\eta = \eta^\perp + \eta^\parallel$ with $\eta^\perp \in (\ker \W)^\perp$ and $\eta^\parallel \in \ker \W$. As the rows of $\q$ form a basis of $\ker \W$ (see \eqref{eq:ranWT=kerQ}), there exists some $\gamma$ such that $\eta^\parallel = \q^\top \gamma$. Thanks to the conservation laws $\q \vec v = M = \q v_\infty$, we have $0 = \q \, \text{diag}(v_\infty) \big( \eta^\perp + \q^\top \gamma \big)$ and 
\begin{align}
    \label{eq:represent-gamma}
    \gamma = - \big( \q \, \text{diag} (v_\infty) \q^\top \big)^{-1} \q \, \text{diag}(v_\infty) \eta^\perp.
\end{align}
Indeed, $\q \, \text{diag} (v_\infty) \q^\top \in \mathbb R^{\ell' \times \ell'}$ is invertible since 
\begin{align*}
    \q \, \text{diag} (v_\infty) \q^\top x = 0 \quad \Rightarrow \quad \big\langle \q^\top x, \text{diag}(v_\infty) \q^\top x \big\rangle = 0 \quad \Rightarrow \quad \q^\top x = 0 \quad \Rightarrow \quad x = 0
\end{align*}
for all $x \in \mathbb R^{\ell'}$, where we used the linear independence of the rows of $\q$ in the last step. 
Inserting \eqref{eq:represent-gamma} into $\eta = \eta^\perp + \q^\top \gamma$, we obtain $\eta = A \eta^\perp$ with 
\begin{align}
    \label{eq:def-A-matrix}
    A \coleq I - \q^\top \big( \q \, \mathrm{diag}(v_\infty) \q^\top \big)^{-1} \q \, \mathrm{diag}(v_\infty) \in \mathbb R^{\ell \times \ell}.
\end{align}
This shows that 
\begin{align*}
\Xi \geq \inf_{\eta \in A ( (\ker \W)^\perp ), \, |\eta| = 1} | \W \eta |_2^2.
\end{align*}
The mapping $A : (\ker \W)^\perp \rightarrow A \big( (\ker \W)^\perp \big)$ induced by the matrix $A$ is regular by construction, which is easily seen as follows. If $A \eta^\perp = \eta^\perp + \q^\top \gamma = 0$ for some $\eta^\perp \in (\ker \W)^\perp$, then $\eta^\perp = 0$ due to the orthogonality relation $\eta^\perp \perp \q^\top \gamma$. Next, we define the mapping $\hat A : \mathbb R^{\ell} \rightarrow \mathbb R^\ell$ as a bijective extension of $A$ such that 
\begin{align}
\label{eq:def-hat-a-matrix}
\hat A = A \quad \text{on} \ \ (\ker \W)^\perp.
\end{align}
For instance, one can choose basis vectors $w_1, \dotsc, w_{\ell-\ell'}$ and $w_{\ell-\ell'+1}, \dotsc, w_\ell$ of $(\ker \W)^\perp$ and $\ker \W$, respectively, and define $\hat A w_j \coleq A w_j$ for $j = 1, \dotsc, \ell-\ell'$ and $\hat A w_j \coleq w_j'$ for $j = \ell-\ell'+1, \dots, \ell$, where $w_{\ell-\ell'+1}', \dotsc, w_\ell'$ extend the linear independent vectors $A w_1, \dotsc, A w_{\ell-\ell'}$ to a basis of $\mathbb R^\ell$. As usual, we also write $\hat A \in \mathbb R^{\ell \times \ell}$ for the corresponding matrix representation.  
We observe that 
\begin{align*}
    | \eta^\perp |_2^2 = \big| \hat A^{-1} \eta \big|_2^2 \geq \sigma_{\hat A^{-1}}^2
\end{align*}
holds for all $\eta \in A \big( (\ker \W)^\perp \big)$ satisfying $|\eta| = 1$ with the smallest singular value $\sigma_{\hat A^{-1}}$ of $\hat A^{-1}$. As a consequence, 
\begin{align*}
    \Xi \geq \inf_{\eta \in A ( (\ker \W)^\perp ), \, |\eta| = 1} | \W \eta^\perp |_2^2 
    \geq \sigma_{\hat A^{-1}}^2 \inf_{\eta^\perp \in (\ker \W)^\perp, \, |\eta^\perp|=1} |\W \eta^\perp|_2^2 
    = \sigma_{\hat A^{-1}}^2 \hat \sigma_\W^2
\end{align*}
with the smallest non-zero singular value $\hat \sigma_\W$ of $\W$. Indeed, let $\W = U \Sigma T^\top$ be a singular value decomposition of $\W$, where the columns $u_i$ of $U \in \mathbb R^{m \times m}$ and the columns $t_i$ of $T \in \mathbb R^{\ell \times \ell}$ form orthonormal bases. Let $\eta^\perp = \sum_i \alpha_i t_i \in (\ker \W)^\perp$ satisfy $|\eta^\perp| = 1$. Then, $\sum_i \alpha_i^2 = 1$ and every $t_i$ corresponds to a non-zero singular value $\sigma_i$; otherwise, we would have $\W t_i = 0$ and $\alpha_i = 0$. Consequently, 
\begin{align*}
    |\W \eta^\perp|_2^2 = \Big| \sum_i \alpha_i U \Sigma T^\top t_i \Big|_2^2 = \Big| \sum_i \alpha_i \sigma_i u_i \Big|_2^2 = \sum_i \alpha_i^2 \sigma_i^2 \geq \hat \sigma_\W^2
\end{align*}
and the claim follows. 
\end{proof}

\begin{lemma}
\label{l:continuous_H0}
Let $v_\mathrm{inf} \geq 0$ and $K \subseteq \q \R_{> 0}^{\ell}$ be a compact subset.
Assume that either condition \ref{cond:eep-estimate-1} or condition \ref{cond:eep-estimate-2} of Theorem \ref{t:entropy_dissipation} holds.
Then, the constant $H_1 > 0$ in \cite[Eq.\ (11)]{FT18_normalized_entropy} is subject to 
\begin{align*}
    H_1 = \inf_{\vec v \in \Sigma_{v_\mathrm{inf},\hat K,M}} \frac{\sum_{r = 1}^m \Big( \sqrt{\frac{\vec v}{v_\infty}}^{\y_r} - \sqrt{\frac{\vec v}{v_\infty}}^{\y_r'} \Big)^2}{\sum_{i = 1}^\ell \Big( \sqrt{\frac{\vec v_i}{v_{\infty,i}}} - 1 \Big)^2} \geq H_0 > 0, 
\end{align*}
where $\Sigma_{v_\mathrm{inf},\hat K,M} \coleq \{ \vec v \in [v_\mathrm{inf}, \hat K]^\ell \, | \, \q \vec v = M \}$, and where $K \ni M \mapsto H_0 \in \R_{>0}$ is continuous. 
\end{lemma}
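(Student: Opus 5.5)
Write $F_M(\vec v)$ for the quotient appearing in the definition of $H_1$, with $v_\infty=v_\infty(M)$. The aim is a lower bound $H_0(M)>0$ that is continuous in $M\in K$. I would get it by showing that $\Phi(M):=\inf_{\Sigma_{v_\mathrm{inf},\hat K,M}}F_M$ is positive and lower semicontinuous on the compact set $K$. Then $H_0:=\min_K\Phi>0$ is a constant, hence continuous, lower bound. If a genuinely $M$-dependent continuous function is wanted, take the lower semicontinuous envelope construction $H_0(M)=\inf_{M'\in K}(\Phi(M')+|M-M'|)\wedge \min_K\Phi$, which is Lipschitz and satisfies $0<H_0\le\Phi$.

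\emph{Step 1: positivity for each fixed $M$.} The numerator vanishes exactly when $(\vec v/v_\infty)^{\y_r}=(\vec v/v_\infty)^{\y_r'}$ for all $r$. For the constraint set, this means $\vec v$ is an equilibrium in the stoichiometric class $M$, which follows the argument of \cite[Lemma 2.8]{FT18_normalized_entropy} using complex balance.
\begin{itemize}
\item Under \ref{cond:eep-estimate-1}, $\vec v$ is strictly positive, so Lemma \ref{lem1} forces $\vec v=v_\infty$.
\item Under \ref{cond:eep-estimate-2}, boundary equilibria are excluded, so again $\vec v=v_\infty$.
\end{itemize}
Hence $F_M$ is continuous and positive on the compact set $\Sigma_{v_\mathrm{inf},\hat K,M}\setminus B_\rho(v_\infty)$. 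Near $v_\infty$, the preceding lemma gives $\liminf F_M\ge \sigma_{\hat A^{-1}}^2\hat\sigma_\W^2>0$. Therefore $\Phi(M)>0$.

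\emph{Step 2: uniformity in $M$.} I argue by contradiction. Suppose $M_n\to M\in K$ and $\vec v_n\in\Sigma_{M_n}$ with $F_{M_n}(\vec v_n)\to c<\Phi(M)$ (or $\to0$). By compactness, $\vec v_n\to\vec v\in\Sigma_M$, since the constraint $\q\vec v=M$ is closed. Lemma \ref{l:continuous_cinfty} gives $v_\infty(M_n)\to v_\infty(M)$.
\begin{itemize}
\item If $\vec v\neq v_\infty(M)$, continuity of the quotient away from its zero set gives $F_M(\vec v)=c$. This contradicts $c<\Phi(M)$.
\item If $\vec v=v_\infty(M)$, both numerator and denominator tend to $0$. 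This $0/0$ case is the main obstacle.
\end{itemize}

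For the degenerate case, I redo the linearization of the preceding lemma uniformly. Set $\eta_n=(\vec v_n-v_{\infty,n})/v_{\infty,n}$ with $v_{\infty,n}=v_\infty(M_n)$. Taylor expansion with a remainder that is uniform for $v_{\infty,n}$ in a compact subset of $\R^\ell_{>0}$ gives numerator $=\tfrac14|\W\eta_n|^2+o(|\eta_n|^2)$ and denominator $=\tfrac14|\eta_n|^2+o(|\eta_n|^2)$. The constraint $\q\,\mathrm{diag}(v_{\infty,n})\eta_n=0$ holds because $\q\vec v_n=\q v_{\infty,n}=M_n$, and it gives $\eta_n=A_n\eta_n^\perp$. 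Here $A_n$ is as in \eqref{eq:def-A-matrix}, and it depends continuously on $v_{\infty,n}$ since $\q\,\mathrm{diag}(v_\infty)\q^\top$ stays uniformly invertible. So $|\eta_n^\perp|\ge \sigma_n|\eta_n|$ with $\sigma_n$ the smallest singular value of $\hat A_n^{-1}$ restricted to the range, which is continuous and bounded below near $M$. Hence $\liminf F_{M_n}(\vec v_n)\ge \inf_n\sigma_n^2\hat\sigma_\W^2>0$.

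This only yields positivity of $\liminf_n\Phi(M_n)$ rather than $\ge\Phi(M)$. That is still enough for the purpose: combined with the first case, it shows $\inf_K\Phi>0$ via a contradiction sequence with $c=0$. That gives the constant choice $H_0=\inf_K\Phi$, which is trivially continuous. The delicate point to check is that the Taylor remainders and the singular value bound are uniform along $v_{\infty,n}$, which is exactly where Lemma \ref{l:continuous_cinfty} enters.
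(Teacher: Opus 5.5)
Your proof is correct. Step 1 coincides with the paper's argument: for fixed $M$, the preceding lemma bounds the quotient near $v_\infty$. On the rest of the compact set $\Sigma_{v_\mathrm{inf},\hat K,M}$, the quotient is continuous and can vanish only at a complex balanced equilibrium, which \ref{cond:eep-estimate-1} or \ref{cond:eep-estimate-2} excludes.

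The difference is in the uniformity over $M$.
\begin{itemize}
\item The paper sets $H_0=\min\{\sigma_{\hat A^{-1}}^2\hat\sigma_\W^2/2,\ \inf_{\Sigma\setminus U}F\}$ and asserts its continuity in one sentence. It appeals to the continuous dependence of $F$, $A$, $\Sigma_{v_\mathrm{inf},\hat K,M}$ and $v_\infty$ on $M$, without saying how the $M$-dependent neighbourhood $U$ or the moving constraint set are controlled.
\item You run a sequential compactness argument on $\{(M,\vec v): M\in K,\ \vec v\in\Sigma_{v_\mathrm{inf},\hat K,M}\}$. You then treat explicitly the one delicate case: near-minimizers collapsing onto the moving equilibrium $v_\infty(M_n)$.
\end{itemize}

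That degenerate case is simpler than you fear. Since $\sqrt{\vec v/v_\infty}^{\y}=\prod_j(1+\eta_j)^{y_j/2}$, the quotient is a function of $\eta$ alone. So the Taylor remainders do not depend on $v_\infty$ at all, and $M$ enters only through the constraint $\q\,\mathrm{diag}(v_\infty)\eta=0$ and the box. The paper's extension $\hat A$ is arbitrary and need not vary continuously with $n$, so avoid it. It is cleaner to use $|\eta_n|\le\|A_n\|\,|\eta_n^\perp|$ with $A_n\to A$. This gives $\liminf_n F_{M_n}(\vec v_n)\ge\hat\sigma_\W^2/\sup_n\|A_n\|^2>0$.

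What your route yields is a constant $H_0=\inf_K\Phi>0$. It satisfies the lemma trivially and is exactly what the proof of Theorem \ref{t:entropy_dissipation} uses, since the paper passes to $\min_{M\in K}H_0$ there anyway. The paper's version retains an $M$-dependent constant that is never needed, at the cost of an unproved continuity claim.

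Two minor remarks.
\begin{itemize}
\item You did not establish lower semicontinuity of $\Phi$, so $\min_K\Phi$ need not be attained; write $\inf_K\Phi$ throughout.
\item Your envelope $\inf_{M'\in K}(\Phi(M')+|M-M'|)\wedge\min_K\Phi$ is identically $\inf_K\Phi$, because the first term is already $\ge\inf_K\Phi$. Drop the $\wedge$ if you want a genuinely $M$-dependent Lipschitz lower bound.
\end{itemize}
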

\begin{proof}
For fixed $M \in K$, the equilibrium $v_\infty$ and the matrix $A$ are also fixed and there exists a neighborhood $U \subset \Sigma_{v_\mathrm{inf},\hat K,M}$ of $v_\infty$ such that 
\begin{align*}
    F(\vec v, v_\infty) \coleq \frac{\sum_{r = 1}^m \Big( \sqrt{\frac{\vec v}{v_\infty}}^{\y_r} - \sqrt{\frac{\vec v}{v_\infty}}^{\y_r'} \Big)^2}{\sum_{i = 1}^\ell \Big( \sqrt{\frac{\vec v_i}{v_{\infty,i}}} - 1 \Big)^2} \geq \frac{\sigma_{\hat A^{-1}}^2 \hat \sigma_\W^2}{2}
\end{align*}
holds for all $\vec v \in U$. Moreover, $F(\vec v, v_\infty)$ is uniformly positive for all $\vec v \in \Sigma_{v_\mathrm{inf},\hat K,M} \backslash U$, which can be seen as follows. According to \cite[eq.\ (30)]{FT18_normalized_entropy}, $F(\vec v, v_\infty) = 0$ with $\vec v \in \Sigma_{v_\mathrm{inf},\hat K,M} \backslash U$ implies that $\vec v$ is a complex balanced equilibrium. But in both cases \ref{cond:eep-estimate-1} and \ref{cond:eep-estimate-2}, the set $\Sigma_{v_\mathrm{inf},\hat K,M} \backslash U$ does not contain any complex balanced equilibrium. Hence, 
\begin{align*}
    H_1 \geq H_0 \coleq \min \bigg\{ \frac{\sigma_{\hat A^{-1}}^2 \hat \sigma_\W^2}{2}, \inf_{\vec v \in \Sigma_{v_\mathrm{inf},\hat K,M} \backslash U} F(\vec v, v_\infty) \bigg\} > 0.
\end{align*}
Since $F(\vec v,v_\infty)$ and $A\in\R^{\ell\times\ell}$ defined in \eqref{eq:def-A-matrix} depend continuously on $v_\infty\in\R^\ell_{>0}$, and since $\Sigma_{v_\mathrm{inf},\hat K,M}$ and $v_\infty$ depend continuously on $M\in K$, the assertion follows. In particular, by the compactness of $K$, $H_0$ can be chosen uniformly positive for $M\in K$.
\end{proof}

\begin{proof}[Proof of Theorem \ref{t:entropy_dissipation}]
We start with establishing $L^1(\T^d)$-bounds on $v$. As in \cite{FT18_normalized_entropy}, we recall the elementary estimate $x\log(x/y) - x + y \geq x/2 - y$ for $x \geq 0$ and $y > 0$. Applied to \eqref{eq:entropy} and using the assumption $\Ent(\overline{v}|v_{\infty})\leq N$, we obtain, for all $i \in \{ 1, \dotsc, \ell\}$,
\begin{align*}
    \overline{v_i} \leq \wh{K} \coleq 2 \Big( N + \sum_{1 \leq i \leq \ell} \max_{M\in K} v_{\infty, i}(M) \Big),
\end{align*}
where the finiteness of $\wh K$ follows from Lemma \ref{l:continuous_cinfty} and the compactness of $K$.

Suppose first that \ref{cond:eep-estimate-2} holds. Then, we can employ the proof of Thm.\ 1.1 on p.\ 17 in \cite{FT18_normalized_entropy} together with the preceding lemmas in \cite{FT18_normalized_entropy} almost verbatim up to the derivation of the constant $H_1$ therein, which we have to replace by our constant $H_0$ from Lemma \ref{l:continuous_H0}. In detail, the positive constants $K_1$ and $K_2$ in \cite{FT18_normalized_entropy} are easily seen to depend continuously on $v_\infty \in \R^\ell_{>0}$, which, in connection with our Lemma \ref{l:continuous_cinfty} above, guarantees that $K \ni M \mapsto K_i(M)=K_i\in \R_{>0}$ is continuous for $i = 1,2$. From the compactness of $K$, we deduce that $\max_{M \in K} K_1$ and $\min_{M \in K} K_2$ are positive constants, which are equally admissible in \cite{FT18_normalized_entropy} and which are independent of $M \in K$. Likewise, we estimate $H_1 \geq H_0$ according to Lemma \ref{l:continuous_H0} above, and we note that $\min_{M \in K} H_0$ is a valid positive constant in the proof of Thm.\ 1.1 in \cite{FT18_normalized_entropy} since $K$ is compact. 

Now, suppose that \ref{cond:eep-estimate-1} holds. Then, the same arguments are applicable up to a minor adaptation in the proof of \cite[Lemma 2.8]{FT18_normalized_entropy}, where it is assumed that no boundary equilibria exist, i.e.\ that \ref{cond:eep-estimate-2} holds. But this hypothesis is only used to ensure that any complex balanced equilibrium $\wt{v}_\infty \in \R^\ell_{\geq 0}$ coincides with $v_\infty \in \R^\ell_{>0}$. In the case of \ref{cond:eep-estimate-1}, this conclusion trivially holds as we only consider functions $v:\T^d\to [0,\infty)^{\ell}$ satisfying $\overline v \in [v_\mathrm{inf}, \hat K]^\ell$. 
\end{proof}

\section{Global well-posedness of RDEs close to equilibria with $L^{\infty}$-noise}
\label{s:global_close_equilibria}
This section is devoted to the proof of the following result.

\begin{theorem}[Global well-posedness with data close to the equilibrium]
\label{t:small_implies_global}
Let \eqref{reaction-full} be a complex balanced chemical reaction network. 
Fix $\delta\in (1,2)$. Let $q,p\in (2,\infty)$ be such that 
$$
q\geq \frac{d(h-1)}{2}\vee \frac{d}{d-\delta}, \quad p\geq \frac{2}{2-\delta} \vee q \quad \text{ and } \quad \a_{p,\delta}=p(1-\frac{\delta}{2})-1.
$$
Fix a compact set $K\subseteq \q\R^\ell_{> 0}$.
Let $(v,\tau)$ be the unique solution to stochastic RDEs \eqref{eq:reaction_diffusion} with initial data $v_0\in L^q(\T^d;\R^\ell_{\geq 0})$ such that $\q\overline{v_0}\in K$ provided by Proposition \ref{prop:LWP}. 
Let $v_\infty\in \R_{>0}^\ell$ be the complex balanced equilibrium satisfying $\q \overline{v_0}=\q v_\infty$ (Lemma \ref{lem1}). 
Then, there exists a constant $R(q,p,\delta,K)>0$ such that the following assertion holds: 
for any $\varepsilon\in (0,1)$, there exists a constant $\eta(q,p,\delta,K,\varepsilon)>0$ such that, if for some stopping time $\tau_0$, one has 
\begin{equation}
\label{eq:small_implies_global_assumption}
\P(\tau_0<\tau,\, \|v(\tau_0)-v_\infty\|_{L^q}\leq\eta)>1- \varepsilon,
\end{equation}
then there exists a stopping time $\tau_1\in (0,\tau]$ such that $ \P(\tau_1=\infty) >1-\varepsilon$ and 
\begin{equation}
\begin{aligned}
\label{eq:small_implies_global_conclusion}
&\sup_{t\in [\tau_0,\tau_1)}\|v(t)-\veq\|_{L^q}^q
+\max_{1\leq i\leq \ell} \int_{\tau_0}^{\tau_1} \int_{\Tor^d}|v_i-\veqi|^{q-2}|\nabla v_i|^2\,\dd x\,\dd r\\
&+\|v-\veq\|_{L^{q_0}(\tau_0,\tau_1;L^{q_0})}^q + \|v-\veq\|_{L^2(\tau_0,\tau_1;L^2)}^2\leq 
R\|v(\tau_0)-v_\infty\|_{L^q}^2(1+\|v(\tau_0)-v_\infty\|_{L^q}^{q-2}) 
\end{aligned}
\end{equation}
a.s.\ on $\{\tau_0<\tau,\, \|v(\tau_0)-v_\infty\|_{L^q}\leq\eta\}$, where
\begin{equation}\label{eq:def_q0}
q_0=q \frac{d+2}{d} > q.
\end{equation}
\end{theorem}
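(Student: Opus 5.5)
The plan is to argue pathwise on the event $\O_0\coleq\{\tau_0<\tau,\ \|v(\tau_0)-v_\infty\|_{L^q}\le\eta\}$, with $w\coleq v-v_\infty$, and to run a bootstrap. The key observation is that, because $\nabla\cdot\sigma_{k,\alpha}=0$ and the It\^o correction $\nu\Delta$ exactly balances the quadratic variation of the transport noise (see \eqref{eq:Ito_stratonovich_change}--\eqref{eq:ellipticity_noise}), It\^o's formula applied to $\int_{\T^d}\phi(w_i)\,\dd x$ produces \emph{no} martingale term. The same holds for any $C^2$ function $\phi$ of a translate $v_i-c$. This is the pathwise identity recalled in the introduction, justified at the level of regularity of Lemma \ref{lem:Lebesgue_initial_data_continuity} by approximation. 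Hence, after $\tau_0$, the quantities $\|w_i(t)\|_{L^q}^q$ and $\int_{\T^d} w_i^2/v_{\infty,i}\,\dd x$ obey exactly the deterministic energy identities with diffusion $\nu_i$ and reaction $f_i(v)$, uniformly in $\nu$ and $\theta$. This explains why $R$ and $\eta$ can be taken independent of the noise.

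\textbf{Step 1: weighted $L^2$ spectral gap.} Write $f(v_\infty+w)=Lw+G(w)$ with $L=Df(v_\infty)$ and $|G(w)|\lesssim |w|^2+|w|^h$; by Lemma \ref{l:continuous_cinfty} the constants are uniform for $\q\overline{v_0}\in K$. Following \cite{tang2018close}, complex balance makes $L$ symmetric-negative with respect to the weight $\mathrm{diag}(v_\infty)^{-1}$. It is the linearisation of the entropy dissipation $\D$ of \eqref{eq:dissipation}, since $\Psi(x,y)\approx\frac12(x-y)^2/x$ near the diagonal. Together with the conservation law $\q\overline{w}=0$, Poincar\'e's inequality and the positivity bound $\hat\sigma_\W>0$ from the lemma preceding Lemma \ref{l:continuous_H0}, this gives
\[
\sum_i\frac{\nu_i}{v_{\infty,i}}\|\nabla w_i\|_{L^2}^2-\sum_i\int_{\T^d}\frac{w_i(Lw)_i}{v_{\infty,i}}\,\dd x\ \ge\ \lambda_0\|w\|_{L^2}^2 ,
\]
with $\lambda_0=\lambda_0(K)>0$.

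\textbf{Step 2: $L^q$ Moser-type estimate.} Test with $|w_i|^{q-2}w_i$. The diffusion gives $c_q\nu_i\|\nabla |w_i|^{q/2}\|_{L^2}^2$. The linear reaction term is bounded by $C\|w\|_{L^q}^q$, and this lower-order term is absorbed by interpolation between the dissipation, the $L^2$ control from Step 1 and Gagliardo--Nirenberg. The nonlinear terms $\int |w|^{q-1}(|w|^2+|w|^h)$ are estimated via Gagliardo--Nirenberg applied to $|w_i|^{q/2}$. The condition $q\ge \frac{d(h-1)}2$ makes the critical exponent work, in the form
\[
\int|w|^{q-1+h}\,\dd x\ \lesssim\ \|w\|_{L^q}^{h-1}\,\|\nabla|w|^{q/2}\|_{L^2}^2+\text{lower order}.
\]
The quadratic term is treated the same way, and this is the only place $q\ge\frac{d(h-1)}{2}$ enters. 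Thus, while $\|w\|_{L^q}\le 2\eta_\ast$ with $\eta_\ast$ small, the nonlinear terms are absorbed into the dissipation.

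\textbf{Step 3: combine and close the bootstrap.} Combine the weighted $L^2$ identity with a small multiple of the $L^q$ one to obtain, on $[\tau_0,\tau_1)$,
\[
\frac{\dd}{\dd t}\big(\|w\|_{L^2_{v_\infty}}^2+\epsilon\|w\|_{L^q}^q\big)+c\big(\|w\|_{L^2}^2+\|\nabla|w|^{q/2}\|_{L^2}^2\big)\le0 ,
\]
where $\tau_1\coleq\inf\{t\ge\tau_0:\|w(t)\|_{L^q}\ge 2\eta_\ast\}\wedge\tau$ on $\O_0$ and $\tau_1\coleq\tau_0$ otherwise. Integrating gives the bound \eqref{eq:small_implies_global_conclusion} with right-hand side $R\|w(\tau_0)\|_{L^q}^2(1+\|w(\tau_0)\|_{L^q}^{q-2})$, using $\|\cdot\|_{L^2}\le\|\cdot\|_{L^q}$. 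The $L^{q_0}(L^{q_0})$ term with $q_0=q\frac{d+2}{d}$ follows from the parabolic embedding $L^\infty_tL^2_x\cap L^2_tH^1_x\embed L^{2(d+2)/d}_{t,x}$ applied to $|w|^{q/2}$. Choosing $\eta$ with $R\eta^2(1+\eta^{q-2})<\eta_\ast^q$ then prevents $\|w\|_{L^q}$ from reaching $2\eta_\ast$, so $\tau_1=\tau$ on $\O_0$. Finally, uniform $L^\infty_tL^q_x$ bounds with $q\ge \frac{d(h-1)}{2}$ rule out blow-up via the blow-up criteria accompanying Proposition \ref{prop:LWP} from \cite{AV23_RDE_local}, giving $\tau=\infty$ on $\O_0$ and hence $\P(\tau_1=\infty)\ge\P(\O_0)>1-\varepsilon$.

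The main obstacle is the critical case $q=\frac{d(h-1)}{2}$ in Step 2 together with the blow-up criterion. At criticality, a bound on $\sup_t\|w\|_{L^q}$ alone may not suffice, which is why the additional $L^{q_0}(L^{q_0})$ control, a subcritical space-time norm, is recorded. I would first try the critical blow-up criterion of \cite{AV23_RDE_local} in terms of the $L^{q_0}_{t,x}$-norm. A secondary technical point is the rigorous justification of the pathwise It\^o identities at the regularity of Lemma \ref{lem:Lebesgue_initial_data_continuity}, which I would handle by the same approximation used there.
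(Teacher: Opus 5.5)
Your proposal is correct and uses the same ingredients as the paper. These are the martingale-free pathwise It\^o identities for $\sum_i\|w_i\|_{L^2}^2/v_{\infty,i}$ and $\|w_i\|_{L^q}^q$, the weighted spectral gap of Lemma \ref{l:spectral_gap} (which you sketch following \cite{tang2018close}), the $L^q$ identity, and the blow-up criterion of \cite{AV23_RDE_local}. You leave the critical case $q=\frac{d(h-1)}{2}$ at the same level of detail as the paper, which refers to Step 2 of \cite[Theorem 3.2]{AV24_dissipative} and relies on exactly the gradient and $L^{q_0}_{t,x}$ bounds you record.

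The genuine difference is how the bootstrap is closed.
\begin{itemize}
\item \emph{Your route.} You work with the pointwise-in-time norm $\|w(t)\|_{L^q}$. While $\|w(t)\|_{L^q}\le 2\eta_*$, H\"older and Sobolev (or Gagliardo--Nirenberg if $d=2$) give $\int|w|^{q+h-1}\,\dd x\lesssim\|w\|_{L^q}^{h-1}\big(\|\nabla|w|^{q/2}\|_{L^2}^2+\|w\|_{L^q}^q\big)$. Every superlinear term is then absorbed and $\Phi=\frac12\sum_i\|w_i\|_{L^2}^2/v_{\infty,i}+\epsilon\|w\|_{L^q}^q$ is nonincreasing. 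Exit from the ball is excluded by continuity of $t\mapsto\|w(t)\|_{L^q}$ (Lemma \ref{lem:Lebesgue_initial_data_continuity}), and the $L^{q_0}(L^{q_0})$ bound is recovered a posteriori from the parabolic embedding.
\item \emph{The paper's route.} It never uses pointwise smallness. Young's inequality funnels all superlinear terms into $\int\!\int|w|^{q_0}$. The $t$-uniform interpolation of Lemma \ref{l:nonlinear_estimate_easy} then combines Lemmas \ref{l:L2_estimates_small_implies} and \ref{l:Lq_estimate_small} into the scalar inequality $\mathscr{X}_t\le C\mathscr{E}_0+C\mathscr{X}_t^{q_0/q}$. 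Here $\mathscr{X}_t=\|w\|_{L^{q_0}(\tau_0,t;L^{q_0})}^q$ is continuous, nondecreasing and vanishes at $\tau_0$, and a connectedness argument closes the estimate.
\end{itemize}

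The paper's argument needs only time-integrated identities and a single smallness threshold, and makes the $L^{q_0}$ norm the natural bootstrap quantity. Yours is the classical small-data argument and yields more. Since $\epsilon\|w\|_{L^q}^q\lesssim\epsilon\|\nabla|w|^{q/2}\|_{L^2}^2+\epsilon\eta_*^{q-2}\|w\|_{L^2}^2$, the dissipation dominates $\Phi$, which gives exponential decay of $\Phi$ on the good event.

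One point you leave implicit is the order in which constants are chosen. The superlinear terms $\int(|w|^3+|w|^{h+1})$ in the weighted $L^2$ identity carry an $O(1)$ coefficient. The gradient dissipation of $|w|^{q/2}$ enters only with weight $\epsilon$. So you must fix $\epsilon$ first and choose $\eta_*$ afterwards, for instance by interpolating against $\int|w|^{q_0}\lesssim\|w\|_{L^q}^{2q/d}\big(\|\nabla|w|^{q/2}\|_{L^2}^2+\|w\|_{L^q}^q\big)$.
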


Note that if \eqref{eq:small_implies_global_assumption} holds, then in particular, 
$$
\P(\tau=\infty)>1-\varepsilon.
$$ 
Let us stress that in Theorem \ref{t:small_implies_global} we do not have to exclude boundary equilibria.

\smallskip

It will be of central importance that the smallness constant $\eta$ is independent of the normalized radially symmetric coefficient $\theta\in \ell^2$. This in particular implies that we do not require any smoothness of the noise beyond its boundedness, and consequently, we assume that the noise in \eqref{eq:reaction_diffusion} is merely bounded. 
Hence, our proof of Theorem \ref{t:small_implies_global} is in the spirit of Moser-type iteration as in \cite[Section 5]{A22} (see also \cite{DG15_boundedness}) and the spectral gap methods as in \cite{tang2018close}. The latter allows one to exploit the closeness to the equilibria $v_\infty$ to obtain exponential stability of the linearized system. We point out that Theorem \ref{t:small_implies_global} not only extends the results in \cite{tang2018close} to the stochastic setting, but also removes unnecessary conditions on the growth of the nonlinearity.
In particular, $h$ in Theorem \ref{t:small_implies_global} can be arbitrarily large.

\smallskip

The proof of Theorem \ref{t:small_implies_global} is given in Subsection \ref{ss:proof_small_implies_global} below. To this end, we collect some preliminary results in Subsection \ref{ss:preliminaries_global_small}.

\subsection{Intermediate estimates}
\label{ss:preliminaries_global_small}
Clearly, $w=v -\veq$ solves
\begin{equation}
\label{eq:reaction_diffusion_w}
\left\{
\begin{aligned}
\dd w_i -\nu_i \Delta w_i \,\dd t &= f_i(w+v_\infty)\, \dd t+\sqrt{c_d \nu}\sum_{k,\alpha} \theta_k (\sigma_{k,\alpha} \cdot\nabla) w_i\circ \dd W^{k,\alpha}_{t}   & \text{ on }&\Tor^d,\\
w_{i}(0)&=w_{0,i} &\text{ on }&\Tor^d,
\end{aligned}
\right.
\end{equation}
where $w_0 \coleq v_{0}-\veq$.
In the following, we set $q\geq \frac{d(h-1)}{2}\vee 2$ and we define $q_0$ as in \eqref{eq:def_q0}.

\subsubsection{$L^2$-estimates via spectral gap}
This subsection aims to prove the following $L^2$-estimates.
\begin{lemma}[Pathwise $L^2$-estimates]
\label{l:L2_estimates_small_implies}
Let $q\geq \frac{d(h-1)}{2}\vee 2$. Then, there exists a constant $C>0$ depending on $d$ and the parameters in the reaction network \ref{reaction-full} for which the following assertion holds for all $v_0\in L^q(\T^d;\R^\ell_{\geq 0})$. Let $(v,\tau)$ be the solution to \eqref{eq:reaction_diffusion}. Then, for all stopping times $\tau_0$ such that $0\leq \tau_0<\tau$ a.s., we have, a.s.\ for all $t<\tau$, 
\begin{align*}
\sup_{r\in [\tau_0,t)}\|w(r)\|_{L^2}^2  
+\int_{\tau_0}^t \| w\|_{L^2}^2\,\dd r 
&\leq C\|w(\tau_0)\|_{L^2}^2 +C\int_{\tau_0}^t\int_{\T^d} |w|^{q_0}\,\dd x \,\dd r.
\end{align*} 
\end{lemma}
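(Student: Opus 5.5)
Our plan is a pathwise energy estimate for $w=v-v_\infty$ in $L^2$, combined with a spectral-gap inequality for the linearized reaction--diffusion operator at $v_\infty$. This follows \cite{tang2018close}.

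\emph{Step 1: It\^o's formula.} Apply It\^o's formula to $\|w_i\|_{L^2}^2$, justified via Lemma \ref{lem:Lebesgue_initial_data_continuity} and a standard localization. Since $\nabla\cdot\sigma_{k,\alpha}=0$ and the It\^o correction $\nu\Delta$ in \eqref{eq:Ito_stratonovich_change} exactly cancels the quadratic variation of the transport noise (by \eqref{eq:ellipticity_noise}), the stochastic integral $\int w_i(\sigma_{k,\alpha}\cdot\nabla)w_i\,\dd x$ vanishes. The noise therefore disappears completely, as in the pathwise $L^q$ identity from the introduction. We obtain, a.s.\ for $t\in[\tau_0,\tau)$,
\[
\tfrac12\|w(t)\|_{L^2}^2+\sum_i\nu_i\int_{\tau_0}^t\|\nabla w_i\|_{L^2}^2\,\dd r=\tfrac12\|w(\tau_0)\|_{L^2}^2+\int_{\tau_0}^t\int_{\T^d} w\cdot f(w+v_\infty)\,\dd x\,\dd r .
\]
Because the argument is pathwise, $C$ is independent of $\theta$ and $\nu$.

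\emph{Step 2: Linearization.} Write $f(w+v_\infty)=Lw+g(w)$, where $L=Df(v_\infty)$ (using $f(v_\infty)=0$). For $|w|\le 1$ we have $|g(w)|\lesssim|w|^2$. For $|w|\ge1$ we use $|g(w)|\lesssim |w|+|w|^h$. This yields $|w\cdot g(w)|\lesssim |w|^3\wedge\big(|w|^2+|w|^{h+1}\big)$. Since $q_0=q(d+2)/d>2$ and $q_0\ge (d+2)(h-1)/2$, we want to absorb all of this into $\eta|w|^2+C_\eta|w|^{q_0}$. The cubic part $|w|^3\le \eta|w|^2+C_\eta|w|^{q_0}$ holds whenever $2<3\le q_0$, or else by interpolation between $2$ and $q_0$. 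For large $|w|$ we need $h+1\le q_0$. This is where we expect a difficulty: if $h+1>q_0$, we will instead use that $v\ge0$ (by \eqref{eq:LWP3}) together with the mass-action structure. Near $v_\infty$, the entropic sign structure $\sum_i \log(v_i/v_{\infty,i}) f_i(v)\le0$ suggests weighting the energy by $\mathrm{diag}(1/v_{\infty,i})$. With that weighting, $w\cdot \mathrm{diag}(v_\infty)^{-1}f$ is controlled up to a quadratic error. We therefore plan to work with the weighted norm $\sum_i\|w_i\|_{L^2}^2/v_{\infty,i}$ from the start; this weighted norm is equivalent to the $L^2$ norm uniformly in $M\in K$ by Lemma \ref{l:continuous_cinfty}, or merely for fixed $v_\infty$ as the statement permits.

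\emph{Step 3: Spectral gap, the key step.} We will show
\[
\sum_i\nu_i\|\nabla w_i\|_{L^2}^2-\Big\langle \mathrm{diag}(v_\infty)^{-1}w,Lw\Big\rangle\ge \lambda_0\|w\|_{L^2}^2\qquad\text{whenever }\q\,\overline{w}=0.
\]
The constraint $\q\overline w=0$ holds pathwise, since $\q\overline v$ is conserved. To prove the inequality, split $w=(w-\overline w)+\overline w$. The gradient term controls the oscillating part by Poincar\'e's inequality. For the mean part we need $-\langle \mathrm{diag}(v_\infty)^{-1}x,Lx\rangle\ge c|x|^2$ on $\ker\q$. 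This is the linearized complex-balanced dissipation, namely the quadratic form $\sum_r k_rv_\infty^{\y_r}\big((\y_r-\y_r')\cdot(x/v_\infty)\big)^2$, modulo symmetrization. It is coercive on $\ker\q=\mathrm{ran}\,\W^\top$ by the singular-value argument of the lemma preceding Lemma \ref{l:continuous_H0}. The cross terms between the mean and the oscillation are handled by Young's inequality, using a large multiple of the Poincar\'e term. This symmetrization and coercivity step is the main obstacle.

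\emph{Step 4: Conclusion.} Inserting Steps 2 and 3 into the identity from Step 1 and choosing $\eta<\lambda_0/2$ gives
\[
\tfrac12\|w(t)\|^2+\tfrac{\lambda_0}{2}\int_{\tau_0}^t\|w\|^2\le \tfrac12\|w(\tau_0)\|^2+C\int_{\tau_0}^t\!\!\int_{\T^d}|w|^{q_0}.
\]
Taking the supremum over $r\in[\tau_0,t)$ then yields the claim.
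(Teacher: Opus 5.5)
Your plan is the paper's proof: It\^o's formula for the weighted energy $\frac12\sum_i\|w_i\|_{L^2}^2/v_{\infty,i}$, with the transport noise dropping out, followed by the spectral gap of Lemma \ref{l:spectral_gap} and Young's inequality on the remainder. The paper cites the spectral gap from \cite{tang2018close}, and your Step 3 re-derives it correctly; note that the mean/oscillation cross terms there vanish identically, since $\int_{\T^d}(w-\overline{w})\,\dd x=0$, so no Young's inequality is needed for them.

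The difficulty you anticipate in Step 2 never arises, so the fallback to positivity and entropic structure is unnecessary. The hypothesis $q\geq\frac{d(h-1)}{2}\vee 2$ gives $q_0=q\frac{d+2}{d}\geq\max\{2+\frac4d,\frac{(d+2)(h-1)}{2}\}$. This maximum is always at least $h+1$: the second term dominates $h+1$ exactly when $h\geq 1+\frac4d$, and otherwise the first term does.
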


In order to do this, we utilize the arguments in \cite{tang2018close}. Note that, by writing 
$$
f_i(w+\veq)=f_i(w+\veq)-f_i(\veq) \eqcol \nabla f_i (\veq) \cdot w+ R_i(w)
$$
where $R_i(w) \coleq f_i(w+v_\infty) - \nabla f_i(v_\infty)\cdot w$, we have for some $\beta\in (0,1)$, see \cite[Lemma 4.4]{tang2018close},
\begin{equation*}
    |R_i(w)|\leq C(|w|^{1+\beta}+|w|^{h}) \ \text{ for all } w \in \R^\ell.
\end{equation*}

We have the following spectral gap estimate, which is taken from \cite[Lemma 3.3]{tang2018close}.

\begin{lemma}[Spectral gap for the linearized operator around positive equilibrium]
\label{l:spectral_gap}
Assume that the chemical reaction network \eqref{reaction-full} is complex balanced. Fix a compact set $K\subseteq \q \R_{>0}^\ell$. Then, there exists a constant $\lambdalin>0$, which depends continuously on $\nu_i$, the stoichiometric coefficients and the compact set $K$ such that, for any equilibria $v_\infty\in \R^\ell_{>0}$ and $z\in H^1(\T^d;\R^\ell)$ satisfying $\q v_\infty \in K$ and $\q \overline{z}=0$, we have 
\begin{equation}\label{spectral-gap-inequality}
\sum_{1\leq i\leq \ell} \Big( \nu_i \frac{\|\nabla z_i\|_{L^2}^2}{\veqi} - \int_{\T^d} (\nabla f_i(\veq)\cdot z)\frac{ z_i}{\veqi}\,\dd x \Big)\geq \lambdalin\sum_{1\leq i\leq \ell} \frac{\|z_i\|_{L^2}^2}{\veqi}.
\end{equation}
\end{lemma}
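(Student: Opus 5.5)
The plan is to reduce \eqref{spectral-gap-inequality} to a finite-dimensional coercivity statement for the reaction part, and then combine it with Poincar\'e's inequality for the diffusion part.

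\emph{Step 1: symmetric form of the reaction quadratic form.} Set $\xi_i \coleq z_i/v_{\infty,i}$. Differentiating \eqref{b-1} gives $\nabla f(v_\infty) z = \sum_r k_r v_\infty^{\y_r}(\y_r\cdot\xi)(\y_r'-\y_r)$. Hence
$$
-\sum_i (\nabla f_i(v_\infty)\cdot z)\,\xi_i = \sum_r k_r v_\infty^{\y_r}\big[(\y_r\cdot\xi)^2-(\y_r\cdot\xi)(\y_r'\cdot\xi)\big].
$$
Complex balance (Definition \ref{def:complex_boundary_equilibria}) says that, for each complex $\y$, the total outflow weight $\sum_{\{r:\y_r=\y\}}k_rv_\infty^{\y_r}$ equals the total inflow weight $\sum_{\{r:\y_r'=\y\}}k_rv_\infty^{\y_r}$. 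Applying this to the function $\y\mapsto(\y\cdot\xi)^2$ yields $\sum_r k_r v_\infty^{\y_r}[(\y_r'\cdot\xi)^2-(\y_r\cdot\xi)^2]=0$. Adding half of this identity symmetrizes the expression above, so that
$$
-\sum_i \int_{\T^d}(\nabla f_i(v_\infty)\cdot z)\,\xi_i\,\dd x=\frac12\sum_r k_r v_\infty^{\y_r}\int_{\T^d}\big((\y_r'-\y_r)\cdot\xi\big)^2\,\dd x \;\geq\; c_1\,\|\W\xi\|_{L^2}^2,
$$
where $c_1=\frac12\min_r k_rv_\infty^{\y_r}$. In particular, the left-hand side of \eqref{spectral-gap-inequality} is a sum of two nonnegative terms.

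\emph{Step 2: the constant mode.} Since $\q\overline z=0$, we have $\overline z\in\ker\q=\mathrm{ran}\,\W^\top$ by \eqref{eq:ranWT=kerQ}. Suppose $\W\overline\xi=0$, with $\overline\xi=\overline z/v_\infty$ taken componentwise. Then $\overline\xi\in\ker\W$, so $\overline\xi=\q^\top\gamma$ for some $\gamma$, and therefore $\overline z=\mathrm{diag}(v_\infty)\q^\top\gamma$. Then $0=\q\overline z=\q\,\mathrm{diag}(v_\infty)\q^\top\gamma$, and invertibility of $\q\,\mathrm{diag}(v_\infty)\q^\top$ (shown in the proof of the lemma preceding Lemma \ref{l:continuous_H0}) gives $\gamma=0$. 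Thus $\W\overline\xi\neq0$ whenever $\overline z\neq 0$. Since the relevant set is finite-dimensional, this yields $|\W\overline\xi|^2\geq c_2|\overline\xi|^2$ for all $\overline z\in\ker\q$. The constant $c_2=c_2(v_\infty)>0$ is the minimum of a continuous function of $v_\infty$ over a unit sphere, so it depends continuously on $v_\infty$.

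\emph{Step 3: combining the two terms.} Write $\xi=\overline\xi+(\xi-\overline\xi)$. The diffusion term, via Poincar\'e's inequality, gives
$$
\sum_i\nu_i\frac{\|\nabla z_i\|_{L^2}^2}{v_{\infty,i}}\geq c_3\,\|\xi-\overline\xi\|_{L^2}^2,
$$
with $c_3=4\pi^2\min_i\nu_iv_{\infty,i}$. For the reaction term, the elementary inequality $(a+b)^2\geq\frac12a^2-b^2$ gives
$$
\|\W\xi\|_{L^2}^2\geq\tfrac12|\W\overline\xi|^2-|\W|^2\|\xi-\overline\xi\|_{L^2}^2.
$$
The bad term $-|\W|^2\|\xi-\overline\xi\|^2$ is what makes a direct bound fail. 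I would handle it by taking the convex combination (diffusion) $+\,\epsilon\cdot$(reaction) with $\epsilon=\min\{1,\,c_3/(2c_1|\W|^2)\}$, which is legitimate because both terms are nonnegative by Step 1. This choice keeps at least $\frac{c_3}{2}\|\xi-\overline\xi\|^2$ from the diffusion part and gains $\frac{\epsilon c_1c_2}{2}|\overline\xi|^2$ from the constant mode. Using $\|\xi\|^2\le 2|\overline\xi|^2+2\|\xi-\overline\xi\|^2$ (in fact equality without the factor $2$, by orthogonality) and $\sum_i\|z_i\|^2/v_{\infty,i}\leq\max_iv_{\infty,i}\,\|\xi\|^2$, we obtain \eqref{spectral-gap-inequality} with an explicit $\lambdalin(v_\infty)>0$.

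\emph{Step 4: uniformity over $K$.} All constants $c_1,c_2,c_3,\min_iv_{\infty,i},\max_iv_{\infty,i}$ depend continuously on $v_\infty\in\R^\ell_{>0}$. By Lemma \ref{l:continuous_cinfty}, $v_\infty=v_\infty(M)$ is continuous in $M$, so $v_\infty(K)$ is a compact subset of $\R^\ell_{>0}$. Taking the minimum over $M\in K$ therefore yields a single $\lambdalin>0$, which also depends continuously on the $\nu_i$ and the stoichiometric coefficients. The main obstacle is Step 1, namely correctly using complex (rather than detailed) balance to symmetrize the linearized reaction form; Steps 2--4 are linear algebra plus compactness.
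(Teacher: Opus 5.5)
Your proof is correct, but it follows a different route from the paper. The paper does not prove the fixed-equilibrium inequality itself. It quotes it from \cite[Lemma 3.3]{tang2018close} and asserts that the constant depends continuously on $v_\infty$ ``from the proof'' of that lemma. It then obtains uniformity over $K$ exactly as in your Step 4, via Lemma \ref{l:continuous_cinfty} and compactness.

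Your Steps 1--3 replace the citation with a self-contained argument. Complex balance enters once: the identity $\sum_r k_r v_\infty^{\y_r}[g(\y_r')-g(\y_r)]=0$, valid for any function $g$ of the complex, symmetrizes the linearized reaction form. The conservation laws also enter once: $\W$ is injective on $\mathrm{diag}(v_\infty)^{-1}\ker\q$, by the same invertibility of $\q\,\mathrm{diag}(v_\infty)\q^\top$ used in the proof of the lemma preceding Lemma \ref{l:continuous_H0}.

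What this buys is an explicit $\lambdalin(v_\infty)$ built from $c_1,c_2,c_3$ and $\max_i v_{\infty,i}$. The continuity in $v_\infty$, which the paper takes from the cited proof, thereby becomes checkable. For $c_2$, write it as the minimum over the fixed unit sphere of $\ker\q$ of $|\W D^{-1}\zeta|^2/|D^{-1}\zeta|^2$ with $D=\mathrm{diag}(v_\infty)$; this function is jointly continuous in $(\zeta,v_\infty)$.

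Two small points:
\begin{itemize}
\item Step 1 requires $v_\infty$ to be complex balanced. Justify this by the Horn--Jackson fact recalled before Lemma \ref{lem1}: every positive equilibrium of a complex balanced network is complex balanced. This matters because the lemma is stated for arbitrary positive equilibria.
\item The ``bad term'' in Step 3 is illusory. Since $\W\overline{\xi}$ is constant in $x$ and $\W(\xi-\overline{\xi})$ has zero mean on $\T^d$, you have $\|\W\xi\|_{L^2}^2=|\W\overline{\xi}|^2+\|\W(\xi-\overline{\xi})\|_{L^2}^2$. So the left-hand side is directly at least $c_3\|\xi-\overline{\xi}\|_{L^2}^2+c_1c_2|\overline{\xi}|^2$, and the $\epsilon$-weighting is unnecessary, though harmless.
\end{itemize}
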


\begin{proof}
The existence of a constant $\lambdalin>0$ such that the spectral inequality \eqref{spectral-gap-inequality} holds was given in \cite[Lemma 3.3]{tang2018close}, where the continuous dependence of $\lambdalin$ on $\nu_i$, the stoichiometric coefficients, and $v_\infty$ follows from the proof. Finally, the uniformity of $\lambdalin$ with respect to equilibria $v_\infty\in\R^\ell_{>0}$ such that $\q v_\infty\in K$ follows from the continuity of $M\mapsto v_\infty(M)$, proved in Lemma \ref{l:continuous_cinfty}, and the compactness of $K$.
\end{proof}

\begin{proof}[Proof of Lemma \ref{l:L2_estimates_small_implies}]
From Lemma \ref{lem:Lebesgue_initial_data_continuity}, we can apply It\^o's formula to the normalized energy functional 
$$
w\mapsto\frac{1}{2}\sum_{1\leq i\leq \ell} \frac{\|w_i\|_{L^2}^2}{\veqi}.
$$
We obtain, a.s.\ for all $t\in (\tau_0,\tau)$,
\begin{align*}\frac{1}{2}
\sum_{1\leq i\leq \ell }\frac{\|w_i(t)\|_{L^2}^2}{\veqi} 
&=\frac{1}{2} \sum_{1\leq i\leq \ell }\frac{\|w_i(\tau_0)\|_{L^2}^2}{\veqi} 
-\sum_{1\leq i\leq \ell}
\int_{\tau_0}^t\int_{\T^d}  \Big(\nu_i \frac{|\nabla w_i|^2}{\veqi}
-[\nabla f_i(\veq)\cdot w] \frac{w_i}{\veqi} \Big)\,\dd x\,\dd r\\
&+\sum_{1\leq i\leq \ell} \int_{\tau_0}^t\int_{\T^d} R_i(w) \frac{w_i}{\veqi}\,\dd x\,\dd r \\
&\leq\frac{1}{2} \sum_{1\leq i\leq \ell }\frac{\|w_i(\tau_0)\|_{L^2}^2}{\veqi} -\frac{\lambda_{{\rm lin}}}{2} \sum_{1\leq i\leq \ell }\int_{\tau_0}^t\frac{\|w_i(r)\|_{L^2}^2}{\veqi}\, \dd r 
+C\int_{\tau_0}^t\int_{\T^d} |w|^{q_0}\,\dd x \,\dd r, 
\end{align*}
where the last inequality follows from Lemma \ref{l:spectral_gap} and Young's inequality
$$
|R_i(w)w_i|\leq C(|w|^{2+\beta}+ |w|^{h+1})\leq \frac{\lambda_{{\rm lin}}}{2}\sum_{1\leq i\leq \ell}\frac{|w_i|^2}{\veqi} + C|w|^{q_0}
$$
as $q_0 = q(1+2/d) \ge h + 1$ and $q\geq 2$. The conclusion of Lemma \ref{l:L2_estimates_small_implies} follows as $\veqi>0$ for all $i\in \{1,\dots,\ell\}$ and Lemma \ref{l:continuous_cinfty}.
\end{proof}

\subsubsection{$L^q$-estimates}
In this subsection, we prove the following result.

\begin{lemma}[Pathwise $L^q$-estimate]
\label{l:Lq_estimate_small}
Let $q\geq \frac{d(h-1)}{2}\vee 2$ and $K\subseteq \q \R^\ell_{>0}$. Then, there exists a constant $C_q>0$ depending on $q$, $d$, $K$ and the parameters in the reaction network \ref{reaction-full} for which the following assertion holds for all $v_0\in L^q(\T^d;\R^\ell_{\geq 0})$. Let $(v,\tau)$ be the solution to \eqref{eq:reaction_diffusion}. Then, for all stopping times $\tau_0$ such that $0\leq \tau_0<\tau$ a.s., we have, 
a.s.\ for all $t<\tau$, 
\begin{equation*}
\begin{aligned}
\sup_{r\in [\tau_0,t)}\|w(r)\|_{L^q}^q 
&+\max_{1\leq i\leq  \ell}\int_{\tau_0}^t \int_{\T^d} |w_i|^{q-2}|\nabla w_i|^2\,\dd x \,\dd r +
\|w\|_{L^{q_0}(\tau_0,t;L^{q_0})}^q\\
&\leq C_q \|w(\tau_0)\|_{L^q}^q+ \|w\|_{L^{q_0}(\tau_0,t;L^2)}^2+ C_q \int_{\tau_0}^t \int_{\T^d} (|w|^2+|w|^{q_0})\,\dd x \,\dd r .
\end{aligned}
\end{equation*}
\end{lemma}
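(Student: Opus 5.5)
Apply It\^o's formula to the functional $w_i\mapsto \|w_i\|_{L^q}^q$ for each $i$. This is justified by the regularity in Lemma \ref{lem:Lebesgue_initial_data_continuity}, using that $w_i=v_i-\veqi$ differs from $v_i$ by a constant.

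Because the vector fields $\sigma_{k,\alpha}$ are divergence free, the transport terms cancel in the usual way. The Itô correction $\nu\Delta w_i$ exactly balances the quadratic variation term, by \eqref{eq:ellipticity_noise}. The stochastic integral $\int |w_i|^{q-2}w_i(\sigma_{k,\alpha}\cdot\nabla)w_i\,\dd x=\frac1q\int \sigma_{k,\alpha}\cdot\nabla |w_i|^q\,\dd x$ vanishes pathwise. We therefore obtain, a.s.\ for $t\in[\tau_0,\tau)$, the identity
\begin{equation*}
\|w_i(t)\|_{L^q}^q+q(q-1)\nu_i\int_{\tau_0}^t\!\!\int_{\T^d}|w_i|^{q-2}|\nabla w_i|^2\,\dd x\,\dd r=\|w_i(\tau_0)\|_{L^q}^q+q\int_{\tau_0}^t\!\!\int_{\T^d}|w_i|^{q-2}w_i f_i(w+\veq)\,\dd x\,\dd r,
\end{equation*}
which is the analogue of the identity stated in the introduction. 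The key point is that this is purely pathwise, so only the reaction term remains.

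Next we bound the reaction term. Since $f_i(\veq)=0$, we have $|f_i(w+\veq)|\le C(|w|+|w|^h)$, with $C$ uniform over $\veq$ with $\q\veq\in K$ by Lemma \ref{l:continuous_cinfty}. Hence
\begin{equation*}
|w_i|^{q-1}|f_i(w+\veq)|\le C(|w|^q+|w|^{q-1+h}).
\end{equation*}
Since $q_0=q(1+2/d)\ge q-1+h$ (this is equivalent to $2q/d\ge h-1$, i.e.\ $q\ge d(h-1)/2$) and $q\ge 2$, interpolation gives $|w|^{q}+|w|^{q-1+h}\lesssim |w|^2+|w|^{q_0}$. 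This produces the term $C_q\int\int(|w|^2+|w|^{q_0})$ on the right-hand side. Summing over $i$ and taking the supremum in time controls the first two terms on the left.

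It remains to produce $\|w\|^q_{L^{q_0}(\tau_0,t;L^{q_0})}$ on the left. This is the parabolic Gagliardo--Nirenberg embedding applied to $u_i=|w_i|^{q/2}$. We have $\|\nabla u_i\|_{L^2}^2\eqsim\int|w_i|^{q-2}|\nabla w_i|^2$ and $\sup_r\|u_i\|_{L^2}^2=\sup_r\|w_i\|^q_{L^q}$. The standard estimate
\begin{equation*}
\|u\|_{L^{2(d+2)/d}_{t,x}}^{2}\lesssim \sup_r\|u\|_{L^2}^{2}+\|\nabla u\|_{L^2_{t,x}}^2+(\text{mean contribution})
\end{equation*}
on the torus gives $\|w_i\|^q_{L^{q_0}_{t,x}}$. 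On $\T^d$ the Sobolev inequality needs a zero-order term. We control it via $\|u_i\|_{L^1}=\|w_i\|_{L^{q/2}}^{q/2}$, interpolated between $L^2$ and the dissipation. Where it cannot be absorbed, it is bounded by the term $\|w\|^2_{L^{q_0}(\tau_0,t;L^2)}$ appearing on the right. I expect this low-frequency/mean bookkeeping to be the main obstacle: getting exactly the stated form, with coefficient $1$ in front of $\|w\|_{L^{q_0}(L^2)}^2$, requires choosing the Gagliardo--Nirenberg splitting carefully, $\|u\|_{L^{2(d+2)/d}}\lesssim\|\nabla u\|_{L^2}^{\theta}\|u\|_{L^2}^{1-\theta}+\|u\|_{L^1}$. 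The remaining steps are routine.
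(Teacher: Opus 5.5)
Your proposal is correct and follows the paper's proof essentially step by step. The pathwise It\^o identity for $\|w_i\|_{L^q}^q$, combined with $|f_i(w+v_\infty)|\le C(|w|+|w|^h)$ and $q-1+h\le q_0$, gives the first two terms on the left (the paper's Step 1). The $L^{q_0}$-term comes from the parabolic embedding applied to $|w_i|^{q/2}$ after splitting off its spatial mean, which is exactly Lemma~\ref{l:nonlinear_estimate_easy}: there the mean part $\|w\|_{L^{q_0}(L^{q/2})}$ is interpolated between $L^2$ and $L^{q_0}$ and absorbed.

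The coefficient-$1$ issue you flag is not a real obstacle. The paper itself ends up with a constant in front of $\|w\|_{L^{q_0}(\tau_0,t;L^2)}^2$ (via $x^q\lesssim x^2+x^{q_0}$ and $L^{q_0}\hookrightarrow L^2$), and that is all that is used later. Moreover, for $q>2$ any coefficient can be arranged there by enlarging the constant in front of $\int\!\int|w|^{q_0}$.
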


To prove Lemma \ref{l:Lq_estimate_small}, we employ the following result.

\begin{lemma}[Nonlinear estimates in the $L^q$-setting I]
\label{l:nonlinear_estimate_easy}
Fix $q\geq \frac{d(h-1)}{2}\vee 2$ and let $q_0$ be as in \eqref{eq:def_q0}. 
Then, there exists a positive constant $C_0$ independent of $t>0$ such that
\begin{align}
\label{eq:interpolation_subcriticality_apriori_estimates_0}
\|u\|_{L^{q_0}((0,t)\times \T^d)}&\leq C_0\|u\|_{L^{q_0}(0,t;L^2)}\\
\nonumber
&+ C_0 \|u\|_{L^{\infty}(0,t;L^q)}^{1-\vartheta}\Big(\int_0^t \int_{\T^d} |u|^{q-2}|\nabla u|^2 \,\dd x\,\dd r \Big)^{\vartheta/q}
\end{align}
for all $u\in L^2_{\loc}([0,\infty), W^{1,q})\cap L^\infty_{\loc}([0,\infty);L^q)$ and $\vartheta=\frac{d}{d+2}$.
\end{lemma}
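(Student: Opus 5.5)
The plan is to substitute $g \coleq |u|^{q/2}$ and to interpolate in space pointwise in time, in Gagliardo--Nirenberg style, between the $L^2$-norm of $u$, the $L^q$-norm of $u$, and the $H^1$-seminorm of $g$. Formally,
\[
|\nabla g|^2 = \tfrac{q^2}{4}|u|^{q-2}|\nabla u|^2 ,
\]
so the integral on the right-hand side of \eqref{eq:interpolation_subcriticality_apriori_estimates_0} is comparable to $\|\nabla g\|_{L^2(0,t;L^2)}^2$. On the other side, $\|u\|_{L^{q_0}}^{q_0}=\|g\|_{L^{2q_0/q}}^{2q_0/q}$, and $2q_0/q = 2(d+2)/d = 2+4/d$ is exactly the exponent appearing in the parabolic embedding $L^\infty_t L^2_x\cap L^2_t\dot H^1_x\hookrightarrow L^{2+4/d}_{t,x}$. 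The proof therefore reduces to this embedding applied to $g$, with care for the mean of $g$, which is not controlled by $\nabla g$. That mean is the source of the lower-order term $\|u\|_{L^{q_0}(0,t;L^2)}$.

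\textbf{Step 1.} I split $g = (g-\overline{g}) + \overline{g}$ at each time. For the oscillating part I use Gagliardo--Nirenberg and Poincaré on $\T^d$:
\[
\|g-\overline g\|_{L^{2+4/d}}\lesssim \|\nabla g\|_{L^2}^{d/(d+2)}\|g\|_{L^2}^{2/(d+2)} .
\]
Raising to the power $2+4/d$ gives $\|\nabla g\|_{L^2}^2\|g\|_{L^2}^{4/d}$. Since $\|g\|_{L^2}^2=\|u\|_{L^q}^q$, integrating in time yields
\[
\int_0^t\|g-\overline g\|_{L^{2+4/d}}^{2+4/d}\,\dd r\lesssim \|u\|_{L^\infty(0,t;L^q)}^{2q/d}\int_0^t\int_{\T^d}|u|^{q-2}|\nabla u|^2\,\dd x\,\dd r .
\]
Taking the $1/q_0$ root, the power of the gradient integral is $1/q_0=\frac{d}{q(d+2)}=\vartheta/q$, and the power of $\|u\|_{L^\infty L^q}$ is $\frac{2q}{d q_0}=\frac{2}{d+2}=1-\vartheta$. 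These are exactly the exponents in the statement.

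\textbf{Step 2.} For the mean part I have $|\overline g|=\|u\|_{L^{q/2}}^{q/2}$, and this must be bounded by the remaining two quantities. I interpolate $L^{q/2}$ between $L^2$ and $L^{q_0}$ when $q/2\ge 2$; when $q<4$ I simply use $\|u\|_{L^{q/2}}\le\|u\|_{L^2}$. This gives
\[
\|u\|_{L^{q/2}}\le \|u\|_{L^2}^{1-s}\|u\|_{L^{q_0}}^{s},\qquad s<1 .
\]
Then
\[
\int_0^t|\overline g|^{2q_0/q}\,\dd r\le\int_0^t\|u\|_{L^2}^{(1-s)q_0}\|u\|_{L^{q_0}}^{sq_0}\,\dd r .
\]
By Hölder in time with exponents $\frac{1}{1-s}$ and $\frac1s$, this is at most $\|u\|_{L^{q_0}(0,t;L^2)}^{(1-s)q_0}\|u\|_{L^{q_0}_{t,x}}^{sq_0}$. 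Young's inequality then absorbs a small multiple of $\|u\|_{L^{q_0}_{t,x}}^{q_0}$ into the left-hand side, leaving $C\|u\|_{L^{q_0}(0,t;L^2)}^{q_0}$. The absorption needs the left-hand side to be finite, and this follows from Step 1 together with $u\in L^\infty L^q$ on bounded intervals. All constants depend only on $d$ and $q$, so they are independent of $t$.

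\textbf{Step 3.} The justification of the chain rule for $g=|u|^{q/2}$ with $u\in W^{1,q}$ is routine: approximate by $(|u|^2+\epsilon)^{q/4}$ and pass to the limit.

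I expect the main obstacle to be the mean term in Step 2, specifically checking that the interpolation exponent $s$ is strictly less than $1$ so that the absorption works. This holds because $q/2<q_0$.
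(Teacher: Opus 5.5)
Your proposal is correct and follows essentially the same route as the paper. Both arguments pass to $|u|^{q/2}$, split off the spatial mean, and control the oscillating part by the parabolic embedding $L^\infty(0,t;\dot L^2)\cap L^2(0,t;\dot H^1)\hookrightarrow L^{2(d+2)/d}((0,t)\times\T^d)$. Both handle the mean through $\|u\|_{L^{q_0}(0,t;L^{q/2})}$, interpolating $L^{q/2}$ between $L^2$ and $L^{q_0}$ (or using $L^2\hookrightarrow L^{q/2}$ when $q<4$) and then absorbing. The only cosmetic difference is that you check the embedding with a pointwise-in-time Gagliardo--Nirenberg inequality, whereas the paper goes through the intermediate space $L^{2/\vartheta}(0,t;\dot H^{\vartheta})$; both give constants independent of $t$.
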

\begin{remark}
Note that, due to the assumed regularity of $u$, all the terms on the right-hand side of \eqref{eq:interpolation_subcriticality_apriori_estimates_0} are well-defined as
\begin{equation}
\label{eq:finiteness_of_norms_interpolation}
\int_0^t \int_{\T^d} |u|^{q-2}|\nabla u|^2 \,\dd x\,\dd r\leq \|u\|_{L^\infty(0,t;L^q)}^{q-2}\|\nabla u\|_{L^2(0,t;L^{q})}^2.
\end{equation}
Lemma \ref{l:nonlinear_estimate_easy} is an improvement of  \cite[Lemma 4.5]{A22} as the constants are uniform in $t>0$. The latter fact is crucial to obtain the results in the following section.
\end{remark}
\begin{proof}
The proof is similar to the one in \cite[Lemma 4.5]{A22}.
However, some care is needed as we want to obtain estimates that are uniform in $t>0$. The key idea is to interpolate the energy space and to use the scaling of the Lebesgue spaces. More precisely, 
\begin{align*}
&\|u\|_{L^{q_0}((0,t)\times \T^d)}= \big\||u|^{q/2}\big\|_{L^{\frac{2q_0}{q}}((0,t)\times \T^d)}^{2/q}\\
&\leq C_q \Big\||u|^{q/2}-\int_{\T^d} |u|^{q/2}\,\dd x \Big\|_{L^{\frac{2q_0}{q}}((0,t)\times \T^d)}^{2/q}
+C_q\|u\|_{L^{q_0}(0,t;L^{q/2})}\\
&\leq C_q \Big\||u|^{q/2}-\int_{\T^d} |u|^{q/2}\,\dd x \Big\|_{L^{\frac{2q_0}{q}}((0,t)\times \T^d)}^{2/q}
+C_q'\|u\|_{L^{q_0}(0,t;L^{2})}+ \frac{1}{2}
\|u\|_{L^{q_0}((0,t)\times \T^d)}.
\end{align*}
Let us emphasize that, if $q<4$, then one can use the trivial embedding $L^2(\T^d)\embed L^{q/2}(\T^d)$ instead of the interpolation argument in the last line.
Thus, it remains to interpolate the first term on the right-hand side of the above estimate. 
For the latter, recall that, by standard interpolation inequalities, for $\vartheta=\frac{d}{d+2}$ and recalling $\frac{2q_0}{q} = \frac{2(d+2)}{d}$,
\begin{align*}
L^{\infty}(0,t;\dot{L}^2(\T^d))\cap L^2(0,t;\dot{H}^1(\T^d))
\embed L^{2/\vartheta}(0,t;\dot{H}^{\vartheta}(\T^d))
\embed L^{2q_0/q}(0,t;L^{2q_0/q}(\T^d)).
\end{align*}
Thus, it follows from the above that
\begin{align*}
\Big\||u|^{q/2}-\int_{\T^d} |u|^{q/2}\,\dd x \Big\|_{L^{2q_0/q}((0,t)\times \T^d)}^{2/q}
&\lesssim\Big\||u|^{q/2}-\int_{\T^d} |u|^{q/2}\,\dd x\Big\|_{L^{\infty}(0,t;L^{2})}^{2(1-\vartheta)/q}
 \|\nabla [|u|^{q/2}]\|_{L^{2}(0,t;L^{2})}^{2\vartheta/q}\\
&\lesssim \|u\|_{L^{\infty}(0,t;L^{q})}^{1-\vartheta}
 \Big(\int_0^t \int_{\T^d} |u|^{q-2}|\nabla u|^2 \,\dd x \,\dd r \Big)^{\vartheta/q}.
\end{align*}
Thus, \eqref{eq:interpolation_subcriticality_apriori_estimates_0} follows by putting together the previous estimates.
\end{proof}

\begin{proof}[Proof of Lemma \ref{l:Lq_estimate_small}]
For simplicity, we split the proof into two steps.

\smallskip

\noindent \emph{Step 1: There exists a constant $C_q>0$ such that, a.s.\ for all $t\in (\tau_0,\tau)$,}
\begin{align*}
\sup_{r\in [\tau_0,t)}\|w(r)\|_{L^q}^q 
&+\sum_{i=1}^{\ell}\int_{\tau_0}^t \int_{\T^d} |w_i|^{q-2}|\nabla w_i|^2\,\dd x \,\dd r \\
&\leq C_q \|w(\tau_0)\|_{L^q}^q+ C_q \int_{\tau_0}^t \int_{\T^d} (|w|^2+|w|^{q_0})\,\dd x \,\dd r .
\end{align*}
From Lemma \ref{lem:Lebesgue_initial_data_continuity} and applying the It\^o formula in either \cite[Section 3]{Kry13} or \cite[Appendix A]{DHV16} (see also \cite[Theorem 4.1(2)]{A22} or \cite[Lemma 2]{DG15_boundedness} for details), it holds a.s.\ for all $t>0$ that 
\begin{align}
\label{eq:Itoformula_Lq_winfty}
\|w_i(t)\|_{L^q}^q
&+q(q-1) \nu_i \int_{\tau_0}^t \int_{\T^d} |w_i|^{q-2} |\nabla w_i|^2\,\dd x \, \dd r \\
\nonumber
&=\|w_i(\tau_0)\|_{L^q}^q
+ q\int_{\tau_0}^t  \int_{\T^d} |w_i|^{q-2} w_i f_i (w+v_\infty)\, \dd x \,\dd r .
\end{align}
As $f_i(v_\infty)=0$, it follows that
\begin{align*}
|f_i(w+v_\infty)- f_i(v_\infty)|\leq C (1+|w+v_\infty|^{h-1}+|v_\infty|^{h-1})|w|
\leq C (|w|+|w|^{h}),
\end{align*}
where $C$ depends only on $N$, $q$, and $K$ as $\sup_{\q v_\infty\in K} |v_\infty|<\infty$ by Lemma \ref{l:continuous_cinfty}. 
Using the above in \eqref{eq:Itoformula_Lq_winfty}, one readily obtains the claim of Step 1.

\smallskip

\noindent\emph{Step 2: There exists a constant $C_q>0$ such that, a.s.\ for all $t\in (\tau_0,\tau)$,}
\begin{align*}
\|w\|_{L^{q_0}(\tau_0,t;L^{q_0})}^q
\leq C_q \|w(\tau_0)\|_{L^q}^q+ \|w\|_{L^{q_0}(\tau_0,t;L^2)}^2+ C_q \int_{\tau_0}^t \int_{\T^d} (|w|^2+|w|^{q_0})\,\dd x \,\dd r .
\end{align*}
From Lemma \ref{l:nonlinear_estimate_easy} and Young’s inequality, together with the bounds from Step 1, we obtain
\begin{align*}
&\|w\|_{L^{q_0}(\tau_0,t;L^{q_0})}^q\\
&\leq C_q \|w(\tau_0)\|_{L^q}^q
+C_q \|w\|_{L^{q_0}(\tau_0,t;L^2)}^q + C_q \|w(\tau_0)\|_{L^q}^q+ C_q \int_{\tau_0}^t \int_{\T^d} (|w|^2+|w|^{q_0})\,\dd x \,\dd r .
\end{align*}
Now, the claimed estimate of Step 2 follows by noticing that 
\begin{align*}
\|w\|_{L^{q_0}(\tau_0,t;L^2)}^q
&\lesssim_{q,d} \|w\|_{L^{q_0}(\tau_0,t;L^2)}^2 + \|w\|_{L^{q_0}(\tau_0,t;L^2)}^{q_0}
\end{align*}
and the trivial embedding $L^{q_0}(\T^d)\embed L^2(\T^d)$ as $q\geq 2$.
The claimed bound in Lemma \ref{l:Lq_estimate_small} follows by summing the ones in Steps 1 and 2.
\end{proof}

\subsection{Proof of Theorem \ref{t:small_implies_global}}
\label{ss:proof_small_implies_global}

\begin{proof}[Proof of Theorem \ref{t:small_implies_global}]
We begin by collecting a useful observation. Let $(\tau_n)_n$ be the sequence of stopping times as in Definition \ref{def:solution}\ref{it:solution1}. In particular, $\tau_n\uparrow \tau$ a.s.\ Thus, if \eqref{eq:small_implies_global_assumption} holds, then the same holds with $\tau_0$ replaced by $\tau_0\wedge \tau_n$ with $n$ sufficiently large. 
Therefore, without loss of generality, we may assume $\tau_0\in (0,\tau)$ a.s.\ and hence 
$$
\P(\tau_0<\tau,\, \|v(\tau_0)-v_\infty\|_{L^q}\leq\eta)=
\P( \|v(\tau_0)-v_\infty\|_{L^q}\leq\eta)>1- \varepsilon.
$$
We now divide the proof into several steps.

\smallskip

\noindent\emph{Step 1: There exists a constant $C_q>0$ such that, a.s.\ for all $t\in (\tau_0,\tau)$,}
\begin{align*}
\|w\|_{L^{q_0}(\tau_0,t;L^{q_0})}^q
\leq C_q \|w(\tau_0)\|_{L^q}^2(1+\|w(\tau_0)\|_{L^q}^{q-2})+ \|w\|_{L^{q_0}(\tau_0,t;L^{q_0})}^{q_0}.
\end{align*}
The idea is to sum Lemmas \ref{l:L2_estimates_small_implies} and \ref{l:Lq_estimate_small} with appropriate constants. 
To begin, recall the following standard interpolation inequality,
$$
\|f\|_{L^{q_0}(0,t;L^2)}
\lesssim
\|f\|_{L^{\infty}(0,t;L^2)}+ \|f\|_{L^{2}((0,t)\times \T^d)},
$$
where the implicit constant is independent of $t>0$. The above and Lemma \ref{l:L2_estimates_small_implies} imply
\begin{align}
\label{eq:step_1_proof_implies_global_L21}
\|w\|_{L^{q_0}(\tau_0,t;L^2)}^2
\leq C_1\|w(\tau_0)\|_{L^2}^2 +C_1\|w\|_{L^{q_0}((\tau_0,t)\times\T^d)}^{q_0}.
\end{align}
Next, we recall the following consequence of Lemma \ref{l:Lq_estimate_small}, 
\begin{equation}
\label{eq:step_1_proof_implies_global_L22}
\begin{aligned}
\|w\|_{L^{q_0}(\tau_0,t;L^{q_0})}^q
&\leq C_1 (\|w(\tau_0)\|_{L^2}^2 + \|w(\tau_0)\|_{L^q}^q)+ C_1\|w\|_{L^{q_0}(\tau_0,t;L^2)}^2\\
&\quad + C_1\|w\|_{L^2((\tau_0,t)\times\T^d)}^2 + C_1 \|w\|_{L^{q_0}(\tau_0,t;L^{q_0})}^{q_0}.
\end{aligned}
\end{equation}
Now, summing \eqref{eq:step_1_proof_implies_global_L21} with the inequality \eqref{eq:step_1_proof_implies_global_L22}, where the latter is multiplied by $1/(2C_1)$, then using Lemma \ref{l:L2_estimates_small_implies} to estimate the term $\|w\|_{L^2((\tau_0,t)\times\T^d)}^2$, we have
\begin{align*}
&\tfrac{1}{2C_1}\|w\|_{L^{q_0}(\tau_0,t;L^{q_0})}^q +
\|w\|_{L^{q_0}(\tau_0,t;L^2)}^2\\
&\leq C_0\|w(\tau_0)\|_{L^2}^2+\tfrac{1}{2} \|w(\tau_0)\|_{L^q}^q+ \tfrac{1}{2}\|w\|_{L^{q_0}(\tau_0,t;L^2)}^2+\big(C_0+\tfrac{1}{2}\big) \|w\|_{L^{q_0}(\tau_0,t;L^{q_0})}^{q_0}.
\end{align*}
This, together with $\|w(\tau_0)\|_{L^2} \lesssim \|w(\tau_0)\|_{L^q}$, proves the claim of Step 1.

\smallskip

\noindent\emph{Step 2: There exists a constant $\eta\in (0,1)$ such that, a.s.\ on $\{\|w(\tau_0)\|_{L^q}\leq \eta\}$, it holds that}
\begin{equation}
\label{eq:bound_tau0mu_w}
\|w\|_{L^{q_0}(\tau_0,\tau;L^{q_0})}^q\leq R \|w(\tau_0)\|_{L^q}^2(1+\|w(\tau_0)\|_{L^q}^{q-2}),
\end{equation}
\emph{where the constant $R$ depends only on $q$, $d$, and $K$.}
Let  
\begin{gather}
\label{eq:def_Oeta}
\O_\eta \coleq \{\|w(\tau_0)\|_{L^q}\leq \eta\}, \\ 
\label{eq:def_XtE0}
\mathscr{X}_t \coleq \|w\|_{L^{q_0}(\tau_0,t;L^{q_0})}^q \quad \text{ and } \quad
\mathscr{E}_0 \coleq \|w(\tau_0)\|_{L^q}^2(1+\|w(\tau_0)\|_{L^q}^{q-2}).
\end{gather}
Set $\gamma=q_0/q>1$. Then, Step 1 can be reformulated as
\begin{align}
\label{eq:mathscr_X_star_bound}
\mathscr{X}_t\leq 
 C \mathscr{E}_0+ C\, \mathscr{X}_t^\gamma \qquad \Longleftrightarrow \qquad \psi_C (\mathscr{X}_t)\leq  \mathscr{E}_0,
\end{align}
where $\psi_C (x)=C^{-1}x-x^\gamma$ for $x\geq 0$, see \cite[Section 4]{A22} for a similar situation. Clearly, 
$\psi_C$ attains a unique maximum at the point $x_*=(\gamma C)^{-1/(\gamma-1)}$ with value $\psi_*= \frac{x_*}{C}(1-\frac{1}{\gamma})$. Now, as $\mathscr{X}_0=0$, it follows from the above estimate and a standard connectedness argument that 
$$
\mathscr{E}_0 \leq \psi_* \qquad \Longrightarrow \qquad\sup_{\tau_0<t<\tau} \mathscr{X}_t \leq x_* \text{ a.s.\ on }\O_\eta.
$$
Finally, to prove the estimate, it is enough to note that, if $x\in [0,x_*]$,
$$
\psi_C(x)\geq \frac{x}{C}-x^{\gamma}\geq \frac{x}{C}-x (x_*)^{\gamma-1}= \frac{x}{C}\Big(1-\frac{1}{\gamma}\Big)\text{ a.s.\ on }\O_\eta.
$$
Thus, as $\sup_{\tau_0<t<\tau} \mathscr{X}_t \leq x_*$ a.s.\ on $\O_\eta$ and \eqref{eq:mathscr_X_star_bound}, it follows that  
$$
\mathscr{X}_t\leq C \Big(\frac{\gamma}{\gamma-1} \Big)\,\psi_C(\mathscr{X}_t)\leq  C \Big(\frac{\gamma}{\gamma-1} \Big)\, \mathscr{E}_0.
$$
From Fatou's lemma
$$
\sup_{\tau_0<t<\tau}\mathscr{X}_t =\|w\|^q_{L^{q_0}(\tau_0,\tau;L^{q_0})} \text{ a.s.\ on }\O_\eta,
$$
as desired.
 
\smallskip

\noindent\emph{Step 3: Conclusion}. 
Let $\eta$ and $\O_\eta$ be as in Step 2. 
Combining \eqref{eq:bound_tau0mu_w} with the bounds in Lemmas \ref{l:L2_estimates_small_implies} and \ref{l:Lq_estimate_small}, it follows that 
\begin{equation}
\label{eq:apriori_estimate_mu}
\sup_{r\in [\tau_0,\tau)}\|w(r)\|_{L^q}^q 
+\max_{1\leq i\leq \ell}\int_{\tau_0}^\tau \int_{\T^d} |w_i|^{q-2}|\nabla w_i|^2\,\dd x \,\dd r+\int_{\tau_0}^\tau \int_{\T^d} |w|^2\,\dd x \,\dd s  \leq C \mathscr{E}_0,
\end{equation}
a.s.\ on $\O_\eta$, 
where $\mathscr{E}_0$ is as in \eqref{eq:def_XtE0}. Clearly, it suffices to consider the case $\mathscr{E}_0>0$. In the latter situation, to conclude, it remains to prove that 
\begin{equation}
\label{eq:tau_infty_Oeta}
\tau= \infty \ \text{ a.s.\ on }\{\|w(\tau_0)\|_{L^q}\leq \eta\}.
\end{equation}
Indeed, if the above holds, then we can let 
$$
\tau_1 \coleq \inf \Big\{t\in [\tau_0,\tau)\,:\, \sup_{r\in [\tau_0,t)}\|w(r)\|_{L^q}^q 
+\max_{1\leq i\leq \ell}\int_{\tau_0}^t \int_{\T^d} |w_i|^{q-2}|\nabla w_i|^2\,\dd x \,\dd r\geq 2C \mathscr{E}_0\Big\}
$$
with $\inf\emptyset \coleq \tau$ to obtain the claim of Theorem \ref{t:small_implies_global} as $\tau_1=\tau$ a.e.\ on $\O_\eta$ by construction.

The claim \eqref{eq:tau_infty_Oeta} follows by partially repeating the argument in \cite[Theorem 3.2]{AV24_dissipative} by means of the blow-up criteria in \cite[Corollary 2.11]{AV23_RDE_local}. We give some details in the subcritical case $q>\frac{d(h-1)}{2}$. The critical case follows similarly (see Step 2 in \cite[Theorem 3.2]{AV24_dissipative}).
In the case $q>\frac{d(h-1)}{2}$, from \cite[Corollary 2.11(1)]{AV23_RDE_local} and Lemma \ref{lem:Lebesgue_initial_data_continuity}, for all stopping times $\tau_0\in [0,\tau)$ a.s., the following blow-up criterion holds:
$$
\P\Big(\tau<\infty,\, \sup_{t\in [\tau_0,\tau)} \|v(t)\|_{L^q}<\infty\Big)=0.
$$
Thus, from the above, it follows that 
\begin{align*}
\P(\{\tau<\infty\}\cap \O_\eta)
&\stackrel{\eqref{eq:apriori_estimate_mu}}{=} \P\Big(\Big\{\tau<\infty,\,\sup_{t\in [\tau_0,\tau)}\|v(t)\|_{L^q}<\infty\Big\} \cap \O_\eta \Big)
\\
&\leq 
\P\Big(\tau<\infty,\,\sup_{t\in [\tau_0,\tau)}\|v(t)\|_{L^q}<\infty \Big)
=0.
\end{align*}
This proves \eqref{eq:tau_infty_Oeta}.
\end{proof}

\section{Scaling limits for RDEs with cutoff -- Proof of Theorem \ref{t:global}}
\label{s:scaling_limit}
The ultimate aim of this section is to prove Theorem \ref{t:global}. In order to do that, we will combine the scaling limit results proven in \cite{A22} with Theorem \ref{t:small_implies_global} and the entropy--entropy dissipation relation of Theorem \ref{t:entropy_dissipation} to prove the global well-posedness with high probability of solutions to \eqref{eq:reaction_diffusion} as stated in Theorem \ref{t:global}. More precisely, we start in Subsection \ref{ss:cutoff1} with stochastic RDEs with cutoff, then in Subsection \ref{ss:cutoff2}, we recall and simplify the approach in \cite[Section 6]{A22} for the scaling limit of \eqref{eq:reaction_diffusion_cut_off}, where the noise coefficients $\theta\in \ell^2$ vanish in a suitable sense but keeping constant intensity, see Theorem \ref{t:scaling_limit_cutoff} below. In Subsection \ref{ss:global_with_high_diffusivity}, we prove global well-posedness and suitable bounds for the deterministic RDEs with high diffusivity \eqref{eq:reaction_diffusion_deterministic_enhanced_diffusivity}. Finally, in Subsection \ref{ss:proof_global_well_posedness}, we provide the proof of Theorem \ref{t:global} by combining these results.

Before going into the details, it is worth pointing out that the entropy--entropy dissipation relation as in Theorem \ref{t:entropy_dissipation} is only used to ensure the exponential decay in \eqref{eq:global_high_viscosity3} for solutions of \emph{deterministic} RDEs with high diffusivity in Subsection \ref{ss:global_with_high_diffusivity}, see also Remark \ref{rem:boundary_equilibria}.

\subsection{Stochastic RDEs with cutoff}\label{ss:cutoff1}
As is typical in scaling limits for SPDEs with transport noise, part of this subsection is devoted to the study of stochastic RDEs \eqref{eq:reaction_diffusion} with cutoff. Let $\phi\in C^{\infty}(\mathbb R)$ such that $\phi|_{[0,1]}=1$, $\phi|_{[2,\infty)}=0$. Then, for $q,r\in (1,\infty)$ and $R>0$, we define the cutoff
\begin{equation}
\label{eq:def_cutoff}
\phi_{R,r}(t,v) \coleq \phi\big(R^{-1}\|v\|_{L^r(0,t;L^q)}\big).
\end{equation}
With this cutoff, we consider the following stochastic RDEs
\begin{equation}
\label{eq:reaction_diffusion_cut_off}
\left\{
\begin{aligned}
\dd \vcuti - \nu_i \Delta \vcuti \,\dd t
& = \phi_{R,r}(\vcut) f_i(\vcut)\,\dd t \\
&+\sqrt{c_d \nu }\sum_{k,\alpha} \theta_k (\sigma_{k,\alpha}\cdot\nabla) \vcuti\circ \dd W^{k,\alpha}_t
& \text{ on }&\Tor^d,\\ 
\vcuti(0)&=v_{0,i}  &\text{ on }&\Tor^d.
\end{aligned}\right.
\end{equation}

The main results of this section on stochastic RDEs with cutoff \eqref{eq:reaction_diffusion_cut_off} are as follows. The definition of $(p,\a,\s,q)$-solutions to \eqref{eq:reaction_diffusion_cut_off} is almost identical to that in Definition \ref{def:solution}.

\begin{proposition}[Global existence and uniqueness of stochastic RDEs with cutoff]
\label{prop:global_cut_off}
Let \eqref{reaction-full} be a complex balanced chemical reaction network, and consider the reaction--diffusion system \eqref{b-1}--\eqref{b-11}. Let $M\in \q \R^\ell_{> 0}$ and $v_\infty$ be the positive complex balanced equilibrium, and fix a normalized radially symmetric coefficient $\theta$ (see \eqref{eq:theta_normalized_symmetric}).
Suppose that $q,p\in (2,\infty)$, $\a\in (0,\frac{p}{2}-1)$, and $\delta\in [1,2)$ satisfy
\begin{equation}
\label{eq:global_cut_off0}
q>\frac{d(h-1)}{2}\vee \frac{d}{d-\delta}
\qquad \text{ and }\qquad
\frac{1+\a}{p}\leq 1-\frac{\delta}{2}.
\end{equation}
Then, there exists some $r_0\in (1,\infty)$ depending only on $q$, $\delta$, $h$, and $d$, for which the following assertion holds provided $r\geq r_0$. For all  
$$
v_0\in B^{2-\delta-2(1+\a)/p}_{q,p}(\T^d;\R^{\ell}_{\geq 0}),
$$
there exists a unique global $(p,\a,\delta,q)$-solution $\vcut$ to \eqref{eq:reaction_diffusion_cut_off} and
\begin{align}
\label{eq:global_cut_off1}
\vcut&\in  L^p_{\loc}([0,\infty),w_{\a};H^{2-\delta,q}(\T^d;\R^\ell)) \cap C([0,\infty);B^{2-\delta-2(1+\a)/p}_{q,p}(\T^d;\R^\ell)), \\
\vcut&\in   C((0,\infty);B^{2-\delta-2/p}_{q,p}(\T^d;\R^\ell))\text{ a.s., }\\
\label{eq:positivity_statement_cutoff}
\vcut(t)&\geq 0 \text{ componentwise a.e.\ on }\T^d \text{ for all }t>0.
\end{align}
Finally, the global solution $\vcut$ depends continuously on the initial data with the same mass vector $\q \overline{ v_0}$ (hence, the same equilibrium $v_\infty$). More precisely, if $(v_0^{(n)})_n\subseteq B^{2-\s-2(1+\a)/p}_{q,p}(\T^d;\R^\ell_{\geq 0})$ satisfies $\q\overline{v_0^n }=\q\overline{v_0}$ for all $n\geq 1$ and $\lim_{n\to\infty}\|v_0^{(n)} - v_0\|_{B^{2-\delta-2(1+\a)/p}_{q,p}(\T^d;\R^{\ell}_{\geq 0})} = 0$, then for all $r\in (1,\infty)$, $T<\infty$, and $\g>0$,
\begin{align}
\label{eq:global_cut_off_continuity1}
\lim_{n\to \infty}\P\Big(\sup_{t\in [0,T]} \|\vcut^n(t)-\vcut(t)\|_{B^{0}_{q,p}}\geq \g \Big)&= 0 ,\\
\label{eq:global_cut_off_continuity2}
\lim_{n\to \infty}\P\Big(\|\vcut^n-\vcut\|_{L^{r}(0,T;L^q)}\geq \g \Big)
&= 0,
\end{align}
where $\vcut^n$ is the global $(p,\a,\delta,q)$-solution to \eqref{eq:reaction_diffusion_cut_off} with initial data $v_0^{(n)}$.
\end{proposition}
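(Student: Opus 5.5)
The plan is to follow the scheme of \cite[Section 3]{A22} for RDEs with cutoff and to reduce everything to known results on stochastic evolution equations in the critical $L^p(L^q)$-setting with time weights. \emph{Local well-posedness} comes first. We view \eqref{eq:reaction_diffusion_cut_off} as a semilinear SPDE on $X_0=H^{-\delta,q}$, $X_1=H^{2-\delta,q}$, with linear part $(\nu+\nu_i)\Delta$ plus the transport noise. Stochastic maximal $L^p$-regularity for this linear part holds by \cite{AV23_RDE_local}, because the ellipticity identity \eqref{eq:ellipticity_noise} makes the It\^o correction exactly compensate the noise. The cutoff nonlinearity $F(t,v)=\phi_{R,r}(t,v)f(v)$ is non-local in time, but it is adapted: it depends only on $v|_{[0,t]}$. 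Under $q>\frac{d(h-1)}{2}\vee\frac{d}{d-\delta}$, the polynomial $f$ satisfies the (sub)critical local Lipschitz estimates from $H^{2-\delta-\epsilon,q}$ into $H^{-\delta,q}$ exactly as in \cite[Section 4]{AV23_RDE_local}. The factor $\phi_{R,r}$ is Lipschitz in $\|v\|_{L^r(0,t;L^q)}$. Since $r\ge r_0$ can be chosen so that maximal regularity in the weighted space with $\frac{1+\a}{p}\le 1-\frac{\delta}{2}$ embeds into $L^r(0,t;L^q)$, the abstract local existence theory of \cite{AV19_QSEE_1} (in the form used in \cite[Theorem 3.3]{A22}) applies. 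It yields a unique maximal solution $(\vcut,\tau)$ with the regularity \eqref{eq:global_cut_off1}, instantaneous regularization in $C((0,\tau);B^{2-\delta-2/p}_{q,p})$, and the usual blow-up criteria.

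\emph{Positivity.} To get \eqref{eq:positivity_statement_cutoff} we use that $f_i$ has been extended by $v\mapsto (v_j\vee 0)_j$ and is quasi-positive: $f_i(v)\ge 0$ when $v_i=0$ and $v\ge 0$. Multiplying by the scalar $\phi_{R,r}\in[0,1]$ keeps this property. Hence the positivity argument of \cite[Theorem 2.13/Proposition 3.1]{AV23_RDE_local} (an It\^o formula for $\|v_i^-\|_{L^2}^2$ in which transport noise produces no sign-indefinite term) goes through verbatim.

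\emph{Globality.} This is the key point, and the argument needs no structure of the network. Let $\sigma=\inf\{t:\|\vcut\|_{L^r(0,t;L^q)}\ge 2R\}$. After $\sigma$ the drift vanishes identically, so the equation becomes linear and is globally solvable. Before $\sigma$ the quantity $\|\vcut\|_{L^r(0,t;L^q)}$ is bounded by $2R$. Choosing $r_0$ large relative to the scaling, i.e. $\frac2r+\frac dq<\frac{2}{h-1}$, this bound controls the nonlinearity: $\|\phi f(\vcut)\|$ in the relevant subcritical norm is bounded by $C(R)$ times a maximal regularity norm to a power less than one, or linearly with an $L^r$-integrable coefficient. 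A stochastic Gronwall argument, or directly the blow-up criterion of \cite[Theorem 4.10]{AV19_QSEE_2}-type in the form of \cite[Corollary 2.11]{AV23_RDE_local}, then gives $\tau=\infty$. Concretely, I would follow \cite[Proposition 3.5]{A22}. On $[0,T]$ the maximal regularity estimate yields $\E\|\vcut\|^p_{\mathrm{MR}(0,T\wedge\tau_n)}\le C(R,T)(1+\|v_0\|^p)$, where the nonlinear term has been absorbed via the cutoff and Young's inequality. This rules out blow-up. I expect this absorption step to be the main obstacle: one has to verify that the exponent restriction $r\ge r_0(q,\delta,h,d)$ makes $\|f(v)\|_{L^p(w_\a;H^{-\delta,q})}$ controlled by $\|v\|_{L^r(L^q)}^{h-1}$ times a lower-order norm of $v$, uniformly over short time steps, so that a time-splitting iteration closes.

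\emph{Continuous dependence.} Fix initial data $v_0^{(n)}\to v_0$ with the same mass; the equilibrium plays no role here. Since both solutions are global and have cutoff-bounded nonlinearity, we localize with stopping times on which the maximal regularity norms of $\vcut$ and $\vcut^n$ are at most $L$. On these intervals the difference satisfies a linear SPDE whose forcing $F(\vcut^n)-F(\vcut)$ is Lipschitz-controlled. Maximal regularity and a stochastic Gronwall lemma give convergence in probability in $C([0,T];B^0_{q,p})\cap L^r(0,T;L^q)$ on the localized event. Uniform tail bounds for the localizing times, obtained from the globality estimates, then remove the localization and give \eqref{eq:global_cut_off_continuity1}--\eqref{eq:global_cut_off_continuity2}. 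Here we use the embedding $B^{2-\delta-2(1+\a)/p}_{q,p}\hookrightarrow B^0_{q,p}$ and the fact that the maximal regularity space embeds into $L^r(0,T;L^q)$ for every $r$ once $\a$ and $\delta$ are fixed.
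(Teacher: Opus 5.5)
Your architecture matches the paper's: local well-posedness, a stochastic maximal regularity (SMR) a priori bound in which the cutoff absorbs the reaction term, the blow-up criterion, and localized difference estimates. The gap is that the two estimates that carry the proof are missing. You leave the first one open (you call it ``the main obstacle''), and you replace the second by a tool that does not apply as stated.

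\emph{Globality.} No time-splitting is needed. What is missing is the pointwise-in-time sublinear bound
\[
\|f_i(v)\|_{H^{-\delta,q}}\le C\big(1+\|v\|_{L^q}^{h-1+\varepsilon}\|v\|_{H^{2-\delta,q}}^{1-\varepsilon}\big),\qquad \varepsilon\in(0,1).
\]
It follows from three ingredients:
\begin{itemize}
\item $L^\eta\embed H^{-\delta,q}$ with $\frac{d}{\eta}=\delta+\frac{d}{q}$, which is where $q>\frac{d}{d-\delta}$ enters;
\item $H^{\rho,q}\embed L^{h\eta}$ with $\rho=\max\{-\frac{\delta}{h}+\frac{d}{q}(1-\frac{1}{h}),0\}$;
\item interpolation between $L^q$ and $H^{2-\delta,q}$.
\end{itemize}
The resulting power $h\rho/(2-\delta)$ of the top-order norm is $<1$ exactly when $q>\frac{d(h-1)}{2}$. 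On $[0,\mu_{R,r}\wedge\tau_n]$ one has $\|\vcut\|_{L^r(0,t;L^q)}\le 2R$. There, H\"older in time with $r\geq (h-1+\varepsilon)p/\varepsilon$ and $w_\a\le T^\a$ shows that $\E\int_0^{\tau_n}\phi_{R,r}(\vcut)^p\|f_i(\vcut)\|_{H^{-\delta,q}}^p\,w_\a\,\dd t$ is at most $C_{R,T}$ plus an arbitrarily small multiple of $\E\|\vcut\|^p_{L^p(0,\tau_n,w_\a;H^{2-\delta,q})}$. This term is absorbed directly into the SMR estimate on $[0,T]$, and $\tau=\infty$ then follows from the blow-up criterion. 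Your variant (``$\|v\|^{h-1}_{L^r(L^q)}$ times a lower-order norm, closed by splitting'') would require restarting the SMR estimate at random times with trace-space data, and you have not carried it out.

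\emph{Continuous dependence.} You invoke an unspecified Lipschitz bound for the forcing and a stochastic Gronwall lemma. The paper explicitly notes that the known stochastic Gronwall lemmas are not applicable here, for two reasons:
\begin{itemize}
\item the term $(\phi_{R,r}(\vcut)-\phi_{R,r}(\vcut^n))f_i(\vcut)$ is controlled only by the history norm $\|\vcut-\vcut^n\|_{L^r(0,t;L^q)}$;
\item the forcing is measured in $L^p(0,t,w_\a;H^{-\delta,q})$, not pointwise in time.
\end{itemize}
A bound of the form $C_L\|\vcut-\vcut^n\|_{\mreg(t)}$ with $C_L$ not small cannot be absorbed. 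The missing idea is again subcriticality. On the event where both $\mreg$-norms are at most $L$,
\[
\|\phi_{R,r}(v)f_i(v)-\phi_{R,r}(v')f_i(v')\|_{L^p(0,t,w_\a;H^{-\delta,q})}\lesssim_{L,T}\|v-v'\|_{L^p(0,t;B^{2-\delta-2(1+\a)/p}_{q,p})}^{1-\varepsilon_0}\|v-v'\|_{\mreg(t)}^{\varepsilon_0}.
\]
This follows from the same Sobolev--interpolation chain together with $\|u\|_{L^r(0,t;L^q)}\lesssim_T\|u\|_{L^p(0,t;B^{2-\delta-2(1+\a)/p}_{q,p})}^{1-\varepsilon_1}\|u\|_{\mreg(t)}^{\varepsilon_1}$. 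Set $\wcut^{(n)}=\vcut-\vcut^n$, and let $\chi$ be the first time either $\mreg$-norm reaches $L$. Young's inequality and SMR on $[0,\chi\wedge t]$ then give
\[
\E\|\wcut^{(n)}\|^p_{\mreg(\chi\wedge t)}\le C\|v_0^{(n)}-v_0\|^p_{B^{2-\delta-2(1+\a)/p}_{q,p}}+C_L\int_0^t\E\sup_{s'\le \chi\wedge s}\|\wcut^{(n)}(s')\|^p_{B^{2-\delta-2(1+\a)/p}_{q,p}}\,\dd s .
\]
A \emph{deterministic} Gronwall inequality closes this estimate. The localization is then removed via Chebyshev and the a priori bound from the globality step, as you describe.
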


The proof of Proposition \ref{prop:global_cut_off} is given at the end of this subsection.
As in Proposition \ref{prop:LWP}, the conditions \eqref{eq:global_cut_off0} imply that $B^{2-\delta-2(1+\a)/p}_{q,p}$ is subcritical for stochastic RDEs with cutoff \eqref{eq:reaction_diffusion_cut_off}, and that the initial data have non-negative smoothness. In particular, the usual choice $\a_{p,\delta}=p(1-\frac{\delta}{2})-1$ satisfies the latter condition. Interestingly, as we are assuming $\a>0$, we have the following elementary embedding at our disposal $B^{1-2/p}_{q,p}\embed L^q$ and therefore \eqref{eq:global_cut_off1} implies 
\begin{equation}
\label{eq:continuity_in_Lq_cutoff}
\vcut\in C((0,\infty);L^q(\T^d;\R^\ell)) \text{ a.s. }
\end{equation}
In particular,  the condition \eqref{eq:positivity_statement_cutoff} is well-defined.

\smallskip

The proof of Proposition \ref{prop:global_cut_off} follows similarly to that of \cite[Section 4]{A22} by employing the stochastic maximal $L^p$-regularity estimates in \cite{AV21_SMR_torus}. 
For the readers' convenience, we provide some details here. 
We begin with the following nonlinear estimate, from which we will derive the main a priori estimates for \eqref{eq:reaction_diffusion_cut_off}. Here, the subcriticality assumption $q>\frac{d(h-1)}{2}\vee \frac{d}{d-\delta}$ is central (see the discussion below Proposition \ref{prop:LWP}).

\begin{lemma}[Nonlinear estimates and subcriticality]
\label{l:subcriticality_Lq}
Fix $q>\frac{d(h-1)}{2}\vee \frac{d}{d-\delta}$. Then, there exist $\varepsilon\in (0,1)$ and $C>0$ such that, for all $i\in \{1,\dots,\ell\}$ and $v\in H^{2-\delta,q}(\T^d;\R^\ell)$ satisfying $v\geq 0$ componentwise a.e.\ on $\T^d$, 
\begin{equation}
\label{eq:subcriticality_Lq1}
\|f_i(v)\|_{H^{-\delta,q}(\T^d)}\leq C( 1+\|v\|_{L^{q}(\T^d;\R^\ell)}^{h-1+\varepsilon}\|v\|_{H^{2-\delta,q}(\T^d;\R^\ell)}^{1-\varepsilon}).
\end{equation}
\end{lemma}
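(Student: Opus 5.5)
The estimate \eqref{eq:subcriticality_Lq1} follows from a Sobolev embedding into negative smoothness, the growth bound \eqref{b-11}, and complex interpolation between $L^q$ and $H^{2-\delta,q}$. Since $v\geq 0$, \eqref{b-11} gives the pointwise bound $|f_i(v)|\lesssim 1+|v|^h$. The first step is to find an exponent $s\in(1,q]$ with the dual Sobolev embedding
\[
L^{s}(\T^d)\embed H^{-\delta,q}(\T^d), \qquad \text{valid when } \tfrac{d}{s}-\tfrac{d}{q}\leq \delta,
\]
which follows by duality from $H^{\delta,q'}\embed L^{s'}$. The natural choice is $\frac1s=\frac1q+\frac{\delta}{d}$. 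This requires $s>1$, i.e.\ $\frac1q+\frac\delta d<1$, which is exactly $q>\frac{d}{d-\delta}$. With this choice,
\[
\|f_i(v)\|_{H^{-\delta,q}}\lesssim 1+\| |v|^h\|_{L^s}=1+\|v\|_{L^{hs}}^h .
\]

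The second step bounds $\|v\|_{L^{hs}}$ by interpolation. Set $\beta\in[0,2-\delta)$ by $\frac{1}{hs}=\frac1q-\frac{\beta}{d}$. Then $H^{\beta,q}\embed L^{hs}$, and if $hs\le q$ we may take $\beta=0$. By complex interpolation,
\[
\|v\|_{H^{\beta,q}}\lesssim \|v\|_{L^q}^{1-\beta/(2-\delta)}\|v\|_{H^{2-\delta,q}}^{\beta/(2-\delta)} .
\]
Hence
\[
\|v\|_{L^{hs}}^h\lesssim \|v\|_{L^q}^{h-\frac{h\beta}{2-\delta}}\|v\|_{H^{2-\delta,q}}^{\frac{h\beta}{2-\delta}} .
\]
The claim follows provided $\frac{h\beta}{2-\delta}<1$, in which case we set $1-\varepsilon \coleq \frac{h\beta}{2-\delta}$. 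If $\frac{h\beta}{2-\delta}$ is smaller than desired, we simply enlarge the exponent on $\|v\|_{H^{2-\delta,q}}$ using $\|v\|_{L^q}\lesssim\|v\|_{H^{2-\delta,q}}$. This yields any $\varepsilon\in(0,1)$ with $1-\varepsilon\ge \frac{h\beta}{2-\delta}$, and the total homogeneity $h$ is preserved.

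The main point, where subcriticality enters, is checking $h\beta<2-\delta$. Computing,
\[
\beta=\frac dq-\frac{d}{hs}=\frac dq-\frac{1}{h}\Big(\frac dq+\delta\Big)=\frac{(h-1)d}{hq}-\frac{\delta}{h}.
\]
Therefore $h\beta<2-\delta$ is equivalent to $\frac{(h-1)d}{q}-\delta<2-\delta$, i.e.\ $q>\frac{d(h-1)}{2}$, which is exactly the assumption. The same identity gives $\beta<2-\delta$ as well, since $h\geq 1$. If $\beta\leq 0$, the bound holds even without interpolation. The constant $C$ depends only on $d,q,\delta,h$ and the reaction coefficients, as required.
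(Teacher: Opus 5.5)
Your proof is correct and follows the paper's argument essentially line by line. Your exponent $s$ is the paper's $\eta$ with $\frac{d}{\eta}=\delta+\frac{d}{q}$, your $\beta$ is the paper's $\rho$, and the dual Sobolev embedding, the interpolation between $L^q$ and $H^{2-\delta,q}$, and the equivalence $h\rho<2-\delta \Leftrightarrow q>\frac{d(h-1)}{2}$ are identical; your remark on enlarging the exponent of $\|v\|_{H^{2-\delta,q}}$ via $\|v\|_{L^q}\lesssim\|v\|_{H^{2-\delta,q}}$ to keep $\varepsilon\in(0,1)$ (e.g.\ when $\beta\le 0$) simply makes explicit a detail the paper leaves implicit.
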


The subcriticality of the space $L^q$ with $q$ as above is encoded in the sublinear growth of the right-hand side of \eqref{eq:subcriticality_Lq1} with respect to the $H^{2-\delta,q}$-norm.

\begin{proof}
Fix $i\in \{1,\dots,\ell\}$.
Since $q>\frac{d}{d-\delta}$ by assumption, there exists $\eta\in (1,\infty)$ such that $\frac{d}{\eta}=\delta+\frac{d}{q}$. Hence, by the Sobolev embedding $L^\eta(\T^d)\embed H^{-\delta,q}(\T^d)$,
\begin{equation}
\label{eq:estimate_nonlinearity_subcritical}
\|f_i(v)\|_{H^{-\delta,q}(\T^d)}
\lesssim \|f_i(v)\|_{L^\eta(\T^d)}
\lesssim 1+\|v\|_{L^{h\eta}(\T^d;\R^\ell)}^{h}.
\end{equation}
Note that, again by Sobolev embeddings, 
$$
H^{\rho,q}(\T^d)\embed L^{h\eta}(\T^d) \quad \text{ if } \quad \rho-\frac{d}{q}\geq -\frac{d}{h\eta} .  
$$
The previous is satisfied by letting 
$$
\rho\coloneq \max\Big\{-\frac{\delta}{h}+\frac{d}{q}\Big(1-\frac{1}{h}\Big),\, 0\Big\}.
$$
This, together with the interpolation inequality $\|u\|_{H^{\rho,q}(\T^d)}\lesssim \|u\|_{L^q(\T^d)}^{(2-\delta-\rho)/(2-\delta)}\|u\|_{H^{2-\delta,q}(\T^d)}^{\rho/(2-\delta)}$, yields
$$
\|f_i(v)\|_{H^{-\delta,q}(\T^d)}
\lesssim 1+ 
\|v\|_{L^q(\T^d;\R^\ell)}^{[h(2-\delta-\rho)/(2-\delta)]}\|v\|_{H^{2-\delta,q}(\T^d;\R^\ell)}^{(h\rho)/(2-\delta)}.
$$
The estimate \eqref{eq:subcriticality_Lq1} follows by noticing that $h\rho< 2-\delta$ if and only if $q>\frac{d(h-1)}{2}$.
\end{proof}

With the above result at our disposal, we are able to prove Proposition \ref{prop:global_cut_off}.

\begin{proof}[Proof of Proposition \ref{prop:global_cut_off}]
For brevity, we divide the proof into several steps.

\smallskip

\emph{Step 1: (Local well-posedness with cutoff). For each $v_0\in B^{2-\s-2(1+\a)/p}_{q,p}$, there exists a $(p,\a,\s,q)$-solution $(\vcut,\tau)$ to \eqref{eq:reaction_diffusion_cut_off}. Moreover, the following blow-up criterion holds: For all $T<\infty$,}
\begin{equation}
\label{eq:blow_up_criteria_cutoff}
\P\Big(\tau<T,\, \max_{1\leq i\leq \ell}\int_0^\tau (\phi_{R,r}(\vcut))^p \| f_i (\vcut)\|_{H^{-\s,q}}^p\, w_{\a}\,\dd t 
<\infty\Big)=0.
\end{equation}
The proof of Step 1 follows as in \cite[Proposition 4.2]{A22} by using Proposition \ref{prop:LWP}, a stopping time argument, and \cite[Theorem 4.10(1)]{AV19_QSEE_2}.

\smallskip

\emph{Step 2: (A-priori estimates and global well-posedness). It holds that $\tau=\infty$ a.s.\ Moreover, for all $T<\infty$, there exists $C_0>0$ independent of $v_0$ such that}
\begin{equation}
\label{eq:apriori_estiamte_cutoff_global}
\E\sup_{t\in [0,T]}\|\vcut(t)\|_{B^{2-\s-2(1+\a)/p}_{q,p}}^p+ 
\E \int_0^T \|\vcut\|_{H^{2-\s,q}}^p\,w_\a \,\dd t \leq C_0^p (1+\|v_0\|_{B^{2-\s-2(1+\a)/p}_{q,p}}^p).
\end{equation} 
Fix $T<\infty$. Let $\tau_n$ be the stopping time given by
$$
\tau_n \coleq \inf\Big\{t\in [0,\tau\wedge T)\,:\, \|\vcut(t)\|_{B^{2-\s-2(1+\a)/p}_{q,p}}^p+ \int_0^t \|\vcut\|_{H^{2-\s,q}}^p\,w_\a \,\dd s 
\geq n\Big\}
$$
with $\inf\emptyset \coleq \tau\wedge T$. As $\#\{\theta_k \neq 0\}<\infty$, we can apply the stochastic maximal $L^p$-regularity estimates in \cite[Theorem 5.2]{AV21_SMR_torus}. From the latter, there exists a constant $C>0$ independent of $v_0$ and $n\geq 1$ such that 
\begin{align}
\label{eq:smr_applied_to_vcut}
&\E\sup_{t\in [0,\tau_n]}\|\vcut(t)\|_{B^{2-\s-2(1+\a)/p}_{q,p}}^p+ 
\E \int_0^{\tau_n} \|\vcut\|_{H^{2-\s,q}}^p \,w_\a \,\dd t \\
\nonumber
&\qquad \leq C^p \|v_0\|_{B^{2-\s-2(1+\a)/p}_{q,p}}^p
+C^p\max_{1\leq i\leq \ell} \E \int_0^{\tau_n} (\phi_{R,r}(\vcut))^p\|f_i(\vcut)\|_{H^{-\s,q}}^p\,w_\a \,\dd t.
\end{align}
Let $\varepsilon\in (0,1)$ be as in Lemma \ref{l:subcriticality_Lq}. Suppose that $r\geq [(h+1-\varepsilon)p]/\varepsilon$ and let
$$
\mu_{R,r} \coleq \inf\big\{t\in [0,\tau\wedge T)\,:\, \|\vcut\|_{L^r(0,t;L^q)}\geq 2R\big\} \quad \text{ with }\quad \inf\emptyset \coleq \tau\wedge T,
$$
and $\mu_{R,r}^n =\tau_n\wedge \mu_{R,r}$. 
Now, from Lemma \ref{l:subcriticality_Lq} and $\mathrm{supp}\,\phi\subseteq [0,2]$, for all $n\geq 1$, we have
\begin{align}
\label{eq:cutoff_nonlinearity_bound_f}
&\max_{1\leq i\leq \ell} \, \E \int_0^{\tau_n} (\phi_{R,r}(\vcut))^p\|f_i(\vcut)\|_{H^{-\s,q}}^p\,w_\a \,\dd t\\
&\qquad\qquad
\leq C^p T+ C\int_0^{\mu_{R,r}^n} \|\vcut\|_{L^{q}}^{(h-1+\varepsilon)p}\|\vcut\|_{H^{2-\delta,q}}^{(1-\varepsilon)p}\,w_\a \,\dd t \\
\nonumber
&\qquad\qquad
\leq C^pT+C \|\vcut\|_{L^r(0,\mu_{R,r}^n,w_\a;L^q)}^{(h-1+\varepsilon)p}
\|\vcut\|_{L^p(0,\mu_{R,r}^n,w_\a;H^{2-\s,q})}^{(1-\varepsilon)p}\\
\nonumber
&\qquad\qquad\stackrel{(i)}{\leq} C^pT+ C_T(2R)^{(h-1+\varepsilon)p}\|\vcut\|_{L^p(0,\tau_n,w_\a;H^{2-\s,q})}^{(1-\varepsilon)p}\\
\nonumber
&\qquad\qquad\stackrel{(ii)}{\leq} C_{R,T,p} +\frac{1}{2} \|\vcut\|_{L^p(0,\tau_n,w_\a;H^{2-\s,q})}^{p},
\end{align}
where in $(i)$ we used the H\"older inequality and $w_\a(t)\leq T^\a$ for $t\in (0,T)$, and in $(ii)$ the Young inequality.
Combining the above with \eqref{eq:smr_applied_to_vcut}, by Fatou's lemma, we obtain 
\begin{equation}
\label{eq:apriori_estiamte_cutoff_global_tau}
\E\sup_{t\in [0,\tau\wedge T)}\|\vcut(t)\|_{B^{2-\s-2(1+\a)/p}_{q,p}}^p+ 
\E \int_0^{\tau\wedge T} \|\vcut\|_{H^{2-\s,q}}^p \,w_\a\,\dd t \leq  C_{R,T,p} (1+ \|v_0\|_{B^{2-\s-2(1+\a)/p}_{q,p}}^p).
\end{equation}
By letting $n\to \infty$ in \eqref{eq:cutoff_nonlinearity_bound_f} with $\tau_n$, we have 
$$
\max_{1\leq i\leq \ell} \, \E \int_0^{\tau\wedge T} (\phi_{R,r}(\vcut))^p\|f_i(\vcut)\|_{H^{-\s,q}}^p\,w_\a \,\dd t
\leq C_1( 1+ \|v_0\|_{B^{2-\s-2(1+\a)/p}_{q,p}}^p).
$$ 
From the above, the arbitrariness of $T<\infty$, and \eqref{eq:blow_up_criteria_cutoff} yield $\tau=\infty$. Hence, the a priori estimate \eqref{eq:apriori_estiamte_cutoff_global} follows from the former fact and \eqref{eq:apriori_estiamte_cutoff_global_tau}.

\smallskip

To conclude the proof of Proposition \ref{prop:global_cut_off}, it remains to show \eqref{eq:global_cut_off_continuity1}--\eqref{eq:global_cut_off_continuity2}. 
%%%
As usual, it is based on a Lipschitz-type estimate for solutions to \eqref{eq:reaction_diffusion_cut_off}. However, due to the nonlocality of the nonlinearity in \eqref{eq:reaction_diffusion_cut_off} and the $L^p(L^q)$-type spaces used in the convergence result \eqref{eq:global_cut_off_continuity1}--\eqref{eq:global_cut_off_continuity2}, we cannot apply the known stochastic Gr\"onwall's lemma. 
Instead, we prove the main estimate exploiting the \emph{subcriticality} of the setting $(p,\a,s,q)$ in \eqref{eq:global_cut_off0}.
For notational convenience, we let 
\begin{equation}
\mreg(t) \coleq 
C([0,t];B^{2-\delta-2(1+\a)/p}_{q,p})\cap L^p(0,t,w_{\a};H^{2-\s,q}),
\end{equation}
where $t\in [0,T]$ and $T<\infty$ are fixed.
Next, we first prove a Lipschitz-type estimate for the nonlinearities. The proof of \eqref{eq:global_cut_off_continuity1}--\eqref{eq:global_cut_off_continuity2} is postponed to Step 4.

\smallskip

\emph{Step 3: (Lipschitz estimate). There exists $\varepsilon_0\in (0,1)$ such that, for all $t>0$ and $v,v'\in \mreg(t)$, it holds that}
\begin{align*}
&\| \phi_{R,r}(v) f_i(v)- \phi_{R,r}(v') f_i(v')\|_{L^p(0,t,w_{\a};H^{-\s,q})}\\
&\qquad \lesssim (1+\|v\|_{\mreg(t)}^{h-1}+\|v'\|^{h-1}_{\mreg(t)})\|v-v'\|_{L^{p}(0,t;B_{q,p}^{2-\s-2(1+\a)/p})}^{1-\varepsilon_0}\|v-v'\|_{\mreg(t)}^{\varepsilon_0},
\end{align*}
\emph{where the implicit constant depends only on $T$.}
We begin by writing 
$$
\phi_{R,r}(v)f_i(v)-
\phi_{R,r}(v')f_i(v') =(
\phi_{R,r}(v)-
\phi_{R,r}(v'))f_i(v)+ 
\phi_{R,r}(v')(f_i(v)-f_i(v')).
$$
From  Lemma \ref{l:subcriticality_Lq}, it readily follows that 
\begin{align*}
\|\phi_{R,r}(v)-
\phi_{R,r}(v'))f_i(v)\|_{L^p(0,t,w_{\a};H^{-\s,q})}
&\leq C \sup_{(0, t)}|\phi_{R,r}(v)-
\phi_{R,r}(v'))| \|f_i(v)\|_{L^p(0,t,w_{\a};H^{-\s,q})} \\
&\leq C \|v-v'\|_{L^r(0,t;L^q)}
(1+\|v\|_{\mathrm{MR}_{p,\a}^{\s,q}(t)}^{h}).
\end{align*}
Let $\eta$ be as in \eqref{eq:estimate_nonlinearity_subcritical}. 
Moreover, as in \eqref{eq:estimate_nonlinearity_subcritical} and the embedding below it, we have
\begin{align*}
&\|\phi_{R,r}(v')(f_i(v)-f_i(v'))\|_{L^p(0,t,w_\a;H^{-\delta,q})}\\
&\qquad
\lesssim \big\|(1+\|v\|_{L^{h\eta}}^{h-1}+\|v'\|_{L^{h\eta}}^{h-1})\|v-v'\|_{L^{h\eta}}\big\|_{L^p(0,t,w_\a)}\\
&\qquad\lesssim (1+\|v\|_{L^{ph}(0,t,w_\a;L^{h\eta})}^{h-1}+\|v'\|_{L^{ph}(0,t,w_\a;L^{h\eta})}^{h-1})\|v-v'\|_{L^{ph}(0,t,w_\a;L^{h\eta})}\\
&\qquad\lesssim (1+\|v\|_{\mreg(t)}^{h-1}+\|v'\|_{\mreg(t)}^{h-1})\|v-v'\|_{L^r(0,t;L^q)}^{1-\varepsilon}\|v-v'\|_{\mreg(t)}^{\varepsilon},
\end{align*}
where $\delta$, $\varepsilon$ and $r$ are as in Lemma \ref{l:subcriticality_Lq}, \eqref{eq:estimate_nonlinearity_subcritical} and below \eqref{eq:smr_applied_to_vcut}, respectively. 
To conclude Step 3, it suffices to note that, as $2-\s-2(1+\a)/p\geq 0$ by \eqref{eq:global_cut_off0}, there exists $\varepsilon_1\in (0,1)$ such that, for all $t\in [0,T]$ and $v\in \mreg(t)$,
\begin{equation}
\label{eq:interpolation_spaces_Lq}
\|v\|_{L^r(0,t;L^q)}\lesssim_T \|v\|_{L^p(0,t;B^{2-\s-2(1+\a)/p}_{q,p})}^{1-\varepsilon_1}\|v\|_{\mreg(t)}^{\varepsilon_1},
\end{equation}
as it readily follows by interpolation (see \cite[Chapter 5]{BeLo}).

\smallskip

\emph{Step 4: Proof of \eqref{eq:global_cut_off_continuity1}--\eqref{eq:global_cut_off_continuity2}.}
Fix $R>1$ and $n\geq 1$. Let $(v_0^{(n)})_n$ be a sequence in $B^{2-\s-2(1+\a)/p}_{q,p}$ such that $v_0^{(n)} \to v_0$ in $B^{2-\s-2(1+\a)/p}_{q,p}$. Let $\vcut^{(n)}$ be the $(p,\a,s,q)$-solution to \eqref{eq:reaction_diffusion_cut_off} with initial data $v_0^{(n)}$. Define
$
\O_{R}^n =\big\{\max\{\|\vcut\|_{\mathrm{MR}_{p,\a}^{\s,q}(T)}\,,\,\|\vcut^{(n)}\|_{\mathrm{MR}_{p,\a}^{\s,q}(T)}\}\leq R\big\}
$
and 
$$
\chi_{R}^n \coleq \inf \big\{t\in [0,T]\,:\, \max\{\|\vcut\|_{\mathrm{MR}_{p,\a}^{\s,q}(t)}\,,\,\|\vcut^n\|_{\mathrm{MR}_{p,\a}^{\s,q}(t)}\}\geq R\big\}\quad \text{ with }\quad \inf\emptyset \coleq T.
$$
Clearly, $\{\chi_R^n < T\}\subseteq \O\setminus\O_R^n$. Define $w_0^{(n)} \coleq v_0^{(n)}-v_0$ and $\wcut^{(n)} \coleq \vcut-\vcut^{(n)}$. 
Combining the stochastic maximal regularity in \cite[Theorem 5.2]{AV21_SMR_torus} and the estimate of Step 3, for all $t\in [0,T]$,
\begin{align*}
\E\|\wcut^{(n)}\|_{\mreg(\chi_{R}^n\wedge t)}^p
\leq C  \|w_0^{(n)}\|_{B^{2-\s-2(1+\a)/p}_{q,p}}^p
+ C_R\E \big[ \|\wcut^{(n)}\|_{L^{p}(0,\chi_{R}^n\wedge t;B_{q,p}^{2-\s-2(1+\a)/p})}^{p(1-\varepsilon_0)}
\|\wcut^{(n)}\|_{\mreg(\chi_{R}^n\wedge t)}^{p\varepsilon_0}\big]&\\
\leq C  \|w_0^{(n)}\|_{B^{2-\s-2(1+\a)/p}_{q,p}}^p
+\frac{1}{2}\E\|\wcut^{(n)}\|_{\mreg(\chi_{R}^n\wedge t)}^p+  
C_R\E \|\wcut^{(n)}\|_{L^{p}(0,\chi_{R}^n\wedge t;B_{q,p}^{2-\s-2(1+\a)/p})}^{p}&.
\end{align*}
The (deterministic) Gr\"onwall's lemma applied to $X_t=\sup_{s\leq \chi_{R}^n \wedge t }\E\|\wcut^{(n)}(s)\|_{B_{q,p}^{2-\s-2(1+\a)/p}}^p$ ensures that 
$
X_T\leq C_R  \|w_0^{(n)}\|_{B^{2-\s-2(1+\a)/p}_{q,p}}^p
$ 
and, therefore,
\begin{equation}
\label{eq:estimate_wn_on_chiRn}
\E\|\wcut^{(n)}\|_{\mreg(\chi_{R}^n)}^p
\leq C_R  \|w_0^{(n)}\|_{B^{2-\s-2(1+\a)/p}_{q,p}}^p.
\end{equation}
For all $\varepsilon>0$, the above implies
\begin{align*}
\P(\|\wcut^{(n)}\|_{\mreg(T)}\geq \varepsilon)
&
\leq \P\big(\|\wcut^{(n)}\|_{\mreg(T)}\geq \varepsilon,\, \chi_{R}^n \geq T\big)
+\P(\chi_R^n <T)\\
&
\leq \frac{1}{\varepsilon^p} 
\E\|\wcut^{(n)}\|_{\mreg(\chi_R^n)}^p 
+\frac{1}{R^p}(\E\|\vcut\|_{\mreg(T)}^p+\E\|\vcut^{(n)}\|_{\mreg(T)}^p)\\
&
\leq \frac{C}{\varepsilon^p}  \|w^{(n)}_0\|_{B^{2-\s-2(1+\a)/p}_{q,p}}^p
+\frac{C}{R^p}\|v_0\|_{B^{2-\s-2(1+\a)/p}_{q,p}}^p,
\end{align*}
where, in the last step, we used \eqref{eq:estimate_wn_on_chiRn} and the uniform estimate in \eqref{eq:apriori_estiamte_cutoff_global}. 

In light of \eqref{eq:interpolation_spaces_Lq}, the conclusion now follows by letting $n\to \infty$, and afterwards $R\to \infty$. 
\end{proof}

\subsection{Scaling limit for stochastic RDEs with cutoff}
\label{ss:cutoff2}
In this subsection, a scaling limit result for stochastic RDEs with cutoff \eqref{eq:reaction_diffusion_cut_off} is proven, essentially using the ideas in \cite[Theorem 6.1]{A22} (see eq.\ (6.21) there). Here, we limit ourselves to giving comments on the modifications needed to apply the strategy in \cite[Theorem 6.1]{A22}.
To begin, for notational convenience, for fixed $q>\frac{d(h-1)}{2}\vee 2$ and $N\geq 1$, and for a compact set $K\subseteq \q \R^\ell_{> 0}$, we let 
\begin{equation}
\label{eq:def_K}
\mathcal{O}_{q,N} \coleq 
\big\{v_0\in L^q(\T^d;\R^\ell_{\geq 0})\,:\,  \|v_0\|_{L^q(\T^d;\R^\ell)}\leq N\ \text{ and } \ \q\overline{v_0}\in K \big\}.
\end{equation}
In the above, with a slight abuse of notation, we did not display the dependence on $K$. Moreover, we consider the RDEs with cutoff and enhanced diffusivity:
\begin{equation}
\label{eq:reaction_diffusion_cut_off_deterministic}
\left\{
\begin{aligned}
&\partial_t \vcdi 
 = (\nu+\nu_i) \Delta \vcdi +\phi_{R,r}(\vcd) f_i(\vcd)
& \text{ on }&\Tor^d,\\ 
&\vcdi(0)=v_{0,i}  &\text{ on }&\Tor^d.
\end{aligned}\right.
\end{equation}

\begin{theorem}[Scaling limit for stochastic RDEs with cutoff -- Finite interval]
\label{t:scaling_limit_cutoff}
Let \eqref{reaction-full} be a complex balanced chemical reaction network, and consider the reaction nonlinearities given by \eqref{b-1}.
Let $(p,q,\delta)$ be as in Theorem \ref{t:global}. 
Fix $T<\infty$. 
Assume that for each $v_0\in 
\mathcal{O}_{q,N}$, there exists a unique 
\begin{equation}
\label{eq:weak_solution_regularity_class_cutoff}
\vcd\in L^2(0,T;H^{1-\g})\cap C([0,T];H^{-\g})\cap L^\infty(0,T;L^q)\cap L^q(0,T;L^\xi)
\end{equation}
for any $\g\in (0,1)$ and for any $\xi<\infty$ if $d=2$, or $\xi=\frac{qd}{d-2}$ otherwise, that is a unique weak solution (in the PDE sense) to \eqref{eq:reaction_diffusion_cut_off_deterministic}. 
Let $(\theta^{(n)})_{n}\subseteq \ell^2$ be a sequence such that  
\begin{equation}
\label{eq:scaling_limit_noise_coefficients}
\|\theta^{(n)}\|_{\ell^2}\equiv 1 \ \text{ for all }\ n \ \quad \text{ and }\ \quad \lim_{n\to \infty }\|\theta^{(n)}\|_{\ell^\infty}=0.
\end{equation}
Then there exists $r<\infty$ sufficiently large, depending only on $q,p,\delta$ and $ d$, for which the following assertion holds. 
Let $(\vcut^{(n)})_n$ be the $(p,\a_{p,\delta},\delta,q)$-solution to the stochastic RDEs with cutoff \eqref{eq:reaction_diffusion_cut_off} with $\theta=\theta^{(n)}$ and initial data $v_0\in \mathcal{O}_{q,N}$.  
Then
\begin{equation}
\label{eq:scaling_limit_cutoff1}
\limsup_{n \to \infty} \sup_{v_0\in 
\mathcal{O}_{q,N}} \P\big(\|\vcut^{(n)}-\vcd\|_{L^r(0,T;L^q(\T^d;\R^\ell))}\geq \varepsilon\big)=0 \ \ \text{ for all }\ \varepsilon\in (0,1).
\end{equation}
\end{theorem}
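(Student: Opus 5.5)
We argue by contradiction combined with a compactness (tightness) argument, following the strategy of \cite[Theorem 6.1]{A22}. If \eqref{eq:scaling_limit_cutoff1} fails, there are $\varepsilon>0$, a subsequence (not relabeled) and initial data $v_0^{(n)}\in\mathcal{O}_{q,N}$ with
$$
\P\big(\|\vcut^{(n)}-\vcd^{(n)}\|_{L^r(0,T;L^q)}\geq \varepsilon\big)\geq \varepsilon,
$$
where $\vcd^{(n)}$ is the deterministic cutoff solution with datum $v_0^{(n)}$. Since $\mathcal{O}_{q,N}$ is bounded in $L^q$, we may extract $v_0^{(n)}\rightharpoonup v_0$ weakly in $L^q$, hence strongly in $H^{-\g}$. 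The set $\mathcal{O}_{q,N}$ is weakly closed, since nonnegativity, the norm bound and $\q\overline{v_0}\in K$ pass to weak limits, $K$ being compact. Thus $v_0\in\mathcal{O}_{q,N}$, and the uniqueness hypothesis applies to it.

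\emph{Step 1: uniform bounds.} We need bounds on $\vcut^{(n)}$ that are uniform in $n$, i.e.\ independent of $\theta^{(n)}$. Maximal regularity constants depend on $\theta$, so instead we use the It\^o $L^q$-identity (as in \eqref{eq:Itoformula_Lq_winfty}). The transport noise cancels in it, and with the cutoff the reaction term is controlled through Lemma \ref{l:subcriticality_Lq} and $\|\vcut\|_{L^r(0,t;L^q)}\leq 2R$ on the support of $\phi_{R,r}$. This gives, uniformly in $n$ and pathwise, bounds in $L^\infty(0,T;L^q)$ and $\int|v_i|^{q-2}|\nabla v_i|^2$, hence in $L^2(0,T;H^1)$ and $L^q(0,T;L^\xi)$ via Sobolev applied to $|v_i|^{q/2}$. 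For time regularity, each term in the equation is bounded in $W^{\alpha,2}(0,T;H^{-\beta})$ in moments: the drift directly, and the stochastic integral via BDG, using $\|\theta^{(n)}\|_{\ell^2}=1$ and \eqref{eq:ellipticity_noise}. By Aubin--Lions--Simon, the laws of $\vcut^{(n)}$ are tight in $L^r(0,T;L^q)\cap C([0,T];H^{-\g})$. Interpolating the $L^\infty L^q$ bound with the $L^2H^1$ bound covers $r$ large, provided $r$ stays below what the $L^\infty(L^q)$ control permits, with the strong $L^q$ compactness coming from $L^2H^{1}\cap L^\infty L^q$; should this fail, we replace $L^q$ by $L^{q'}$, $q'<q$, and interpolate back.

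\emph{Step 2: identification of the limit.} By Skorokhod/Jakubowski we pass to a new probability space with a.s.\ convergence $\tilde v^{(n)}\to \tilde v$ in $L^r(0,T;L^q)$. The cutoff $\phi_{R,r}$ is continuous in this topology, and $f(\tilde v^{(n)})\to f(\tilde v)$ in $L^1$ by the uniform higher integrability from Step 1, so the drift passes to the limit. The key point is that the martingale term vanishes. For a test function $\psi$,
$$
\E\Big|\int_0^t\sum_{k,\alpha}\theta^{(n)}_k\langle \tilde v^{(n)},\sigma_{k,\alpha}\cdot\nabla\psi\rangle\,\dd W^{k,\alpha}\Big|^2
\lesssim \nu\int_0^t\sum_{k}(\theta^{(n)}_k)^2|\langle \tilde v^{(n)}\nabla\psi,e_k\rangle|^2\,\dd s
\leq \nu\|\theta^{(n)}\|_{\ell^\infty}^2\int_0^t\|\tilde v^{(n)}\nabla\psi\|_{L^2}^2\,\dd s\to0 .
$$
The It\^o correction $\nu\Delta$ survives and produces the enhanced diffusivity $\nu+\nu_i$. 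Hence $\tilde v$ is a.s.\ a weak solution of \eqref{eq:reaction_diffusion_cut_off_deterministic} with datum $v_0$ in the class \eqref{eq:weak_solution_regularity_class_cutoff}, so by uniqueness $\tilde v=\vcd$, which is deterministic.

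\emph{Step 3: conclusion.} The same compactness argument applied to the deterministic solutions shows $\vcd^{(n)}\to\vcd$ in $L^r(0,T;L^q)$. Since the limit in Step 2 is deterministic, convergence in law upgrades to convergence in probability, $\vcut^{(n)}\to\vcd$. Together these contradict the displayed lower bound.

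The main obstacle is Step 1: obtaining $\theta$-independent compactness strong enough to cover the large exponent $r$ and the nonlocal cutoff. Here the subcriticality $q>d(h-1)/2$ and the cancellation of the noise in the $L^q$ energy identity are essential.
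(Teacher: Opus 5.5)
Your argument is correct in outline. It is essentially the tightness--Skorokhod--identification scheme of \cite[Theorem 6.1]{A22}, which the paper invokes rather than reproduces. It differs from the paper at exactly the point the paper singles out: how to obtain $\theta$-independent bounds.

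The paper's sketch treats tightness and identification as verbatim from \cite{A22}. Its one genuine modification is a $\theta$-uniform mass bound $\|\vcut(t)\|_{L^1}\le M_0$, obtained as follows:
\begin{itemize}
\item the entropy inequality with cutoff is used, where the reaction dissipation enters multiplied by $\phi_{R,r}\ge 0$ and the transport noise leaves the entropy unchanged;
\item this is combined with Csisz\'ar--Kullback--Pinsker and the continuity of $M\mapsto v_\infty(M)$;
\item the resulting bound replaces the mass-control hypothesis of \cite{A22} in bounding lower-order terms.
\end{itemize}
Uniformity over $\mathcal{O}_{q,N}$ is then handled by passing to $C^\gamma$ data with the same mass via \eqref{eq:global_cut_off_continuity2}.

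You never use the complex-balanced structure in this step, and on a finite horizon you do not need to. In an interpolation inequality of the type of Lemma \ref{l:interpolation_subcriticality_apriori_estimates}, the lower-order term satisfies $\|u\|_{L^{q+h-1}(0,t;L^1)}\le T^{\frac{1}{q+h-1}-\frac1r}\|u\|_{L^r(0,t;L^q)}\le C_T R$ on the support of $\phi_{R,r}$, once $r\ge q+h-1$. After the cutoff time the equation is linear and $L^q$-nonexpansive. So the pathwise $L^q$-energy bound closes without any mass control. This is the same mechanism the paper itself uses in Proposition \ref{prop:global_cut_off_entropy}, where $\|\cdot\|_{L^2(0,t;L^2)}$ is added to the cutoff precisely because the horizon there is infinite.

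Your contradiction argument with weakly convergent data also yields the $\sup_{v_0}$ directly. It relies on $\mathcal{O}_{q,N}$ being weakly closed, which holds since $K$ is compact and the mean is weakly continuous. Your route is thus self-contained and valid for any mass-action nonlinearity with cutoff. The paper's route reuses \cite{A22} with minimal changes and produces an $L^1$ bound that is uniform in time, not only on $[0,T]$.

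A few points need repair when you write it out.
\begin{itemize}
\item Lemma \ref{l:subcriticality_Lq} is the wrong tool inside the $L^q$ identity. An $H^{-\delta,q}$ bound on $f_i(v)$ with $\delta>1$ cannot be paired with $|v_i|^{q-2}v_i$, which has only energy-level regularity. You need the space-time $L^{q+h-1}$ interpolation described above.
\item The $L^2(0,T;H^1)$ bound does not follow from $\int|v_i|^{q-2}|\nabla v_i|^2$, because the weight degenerates where $v_i$ vanishes. Run the identity again with $q$ replaced by $2$.
\item For data only in $L^q$, the It\^o formula for $\|v_i\|_{L^q}^q$ must be justified. Mollify the data keeping $\q\overline{v_0}$ fixed and pass to the limit using \eqref{eq:global_cut_off_continuity1}, as the paper does.
\item For tightness in $L^r(0,T;L^q)$, there is no need for the vague fallback to $L^{q'}$. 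Aubin--Lions compactness in $L^2(0,T;L^2)$, together with a uniform bound in some $L^{r_1}(0,T;L^{q_1})$ with $r_1>r$ and $q_1>q$, suffices. That bound follows by interpolating your $L^\infty(0,T;L^q)$ and $L^q(0,T;L^\xi)$ bounds.
\end{itemize}
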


A typical choice of noise coefficients satisfying  \eqref{eq:scaling_limit_noise_coefficients} is given by
\begin{equation}
\label{eq:choice_noise_coefficients_scaling}
\theta_k= \frac{\Theta_k}{\|\Theta\|_{\ell^2}}\quad \text{ where }\quad \Theta_k = |k|^{-\g} \one_{N\leq |k|\leq 2N} \ \text{ for } \ k\in \Z^d_0,
\end{equation}
and where $\g>0$ is fixed; see, e.g., \cite[Subsection 1.1]{L21}.
Before going further, let us mention that weak solutions to \eqref{eq:reaction_diffusion_cut_off_deterministic} with regularity as in \eqref{eq:weak_solution_regularity_class_cutoff} are understood in the usual PDE sense: For all $\psi\in C^\infty(\T^d)$, $i\in \{1,\dots,\ell\}$, and for all $t\in [0,T]$,
\begin{align}
\langle \vcdi(t),\psi\rangle 
=
\langle v_{0,i},\psi\rangle 
+ \int_0^t \Big(-(\nu+\nu_i)\langle \nabla \vcdi, \nabla \psi\rangle + \phi_{R,r}(\vcd)\int_{\T^d} f_i(\vcd)\,\psi\,   \dd x \Big)\,\dd s.
\end{align}
In the above, we denoted by $\langle \cdot ,\cdot \rangle$ the duality pairing between $H^{-\g}$ and $H^{\g}$. Let us emphasize that the above weak formulation is well-defined as $f_i(\widehat{v}_{\mathrm{cut}})\in L^1(0,T;L^1)$ due to \eqref{eq:weak_solution_regularity_class_cutoff}, see Step 1 in \cite[Proposition 5.3]{A22}.

\begin{proof}[Proof of Theorem \ref{t:scaling_limit_cutoff} -- Sketch]
The proof of \eqref{eq:scaling_limit_cutoff1} is essentially given in 
\cite{A22}. However, here we comment on the modifications needed in order to handle complex balanced chemical reactions instead of reaction--diffusion with mass conservation, and how to weaken the regularity on the initial data in \cite[eq.\ (6.21)]{A22}.

\smallskip

\emph{Step 1: (Uniform control of the mass). There exists a constant $M_0>0$ depending only on $N$, $d$, and the compact set $K\subseteq \q \R^\ell_{> 0}$ such that, for all $v_0\in \mathcal{O}_{q,N}$, the weak solution to \eqref{eq:reaction_diffusion_cut_off} satisfies
\begin{equation}
\label{eq:control_ass_cut_equation}
\|\vcut(t)\|_{L^1(\T^d;\R^\ell)}\leq M_0 \ \ \text{ a.s.\ for all }\ t>0.
\end{equation}
In particular, $M_0$ is independent of the radially symmetric noise coefficient $\theta$.}

By a standard mollification argument, one can check that, for all $v_0\in L^q(\T^d;\R^\ell_{\geq 0})$, there exists a sequence $(v_{0,n})_{n}\subseteq C^{\g}(\T^d;\R^\ell_{\geq 0})$ such that $v_{0,n}\to v_0$, it satisfies the same $L^q$-bound, and $\q\overline{v_{0,n}}=\q \overline{v_0}$. Hence, in the following, we assume that $v_0\in C^\g(\T^d;\R^\ell_{\geq 0})$ and from Proposition \ref{prop:global_cut_off}, the corresponding solution satisfies \eqref{eq:global_cut_off1} with $\delta=1$. 

By the regularity of $\vcut$ in Proposition \ref{prop:global_cut_off} and \eqref{eq:Q_anniled_f}, one gets
$$
\q \overline{\vcut(t)}=\q \overline{v_\infty} \ \ \text{ a.s.\ for all }\ t>0.
$$
By Lemma \ref{l:continuous_cinfty} and the Csisz\'ar--Kullback--Pinsker inequality (see, e.g., \cite{AMTU01} or \cite[Lemma 2.6]{DFT17}), to prove \eqref{eq:control_ass_cut_equation}, it suffices to show 
\begin{equation}
\label{eq:entropy_dissipation_cutoff}
\Ent(\vcut(t)|v_\infty)\leq \Ent (v_0|v_\infty).
\end{equation}

To prove the above, by a standard approximation argument together with the It\^o formula (see either \cite[Section 3]{Kry13} or \cite[Appendix A]{DHV16}) and Fatou's lemma, one can check that, a.s.\ for all $t>0$, 
\begin{equation}
\label{eq:dissipation_inequality_relation_cutoff}
\int_{0}^t \big(\D_{{\rm diff}}(\vcut)+ \phi_{R,r}(\vcut)\D_{{\rm reac}}(\vcut)\big)\,\dd s
\leq 
\Ent(v_0|v_\infty)-
\Ent(\vcut(t)|v_\infty),
\end{equation}
where $\D_{{\rm diff}}(\vcut)$ and $\D_{{\rm reac}}(\vcut)$ denote the diffusion and the reaction dissipation terms, i.e., for any $v\in H^1(\T^d;\R^\ell_{\geq 0})$,
\begin{align*}
\D_{{\rm diff}}(v)=
\sum_{1\leq i\leq \ell} \nu_i \int_{\T^d} \frac{|\nabla v_i|^2}{v_i}\,\dd x\ \  \text{ and }\ \  
\D_{{\rm reac}}(v)=
\sum_{1\leq r\leq m } k_r \sol_{\infty}^{\y_r} \int_{\T^d} \Psi\Big(\frac{\sol^{\y_r}}{\sol_{\infty}^{\y_r}},\frac{\sol^{\y_r'}}{\sol_{\infty}^{\y_r'}}\Big)\, \dd x,
\end{align*}
where $\Psi$ is as in \eqref{eq:dissipation}. As $
\D_{{\rm diff}}(v)\geq 0$ and 
$\D_{{\rm reac}}(v)\geq 0$, \eqref{eq:entropy_dissipation_cutoff} follows from \eqref{eq:dissipation_inequality_relation_cutoff}.

\smallskip

\emph{Step 2: Proof of \eqref{eq:scaling_limit_cutoff1} by density.}
In light of Step 1, the proof of \cite[Theorem 6.1]{A22} follows verbatim. The only changes are needed in the proof of \cite[Theorem 4.1(2)]{A22}, where one uses the bound \eqref{eq:control_ass_cut_equation} to bound lower order terms, and a standard approximation argument together with \eqref{eq:global_cut_off_continuity1} to pass to the limit in the a priori bounds in \cite[Theorem 4.1(2)]{A22} (see the proof of Lemma \ref{lem:Lebesgue_initial_data_continuity} for an analogous result). Hence, \cite[eq.\ (6.21)]{A22} also holds in this case. To prove \eqref{eq:scaling_limit_cutoff1}, it remains to note that, for all $\g\in (0,1)$ and $\varepsilon>0$, 
\begin{equation}
\label{eq:equality_of_the_sup_proof_scaling}
\sup_{v_0\in 
\mathcal{O}_{q,N}}
\P\big(\|\vcut^{(n)}-\vcd\|_{L^r(0,T;L^q(\T^d;\R^\ell))}\geq \varepsilon\big)
\leq \sup_{v_0\in 
\mathcal{B}_{q,N,\g}}
\P\big(\|\vcut^{(n)}-\vcd\|_{L^r(0,T;L^q(\T^d;\R^\ell))}\geq \varepsilon/2\big)
\end{equation}
where 
$
\mathcal{B}_{q,N,\g} \coleq \mathcal{O}_{q,N} \cap C^{\g}(\T^d;\R^\ell).
$
Clearly, \eqref{eq:equality_of_the_sup_proof_scaling} follows from \eqref{eq:global_cut_off_continuity2} and the approximation argument via mollification mentioned at the beginning of Step 1 of the current proof.
\end{proof}

We collect some observations in the following

\begin{remark}\
\begin{itemize}
    \item Using stochastic Meyers' estimates \cite[Appendix A]{A25_anomalous}, one can prove that \eqref{eq:scaling_limit_cutoff1} holds with the space $L^r(0,T;L^q)$ replaced by $C([0,T];L^q)$ at the expense of additional regularity with respect to the initial data. 
    \item It does not seem possible to obtain the scaling limit of Theorem \ref{t:scaling_limit_cutoff} on the whole line, i.e., $T=\infty$. The key problem is in the compactness argument behind \cite[Theorem 6.1]{A22}, and the possibly complicated long-time behavior of the stochastic RDEs with cutoff \eqref{eq:reaction_diffusion_cut_off}, which might differ from the one of \eqref{eq:reaction_diffusion} in the case $\|\vcut\|_{L^r(0,t;L^q)}\geq 2R$ for $t\gg 1$ (and removing the reaction term as $\phi_{R,r}(\vcut)=0$). Thus, in the latter case, one cannot use the entropy--entropy dissipation relation of Theorem \ref{t:entropy_dissipation}, which effectively uses the nonlinear reactions for the convergence to the equilibrium $v_\infty$.
\end{itemize}    
\end{remark}

\subsection{Global well-posedness of deterministic RDEs with high diffusivity}
\label{ss:global_with_high_diffusivity}
The aim of this subsection is to prove the global well-posedness of the following deterministic RDEs with large diffusion coefficients:
\begin{equation}
\label{eq:reaction_diffusion_deterministic_enhanced_diffusivity}
\left\{
\begin{aligned}
&\partial_t \vdi 
 = \mu_i \Delta \vdi +f_i(\vd)
& \text{ on }&\Tor^d,\\ 
&\vdi(0)=v_{0,i} &\text{ on }&\Tor^d.
\end{aligned}\right.
\end{equation}
Below, a $(p,\a,\delta,q)$-solution to the \emph{deterministic} reaction--diffusion equations is understood as in Definition \ref{def:solution} with zero noise. Let us point out that in this case, due to the lack of the stochastic term, the time weight $\a$ can be chosen from the interval $[0,p-1)$ differently from what is assumed in Definition \ref{def:solution} (in the latter, the constraint $\a<\frac{p}{2}-1$ is required to make sense of the stochastic integral). We refer to, e.g., \cite[Section 5]{pruss2016moving} or \cite{CriticalQuasilinear} for the $L^p$-approach to evolution equations with time weights.

\begin{proposition}[Global well-posedness with high diffusivity]
\label{prop:global_high_viscosity}
Let \eqref{reaction-full} be a complex balanced chemical reaction network. Fix $q\geq \frac{d(h-1)}{2}\vee 2$, $p\in [q,\infty)$, and let $\kappa_p=\frac{p}{2}-1$. Let $N\geq 1$ and a compact set $K\subseteq \q \R^\ell_{> 0}$ be given. 
Then, there exists a constant $\mu_0\geq 1$ depending only on $N$, $q$, $p$, $d$, and $K$, for which the following assertion holds: If 
$$
\min_{1\leq i\leq \ell}\mu_i\geq \mu_0,
$$
then, for all initial data $v_0$ satisfying
$$
v_0\in \InD^q , \qquad  \|v_0\|_{L^q(\T^d;\R^\ell)}\leq N, \qquad \text{ and }\qquad 
\q \overline{ v_0} \in K,
$$
there exists a \emph{global (unique)} $(p,\a_p,1,q)$-solution $\wh{v}$ to the deterministic RDEs \eqref{eq:reaction_diffusion_deterministic_enhanced_diffusivity} such that 
\begin{align}
\label{eq:global_high_viscosity1}
\vd
&\in W^{1,p}_{\loc}([0,\infty),w_{\a_p};W^{-1,q} )\cap L^{p}_{\loc}([0,\infty),w_{\a_p};W^{1,q} ).
\end{align}
If additionally, for each $M\in \q\R^\ell_{> 0}$, there are no boundary equilibria $\wt{v}_\infty$ such that $\q\wt{v}_\infty=M$ (see Definition \ref{def:complex_boundary_equilibria}), then, for any $q_0\in (1,q)$, there exist $L, \lambda>0$ such that
\begin{align}
\label{eq:global_high_viscosity2}
\|\vd-v_\infty\|_{L^\infty(\R_+;L^q)}+\max_{1\leq i\leq \ell}\int_{ \R_+}\int_{\T^d} |\vdi-v_{\infty,i}|^{q-2}|\nabla \vdi|^2\,\dd x \,\dd t &\leq L,\\
\label{eq:global_high_viscosity21}
\sup_{t>0} \big(e^{\lambda t}\|\vd(t)-v_\infty\|_{L^{1}}\big)&\leq L .
\end{align}
\end{proposition}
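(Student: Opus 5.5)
The plan has three parts. First, local existence and a blow-up criterion for the deterministic problem. Second, an entropy argument that brings $\vd$ close to $v_\infty$ in $L^q$ after a finite, uniform time. Third, the noise-free close-to-equilibrium theory of Section \ref{s:global_close_equilibria}, which keeps it there forever.

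\emph{Local theory.} Local existence of a $(p,\a_p,1,q)$-solution follows from Proposition \ref{prop:LWP} with $\nu=0$ (the deterministic case is simpler, and $\a_p=\frac p2-1$ is admissible). The blow-up criterion is then the deterministic version of \cite[Corollary 2.11]{AV23_RDE_local}: the solution is global as soon as $\sup_t\|\vd(t)\|_{L^q}<\infty$ on its existence interval in the subcritical case. The critical case $q=\frac{d(h-1)}2$ needs in addition $L^{q_0}$-space-time control, as in Step 3 of the proof of Theorem \ref{t:small_implies_global}.

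\emph{Entropy decay.} The entropy identity gives $\frac{\dd}{\dd t}\Ent(\vd|v_\infty)=-\D_\mu(\vd)$, where $\D_\mu$ is $\D$ with $\nu_i$ replaced by $\mu_i$. Since $\mu_i\geq\mu_0\ge \nu_i$, one has $\D_\mu\geq \D$.

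We have $\Ent(v_0|v_\infty)\le C(N,K)$, because $x\log x\lesssim 1+x^q$ and $v_\infty$ ranges over a compact set by Lemma \ref{l:continuous_cinfty}. Under the no-boundary-equilibria assumption, Theorem \ref{t:entropy_dissipation} applies with condition \ref{cond:eep-estimate-2} and $v_{\rm inf}=0$. It yields $\Ent(\vd(t)|v_\infty)\le e^{-\lambda t}\Ent(v_0|v_\infty)$ with $\lambda$ uniform over the data class. The Csisz\'ar--Kullback--Pinsker inequality then gives \eqref{eq:global_high_viscosity21} on the existence interval.

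Without the no-boundary assumption, only the monotonicity of the entropy is used, giving a uniform $L^1$ bound.

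\emph{Uniform $L^q$ bounds for large $\mu_0$ (main obstacle).} I would perform the $L^q$ energy estimate on $\vd$:
\[
\tfrac{\dd}{\dd t}\|\vd_i\|_{L^q}^q+q(q-1)\mu_i\int|\vd_i|^{q-2}|\nabla\vd_i|^2\,\dd x\le C\int(1+|\vd|^{h+q-1})\,\dd x.
\]
The right-hand side is handled by the Gagliardo--Nirenberg-type interpolation of Lemma \ref{l:nonlinear_estimate_easy}, with the low-order part controlled by the uniform $L^1$ mass bound. This is the subcritical mechanism of \cite[Section 4]{A22}: the large factor $\mu_0$ in front of the dissipation beats the superlinear term on a fixed time window $[0,T]$. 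The result is $\sup_{[0,T]}\|\vd\|_{L^q}\le C(N)$ and global-in-$[0,T]$ existence, provided $\mu_0\geq\mu_0(N,T)$.

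Equivalently, one can rescale time by $\mu_0$: then $\vd$ is close to the heat flow with reaction of size $\mu_0^{-1}$, and maximal regularity plus the continuity method give the bound. Choosing $T$ depends only on the entropy decay and not on $\mu_0$, so no circularity arises. The delicate point is making the constants uniform in $\mu\geq\mu_0$, since a larger diffusion only helps the dissipation.

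\emph{Closing.} I would combine the $L^1$ decay with the $L^q$ bound (by interpolation) and the smoothing given by large diffusion, for example the $L^{q'}$ bound for some $q'>q$ obtained by the same argument. This gives $\|\vd(T)-v_\infty\|_{L^q}\le\eta$ once $T$ is large, uniformly over the data class. Then I would apply Theorem \ref{t:small_implies_global} with zero noise, with $\tau_0=T$ deterministic and probability one. Its proof only used Lemmas \ref{l:spectral_gap}, \ref{l:L2_estimates_small_implies} and \ref{l:Lq_estimate_small}, which hold with $\nu_i$ replaced by $\mu_i\geq\nu_i$, with $\lambdalin$ only improving. This yields global existence and the bound \eqref{eq:global_high_viscosity2} on $[T,\infty)$, while $[0,T]$ is covered by the previous step.

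When boundary equilibria exist, global existence, without decay, follows by iterating the window argument using the uniform entropy bound, which controls $L^1$ for all times. The regularity \eqref{eq:global_high_viscosity1} follows from deterministic maximal $L^p$-regularity with weight $w_{\a_p}$.
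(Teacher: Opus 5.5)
\textbf{The decay part (no boundary equilibria): correct, but a different route.} Your argument is sound here, but it is not how the paper proceeds. The paper does not use the close-to-equilibrium mechanism of Theorem~\ref{t:small_implies_global}; it borrows only that proof's final blow-up-criterion step.

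\emph{Paper's route.} It first proves uniform exponential $L^1$-decay. Note that Theorem~\ref{t:entropy_dissipation} also requires a uniform bound on $\Ent(\overline{\vd(t)}|v_\infty)$. The paper gets this from $\Ent(\vd|v_\infty)=\Ent(\vd|\overline{\vd})+\Ent(\overline{\vd}|v_\infty)$ and monotonicity of the entropy, and you should state it too. The paper then runs the $L^q$-estimate of Lemma~\ref{l:Lq_estimate_small} directly for $\vd-v_\infty$ on the whole half-line. Since $\int_0^\infty\|\vd(t)-v_\infty\|_{L^1}\,\dd t<\infty$, every lower-order term is integrable on $\R_+$. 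The superlinear term is handled by the large diffusivity through the interpolation of Lemma~\ref{l:nonlinear_estimate_easy}. This gives \eqref{eq:global_high_viscosity2} in one stroke with a $T$-independent $\mu_0$, and the blow-up criterion then yields $\widehat\tau=\infty$.

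\emph{What this saves.} It avoids your finite window and the spectral-gap machinery. It also avoids the smoothing to some $L^{q'}$ with $q'>q$, which you genuinely need (e.g.\ in the critical case) to turn $L^1$-decay into $\|\vd(T)-v_\infty\|_{L^q}\le\eta$.

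\emph{Why your route still works.} The proofs of Lemmas~\ref{l:L2_estimates_small_implies}, \ref{l:spectral_gap}, \ref{l:Lq_estimate_small} and of Theorem~\ref{t:small_implies_global} are monotone in the diffusion coefficients, so $\eta$ and $R$ are uniform in $\mu_i\ge\nu_i$. To fix $T$ before $\mu_0$, you must also note two things: the bounds on $[0,T]$ (including the $L^{q'}$-bound) grow only polynomially in $T$, and they do not deteriorate as $\mu_0$ grows. Then the exponential $L^1$-decay wins.

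\textbf{The gap: global existence when boundary equilibria are allowed.} The first assertion of the proposition requires global existence with $\mu_0$ depending only on $N,q,p,d,K$, and here no decay is available. Iterating your window argument does not close.
\begin{itemize}
\item A window started at $L^q$-level $N$ ends at a level $C(N,T)$ that in general exceeds $N$. The uniform mass bound $M_0$ controls the lower-order terms only pointwise in time, so their time integral grows linearly in $T$.
\item The next window therefore needs $\mu_0\ge\mu_0(C(N,T),T)>\mu_0(N,T)$, and so on. With a fixed $\mu_0$ you only reach a finite time.
\item The uniform $L^1$-bound never resets the $L^q$-level.
\end{itemize}

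\emph{What is missing} is a damping mechanism that makes the $L^q$-bound uniform in time. One way, inside the continuity argument:
\begin{itemize}
\item Spend half of the dissipation absorbing the reaction term via $\int_{\T^d}|\vd_i|^{q(d+2)/d}\,\dd x\lesssim\|\vd_i\|_{L^q}^{2q/d}\|\nabla|\vd_i|^{q/2}\|_{L^2}^2+\|\vd_i\|_{L^1}^{q(d+2)/d}$, recalling $q+h-1\le q(d+2)/d$.
\item Spend the other half via Poincar\'e's inequality for $|\vd_i|^{q/2}$, together with $\|\vd_i\|_{L^{q/2}}^q\le\frac12\|\vd_i\|_{L^q}^q+C\|\vd_i\|_{L^1}^q$.
\end{itemize}
This gives $\frac{\dd}{\dd t}\|\vd\|_{L^q}^q\le-c\mu_0\|\vd\|_{L^q}^q+C\mu_0M_0^q+C(M_0)$. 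Hence there is an absorbing $L^q$-level depending only on $N$ and $K$, and a single $\mu_0$ works for all times. The paper also treats this case only in a brief remark, but your iteration as stated does not prove it.
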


In the setting of Proposition \ref{prop:global_high_viscosity}, by interpolating \eqref{eq:global_high_viscosity2} and \eqref{eq:global_high_viscosity21}, it follows that, for each $q_0\in (1,q)$, there exist $L_0,\lambda_0>0$ depending only on $\lambda$, $L$, and $q_0$, for which 
\begin{equation}
\label{eq:global_high_viscosity3}
\|\vd(t)-v_\infty\|_{L^{q_0}}\leq L_0 e^{- \lambda_0 t} \ \  \text{ for all }\ t>0.
\end{equation}

\begin{remark}
    Proposition \ref{prop:global_high_viscosity} is an improvement of \cite[Theorem 1.2]{morgan2020boundedness} because here it only requires the boundedness of initial data in $L^q$-norm with $q \geq \frac{d(h-1)}{2}\vee 2$ instead of the boundedness in $L^\infty$-norm as in \cite{morgan2020boundedness}. Interestingly, we cover the critical regime $q=\frac{d(h-1)}{2}\vee 2$.
\end{remark}

The proof of Proposition \ref{prop:global_high_viscosity} follows the line of \cite[Proposition 5.1]{A22}. However, we provide some details for establishing the a priori estimates \eqref{eq:global_high_viscosity2}--\eqref{eq:global_high_viscosity3}, where we employ the entropy--entropy dissipation inequality Theorem \ref{t:entropy_dissipation}. As shown in Figure \ref{fig:proof_architecture} (see also Remark \ref{rem:boundary_equilibria}), this is the only place where the latter are used in the proof of Theorems \ref{t:global} and \ref{t:enhanced_dissipation_reaction}.

\begin{proof}
Arguing as in \cite[Proposition 5.1]{A22} (see also Proposition \ref{prop:LWP} and Lemma \ref{lem:Lebesgue_initial_data_continuity}), as $p\geq q$, for all 
$$
v_0\in L^q(\T^d;\R^\ell)\subseteq B^0_{q,p}(\T^d;\R^\ell),
$$   
there exists a \emph{local} $(p,\a_p,1,q)$-solution $(\vd,\taud)$ to \eqref{eq:reaction_diffusion_deterministic_enhanced_diffusivity} satisfying \eqref{eq:global_high_viscosity1} with $[0,\infty)$ replaced by $[0,\taud)$. Moreover, from Lemma \ref{lem:Lebesgue_initial_data_continuity}, it holds that
\begin{align*}
\vd&\in C([0,\taud);L^q(\T^d;\R^\ell_{\geq 0})),\\
|\vdi|^{q-2}|\nabla \vdi |^2&\in L^1([0,\taud)\times \T^d)
\end{align*}
for all $i\in \{1,\dots,\ell\}$. Thus, it remains to prove $\taud=\infty$ and the estimates \eqref{eq:global_high_viscosity2}--\eqref{eq:global_high_viscosity3}. 
We divide the proof into several steps. Here we focus on proving \eqref{eq:global_high_viscosity2}--\eqref{eq:global_high_viscosity21}, while the assertion \eqref{eq:global_high_viscosity1} can be proven analogously avoiding the use of the entropy--entropy dissipation relation in Step 1, and using only the entropy bound \eqref{eq:entropy_decay_strict_inequality} below, which yields an $L^1((0,T)\times \T^d)$-bound on $v$ with corresponding constant depending on $T$ via the Csisz\'ar--Kullback--Pinsker inequality, see the proof of \eqref{eq:control_ass_cut_equation} for a similar situation.

\smallskip

\emph{Step 1: There exist constants $L_1,\lambda_1>0$ depending only on $N$, $q$, $p$, $d$, and $K$ such that}
\begin{equation*}
    \|\vd(t)- v_\infty\|_{L^1}\leq L_1 e^{-\lambda_1 t}\ \  \text{ for all }\ t\in [0,\taud).
\end{equation*}
Let $\Ent$ be the entropy as defined in \eqref{eq:entropy}. From the discussion at the beginning of Section \ref{s:entropy_review}, we have 
\begin{equation}
\label{eq:entropy_dissipation_relation_deterministic_proof}
\frac{\dd}{\dd t }\Ent(\vd(t)| \veq)=- \wh{\D} (\vd(t))\leq 0 \  \text{ for all }\ t\in [0,\taud).
\end{equation}
where $\wh{\D}$ is as in \eqref{eq:dissipation} with $\nu_i$ replaced by $\mu_i$. 
Hence,
\begin{equation}
\label{eq:entropy_decay_strict_inequality}
\Ent(\vd(t)|\veq)\leq \Ent(v_0|\veq)\lesssim 1+N\  \text{ for all }\ t\in [0,\taud)
\end{equation} 
as $v_0\in 
\mathcal{O}_{q,N}$, where the latter is defined in \eqref{eq:def_K}. 

Next, we apply Theorem \ref{t:entropy_dissipation}. To obtain constants that are uniform in $v_0$, we first need to prove a uniform bound on $\Ent(\overline{\vd(t)}|\veq)$ for $t\in [0,\taud)$. To this end, note that  
$$
\Ent(\vd(t)|\veq)
=
\Ent(\vd(t)|\overline{\vd(t)})+
\Ent(\overline{\vd(t)}|\veq)
$$
and $
\Ent(\vd(t))|\overline{\vd(t)})\geq 0$ for $t\in [0,\taud)$. Combining this with \eqref{eq:entropy_decay_strict_inequality}, we obtain $\Ent(\overline{\vd(t)}|\veq)\lesssim 1+ N$. Therefore, the conclusion of Step 1 follows from \eqref{eq:entropy_dissipation_relation_deterministic_proof} and Theorem \ref{t:entropy_dissipation}.

\smallskip

\emph{Step 2: There exists a constant $L_2>0$ depending only on $N$, $q$, $p$, $d$, and $K$ such that}
\begin{equation*}
\|\vd-v_\infty\|_{L^\infty(0,\taud ;L^q)}+\int_{ 0}^{\taud }\int_{\T^d} |\vdi-v_{\infty,i}|^{q-2}|\nabla \vdi|^2\,\dd x \,\dd t \leq L_2.
\end{equation*}
The proof of Step 2 is similar to that of Lemma \ref{l:Lq_estimate_small}, where one uses
$$
\int_{0}^{\taud}\int_{\T^d} |\wdd|\,\dd x \,\dd s\lesssim \Ent(v_0|\veq)\lesssim_{K,N}1,
$$
as it follows from Step 1. Note that in Lemma \ref{l:Lq_estimate_small}, we cannot use the above as Step 1 relies on the absence of boundary equilibria via Theorem \ref{t:entropy_dissipation}; the latter is not assumed in Lemma \ref{l:Lq_estimate_small} (in the latter, linear terms are estimated via the spectral gap of the linearized operator around the equilibrium $\veq$; see Lemma \ref{l:L2_estimates_small_implies}).

\smallskip

\emph{Step 3: Conclusion: $\taud=\infty$}. From the estimate in Step 2, the conclusion follows as in Step 3 in the proof of 
Theorem \ref{t:small_implies_global}; see Subsection \ref{ss:proof_small_implies_global}.
\end{proof}

\subsection{Proof of Theorem \ref{t:global}}
\label{ss:proof_global_well_posedness}
With all the previous results at our disposal, we can finally provide the proof of the global well-posedness result of Theorem \ref{t:global}.

\begin{proof}[Proof of Theorem \ref{t:global}]
In light of Theorems \ref{t:small_implies_global},  \ref{t:scaling_limit_cutoff} and Proposition \ref{prop:global_high_viscosity}, the proof of Theorem \ref{t:global} is standard. For the reader's convenience, we include some details.

We begin by collecting some facts. First, by compatibility of $(p,\a_{p,\delta},\delta,q)$-solutions to \eqref{eq:reaction_diffusion}, it suffices to consider fixed parameters $q>\frac{d(h-1)}{2}\vee 2$, $\delta\in (1,2)$ and $p> \frac{2}{2-\delta}\vee q$, where as usual, $\a_{p,\delta}=p(1-\frac{\delta}{2})-1$. Second, $N\geq 1$ and 
$\varepsilon\in (0,1)$ are given constants. Moreover, for $\InD^q$ as in \eqref{eq:initial_data_Xin}, we define 
\begin{equation}
\label{eq:def_mathcal_OInD}
\mathcal{O}_{q,N}^{\InD}
=\{v_0\in \InD^q\,:\,  \|v_0\|_{L^q(\T^d;\R^\ell)}\leq N\ \text{ and } \ \q\overline{ v_0}\in K  \}.
\end{equation}
Finally, let $r\in (1,\infty)$ be so large that the conclusion of Theorem \ref{t:scaling_limit_cutoff} holds, and let $\eta\in (0,1)$ be such that Theorem \ref{t:small_implies_global} holds with $q$ replaced by $q_*>2$ such that $q_*\in (\frac{d(h-1)}{2}\vee \frac{d}{d-\delta},q)$, and $(p,\delta)$ as above.

\smallskip

It follows from Proposition \ref{prop:global_high_viscosity} and \eqref{eq:global_high_viscosity2} that there exist constants 
\begin{equation}
\label{eq:parameters_choice_global}
\nu_0(q,K,p)>0\quad \text{ and } \quad  R_0(q,K,N,r),T_0(q,K,N,r,q_*)>0
\end{equation}
such that, for all $v_0\in \mathcal{O}^{\InD}_{q,N}$ and $\nu\geq \nu_0$, the unique $(p,\a_{p,\delta},\delta,q)$-solution $\vd$ to the RDEs \eqref{eq:reaction_diffusion_deterministic_enhanced_diffusivity} with increased diffusivity $\mu_i=\nu_i+\nu$ is \emph{global} in time and satisfies 
\begin{equation}
\label{eq:smallness_global_holds}
\|\vd\|_{L^r(0,T_0;L^q)}\leq R_0-1\quad \text{ and }\quad
\sup_{t\geq T_0-1}\|\vd-\veq\|_{L^{q_*}}\leq \frac{\eta}{2}.
\end{equation}
Let us stress that all the above choices are independent of  $\theta\in \ell^2$ satisfying \eqref{eq:theta_normalized_symmetric}, while for the intensity of the noise, we enforce $\nu\geq \nu_0$.

\smallskip

For clarity, we now divide the proof into two steps. Below, $(v,\tau)$ is the $(p,\a_{p,\delta},\delta,q)$-solution to \eqref{eq:reaction_diffusion} provided by Proposition \ref{prop:LWP} with initial data $v_0\in \mathcal{O}_{q,N}^{\InD}$.

\smallskip

\noindent \emph{Step 1: (Classical solutions up to time $T_0$). There exists a normalized radially symmetric $\theta\in \ell^2$ independent of $v_0\in \mathcal{O}_{q,N}^{\InD}$ such that the solution $(v,\tau)$ lives up to time $T_0$ with large probability:
\begin{equation}
\label{eq:step1_global_classical1}
\P(\tau\geq T_0)>1-\frac{\varepsilon}{2}.    
\end{equation}
Moreover, }
\begin{equation}
\label{eq:step1_global_classical2}
\P\Big(\tau\geq T_0,\,\|v-\vd\|_{L^r(0,T_0;L^q)}\leq\frac{\eta}{2}\Big)>1-\frac{\varepsilon}{2}.
\end{equation}

From Theorem \ref{t:scaling_limit_cutoff} applied with the choice of $(\theta^{(n)})_{n\geq 1}$ as in \eqref{eq:choice_noise_coefficients_scaling}, it follows that there exists a normalized radially symmetric $\theta\in \ell^2$ such that $\#\{k\,:\,\theta_k\neq 0\}<\infty$, and uniformly in $v_0\in \mathcal{O}_{q,N}^{\InD}$, the global $(p,\a_{p,\delta},\delta,q)$-solution to the stochastic RDEs \eqref{eq:reaction_diffusion_cut_off} with cutoff $\vcut$ with $R=R_0$ satisfies
\begin{equation}
\label{eq:large_probability_LrLq_close}
\P\Big(\|\vcut-\vd\|_{L^r(0,T_0;L^q)}\leq\frac{\eta}{2}\Big)>1-\frac{\varepsilon}{2}.
\end{equation}
Here, we also used that \eqref{eq:reaction_diffusion_cut_off_deterministic} implies that $\vd$ is also a solution to \eqref{eq:reaction_diffusion_cut_off_deterministic} with $R=R_0$. Thus, \cite[Corollary 5.5]{A22} ensures that $\vd$ is indeed the unique weak solution to \eqref{eq:reaction_diffusion_cut_off_deterministic}.

As $\eta\in (0,1)$ and $\vd$ satisfies the first bound in \eqref{eq:smallness_global_holds}, we have
\begin{equation}
\label{eq:large_probability_LrLq}
\P(\|\vcut\|_{L^r(0,T_0;L^q)}\leq R_0)>1-\varepsilon/2.
\end{equation}
Let $\tau_0$ be the stopping time given by 
$$
\tau_0 \coleq \inf\big\{t\in [0,T_0]\,:\,\|\vcut\|_{L^r(0,t;L^q)}\geq R_0\big\} \quad \text{ with } \quad \inf\emptyset \coleq T_0.
$$
Clearly, 
$$
\P(\tau_0\geq T_0)\geq \P(\|\vcut\|_{L^r(0,T_0;L^q)}\leq R_0)\stackrel{\eqref{eq:large_probability_LrLq}}{\geq }1-\varepsilon/2
$$
and 
\begin{equation}
\phi_{R,r}(\cdot,\vcut)=1  \ \ \text{ on }\ \  [0,\tau_0)\times\O.  
\end{equation}
In particular, on the stochastic interval $[0,\tau_0)\times \O$, the cutoff 
$\phi_{R,r}(\vcut)$ is not present, and $(\vcut|_{[0,\tau_0)\times \O},\tau_0)$ is a local $(p,\a_{p,\delta},\delta,q)$-solution to the stochastic RDEs \eqref{eq:reaction_diffusion} \emph{without} cutoff. From the maximality of a $(p,\a_{p,\delta},\delta,q)$-solution to the stochastic RDEs \eqref{eq:reaction_diffusion_cut_off} (see Definition \ref{def:solution}), we have 
\begin{equation}
\label{eq:tau_0tau_equality_global_proof}
\tau\geq \tau_0 \ \text{ a.s. }\quad \text{ and }\quad 
v=\vcut \ \text{ a.e.\ on }\ [0,\tau_0)\times \O.
\end{equation}
This proves \eqref{eq:step1_global_classical1}. The assertion \eqref{eq:step1_global_classical2} is a consequence of \eqref{eq:large_probability_LrLq_close} and \eqref{eq:tau_0tau_equality_global_proof}. Indeed, from the definition of $\tau_0$ and \eqref{eq:smallness_global_holds}, it follows that   
$$
\{\|\vcut-\vd\|_{L^r(0,T_0;L^q)}\leq \eta/2\}\subseteq \{\tau_0\geq T_0\}.
$$

\smallskip

\noindent\emph{Step 2: Conclusion}.  
Let $\nu$ and $\theta$ be fixed as above; see below \eqref{eq:parameters_choice_global} and Step 1, respectively.
We begin by noticing that \eqref{eq:step1_global_classical2} and the second bound in \eqref{eq:smallness_global_holds} imply
\begin{equation}
\begin{aligned}
\label{eq:smallness_Lq00}
\P(\O_0)>1-\varepsilon, \quad \text{ with }\quad 
\O_0 &\coleq \{\tau\geq T_0,\, \|\vcut-\vd\|_{L^r(0,T_0;L^{q_*})}\leq \eta/2\}\\
\text{ and }\quad \O_0&\subseteq \{\|\vcut-v_\infty\|_{L^r(T_0-1,T_0;L^{q_*})}
\leq\eta\},
\end{aligned}
\end{equation}
where $q_*\in (\frac{d(h-1)}{2}\vee \frac{d}{d-\delta}\vee 2,q)$ was fixed above \eqref{eq:parameters_choice_global}. 
As below \eqref{eq:tau_0tau_equality_global_proof}, 
\begin{align}
\label{eq:smallness_Lq000}
\tau\geq T_0 \ \text{ a.s.\ } \O_0\quad \text{ and }\quad \vcut=v \ \text{ a.e.\ on }[0,T_0)\times \O_0.
\end{align}
Hence, $\|\vcut-\veq\|_{L^r(T_0-1,T_0;L^{q_*})}\leq \eta$ on $\O_0$. Therefore, for all $\om \in \O_0$, there exists $t(\om)\in (T_0-1,T_0)$ such that 
\begin{equation}
\label{eq:smallness_Lq0}
\|\vcut(t(\om))-\veq\|_{L^{q_*}}\leq \eta.    
\end{equation}
Let $\tau_*$ be defined as
\begin{equation}
\label{eq:definition_tau_star}
\tau_*
\coloneq
\inf\big\{
t\in[T_0-1,T_0)\,:\,
\|\vcut(t)-v_\infty\|_{L^{q_*}}\leq\eta
\big\},
\qquad
\inf\emptyset\coloneq T_0.
\end{equation}
As $\a_{p,\delta}>0$ due to $p>\frac{2}{2-\delta}$ (see the conditions at the beginning of the proof), the assertion \eqref{eq:continuity_in_Lq_cutoff} ensures that $\tau_*$ is a stopping time. 
Additionally, from \eqref{eq:smallness_Lq00}--\eqref{eq:smallness_Lq0}, we can infer
\begin{equation}
\label{eq:small_implies_global_end_proof}
\P\big(
\tau_*<\tau,\,
\|v(\tau_*)-v_\infty\|_{L^{q_*}}\leq\eta
\big)
\geq\P(\O_0)>1-\varepsilon.
\end{equation}
The claim of Theorem \ref{t:global} now follows from Theorem \ref{t:small_implies_global} and the choice of $\eta\in (0,1)$ made at the beginning of the proof.
\end{proof}

From the proof of Theorem \ref{t:global}, we can extract the following result, which will be used in the proof of Theorem \ref{t:enhanced_dissipation_reaction}.

\begin{corollary}[Uniform estimates on the half-line]
\label{cor:uniform_estimates_half_line}
Let the assumptions of Theorem \ref{t:global} be satisfied, and fix $q_*>2$ such that $q_*\in (\frac{d(h-1)}{2}\vee \frac{d}{d-\delta},q)$. Suppose further that $r\geq q_0$, where $q_0=q \frac{d+2}{d}$. Let $(\theta^{(n)})_n$ and $\mathcal{O}_{q,N}^{\InD}$ be as in \eqref{eq:choice_noise_coefficients_scaling} and \eqref{eq:def_mathcal_OInD}, respectively. 
Then, for any $\varepsilon\in (0,1)$, there exist $\nu_0$ and $n_0$ such that, if $\nu\geq \nu_0$ and $n\geq n_0$, then, for any $(p,\a_{p,\delta},\delta,q)$-solution $(v,\tau)$ to \eqref{eq:reaction_diffusion} with initial data $v_0\in 
\mathcal{O}_{q,N}^{\InD}$ and $\theta=\theta^{(n)}$, there exists a stopping time $\tau_1\in (0,\tau]$ such that 
$$
\P(\tau_1=\infty)>1-\varepsilon\quad \text{ and }\quad
\|v-\veq\|_{L^2(0,\tau_1;L^{2})}+\|v-\veq\|_{L^r(0,\tau_1;L^{q_*})}\leq \mathscr{C} \ \text{ a.s.}, 
$$
where $\mathscr{C}$ depends only on $p$, $\delta$, $q$, $q_*$, $d$, $K$, $r$, $N$, and $\varepsilon$. 
\end{corollary}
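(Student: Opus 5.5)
The plan is to rerun the proof of Theorem \ref{t:global}, keeping track of the quantitative information at each stage, and to split the half-line into $[0,\tau_*)$ and $[\tau_*,\tau_1)$. Here $\tau_*\in[T_0-1,T_0]$ is the stopping time in \eqref{eq:definition_tau_star}, and $\tau_1$ is the stopping time given by Theorem \ref{t:small_implies_global} applied with $q$ replaced by $q_*$ and $\tau_0=\tau_*$. First, $\nu_0$, $R_0$, $T_0$ and $\eta$ are fixed exactly as in \eqref{eq:parameters_choice_global}–\eqref{eq:smallness_global_holds}; these depend only on $(p,\delta,q,q_*,K,N,r)$. Next, Theorem \ref{t:scaling_limit_cutoff} is applied with $\theta^{(n)}$ from \eqref{eq:choice_noise_coefficients_scaling}. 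Since \eqref{eq:scaling_limit_cutoff1} is a $\limsup$ uniform over $v_0$, there is an $n_0$ such that \eqref{eq:large_probability_LrLq_close} holds for every $n\ge n_0$, not just for one chosen $\theta$. Steps 1–2 of the proof of Theorem \ref{t:global} then give \eqref{eq:small_implies_global_end_proof} for all $n\ge n_0$ and $\nu\ge\nu_0$. We set $\tau_1$ equal to the time from Theorem \ref{t:small_implies_global} on the event $\{\tau_*<\tau,\|v(\tau_*)-v_\infty\|_{L^{q_*}}\le\eta\}$, and equal to $\tau_*$ on its complement. Then $\P(\tau_1=\infty)>1-\varepsilon$.

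Next comes the bound on $[0,\tau_*)$. On $\O_0$ we have $\tau_*\le T_0\le\tau$ and $v=\vcut$, and $\|\vcut\|_{L^r(0,T_0;L^q)}\le R_0$, since $\|\vcut-\vd\|_{L^r(0,T_0;L^q)}\le\eta/2<1$ and $\|\vd\|_{L^r(0,T_0;L^q)}\le R_0-1$. Because $q_*<q$ and $q\ge2$, and because $T_0$ and $|v_\infty|$ are bounded uniformly over $K$ (Lemma \ref{l:continuous_cinfty}), Hölder's inequality in time and the embedding of $L^q$ into $L^{q_*}$ and $L^2$ give $\|v-v_\infty\|_{L^2(0,\tau_*;L^2)}+\|v-v_\infty\|_{L^r(0,\tau_*;L^{q_*})}\lesssim R_0+T_0^{1/r}|v_\infty|$. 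This is admissible provided $r\ge2$, which holds because $r\ge q_0$. Off $\O_0$ we have $\tau_1=\tau_*$, so we truncate further: we replace $\tau_1$ by $\tau_1\wedge\sigma$, where $\sigma$ is the first time the $L^r(0,t;L^q)$-norm of $v$ reaches $R_0$. On $\O_0$ this changes nothing, so the probability bound is preserved, and the truncated time gives a deterministic bound everywhere.

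Then the bound on $[\tau_*,\tau_1)$. Theorem \ref{t:small_implies_global} with exponent $q_*$ gives $\sup_{[\tau_*,\tau_1)}\|v-v_\infty\|_{L^{q_*}}^{q_*}$ and $\|v-v_\infty\|_{L^2(\tau_*,\tau_1;L^2)}^2$ bounded by $R\eta^2(1+\eta^{q_*-2})$. It also bounds $\|v-v_\infty\|_{L^{q_{*,0}}(\tau_*,\tau_1;L^{q_{*,0}})}$, where $q_{*,0}=q_*\frac{d+2}{d}$.

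The main obstacle is the $L^r_tL^{q_*}_x$ norm on an infinite interval with $r$ large, since the estimates only control $L^\infty_tL^{q_*}_x$ and $L^2_tL^2_x$. I would interpolate in time: $\|g\|_{L^r_t}^r\le\|g\|_{L^\infty_t}^{r-2}\|g\|_{L^2_t}^2$ with $g=\|v-v_\infty\|_{L^{q_*}}$. This reduces the problem to an $L^2_tL^{q_*}_x$ bound. That bound would follow from the $L^2_tL^2_x$ bound interpolated with the $L^{q_{*,0}}_{t,x}$ bound, or, more robustly, from Lemma \ref{l:nonlinear_estimate_easy} combined with the gradient term $\int|w_i|^{q_*-2}|\nabla w_i|^2$ in \eqref{eq:small_implies_global_conclusion}, via the Poincaré-type splitting used there. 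The requirement $r\ge q_0$ is meant to make these Hölder exponents match; if the direct interpolation fails, I would instead split time into unit intervals and sum. Adding the two pieces gives the constant $\mathscr C$ with the stated dependence.
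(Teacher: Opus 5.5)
Your decomposition ($[0,\tau_*)$ controlled by the cutoff/scaling-limit step, $[\tau_*,\infty)$ by Theorem \ref{t:small_implies_global} with exponent $q_*$) is the paper's. However, the step you flag as the main obstacle does not go through as proposed. The interpolation $\|g\|_{L^r_t}^r\le\|g\|_{L^\infty_t}^{r-2}\|g\|_{L^2_t}^2$ needs a bound on $\|v-\veq\|_{L^2(\tau_*,\infty;L^{q_*})}$, and nothing available gives one on an unbounded time interval. The $L^2_tL^2_x$ bound cannot control the $L^{q_*}_x$-norm because of concentration. The $L^{q_{*,0}}_{t,x}$ bound, Lemma \ref{l:nonlinear_estimate_easy} and the Poincar\'e splitting of $\nabla|w|^{q_*/2}$ only give time integrability of order $q_{*,0}$ or $q_*$, both larger than $2$, and $L^s(\tau_*,\infty)\not\subset L^2(\tau_*,\infty)$ for $s>2$. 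By homogeneity, for small $w$ the gradient term scales like $|w|^{q_*}$, not $|w|^2$. Concretely, take $w(t,x)=\epsilon\lambda^{d/q_*}\phi(\lambda x)$ on a time interval of length $A\epsilon^{-2}$, choosing first $\lambda$ large and then $\epsilon$ small. This keeps every quantity on the left of \eqref{eq:small_implies_global_conclusion} (with $q_*$) bounded, while $\int\|w\|_{L^{q_*}}^2\,\dd t\eqsim A$ is arbitrarily large. So no inequality built from these quantities yields the $L^2_tL^{q_*}_x$ bound. Splitting into unit intervals and summing reproduces the same sum, so your fallback does not help either.

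The missing observation is that you do not need time exponent $2$. Since $q_{*,0}=q_*\frac{d+2}{d}>q_*$ and $|\T^d|=1$, H\"older in $x$ gives $\|v-\veq\|_{L^{q_{*,0}}(\tau_*,\tau_1;L^{q_*})}\le\|v-\veq\|_{L^{q_{*,0}}(\tau_*,\tau_1;L^{q_{*,0}})}$. Then $\|g\|_{L^r_t}^r\le\|g\|_{L^\infty_t}^{r-q_{*,0}}\|g\|_{L^{q_{*,0}}_t}^{q_{*,0}}$ for every $r\ge q_{*,0}$. The hypothesis $r\ge q_0=q\frac{d+2}{d}>q_{*,0}$ is there exactly for this, and this is the paper's argument.

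There is also a second, fixable error: truncating by the first time $\|v\|_{L^r(0,t;L^q)}$ reaches $R_0$ does not ``change nothing on $\O_0$''. On the good event, $\|v-\veq\|_{L^2(\tau_*,\infty;L^2)}<\infty$ and $\veq\neq0$. Hence $\|v(t)\|_{L^q}\ge\frac12\|\veq\|_{L^2}$ on a time set of infinite measure, so $\|v\|_{L^r(0,t;L^q)}\to\infty$. Your truncation time is therefore finite there, which destroys $\P(\tau_1=\infty)>1-\varepsilon$.

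The fix is to monitor only on $[0,T_0\wedge\tau)$ with a margin, as in the paper's $\mu_*$ with threshold $R_1=R_0+T_0^{1/r}\sup_{\q\veq\in K}|\veq|$. Then intersect $\{\mu_*\ge\tau_*\}$, $\O_*$ and the event where the time from Theorem \ref{t:small_implies_global} is infinite. Take as final stopping time the first time $\|v-\veq\|_{L^r(0,t;L^{q_*})}+\|v-\veq\|_{L^2(0,t;L^2)}$ exceeds twice the bound valid on that intersection, and replace $\varepsilon$ by $\varepsilon/3$.
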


\begin{proof}
Continuing Step 2 in the proof of Theorem \ref{t:global}, let
$$
\O_*\coleq
\{\tau_*<\tau,\,\|v(\tau_*)-\veq\|_{L^{q_*}}\leq\eta\}.
$$
In light of \eqref{eq:small_implies_global_end_proof}, we have
$\P(\O_*)>1-\varepsilon$. Hence, from Theorem
\ref{t:small_implies_global} applied with $q$ replaced by $q_*$,
we also obtain the existence of a stopping time
$\sigma\in(0,\tau]$ such that $\P(\sigma=\infty)>1-\varepsilon
$ and
\begin{equation}
\label{eq:tau_1_is_very_large_in_time}
\sup_{t\in[\tau_*,\sigma)}
\|v(t)-\veq\|_{L^{q_*}}
+
\|v-\veq\|_{L^{q_1}(\tau_*,\sigma;L^{q_1})}
+
\|v-\veq\|_{L^{2}(\tau_*,\sigma;L^{2})}
\leq M_0
\quad\text{a.s.\ on }\O_*,
\end{equation}
where $q_1=q_*\frac{d+2}{d}$, and $M_0$ is a constant depending
only on $p$, $\delta$, $q$, $q_*$, $d$, $K$, $r$, $N$, and
$\varepsilon$. As $r\geq q_0>q_1>q_*$, by interpolation and
H\"older's inequality, the previous estimate ensures that
\begin{equation}
\label{eq:estimate_on_mu_star_corollary_0}
\|v-\veq\|_{L^r(\tau_*,\sigma;L^{q_*})}
\leq M_1
\quad\text{a.s.\ on }\O_*,
\end{equation}
where $M_1$ is a constant depending only on $p$, $\delta$, $q$,
$q_*$, $d$, $K$, $r$, $N$, and $\varepsilon$.

Next, we focus on proving an estimate like
\eqref{eq:estimate_on_mu_star_corollary_0} also holds on a
stochastic interval of the form $[0,\mu_*)\times\O$. 
To this end, let $\mu_*$ be the stopping time
$$
\mu_*
\coleq
\inf\{t\in[0,T_0\wedge\tau):\,
\|v-\veq\|_{L^r(0,t;L^{q_*})}\geq R_1\},
\qquad
\inf\emptyset\coleq T_0\wedge\tau,
$$
where
$
R_1=R_0+T_0^{1/r}\sup_{\q\veq\in K}|\veq|<\infty,
$
$R_0$ is as in \eqref{eq:smallness_global_holds}, and where the
finiteness of the supremum is ensured by Lemma
\ref{l:continuous_cinfty} and the compactness of $K$. Clearly,
\begin{equation}
\label{eq:estimate_on_mu_star_corollary}
\|v-\veq\|_{L^r(0,\mu_*;L^{q_*})}\leq R_1.
\end{equation}
By Step 1 in the proof of Theorem \ref{t:global} and
\eqref{eq:smallness_Lq00}, it follows that
$$
\P(\mu_*\geq T_0)>1-\varepsilon.
$$
Recall from \eqref{eq:definition_tau_star} that $\tau_*\leq T_0$.
Therefore,
$$
\P(\{\mu_*\geq\tau_*\}\cap\O_*)
\geq
\P(\{\mu_*\geq T_0\}\cap\O_*)
>1-2\varepsilon.
$$
Combining this with $\P(\sigma=\infty)>1-\varepsilon$, we obtain
$$
\P(\{\mu_*\geq\tau_*\}\cap\O_*\cap\{\sigma=\infty\})
>1-3\varepsilon.
$$
Thus, by \eqref{eq:estimate_on_mu_star_corollary_0} and
\eqref{eq:estimate_on_mu_star_corollary},
\begin{equation*}
\P\left(
\|v-\veq\|_{L^r(0,\infty;L^{q_*})}
\leq (M_1^r+R_1^r)^{1/r}
\right)>1-3\varepsilon.
\end{equation*}
Now, letting
$$
\mu
\coleq
\inf\left\{
t\in[0,\tau):\,
\|v-\veq\|_{L^r(0,t;L^{q_*})}
\geq 2(M_1^r+R_1^r)^{1/r}
\right\},
\qquad
\inf\emptyset\coleq\tau,
$$
we have
$
\|v-\veq\|_{L^r(0,\mu;L^{q_*})}
\leq 2(M_1^r+R_1^r)^{1/r}
$
a.s.\
Defining $\mu$ as above with the additional stopping condition
$
\|v-\veq\|_{L^2(0,t;L^2)}\geq \mathscr{C},
$
a routine modification of the preceding argument gives
$\P(\mu=\infty)>1-3\varepsilon$. The claim follows by setting
$\tau_1=\mu$ and replacing $\varepsilon$ by $\varepsilon/3$.
\end{proof}

\section{Enhanced dissipation with high probability -- Proof of Theorem \ref{t:enhanced_dissipation_reaction}}
\label{s:enhanced}
This section is devoted to the proof of Theorem \ref{t:enhanced_dissipation_reaction}. In light of the result of Corollary \ref{cor:uniform_estimates_half_line}, in this section, we will focus on the stochastic RDEs \eqref{eq:reaction_diffusion} with (modified) cutoff:
\begin{equation}
\label{eq:reaction_diffusion_cut_off_Lp}
\left\{
\begin{aligned}
\dd \vmodi - \nu_i \Delta \vmodi \,\dd t
& = \phi_{R,r}(\cdot,v_\infty,\vmod) f_i(\vmod)\,\dd t +\sqrt{c_d \nu }\sum_{k,\alpha} \theta_k (\sigma_{k,\alpha}\cdot\nabla) \vmodi\circ \dd W^{k,\alpha}_t & \text{ on }&\Tor^d,\\ 
\vmodi(0)&=v_{0,i}  &\text{ on }&\Tor^d.
\end{aligned}\right.
\end{equation}
where, for $q\in (\frac{d(h-1)}{2}\vee 2, \infty)$, $q_*\in (\frac{d(h-1)}{2}\vee 2,q)$, $R>0$, and $r\in (1,\infty)$,
$$
\phi_{R,r}(t,v_\infty,\vmod) \coleq \phi\big(R^{-1}[ \|\vmod-v_\infty\|_{L^r(0,t;L^{q_*}(\T^d;\R^\ell))}+\|\vmod-v_\infty\|_{L^2(0,t;L^{2}(\T^d;\R^\ell))}]\big) 
$$
with $\phi\in C^{\infty}_{{\rm }}(\R)$ such that $\phi|_{[0,1]}=1$ and $\phi|_{[2,\infty)}=0$. As usual, $v_\infty$ is the unique positive equilibrium to the RDEs \eqref{eq:reaction_diffusion} for which $\q \int_{\T^d} v_0\,\dd x =\q v_\infty$. 
Note that the cutoff \eqref{eq:reaction_diffusion_cut_off_Lp} differs from the one used in Section \ref{s:scaling_limit} and better accommodates the long-time behavior of the solution to the stochastic RDEs \eqref{eq:reaction_diffusion}. 
Clearly, an inspection of the proof of Proposition \ref{prop:global_cut_off} shows that \eqref{eq:reaction_diffusion_cut_off_Lp} admits a global $(p,\a,\delta,q)$-solution for all 
$q,p\in (2,\infty)$, $\a\in (0,\frac{p}{2}-1)$, and $\delta\in [1,2)$ satisfying \eqref{eq:global_cut_off0}.

\smallskip

The following are the remaining ingredients needed to prove Theorem \ref{t:enhanced_dissipation_reaction}. In the first one, we prove a pathwise estimate of the difference between the solution $\vmod$ to \eqref{eq:reaction_diffusion_cut_off_Lp} and its average $\int_{\T^d} \vmod(t,x)\,\dd x$. The structure of the noise described in Subsection \ref{ss:noise_description} plays a fundamental role in both results.
For simplicity, we employ the following shorthand notation: 
$$
\wt{\vmod}(t,x) \coleq \vmod(t,x)-\overline{\vmod(t,\cdot)}\qquad \text{ and }\qquad \overline{\vmod(t,\cdot)} = \int_{\T^d} \vmod(t,x)\,\dd x .
$$

\begin{proposition}[Pathwise $L^2$-estimate for the spatial fluctuations]
\label{prop:quenched_estimate_w}
Under the assumptions of Theorem \ref{t:enhanced_dissipation_reaction}, there exist $r<\infty$ and $q_*\in (\frac{d(h-1)}{2}\vee 2,q)$, for which the following assertion holds. There exists some $C_0>0$ depending only on $R$, $q$, $d$, $h$, $p$, $\delta$, and $K$ such that, for all $0\leq s<t <\infty$ subject to $|s-t| \leq 1$, the global $(p,\a_{p,\delta},\delta,q)$-solution $\vmod$ to \eqref{eq:reaction_diffusion_cut_off_Lp} satisfies
$$
\sup_{r\in [s,t]}\|\wt{\vmod}(r)\|_{L^2}^2
+
\int_s^t\|\wt{\vmod}(r)\|_{L^2}^2\,\dd r 
\leq C_0  \|\wt{\vmod}(s)\|_{L^2}^2 \ \text{ a.s. }
$$
\end{proposition}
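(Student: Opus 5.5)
The plan is an energy estimate on the fluctuation $\wt{\vmod}=\vmod-\overline{\vmod}$, followed by a Gronwall argument over a unit time interval. The noise contributes nothing pathwise at the $L^2$ level, so all the work goes into the reaction term.

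\emph{Step 1: equation for the fluctuation and It\^o's formula.} Averaging \eqref{eq:reaction_diffusion_cut_off_Lp} over $\T^d$ removes both the diffusion and the noise. Indeed, $\nabla\cdot\sigma_{k,\alpha}=0$ gives $\int(\sigma_{k,\alpha}\cdot\nabla)\vmodi\,\dd x=0$. Hence $\dd\overline{\vmodi}=\phi_{R,r}\overline{f_i(\vmod)}\,\dd t$, and
\[
\dd\wt{\vmod}_i=(\nu+\nu_i)\Delta\wt{\vmod}_i\,\dd t+\phi_{R,r}\big(f_i(\vmod)-\overline{f_i(\vmod)}\big)\dd t+\sqrt{c_d\nu}\textstyle\sum_{k,\alpha}\theta_k(\sigma_{k,\alpha}\cdot\nabla)\wt{\vmod}_i\,\dd W^{k,\alpha}_t .
\]
Apply It\^o's formula to $\|\wt{\vmod}_i\|_{L^2}^2$, justified by the regularity of Lemma \ref{lem:Lebesgue_initial_data_continuity} after approximation. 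The martingale term vanishes identically, since $\int \wt{\vmod}_i(\sigma\cdot\nabla)\wt{\vmod}_i\,\dd x=0$ by the divergence-free property. The It\^o correction cancels exactly against $\nu\Delta$ by \eqref{eq:ellipticity_noise}. This leaves the pathwise identity
\[
\tfrac12\|\wt{\vmod}_i(t)\|^2_{L^2}+\nu_i\int_s^t\|\nabla\wt{\vmod}_i\|_{L^2}^2\,\dd r=\tfrac12\|\wt{\vmod}_i(s)\|^2_{L^2}+\int_s^t\phi_{R,r}\int_{\T^d}f_i(\vmod)\wt{\vmod}_i\,\dd x\,\dd r ,
\]
where the mean $\overline{f_i}$ drops out because $\wt{\vmod}_i$ has zero mean. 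This zero-mean cancellation is the key structural point.

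\emph{Step 2: the reaction term.} Since $\int\wt{\vmod}_i=0$, we may write
\[
\int f_i(\vmod)\wt{\vmod}_i=\int\big(f_i(\vmod)-f_i(\overline{\vmod})\big)\wt{\vmod}_i .
\]
The mean-value bound then gives
\[
|f_i(\vmod)-f_i(\overline{\vmod})|\lesssim(1+|\vmod|^{h-1}+|\overline{\vmod}|^{h-1})|\wt{\vmod}| .
\]
Here $\overline{\vmod}$ is bounded uniformly, since the $L^2$ part of the cutoff controls $\|\vmod-v_\infty\|_{L^2(0,t;L^2)}$, mass is controlled, and $v_\infty$ is bounded on $K$ by Lemma \ref{l:continuous_cinfty}. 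So the reaction term is bounded by
\[
C\|\wt{\vmod}\|_{L^2}^2+C\int|\vmod-v_\infty|^{h-1}|\wt{\vmod}|^2\,\dd x .
\]
For the second piece, apply H\"older with $|\vmod-v_\infty|^{h-1}$ in $L^{q_*/(h-1)}$, and then Gagliardo--Nirenberg to get
\[
\|\wt{\vmod}\|_{L^{2q_*/(q_*-h+1)}}^2\lesssim\|\wt{\vmod}\|_{L^2}^{2(1-a)}\|\nabla\wt{\vmod}\|_{L^2}^{2a},\qquad a=\frac{d(h-1)}{2q_*}<1,
\]
where $a<1$ exactly because $q_*>\frac{d(h-1)}{2}$. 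Young's inequality absorbs the gradient into $\nu_i\|\nabla\wt{\vmod}\|^2$, leaving
\[
C\big(1+\|\vmod-v_\infty\|_{L^{q_*}}^{(h-1)/(1-a)}\big)\|\wt{\vmod}\|_{L^2}^2 .
\]
We choose $r\geq (h-1)/(1-a)$. The cutoff $\phi_{R,r}$ vanishes once $\|\vmod-v_\infty\|_{L^r(0,t;L^{q_*})}\geq 2R$, so the coefficient $g(r)\coleq\phi_{R,r}\|\vmod-v_\infty\|_{L^{q_*}}^{(h-1)/(1-a)}$ satisfies $\int_0^\infty g\lesssim_{R} 1$ deterministically.

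\emph{Step 3: Gronwall.} We now have
\[
\|\wt{\vmod}(t)\|^2\leq\|\wt{\vmod}(s)\|^2+C\int_s^t(1+g)\|\wt{\vmod}\|^2 .
\]
Gronwall on $[s,t]$ with $|t-s|\le1$ bounds $\sup_{[s,t]}\|\wt{\vmod}\|_{L^2}^2$ by $e^{C(1+C_R)}\|\wt{\vmod}(s)\|_{L^2}^2$. Integrating over an interval of length at most $1$ gives the time-integral term. This yields the claim with $C_0$ depending only on $R,q,d,h,p,\delta,K$.

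I expect the main obstacle to be Step 2: the interpolation must stay at the subcritical exponent while remaining compatible with the cutoff exponent $r$. The dimensional restriction for $d\ge5$ may enter if one needs $L^{q}$-control of $f_i$ rather than of $\vmod$ itself. A secondary technical issue is the rigorous justification of It\^o's formula and of the exact cancellation of the noise terms at the regularity available near $t=0$.
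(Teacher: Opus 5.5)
Your argument is correct, but it closes the Gronwall loop by a different route than the paper.

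\textbf{The paper's route.} The paper bounds the reaction pairing by $\|f_i(\vmod)-\overline{f_i(\vmod)}\|_{L^2}\|\wt{\vmod}_i\|_{L^2}$. It then uses $\|f_i(\vmod)-\overline{f_i(\vmod)}\|_{L^2}\eqsim\|\nabla f_i(\vmod)\|_{H^{-1}}\lesssim\|(1+|\vmod|^{h-1})|\nabla\wt{\vmod}|\|_{L^{r}}$ with $r=\frac{2d}{d+2}$. This produces the Gronwall weight $\|\vmod-v_\infty\|_{L^{d(h-1)}}^{2(h-1)}$, i.e.\ the critical norm $L^{2(h-1)}(s,t;L^{d(h-1)})$. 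Since $d(h-1)$ can exceed $q_*$, the cutoff does not control this norm. The paper therefore has to invoke the $\theta$-uniform bounds of Proposition \ref{prop:global_cut_off_entropy} together with Lemma \ref{l:interpolation_Lr0}. That lemma is exactly where the extra hypothesis $q>(d-2)(h-1)$ for $d\geq5$ enters.

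\textbf{Your route.} You use $\int_{\T^d}f_i(\overline{\vmod})\wt{\vmod}_i\,\dd x=0$ to obtain a quadratic form in $\wt{\vmod}$ weighted by $|\vmod-v_\infty|^{h-1}$. H\"older and Gagliardo--Nirenberg then give the weight $\|\vmod-v_\infty\|_{L^{q_*}}^{\beta}$ with $\beta=\frac{2q_*(h-1)}{2q_*-d(h-1)}$. This quantity is also critical (its space-time scaling index is $-\frac{2}{h-1}$), but it sits at spatial integrability $q_*<q$. It is therefore dominated directly by the cutoff functional in \eqref{eq:reaction_diffusion_cut_off_Lp} once $r\geq\beta$.

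\textbf{Comparison.} For this proposition your route is more elementary. It needs neither Proposition \ref{prop:global_cut_off_entropy} nor Lemma \ref{l:interpolation_Lr0}, nor the restriction for $d\geq5$, and independence of $C_0$ from $\theta$ and $\nu$ is immediate. What the paper's formulation buys is reuse: the same time-integrated $H^{-1}$ bound on $f_i(\vmod)-\overline{f_i(\vmod)}$ serves for the deterministic convolution $I_2$ in Proposition \ref{prop:iteration_lemma}. There is no pairing with $\wt{\vmod}_i$ there, so your trick would have to be reworked. One option is to bound $f_i(\vmod)-f_i(\overline{\vmod})$ in a Lebesgue space embedding into $H^{-1}$ via the same H\"older/Gagliardo--Nirenberg scheme.

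Three points to tidy up.

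\begin{enumerate}
\item \textbf{Bound on $\overline{\vmod}$.} The $L^2$ part of the cutoff gives no pointwise-in-time control of $\overline{\vmod}$. The conservation laws $\q\overline{\vmod}=\q v_\infty$ do not bound $\overline{\vmod}$ in general either. You can either use monotonicity of the entropy for the cutoff system together with $\vmod\geq0$; the reaction dissipation is only multiplied by $\phi_{R,r}\geq0$, as in Step 1 of the proof of Theorem \ref{t:scaling_limit_cutoff}. More simply, write $|\overline{\vmod}|^{h-1}\lesssim|v_\infty|^{h-1}+\|\vmod-v_\infty\|_{L^{q_*}}^{h-1}$ and add this to the Gronwall weight. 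It is handled by the same H\"older argument below, since $h-1\leq\beta\leq r$.
\item \textbf{Integrability of the weight.} For $r>\beta$ the integral $\int_0^\infty\phi_{R,r}\|\vmod-v_\infty\|_{L^{q_*}}^{\beta}\,\dd t$ need not be finite. What you actually get, and need, is $\int_s^t\phi_{R,r}\|\vmod-v_\infty\|_{L^{q_*}}^{\beta}\,\dd t'\leq(2R)^{\beta}|t-s|^{1-\beta/r}\leq(2R)^{\beta}$, by H\"older and $|t-s|\leq1$.
\item \textbf{Gradient bound.} If you absorb only half of $\nu_i\|\nabla\wt{\vmod}_i\|_{L^2}^2$, the same Gronwall step also gives $\int_s^t\|\nabla\wt{\vmod}\|_{L^2}^2\,\dd t'\leq C_0\|\wt{\vmod}(s)\|_{L^2}^2$. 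This is the form of the estimate actually invoked later in Proposition \ref{prop:iteration_lemma}.
\end{enumerate}
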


The previous result gives a uniform pathwise control of the solution. The following provides a quantitative decay of the energy of $\wt{\vmod}$ at large times. 

\begin{proposition}[Uniform energy decay for the spatial fluctuations]
\label{prop:iteration_lemma}
Under the assumptions of Theorem \ref{t:enhanced_dissipation_reaction}, there exist $r<\infty$ and $q_*\in (\frac{d(h-1)}{2}\vee 2,q)$, for which the following assertion holds. For all $\eta\in (0,1)$, there exists some $\varepsilon\in (0,1)$ depending only on $R$, $q$, $d$, $h$, $p$, $\delta$, $K$ and $\eta$ such that the global $(p,\a_{p,\delta},\delta,q)$-solution $\vmod$ to \eqref{eq:reaction_diffusion_cut_off_Lp} satisfies, for all $n\geq 0$,
$$
\E \|\wt{\vmod}(n+1)\|_{L^2}^2 \leq \eta\, \E\|\wt{\vmod}(n)\|_{L^2}^2
$$
provided $\nu \geq \varepsilon^{-1}$ and $\|\theta\|_{\ell^\infty}\leq \varepsilon$.
\end{proposition}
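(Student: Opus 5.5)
We follow the quantitative strategy of \cite{FGL24_quantitative,L21} and restart at time $n$. By the Markov-type structure of \eqref{eq:reaction_diffusion_cut_off_Lp} (the cutoff depends on the path only through an additive functional, so conditioning on $\F_n$ is harmless), it suffices to prove $\E\big[\|\wt{\vmod}(n+1)\|_{L^2}^2\,|\,\F_n\big]\leq \eta\|\wt{\vmod}(n)\|_{L^2}^2$ a.s. We may take $n=0$ and treat $\vmod(0)$ as an $\F_0$-measurable datum. The constants must be uniform, which is guaranteed because the cutoff bounds the history through $R$.

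The fluctuation $\wt{\vmod}_i$ satisfies
\[
\dd\wt{\vmod}_i=(\nu+\nu_i)\Delta\wt{\vmod}_i\,\dd t+\phi_{R,r}\big(f_i(\vmod)-\overline{f_i(\vmod)}\big)\dd t+\sqrt{c_d\nu}\sum_{k,\alpha}\theta_k(\sigma_{k,\alpha}\cdot\nabla)\wt{\vmod}_i\,\dd W^{k,\alpha}_t .
\]
The noise has zero mean since $\sigma_{k,\alpha}$ is divergence-free. Write $\wt{\vmod}=\Phi+Z$, where $\Phi$ solves the linear stochastic transport--diffusion equation with datum $\wt{\vmod}(0)$, and $Z$ is the stochastic convolution of the centered reaction with the linear random propagator $P_{s,t}$. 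For $\Phi$, the result of \cite{FGL24_quantitative,L21} gives
\[
\E\|\Phi(1)\|_{L^2}^2\leq \Big(e^{-8\pi^2\nu}+C\|\theta\|_{\ell^\infty}^{a}\nu^{b}\Big)\|\wt{\vmod}(0)\|_{L^2}^2
\]
for some exponents $a,b>0$. This estimate is derived by comparing with the heat flow of diffusivity $\nu$ in a negative Sobolev norm and then interpolating against the energy bound. Choosing $\nu$ large first and then $\|\theta\|_{\ell^\infty}$ small makes this smaller than $\eta/2$.

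For $Z(1)=\int_0^1 P_{s,1}\big[\phi_{R,r}(f(\vmod)-\overline{f(\vmod)})(s)\big]\dd s$, we use the same enhanced-dissipation bound for $P_{s,1}$. We apply it in the form $\E\|P_{s,1}g\|_{L^2}^2\leq \epsilon(\nu,\theta,1-s)\|g\|_{H^{-1}}^2$, valid for mean-zero $g$. The centering is essential here, because $P_{s,1}$ fixes constants. This reduces the problem to bounding $\int_0^1\|f(\vmod)-\overline{f(\vmod)}\|\,\dd s$ in a suitable norm by $\|\wt{\vmod}\|$. We write
\[
f(\vmod)-\overline{f(\vmod)}=\int_0^1\nabla f\big(\overline{\vmod}+\lambda\wt{\vmod}\big)\,\dd\lambda\cdot\wt{\vmod}-\overline{(\cdots)},
\]
so that $|f(\vmod)-\overline{f(\vmod)}|\lesssim(1+|\vmod|^{h-1})|\wt{\vmod}|$ up to a mean. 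By Hölder, $\|f(\vmod)-\overline{f(\vmod)}\|_{L^{\eta'}}\lesssim\|1+|\vmod|^{h-1}\|_{L^{d(h-1)\cdot}}\|\wt{\vmod}\|_{L^2}$. Via Sobolev, $L^{\eta'}\hookrightarrow H^{-1}$ when $\frac{1}{\eta'}\leq\frac12+\frac1d$, which requires $\vmod\in L^{d(h-1)}$ in space. In time we need the critical norm $L^{2(h-1)}(0,1;L^{d(h-1)})$. We control it by interpolating the cutoff bounds in $L^r(L^{q_*})$ against the parabolic energy (Lemma \ref{l:interpolation_Lr0}), with $r$ large. The restriction $q>(d-2)(h-1)$ for $d\geq5$ enters exactly at this point. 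Proposition \ref{prop:quenched_estimate_w} then bounds $\sup_{[0,1]}\|\wt{\vmod}\|_{L^2}$ and $\int_0^1\|\wt{\vmod}\|_{H^1}^2$ by $C_0\|\wt{\vmod}(0)\|_{L^2}^2$. Together these give $\E\|Z(1)\|_{L^2}^2\leq \epsilon'(\nu,\theta)\,C\|\wt{\vmod}(0)\|_{L^2}^2$, where $\epsilon'\to0$ in the same regime.

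The main obstacle is the convolution term, for two reasons. First, the operator $P_{s,1}$ is random and non-adapted. The quantitative mixing bound must therefore be used pathwise with moments, for instance through a Hölder split that uses the boundedness of the cutoff factor together with the pathwise bound of Proposition \ref{prop:quenched_estimate_w}. Second, the reaction has critical-order growth. Near $s=1$ there is no time for mixing, so one must use the integrability of $\int_0^1\epsilon(\nu,\theta,1-s)^{1/2}\,\dd s$ rather than a uniform bound. If the non-adaptedness causes trouble, I would instead run the It\^o energy estimate for $\wt{\vmod}$ in the $H^{-1}$ norm, compared with the heat semigroup with diffusivity $\nu$, as in \cite{FGL24_quantitative}.
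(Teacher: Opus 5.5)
There is a genuine gap in your treatment of the reaction term. It comes from aiming directly at the endpoint value $\wt{\vmod}(1)$ through the random propagator $P_{s,1}$.

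First, the bound $\E\|P_{s,1}g\|_{L^2}^2\leq\epsilon(\nu,\theta,1-s)\|g\|_{H^{-1}}^2$ is not what the linear enhanced-dissipation results give, since those concern $L^2$ data. It is an $H^{-1}\to L^2$ smoothing statement for the stochastic transport--diffusion propagator, uniform in $\theta$ and at the enhanced rate. You would have to prove it, and the transport noise itself provides no pathwise smoothing.

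More seriously, suppose you grant it with the best conceivable rate $\epsilon\approx(\nu(1-s))^{-1}$. You then need to control $\int_0^1(\nu(1-s))^{-1/2}\|g(s)\|_{H^{-1}}\,\dd s$, which requires $\|g\|_{H^{-1}}\in L^{p}(0,1)$ for some $p>2$. The bounds you list do not give this:
\begin{itemize}
\item The critical norm $L^{2(h-1)}(0,1;L^{d(h-1)})$ of Lemma \ref{l:interpolation_Lr0}, combined with $\sup_t\|\wt{\vmod}\|_{L^2}$, gives exactly $L^2$ in time. This is borderline, and the integral can diverge.
\item Combined instead with $\nabla\wt{\vmod}\in L^2_tL^2_x$, it gives only $L^1$ in time.
\end{itemize}
Your own reduction to bounding $\int_0^1\|f(\vmod)-\overline{f(\vmod)}\|\,\dd s$ is an $L^1_t$ bound. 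An $L^1_tH^{-1}_x$ forcing does not control the $L^2$-norm at a fixed time even for the heat equation (concentrate the forcing near $s=1$). So the integrability of $\epsilon^{1/2}$ cannot close the argument; compare Remark \ref{r:L1_estimate_usefulness}.

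The missing idea is to avoid the endpoint altogether. Proposition \ref{prop:quenched_estimate_w} applied on $[t,n+1]$ gives $\|\wt{\vmod}(n+1)\|_{L^2}^2\leq C_0\|\wt{\vmod}(t)\|_{L^2}^2$ for every $t\in[n,n+1]$. Hence $\E\|\wt{\vmod}(n+1)\|_{L^2}^2\leq C_0\,\E\int_n^{n+1}\|\wt{\vmod}\|_{L^2}^2\,\dd t$. This is the nonlinear substitute for the pathwise $L^2$-monotonicity that makes the linear results work, and it relies on $C_0$ being independent of $\theta$.

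The time-integrated quantity is then estimated from the It\^o mild formulation \eqref{eq:mild_formulation_SPDE_wtv} with the deterministic semigroup $e^{(\nu+\nu_i)t\Delta}$, so no random propagator enters. The three terms are handled as follows:
\begin{itemize}
\item The free term contributes $\nu^{-1}\E\|\wt{\vmod}(n)\|_{L^2}^2$.
\item The centered reaction only needs to be in $L^1_tH^{-1}_x$. Lemma \ref{l:smr_L2_L1H} gives $\nu^{-1}\big(\int_n^{n+1}\|f_i(\vmod)-\overline{f_i(\vmod)}\|_{H^{-1}}\,\dd s\big)^2\lesssim\nu^{-1}\|\wt{\vmod}(n)\|_{L^2}^2$, using the critical norm and $\int_n^{n+1}\|\nabla\wt{\vmod}\|_{L^2}^2\,\dd s\leq C_0\|\wt{\vmod}(n)\|_{L^2}^2$.
\item The noise is an It\^o stochastic convolution, bounded by Lemma \ref{l:estimate_stoch_convol_linfty} by $\|\theta\|_{\ell^\infty}^{2\gamma}\E\int_n^{n+1}\|\nabla\wt{\vmod}\|_{L^2}^2\,\dd s$.
\end{itemize}
Your variant $\|f(\vmod)-\overline{f(\vmod)}\|_{H^{-1}}\lesssim(1+\|\vmod\|_{L^{d(h-1)}}^{h-1})\|\wt{\vmod}\|_{L^2}$ would work equally well inside this scheme. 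Conditioning on $\F_n$ is also unnecessary, because the cutoff bounds are pathwise and global in time.
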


The proof of Propositions \ref{prop:quenched_estimate_w} and \ref{prop:iteration_lemma} also shows why the statement \eqref{eq:enhanced_dissipation_reaction2} in Theorem \ref{t:enhanced_dissipation_reaction} cannot hold in general with $\wt{v}$ replaced by $v-v_\infty$; see the discussion following \eqref{eq:basic_decay_enhanced}.

\subsection{Proof of Theorem \ref{t:enhanced_dissipation_reaction}}
\label{ss:proof_enhanced}
We first show how to derive Theorem \ref{t:enhanced_dissipation_reaction} from the above results. The proofs of Propositions \ref{prop:quenched_estimate_w} and \ref{prop:iteration_lemma} are provided subsequently in Subsections \ref{ss:proof_quenched_estimate_w} and \ref{ss:proof_iteration_lemma}, respectively.

\begin{proof}[Proof of Theorem \ref{t:enhanced_dissipation_reaction}]
From Corollary \ref{cor:uniform_estimates_half_line} and the fact that the sequence of noise coefficients in \eqref{eq:choice_noise_coefficients_scaling} satisfies \eqref{eq:scaling_limit_noise_coefficients}, it is enough to show the corresponding estimate for the $(p,\a_{p,\delta},\s,q)$-solution $\vmod$  to the stochastic RDEs \eqref{eq:reaction_diffusion_cut_off_Lp} with cutoff. 
Without loss of generality, we assume $v_0\not\equiv \overline{v_0}$; otherwise $\wt{\vmod}(t)\equiv 0$ and the claim of Theorem \ref{t:enhanced_dissipation_reaction} follows trivially. For notational convenience, we let
$$
\wt{\vmod}_0 \coleq v_0-\overline{v_0}.
$$
Due to Propositions \ref{prop:quenched_estimate_w} and \ref{prop:iteration_lemma}, the proof now follows the line of \cite[Theorem 2.3]{L21}. For the reader's convenience, we provide a sketch. 
Let $\chi$, $b$, and $C_0$ be as in Theorem \ref{t:enhanced_dissipation_reaction} and Proposition \ref{prop:quenched_estimate_w}, respectively.
Without loss of generality, we assume $C_0\geq 1$.
Due to Proposition \ref{prop:iteration_lemma}, we can choose $(\nu,\theta)$ such that 
\begin{equation}
\label{eq:choice_chi_chiN}
\chi'>\chi\Big(1+\frac{b}{2}\Big)+\frac{b}{2}\log C_0 \quad \text{ where }\quad \chi' \coleq -\frac{1}{2}\log \eta>0.
\end{equation} 
Iterating the estimate in Proposition \ref{prop:iteration_lemma}, it follows that 
$$
\E\|\wt{\vmod}(n)\|_{L^2}^2 \leq \eta^n \|\wt{\vmod}_0\|_{L^2}^2=e^{-2\chi' n}\|\wt{\vmod}_0\|_{L^2}^2 \quad \text{ for all }n\geq 1.
$$
Moreover, from Proposition \ref{prop:quenched_estimate_w}, we get
\begin{equation}
\label{eq:decay_estimate}
\E\sup_{t\in [n,n+1]}\|\wt{\vmod}(t)\|_{L^2}^2 \leq C_0 \E\|\wt{\vmod}(n)\|_{L^2}^2 \leq C_0e^{-2\chi' n} \|\wt{\vmod}_0\|_{L^2}^2
.
\end{equation}
Next, we prove that $\P(\limsup_{n} A_n)=0$ where 
$$
A_n \coleq 
\Big\{\om\in \O\,:\, \sup_{t\in [n,n+1]}\|\wt{\vmod}(t,\om)\|_{L^2}^2>e^{-2\chi n} \|\wt{\vmod}_0\|_{L^2}^2 \Big\}.
$$
The above follows from the Borel--Cantelli lemma and Chebyshev's inequality,
\begin{equation*}
\sum_{n\geq 1} \P(A_n)
\leq \sum_{n\geq 1} \frac{e^{2\chi n}}{\|\wt{\vmod}_0\|_{L^2}^2} \E\big(\sup_{t\in [n,n+1]}\|\wt{\vmod}(t,\om)\|_{L^2}^2 \big) 
\leq  C_0 \sum_{n\geq 1} e^{-2(\chi'-\chi)n}<\infty.
\end{equation*}
For $\om\not\in \limsup_{n} A_n$, let 
\begin{equation}
\label{eq:definition_N_Borel_Cantelli_argument}
N(\om) \coleq 1+\sup\Big\{n \geq 1\,:\,\sup_{t\in [n,n+1]}\|\wt{\vmod}(t,\om)\|_{L^2}^2>e^{-2\chi n} \|\wt{\vmod}_0\|_{L^2}^2 \Big\} \ \  \text{ with }\ \  \sup\emptyset \coleq 0,
\end{equation}
while $N(\om) \coleq \infty$ otherwise. From the above construction and $\P(A_n)\leq C_0 e^{-2(\chi'-\chi)n}$, for $k\geq 2$,
\begin{equation*}
\P(N\geq k)
= \P\big( \textstyle{\bigcup_{ n\geq {k-1}}^\infty}\, A_n\big)\leq C_0\sum_{n\geq k-1} e^{-2(\chi'-\chi)n}\lesssim e^{-2(\chi'-\chi)k}.
\end{equation*}
From the above and \eqref{eq:choice_chi_chiN}, it readily follows that 
\begin{equation*}
\E [e^{b(\chi + \log C_0) N}]= \E[C_0^{bN} e^{\chi b N}]<\infty.
\end{equation*} 
From \eqref{eq:definition_N_Borel_Cantelli_argument}, on the one hand, for all $n\geq 1$, $t\in [n,n+1]$, and $n\geq N$, it holds that
$$
\|\wt{\vmod}(t)\|_{L^2}^2\leq\sup_{t\in [n,n+1]} \|\wt{\vmod}(t)\|_{L^2}^2\leq C_0 \|\wt{\vmod}(n)\|_{L^2}^2 
\leq C_0 e^{-2\chi n}  \|\wt{\vmod}_0\|_{L^2}^2\leq C_0e^{2\chi (N+1)} e^{-2\chi t }\|\wt{\vmod}_0\|_{L^2}^2
$$ 
a.s. On the other hand, for $0\leq t\leq N$,
$$
\sup_{t\leq N}\|\wt{\vmod}(t)\|_{L^2}^2\leq C_0^N \|\wt{\vmod}_0\|_{L^2}^2\leq 
C_0^Ne^{2\chi N}e^{-2\chi t} \|\wt{\vmod}_0\|_{L^2}^2
$$
a.s.
Letting 
$
D(\om)=C_0^{N(\om)} e^{\chi (N(\om)+1)},
$ 
the above argument yields \eqref{eq:enhanced_dissipation_reaction1}--\eqref{eq:enhanced_dissipation_reaction2} with $\mu\equiv \infty$ and the solution $v$ to the stochastic RDEs \eqref{eq:reaction_diffusion_cut_off_Lp} with cutoff. 
As mentioned at the beginning of the proof, the latter leads to the claim in the absence of the cutoff, as in the proof of Theorem \ref{t:global} in Subsection 
\ref{ss:proof_global_well_posedness}.
\end{proof}

Before going into the proofs of Propositions \ref{prop:quenched_estimate_w} and \ref{prop:iteration_lemma}, let us point out that, in the case $b=2$, the above argument below \eqref{eq:decay_estimate} can be simplified. Indeed, in the latter situation, the claim of Theorem \ref{t:enhanced_dissipation_reaction} follows with a random constant $D$ given by
$$
\max_{n\geq 0}\, e^{\chi n} \|\wt{\vmod}(n)\|_{L^2}/\|\wt{\vmod}_0\|_{L^2}.
$$

\subsection{Uniform bounds with respect to the noise coefficients}
In this subsection, we prove estimates needed to prove Propositions \ref{prop:quenched_estimate_w} and \ref{prop:iteration_lemma}, and to ensure uniform in $\theta$ estimates on the half-line for solutions to \eqref{eq:reaction_diffusion_cut_off_Lp}. The following is a sharper version of \cite[Theorem 4.1(2)]{A22}, which is suited for our purposes.

\begin{proposition}[$\theta$-uniform estimates with cutoff]
\label{prop:global_cut_off_entropy}
Fix a compact set $K\subseteq \q \R^\ell_{> 0}$, $N\geq1$, $R>0$, and assume that $q,p\in (2,\infty)$ and $\delta\in (1,2)$ satisfy 
$$
q>\frac{d(h-1)}{2}\vee \frac{d}{d-\delta} \quad \text{ and }\quad p>\frac{2}{2-\delta}\vee q.
$$ 
Set $\a_{p,\delta}=p(1-\frac{\delta}{2})-1$. 
Then, there exist constants $r_0,L_0>0$ and $q_*\in (\frac{d(h-1)}{2}\vee 2,q)$ depending only on $q$, $h$, $d$, $p$, $\delta$, $N$, $R$, and $K$ such that for all $r\geq r_0$, normalized radially symmetric coefficients $\theta$ (see \eqref{eq:theta_normalized_symmetric}), and initial data $v_0\in L^q(\T^d;\R^\ell_{\geq 0})$ subject to $\|v_0\|_{L^q}\leq N$ and $\q\overline{v_0}\in K$, the global $(p,\a_{p,\delta},\delta,q)$-solution $\vmod$ to \eqref{eq:reaction_diffusion_cut_off_Lp} satisfies, 
\begin{align}
\label{eq:global_cut_off_estimate1}
\sup_{t>0}\|\vmod(t)-\veq\|_{L^q(\T^d;\R^{\ell})}^q
+\max_{1\leq i\leq \ell} \int_0^\infty\int_{\T^d}(1+ |\vmodi-\veqi|^{q-2})|\nabla \vmodi|^2 \,\dd x\,\dd t
\leq L_0.
\end{align}
\end{proposition}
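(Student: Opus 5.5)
The plan is to run the $L^q$-energy identity for $\vmod-\veq$ (the It\^o formula used in Lemma \ref{l:Lq_estimate_small}, justified by Lemma \ref{lem:Lebesgue_initial_data_continuity}). Two facts make this work uniformly in $\theta$:
\begin{itemize}
\item the transport noise is divergence free, and its It\^o correction exactly cancels the $\nu\Delta$ term, so neither the noise nor $\nu$ enters the $L^q$-balance;
\item the reaction term carries the cutoff factor $\phi_{R,r}(\cdot,\veq,\vmod)$, which vanishes as soon as
\[
\|\vmod-\veq\|_{L^r(0,t;L^{q_*})}+\|\vmod-\veq\|_{L^2(0,t;L^2)}\geq 2R .
\]
\end{itemize}
So the estimate reduces to a pathwise energy estimate in which the nonlinear contribution is only ever integrated over the random interval $[0,\sigma)$, where $\sigma$ is the first time the cutoff quantity reaches $2R$. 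On $[0,\sigma)$ both the $L^r_tL^{q_*}_x$ norm and the $L^2_tL^2_x$ norm of $\vmod-\veq$ are bounded by $2R$. For $t\geq\sigma$ the equation is linear, pure transport-diffusion, and the $L^q$-norm of $\vmod_i-\veqi$ is nonincreasing; this holds because $\veqi$ is constant, so it is itself a solution of the linear equation.

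Set $w=\vmod-\veq$. The identity reads
\[
\|w_i(t)\|_{L^q}^q+q(q-1)\nu_i\int_0^t\!\int_{\T^d}|w_i|^{q-2}|\nabla w_i|^2
=\|w_i(0)\|_{L^q}^q+q\int_0^t\!\int_{\T^d}\phi_{R,r}\,|w_i|^{q-2}w_i f_i(w+\veq).
\]
Since $f_i(\veq)=0$, the reaction term is bounded by $C(|w|^q+|w|^{q-1+h})$, with constants uniform over $\q\veq\in K$ by Lemma \ref{l:continuous_cinfty}. The initial term is at most $C(N,K)$.

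The core step is to bound
\[
\int_0^{\sigma}\!\int_{\T^d}\bigl(|w|^q+|w|^{q+h-1}\bigr)
\]
by a small multiple of the dissipation $\int|w|^{q-2}|\nabla w|^2$ plus a constant depending on $R$ and $r$. I would write $u=|w_i|^{q/2}$ and apply Gagliardo--Nirenberg in space:
\[
\||w|^{q+h-1}\|_{L^1}=\|u\|_{L^{2(q+h-1)/q}}^{2(q+h-1)/q}.
\]
Interpolate between $\|\nabla u\|_{L^2}$ and a low norm of $w$, namely $\|w\|_{L^{q_*}}$, which is controlled in $L^r_t$ on $[0,\sigma)$. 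Choose $q_*$ close to $q$ and above $\frac{d(h-1)}{2}$; this subcriticality makes the power of $\|\nabla u\|_{L^2}^2$ strictly less than $1$. The mean part of $u$ is handled as in Lemma \ref{l:nonlinear_estimate_easy}. Then Young's inequality in time, with $r\geq r_0$ large enough that the conjugate exponent on $\|w\|_{L^{q_*}}$ is at most $r$, gives the desired bound, exactly as in Step 2 of the proof of Proposition \ref{prop:global_cut_off}. The lower-order term $|w|^q$ is treated the same way, using the $L^2_tL^2_x$ bound from the cutoff, which is why the modified cutoff includes the $L^2$ part.

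I expect this interpolation to be the main obstacle. Its exponents must be uniform in $t$ and must close without any smallness, and this is what dictates the choice of $q_*$ and $r_0$; the bound $\sup_t\|w\|_{L^{q_*}}$ is \emph{not} available a priori, only its $L^r_t$ version.

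Having absorbed the nonlinearity, I get
\[
\sup_t\|w(t)\|_{L^q}^q+\int_0^\infty\!\int_{\T^d}|w_i|^{q-2}|\nabla w_i|^2\leq L ,
\]
uniformly in $\theta$ and $\nu$. The remaining term $\int_0^\infty\|\nabla\vmod_i\|_{L^2}^2$ comes from the $L^2$-identity with $q=2$. Its reaction contribution lives on $[0,\sigma)$ and is bounded by the quantities just controlled; for instance $|w|^{1+h}\lesssim|w|^2+|w|^{q+h-1}$ since $q\geq 2$. This yields \eqref{eq:global_cut_off_estimate1}.
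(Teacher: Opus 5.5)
Your route is the paper's: the $L^q$ It\^o balance for $\vmodhat=\vmod-\veq$ (noise and $\nu$ drop out), the bound $|\vmodhati|^{q-1}|f_i(\vmodhat+\veq)|\lesssim|\vmodhat|^q+|\vmodhat|^{q+h-1}$, stopping when the cutoff quantity reaches $2R$, a subcritical interpolation between the dissipation $\int|\vmodhati|^{q-2}|\nabla\vmodhati|^2$ and $\|\vmodhat\|_{L^r_tL^{q_*}_x}$ (this is Lemma \ref{l:interpolation_subcriticality_apriori_estimates}), Young's inequality, and finally the case $q=2$.

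However, the step you single out as the crux does not work as stated. You only ask that the time exponent $s$, which Young's inequality puts on $\|\vmodhat\|_{L^{q_*}}$, be \emph{at most} $r$. You then refer to Step 2 of the proof of Proposition \ref{prop:global_cut_off}, which is a H\"older argument on $[0,T]$ with $T$-dependent constants. Here your stopping time $\sigma$ may be $+\infty$, and all you know is $\int_0^\sigma\|\vmodhat\|_{L^{q_*}}^r\,\dd t\le(2R)^r$. If $s<r$, the remainder $\int_0^\sigma\|\vmodhat\|_{L^{q_*}}^s\,\dd t$ is not bounded uniformly in time, because $L^r(0,\infty)\not\subseteq L^s(0,\infty)$. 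With the scaling Gagliardo--Nirenberg exponent $\vartheta_*$ one gets $s(\vartheta_*)=r_0$, your threshold, so this failure occurs for every $r>r_0$.

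The repair is to tie the interpolation parameter to $r$ so that $s=r$ exactly. On $\T^d$, for the mean-free part of $u=|\vmodhati|^{q/2}$, the Gagliardo--Nirenberg bound with low norm $\|u\|_{L^{2q_*/q}}=\|\vmodhati\|_{L^{q_*}}^{q/2}$ holds for every $\vartheta\in[\vartheta_*,1]$, by Poincar\'e's inequality. Moreover, $s(\vartheta)=\frac{q(1-\vartheta)(q+h-1)}{q-\vartheta(q+h-1)}$ increases from $r_0$ to $+\infty$ as $\vartheta\uparrow\frac{q}{q+h-1}$. Solving $s(\vartheta)=r$ gives $\vartheta=\frac{q}{r-q}\,\frac{r-(q+h-1)}{q+h-1}$, which keeps the power $\vartheta\frac{q+h-1}{q}$ of the dissipation below $1$. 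This is exactly how Lemma \ref{l:interpolation_subcriticality_apriori_estimates} is built. The first relation in \eqref{eq:interpolation_subcriticality_apriori_estimates} is imposed as an equality, which the paper points out is what makes the constant independent of $t$, and the spatial condition $\vartheta>\frac d2\,\frac{h-1}{q+h-1}$ is what fixes $r_0$.

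In the same spirit, the mean part leaves $\|\vmodhat\|_{L^{q+h-1}(0,\sigma;L^1)}$. This must be bounded by interpolating in time between the $L^2_tL^2_x$ and $L^r_tL^{q_*}_x$ bounds (hence $r\ge q+h-1$), not by H\"older on a bounded interval.

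A minor point: Lemma \ref{lem:Lebesgue_initial_data_continuity} concerns \eqref{eq:reaction_diffusion}, not the cutoff problem. The paper instead justifies the It\^o formula by proving the bound for $C^\gamma$ data and passing to the limit via \eqref{eq:global_cut_off_continuity1}--\eqref{eq:global_cut_off_continuity2}.
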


The key point in the above is the independence of $L_0$ on the noise coefficient $\theta$. 
As in Theorem \ref{t:small_implies_global}, the regularity of a $(p,\a_{p,\delta},\delta,q)$-solution $v$ (similar to \eqref{eq:global_cut_off1} and Definition \ref{def:solution}) is in general not sufficient to ensure the finiteness of the left-hand side of \eqref{eq:global_cut_off_estimate1}. The latter is also part of the proof of Proposition \ref{prop:global_cut_off_entropy}.
Finally, we point out that, as
$$
\int_{\T^d} |\vmodi-\veqi|^{q-2}|\nabla \vmodi|^2\,\dd x
 \eqsim_q\int_{\T^d}  |\nabla [|\vmodi-\veqi|^{q/2}]|^2\,\dd x  ,
$$ 
and $\|\vmodi-\veqi\|_{L^{\xi}}^q
\eqsim_q \big\||\vmodi-\veqi|^{q/2}\big\|_{L^{(2\xi)/q}}^2$, it follows from Sobolev embeddings and \eqref{eq:global_cut_off_estimate1} that, for all $0\leq s <t<\infty$ and $i\in \{1,\dots,\ell\}$,
\begin{align}
\label{eq:v_vinfty_embedding_apriori_estimate}
\|\vmodi-\veqi\|_{L^{q}(s,t;L^{\xi}(\T^d))}^q
&\lesssim_{q,\ell,\xi}\|\vmodi-\veqi \|_{L^{q}(s,t;L^{q}(\T^d))}^q
+ 
\big\|\nabla |\vmodi-\veqi|^{q/2}\big\|_{L^2(s,t;L^2(\T^d))}^{2}
\\
\nonumber
&
\lesssim_{\ell,q}  L_0(1+t-s),
\end{align}
where $\xi\in (1,\infty)$ is arbitrary if $d=2$, or $\xi=\frac{qd}{d-2}$ otherwise.

\smallskip

The key for the proof of Proposition \ref{prop:global_cut_off_entropy} is the following result. 
The following can be seen as a `subcritical' variant of Lemma \ref{l:nonlinear_estimate_easy} and an improved version of \cite[Lemma 4.5]{A22} (see the discussion below Proposition \ref{prop:LWP}).

\begin{lemma}[Nonlinear estimates in the $L^q$-setting II] 
\label{l:interpolation_subcriticality_apriori_estimates}
Let $r\in (1,\infty)$ and fix $q>\frac{d(h-1)}{2}\vee 2$. Set $\delta=q-\frac{d}{2}(h-1)$ and assume that $r<\infty$ satisfies
\begin{equation}
\label{eq:lower_bound_r_interpolation_final}
r> \frac{q}{\delta} (\delta+h-1).
\end{equation}
Then, there exist constants $C_0>0$, $q_*<q$, and $\eta_0<\frac{1}{q+h-1}$ independent of $t>0$ such that
\begin{align}
\label{eq:interpolation_subcriticality_apriori_estimates1}
\|u\|_{L^{q+h-1}((0,t)\times \T^d)}&\leq C_0\|u\|_{L^{q+h-1}(0,t;L^1)}\\
\nonumber
&+ C_0 \|u\|_{L^r(0,t;L^{q_*})}^{1-q \,\eta_0}\Big(\int_0^t \int_{\T^d} |u|^{q-2}|\nabla u|^2 \,\dd x\,\dd s \Big)^{\eta_0}
\end{align}
for all $u\in L^2_{\loc}([0,\infty),W^{1,q})\cap L^\infty_{\loc}([0,\infty);L^q)$.
\end{lemma}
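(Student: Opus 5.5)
The plan is to imitate the proof of Lemma \ref{l:nonlinear_estimate_easy}, but to replace the $L^\infty(0,t;L^q)$-interpolation by a Gagliardo--Nirenberg inequality in space with a lower Lebesgue exponent. That exponent is then paid for by the $L^r$-integrability in time. Set $s \coleq q+h-1$ and $U \coleq |u|^{q/2}$, so that
\[
\|u\|_{L^{s}((0,t)\times\T^d)}=\|U\|_{L^{2s/q}((0,t)\times\T^d)}^{2/q}
\qquad\text{and}\qquad
\int_0^t\int_{\T^d}|\nabla U|^2\,\dd x\,\dd r\eqsim_q \int_0^t\int_{\T^d}|u|^{q-2}|\nabla u|^2\,\dd x\,\dd r .
\]
As before, I split $U=(U-\overline{U})+\overline{U}$. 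All inequalities will be applied pointwise in time and only afterwards combined via H\"older's inequality on $(0,t)$, so that the constants are independent of $t$.

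\emph{Mean part.} We have $\|\overline{U}\|_{L^{2s/q}(0,t)}^{2/q}=\|u\|_{L^{s}(0,t;L^{q/2})}$, and $q/2>1$ since $q>2$. Interpolating $L^{q/2}$ between $L^1$ and $L^{s}$ in space, then applying Young's inequality and H\"older's inequality in time, gives
\[
\|u\|_{L^{s}(0,t;L^{q/2})}\le C\|u\|_{L^{s}(0,t;L^1)}+\tfrac12\|u\|_{L^{s}((0,t)\times\T^d)} .
\]
The last term is absorbed into the left-hand side, exactly as in Lemma \ref{l:nonlinear_estimate_easy}. This produces the term $C_0\|u\|_{L^{q+h-1}(0,t;L^1)}$.

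\emph{Fluctuation part.} For a.e.\ $r$, I apply the Gagliardo--Nirenberg--Poincar\'e inequality on $\T^d$:
\[
\|U-\overline U\|_{L^{2s/q}}\lesssim \|U\|_{L^{b}}^{1-\theta}\|\nabla U\|_{L^2}^{\theta},
\qquad b=\frac{2q_*}{q},\qquad \frac{q}{2s}=\theta\Big(\frac12-\frac1d\Big)+\frac{1-\theta}{b}.
\]
Note that $\|U\|_{L^b}=\|u\|_{L^{q_*}}^{q/2}$. I raise this to the power $2s/q$ and integrate in time. H\"older's inequality, with exponent $q/(\theta s)$ on the gradient factor so that it lands in $L^2_t$, then yields
\[
\|U-\overline U\|_{L^{2s/q}_{t,x}}^{2s/q}\lesssim \|u\|_{L^{\rho}(0,t;L^{q_*})}^{(1-\theta)s}\,\|\nabla U\|_{L^2_{t,x}}^{2\theta s/q},
\qquad \rho=\frac{(1-\theta)s}{1-\theta s/q}.
\]
This step requires $\theta s/q<1$. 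When $\rho\le r$ one then passes to $L^r(0,t;L^{q_*})$, which is the only non-local-in-time step. The constant is still uniform in $t$, since H\"older's inequality is applied with the correct exponents rather than through the length of $(0,t)$. One could otherwise adjust the time exponent exactly, taking $q_*$ as a function of $r$ so that $\rho=r$. Taking the $s$-th root gives the claimed form with $\eta_0=\theta/q$ and exponent $(1-\theta)=1-q\eta_0$ on the $L^r(L^{q_*})$-norm, matching \eqref{eq:interpolation_subcriticality_apriori_estimates1}.

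\emph{Exponent bookkeeping (main obstacle).} At the endpoint $q_*=q$ (so $b=2$), the scaling relation gives $\theta=\frac{d(h-1)}{2s}$, and hence
\[
\frac{\theta s}{q}=\frac{d(h-1)}{2q}<1
\quad\Longleftrightarrow\quad q>\frac{d(h-1)}{2},
\]
which is precisely the subcriticality assumption. In that case $\eta_0=\frac{d(h-1)}{2qs}<\frac1s$. Moreover, with $\delta=q-\frac d2(h-1)$ one computes $\rho=\frac{q(\delta+h-1)}{\delta}$ at this endpoint, which is why \eqref{eq:lower_bound_r_interpolation_final} appears. Since all conditions are strict and the exponents depend continuously on $q_*$, I would choose $q_*<q$ close enough to $q$ that:
\begin{itemize}
\item $\theta s/q<1$ still holds;
\item $\eta_0<\frac1{q+h-1}$ still holds;
\item $\rho(q_*)<r$, using the strict inequality \eqref{eq:lower_bound_r_interpolation_final}.
\end{itemize}
The delicate part is verifying these three strict inequalities simultaneously and checking that the admissible range for the Gagliardo--Nirenberg inequality (with $b>1$, using $q_*>q/2$) is respected.
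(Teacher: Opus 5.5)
Your architecture matches the paper's: the mean/fluctuation split of $U=|u|^{q/2}$, absorption of the mean part via $L^1$--$L^{q+h-1}$ interpolation, and interpolation of $U-\overline U$ between $L^r_tL^{q_*}_x$ and $\|\nabla U\|_{L^2_{t,x}}$. Your exponents at $q_*=q$ ($\theta=\frac{d(h-1)}{2s}$, $\rho=\frac{q(\delta+h-1)}{\delta}$) are correct and explain \eqref{eq:lower_bound_r_interpolation_final}.

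The gap is in the time exponent, which is exactly where uniformity in $t$ is decided. Your bookkeeping picks $q_*$ near $q$ with $\rho(q_*)<r$ and then passes from $L^\rho(0,t;L^{q_*})$ to $L^r(0,t;L^{q_*})$. For $\rho<r$ this is H\"older through the length of the interval, $\|g\|_{L^\rho(0,t)}\le t^{1/\rho-1/r}\|g\|_{L^r(0,t)}$. Your constant therefore picks up $t^{(1-\theta)(1/\rho-1/r)}$, which blows up as $t\to\infty$, so the claim that it ``is still uniform in $t$'' is false. This matters: $t$-uniformity is the whole point of the lemma, and it is used on $(0,\infty)$ in the proof of Proposition \ref{prop:global_cut_off_entropy}. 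The mismatch is also not a borderline artifact. With the sharp Gagliardo--Nirenberg exponent, $\theta(q_*)$ increases as $q_*$ decreases (using $q>\frac{(d-2)(h-1)}{2}$), and $\rho=\frac{(1-\theta)s}{1-\theta s/q}$ is increasing in $\theta$. Hence for $q_*<q$ near $q$ you land strictly inside $(\rho(q),r)$.

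The remedy, which you mention only as an aside, must be the main route: match the time exponents exactly and put all slack in space.
\begin{itemize}
\item Fix $\theta:=\frac{q(r-s)}{s(r-q)}$, the unique value with $\frac{(1-\theta)s}{1-\theta s/q}=r$. This is exactly the paper's $\vartheta$ in \eqref{eq:vartheta_interpolation_qstar_q_conditions_proof}.
\item Since $s>q$, automatically $\theta<q/s$, i.e.\ $\eta_0=\theta/q<\frac{1}{q+h-1}$. Your first two bullets are in fact the same condition.
\item By monotonicity of $\rho$ in $\theta$, hypothesis \eqref{eq:lower_bound_r_interpolation_final} is equivalent to $\theta>\frac{d(h-1)}{2s}$.
\item Take $q_*<q$ close enough to $q$ that the sharp exponent $\theta_0(q_*)$ is still below $\theta$. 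Then use the non-sharp inequality $\|V\|_{L^{2s/q}}\lesssim\|V\|_{L^b}^{1-\theta}\|\nabla V\|_{L^2}^{\theta}$ for mean-zero $V$ on $\T^d$. It follows from the sharp one together with $\|V\|_{L^b}\le\|V\|_{L^2}\lesssim\|\nabla V\|_{L^2}$ (valid since $b<2$).
\end{itemize}
Tuning $q_*$ alone until $\rho(q_*)=r$ also works. However, for large $r$ it pushes $q_*$ towards $\frac{d(h-1)}{2}$, which may lie below $q/2$, where your requirement $b>1$ fails.

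With this correction your proof coincides with the paper's up to packaging. The paper imposes the time relation in \eqref{eq:interpolation_subcriticality_apriori_estimates} as an equality and interpolates $L^{2r/q}(0,t;\dot L^{2q_*/q})\cap L^2(0,t;\dot H^1)$ into $L^{2s/q}(0,t;\dot H^{\vartheta,\xi})$. It then spends the slack in the non-sharp Sobolev embedding $H^{\vartheta,\xi}\hookrightarrow L^{2s/q}$. Your version (Gagliardo--Nirenberg pointwise in time, then H\"older in time with conjugate exponents) is slightly more elementary and avoids mixed-norm interpolation.
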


Similar to Lemma \ref{l:subcriticality_Lq}, the subcriticality of the $L^q$-space with $q>\frac{d(h-1)}{2}$ is encoded in the condition $\eta_0<1/(q+h-1)$, which ensures that the nonlinearity $f_i(v)$ is sublinear w.r.t.\ $\int_0^t \int_{\T^d} |u|^{q-2}|\nabla u|^2 \,\dd x\,\dd s$; see the proof of \eqref{eq:global_cut_off_estimate1} below.
As in \eqref{eq:finiteness_of_norms_interpolation}, the assumed regularity of $u$ in Lemma  \ref{l:interpolation_subcriticality_apriori_estimates} ensures that the right-hand side of \eqref{eq:interpolation_subcriticality_apriori_estimates1} is finite. 

\begin{proof}
As in \cite[Lemma 4.5]{A22}, the proof is based on an interpolation argument. Some care is needed as we want to obtain estimates that are uniform in $t>0$. The key idea is to interpolate the energy space and to use the scaling of the Lebesgue spaces. More precisely, 
\begin{align*}
&\|u\|_{L^{q+h-1}((0,t)\times \T^d)}= \big\||u|^{q/2}\big\|_{L^{\frac{2}{q}(q+h-1)}((0,t)\times \T^d)}^{2/q}\\
&\leq C_q \Big\||u|^{q/2}-\int_{\T^d} |u|^{q/2}\,\dd x \Big\|_{L^{\frac{2}{q}(q+h-1)}((0,t)\times \T^d)}^{2/q}
+C_q\|u\|_{L^{q+h-1}(0,t;L^{q/2})}\\
&\leq C_q \Big\||u|^{q/2}-\int_{\T^d} |u|^{q/2}\,\dd x \Big\|_{L^{\frac{2}{q}(q+h-1)}((0,t)\times \T^d)}^{2/q}
+C_q'\|u\|_{L^{q+h-1}(0,t;L^{1})}+ \frac{1}{2}
\|u\|_{L^{q+h-1}((0,t)\times \T^d)}.
\end{align*}
Thus, it remains to interpolate the first term on the right-hand side of the above estimate. 
For the latter, recall that, by standard interpolation inequalities,
\begin{equation}
\label{eq:interpolation_inequality_proof_uniformtheta0}
L^{(2r)/q}(0,t;\dot{L}^{(2q_*)/q}(\T^d))\cap L^2(0,t;\dot{H}^1(\T^d))\embed L^{\frac{2}{q}(q+h-1)}(0,t;\dot{H}^{\vartheta,\xi}(\T^d)),
\end{equation}
where $\vartheta\in (0,1)$ and $\xi\in (1,2)$ satisfy 
\begin{align}
\label{eq:interpolation_subcriticality_apriori_estimates}
\frac{q(1-\vartheta)}{2r}+\frac{\vartheta}{2}=\frac{q}{2(q+h-1)}
\qquad \text{ and }\qquad 
\frac{q(1-\vartheta)}{2q_*}+\frac{\vartheta}{2}=\frac{1}{\xi}.
\end{align}
For the interpolation of Bessel potential spaces with different integrability, see, e.g., \cite[Theorem 6.4.5(7)]{BeLo}.
In particular, 
\begin{equation}
\label{eq:vartheta_interpolation_qstar_q_conditions_proof}
\vartheta = \frac{q}{r-q} \frac{r-(q+h-1)}{q+h-1} \qquad \text{ and }\qquad \lim_{q_*\uparrow q}\xi=2.
\end{equation}
We emphasize that the constant in the embedding \eqref{eq:interpolation_inequality_proof_uniformtheta0} can be made uniform in $t>0$ as the first relation in \eqref{eq:interpolation_subcriticality_apriori_estimates} holds as an equality.

Next, we claim that there exists some $q_*<q$, for which 
$$
H^{\vartheta,\xi}(\T^d) \embed L^{\frac{2}{q}(q+h-1)}(\T^d).
$$
Indeed, by the second identity in \eqref{eq:vartheta_interpolation_qstar_q_conditions_proof} and Sobolev embeddings, it is enough to check that 
$$
\vartheta-
\frac{d}{2}>-\frac{qd}{2(q+h-1)}\qquad \Longleftrightarrow \qquad \vartheta>\frac{d}{2}\frac{h-1}{q+h-1}.
$$
Using \eqref{eq:vartheta_interpolation_qstar_q_conditions_proof}, standard computations show that the latter condition is equivalent to \eqref{eq:lower_bound_r_interpolation_final}.
Thus, it follows from the above and \eqref{eq:interpolation_inequality_proof_uniformtheta0} that
\begin{align*}
\Big\||u|^{q/2}-\int_{\T^d} |u|^{q/2}\,\dd x \Big\|_{L^{\frac{2}{q}(q+h-1)}((0,t)\times \T^d)}^{2/q}
&\lesssim\Big\||u|^{q/2}-\int_{\T^d} |u|^{q/2}\,\dd x\Big\|_{L^{\frac{2r}{q}}(0,t;L^{(2q_*)/q})}^{(2(1-\vartheta))/q}
 \|\nabla [|u|^{q/2}]\|_{L^{2}(0,t;L^{2})}^{(2\vartheta)/q}\\
&\lesssim \|u\|_{L^{r}(0,t;L^{q_*})}^{1-\vartheta}
 \Big(\int_0^t \int_{\T^d} |u|^{q-2}|\nabla u|^2 \,\dd x \,\dd s \Big)^{\vartheta/q}.
\end{align*}
The claimed inequality \eqref{eq:interpolation_subcriticality_apriori_estimates1} can now be obtained by collecting the above estimates and choosing $\eta_0=\vartheta/q$. Clearly, $\eta_0<1/(q+h-1)$ as $h>1$ by assumption.
\end{proof}

With Lemma \ref{l:interpolation_subcriticality_apriori_estimates} at our disposal, we can conclude the proof of Proposition \ref{prop:global_cut_off_entropy}.

\begin{proof}[Proof of Proposition \ref{prop:global_cut_off_entropy}]
From \eqref{eq:global_cut_off_continuity1}--\eqref{eq:global_cut_off_continuity2} in Proposition \ref{prop:global_cut_off}, by mollification, it suffices to obtain \eqref{eq:global_cut_off_estimate1} for $v_0\in C^{\gamma}(\T^d;\R^{\ell}_{\geq 0})$ for some $\gamma\in (0,1)$ with constants independent of $v_0$ and $\g$. 
In particular, the corresponding $(p,\a_{p,\delta},\delta,q)$-solution $\vmod$ is also a $(p,\kappa,1,q)$-solution for any $p\in (2,\infty)$ and $\a\in [0,\frac{p}{2}-1)$, and it possesses the additional regularity 
\begin{align}
\label{eq:regularity_smoothed1}
\vmod&\in C([0,\infty);L^q(\T^d;\R^\ell)), \\
\label{eq:regularity_smoothed2}
\vmod&\in L^p_{\loc}([0,\infty),w_{\a};W^{1,q}(\T^d;\R^\ell))\subseteq L^2_{\loc}([0,\infty);W^{1,q}(\T^d;\R^\ell)). 
\end{align}
This will be used below without further mention. 

Without loss of generality, we assume that $r\geq r_0\vee (q+h-1)$ and $q_*$ are chosen so that the estimate in Lemma \ref{l:interpolation_subcriticality_apriori_estimates} holds.
Set 
$$
\vmodhat=\vmod-v_\infty.
$$ 
Since $f_i(v_\infty)=0$ for all $i\in \{1,\dots,\ell\}$, $\vmodhat$ is a global $(p,\kappa,1,q)$-solution to 
\begin{equation}
\left\{
\begin{aligned}
\dd \vmodhati 
& =\big[ \nu_i \Delta \vmodhati+ \wh{\phi}_{R,r}(\vmodhat) f_i(\vmodhat+v_\infty)\big]\,\dd t 
+\sqrt{c_d \nu }\sum_{k,\alpha} \theta_k (\sigma_{k,\alpha}\cdot\nabla) \vmodhati\circ \dd W^{k,\alpha}_t
& \text{on }&\Tor^d,\\ 
\vmodhat_{i,0}&\coleq v_i(0)-v_{\infty,i}  &\text{on }&\Tor^d,
\end{aligned}\right.
\end{equation}
where 
$
\wh{\phi}_{R,r}(t,\vmodhat)
=\phi\big(R^{-1}\,[\|\vmodhat\|_{L^r(0,t;L^{q_*})}+\|\vmodhat\|_{L^2(0,t;L^2)}]\big)
$
and $\phi$ is as in \eqref{eq:def_cutoff}.

\smallskip

As in the proof of Lemma \ref{l:Lq_estimate_small}, from the It\^o formula (\cite[Section 3]{Kry13} or \cite[Appendix A]{DHV16}) it follows that, a.s.\ for all $t>0$,
\begin{align}
\label{eq:ito_formula_what_final_estimates}
\|\vmodhat_i(t)\|_{L^q}^q
&+q(q-1) \nu_i \int_{0}^t \int_{\T^d} |\vmodhati|^{q-2} |\nabla \vmodhati|^2\,\dd x \, \dd t' \\
\nonumber
&=\|\vmodhat_{0,i}\|_{L^q}^q
+ q\int_{0}^t \wh{\phi}_{R,r}(\vmodhat) \int_{\T^d} |\vmodhat_i|^{q-2} \vmodhati f_i (\vmodhat+v_\infty)\, \dd x \,\dd t' .
\end{align}
Since $\veq$ is an equilibrium, one has $f_i(v_\infty)=0$ and 
\begin{align*}
|f_i(\vmodhat+v_\infty)- f_i(v_\infty)|\leq C (1+|\vmodhat+v_\infty|^{h-1}+|v_\infty|^{h-1})|\vmodhat|
\leq C (|\vmodhat|+|\vmodhat|^{h}),
\end{align*}
where $C$ depends only on $N$, $q$, and $K$ as $\sup_{\q v_\infty\in K} |v_\infty|<\infty$ by Lemma \ref{l:continuous_cinfty}. 
Thus, by Young's inequality, 
\begin{align*}
\Big| \int_{\T^d} |\vmodhati(t)|^{q-2} \vmodhati(t) f_i (\vmodhat(t)+v_\infty)\, \dd x  \Big|
&\leq C \int_{\T^d} (|\vmodhat(t)|^{q} +  |\vmodhat(t)|^{q+h-1})\,\dd x  \\
&\leq C_q   \int_{\T^d}  \big( |\vmodhat(t)|^{2}+  |\vmodhat(t)|^{q+h-1}\big)\,\dd x 
\end{align*}
a.s.\ for all $t>0$.
Let $\chi_{R,r}$ be the stopping time given by
$$
\chi_{R,r} \coleq \inf\{t\in [0,\infty)\,:\, \|\vmodhat\|_{L^r(0,t;L^{q_*})}+\|\vmodhat\|_{L^2(0,t;L^{2})}\geq 2R\}  \quad \text{ with } \quad  \inf\emptyset \coleq \infty.
$$
Lemma \ref{l:interpolation_subcriticality_apriori_estimates} implies, a.s.\ for all $t>0$, 
\begin{align*}
&\Big|\int_{0}^t \wh{\phi}_{R,r}(\vmodhat) \int_{\T^d} |\vmodhati|^{q-2} \vmodhati f_i (\vmodhat+v_\infty)\, \dd x \,\dd t'\Big|\\
& \lesssim_{q,N} \int_0^{\chi_{R,r}\wedge t} \int_{\T^d}  |\vmodhat|^{2}\,\dd x\,\dd t'  
+ \max_{1\leq i \leq \ell} \int_0^{\chi_{R,r}\wedge t} \int_{\T^d}  |\vmodhati|^{q+h-1}\,\dd x\,\dd t' \\
& \lesssim \|\vmodhat\|_{L^2(0,\chi_{R,r}\wedge t;L^2)}^2  + \|\vmodhat\|_{L^{q+h-1}(0,\chi_{R,r}\wedge t;L^2)}^{q+h-1}\\
&+\|\vmodhat\|_{L^r(0,\chi_{R,r}\wedge t;L^{q_*})}^{(q+h-1)\alpha_0}
\Big(\max_{1\leq i \leq \ell} \int_0^{\chi_{R,r}\wedge t} \int_{\T^d} |\vmodhati|^{q-2}|\nabla \vmodhati|^2 \,\dd x\,\dd t' \Big)^{(q+h-1)\eta_0},
\end{align*}
where $\alpha_0=1-q\,\eta_0$.
From the definition of $\chi_{R,r}$, it follows that $\|\vmodhat\|_{L^2(0,\chi_{R,r};L^2)}\leq 2R$ and $\|\vmodhat\|_{L^r(0,\chi_{R,r};L^{q_*})}\leq2R$. 
Moreover, as we assumed $r\geq q+h-1>2$, it also follows from interpolation and the definition of stopping time $\chi_{R,r}$ that 
$$
\|\vmodhat\|_{L^{q+h-1}(0,\chi_{R,r}\wedge t;L^2)}^{q+h-1}
\lesssim \|\vmodhat\|_{L^{2}(0,\chi_{R,r}\wedge t;L^2)}^{q+h-1}
+\|\vmodhat\|_{L^{r}(0,\chi_{R,r}\wedge t;L^2)}^{q+h-1}\lesssim_R 1.
$$
Hence, the above observations and the fact that $\eta_0<1/(q+h-1)$ yield
\begin{align*}
&\Big|\int_{0}^t \wh{\phi}_{R,r}(\vmodhat) \int_{\T^d} |\vmodhati|^{q-2} \vmodhati f_i (\vmodhat+v_\infty)\, \dd x \,\dd t' \Big|\\
&\qquad \quad \leq C_{q,N,R} + \frac{\nu_0}{2}(q-1)\max_{1\leq i\leq \ell}\int_0^{t } \int_{\T^d} |\vmodhati|^{q-2}|\nabla \vmodhati|^2 \,\dd x\,\dd t',
\end{align*}
where $\nu_0=\min_{1\leq i\leq \ell}\nu_i$. The previous displayed formula and \eqref{eq:ito_formula_what_final_estimates} readily imply that the right-hand side of \eqref{eq:ito_formula_what_final_estimates} is bounded uniformly in $t>0$. To conclude the proof of Proposition \ref{prop:global_cut_off_entropy}, it suffices to perform the above argument again, with $q$ replaced by $2$ to bound $\int_0^t \int_{\T^d} |\nabla \vmodi |^2\,\dd x \,\dd t' $ (see Lemma \ref{l:L2_estimates_small_implies} for a similar situation).
\end{proof}

\subsection{Proof of Proposition \ref{prop:quenched_estimate_w}}
\label{ss:proof_quenched_estimate_w}
Before going into the proof of Proposition \ref{prop:quenched_estimate_w}, let us note that $\wt{\vmod}(t,x)$ is a global $(p,\a_{p,\delta},\s,q)$-solution to (defined analogously to Definition \ref{def:solution})
\begin{equation}
\label{eq:reaction_diffusion_cut_off_Lp_w}
\left\{
\begin{aligned}
\dd \wt{\vmod}_i - \nu_i \Delta \wt{\vmod}_i \,\dd t
& = \phi_{R,r}(v_\infty,\vmod) (f_i(\vmod)-\overline{f_i(\vmod)})\,\dd t \\
&\qquad \quad +\sqrt{c_d \nu }\sum_{k,\alpha} \theta_k (\sigma_{k,\alpha}\cdot\nabla) \wt{\vmod}_i\circ \dd W^{k,\alpha}_t
& \text{ on }&\Tor^d,\\ 
\wt{\vmod}_i(0)&=\wt{v}_{0,i} &\text{ on }&\Tor^d.
\end{aligned}\right.
\end{equation}

\begin{proof}[Proof of Proposition \ref{prop:quenched_estimate_w}]
By the It\^o formula, we have, a.s.\ for all $0<s<t<\infty$,
\begin{align*}
\frac{1}{2}\|\wt{\vmod}_i(t)\|_{L^2}^2+ \nu_i \int_s^t \|\nabla \wt{\vmod}_i\|_{L^2}^2\,\dd t'
&= \frac{1}{2}\|\wt{\vmod}_i(s)\|_{L^2}^2 + \int_s^t \phi_{R,r}(v_\infty,\vmod)\int_{\T^d} (f_i(\vmod)-\overline{f_i(\vmod)}) \wt{\vmod}_i\,\dd x \,\dd r\\
&\leq\frac{1}{2} \|\wt{\vmod}_i(s)\|_{L^2}^2 + \int_s^t \Big|\int_{\T^d} (f_i(\vmod)-\overline{f_i(\vmod)}) \wt{\vmod}_i\,\dd x \Big|\, \dd r.
\end{align*}
Fix $r\in (1,2)$ to be decided later if $d=2$, otherwise $r=\frac{2d}{d+2}$. From the Sobolev embedding $L^r\embed H^{-1}$, it follows that, a.s.\ for all $t>0$, 
\begin{align}
\label{eq:estimate_f_fbar_mixing}
\Big|\int_{\T^d} (f_i(\vmod)-\overline{f_i(\vmod)})\,\wt{\vmod}_i \,\dd x \Big|
&\leq \|f_i(\vmod)-\overline{f_i(\vmod)}\|_{L^2} \|\wt{\vmod}_i\|_{L^2}\\
\nonumber
&\stackrel{(i)}{\eqsim} \|\nabla [f_i(\vmod)]\|_{H^{-1}}\|\wt{\vmod}_i\|_{L^2}\\
\nonumber
&\stackrel{(ii)}{\lesssim} \|\nabla [f_i(\vmod)]\|_{L^r}\|\wt{\vmod}_i\|_{L^2}\\
\nonumber
&\stackrel{(iii)}{\lesssim} \|(1+|\vmod|^{h-1})|\nabla \wt{\vmod}|\|_{L^r}\|\wt{\vmod}_i\|_{L^2},
\end{align}
where we used in $(i)$ that $f_i(\vmod)-\overline{f_i(\vmod)}$ has mean zero and, by standard Fourier methods,
$$
\|u\|_{L^2}\eqsim \|\nabla u\|_{H^{-1}}\quad \text{ for all }u\in L^2,
$$
in $(ii)$ the Sobolev embeddings, and in $(iii)$ that $\nabla \vmod=\nabla \wt{\vmod}$. Hence, a.s.\ for all $t>0$, 
\begin{align*}
\Big|\int_{\T^d} (f_i(\vmod)-\overline{f_i(\vmod)})\,\wt{\vmod}_i \,\dd x \Big|
&\lesssim
(1+\|\vmod\|_{L^{r_0(h-1)}}^{h-1})\|\nabla \wt{\vmod}\|_{L^2}\|\wt{\vmod}_i\|_{L^2}\\
&\leq \frac{\min_i \nu_i}{2} \|\nabla \wt{\vmod}\|_{L^2}^2+ C_{\nu_i}
(1+\|\vmod\|_{L^{r_0(h-1)}}^{2(h-1)})\|\wt{\vmod}_i\|_{L^2}^2\\
&\leq \frac{\min_{1\leq i\leq \ell}\nu_i}{2\ell} \|\nabla \wt{\vmod}\|_{L^2}^2+ C_{\nu_i,K}
(1+\|\vmod-v_\infty\|_{L^{r_0(h-1)}}^{2(h-1)})\|\wt{\vmod}_i\|_{L^2}^2,
\end{align*}
where $r_0\in (1,\infty)$ satisfies $\frac{1}{r_0}+\frac{1}{2}=\frac{1}{r}$, and where we used $\sup_{\q\veq \in K}|v_\infty|<\infty$ by Lemma \ref{l:continuous_cinfty}. Thus, $r_0=d$ if $d\geq 3$, or $r_0\downarrow 2$ if $d=2$, and $r\downarrow 1$.
From the above estimate and the Gr\"onwall lemma, it follows that, a.s.\ for all $t>s$,
\begin{equation}
\label{eq:Gronwall_Application_wt}
\|\wt{\vmod}(t)\|_{L^2}^2+ \int_s^t \|\nabla \wt{\vmod}\|_{L^2}^2\,\dd t'
\leq C e^{C\int_s^t (1+\|\vmod-v_\infty\|_{L^{r_0(h-1)}}^{2(h-1)})\,\dd t' }\|\wt{\vmod}(s)\|_{L^2}^2.
\end{equation}
Since $|t-s|\leq 1$, the bound  
\begin{equation}
\label{eq:boundedness_Lr0_proof}
\int_s^t (1+\|\vmod-v_\infty\|_{L^{r_0(h-1)}}^{2(h-1)})\,\dd t' \,\lesssim_{q,N,K} 1
\end{equation} 
follows from Proposition \ref{prop:global_cut_off_entropy} and \eqref{eq:v_vinfty_embedding_apriori_estimate} together with the embedding in Lemma \ref{l:interpolation_Lr0} below.
\end{proof}

\begin{lemma}
\label{l:interpolation_Lr0}
Suppose that $q>\frac{d(h-1)}{2}\vee 2$ and additionally $q>(d-2)(h-1)$ in case $d\geq 5$.
Then, 
\begin{equation}
\label{eq:interpolation_inequality_checking}
L^{q}(0,1;L^{\xi})\cap L^\infty(0,1;L^q)\embed L^{2(h-1)}(0,1;L^{r_0(h-1)})
\end{equation}
provided $r_0=d$, $\xi=\frac{dq}{d-2}$, and $d\geq 3$. Finally, in the case $d=2$, there exist $r_0\in (2,\infty)$ sufficiently small and $\xi\in [1,\infty)$ sufficiently large such that \eqref{eq:interpolation_inequality_checking} holds. 
\end{lemma}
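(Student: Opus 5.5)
The embedding will follow from a time--space interpolation of mixed Lebesgue norms (H\"older in space, then H\"older in time) on the finite interval $(0,1)$, where lower time integrability is always allowed. For $\vartheta\in[0,1]$ the interpolation inequality
$$
\|u\|_{L^{a}(0,1;L^{b})}\leq \|u\|_{L^{q}(0,1;L^{\xi})}^{\vartheta}\|u\|_{L^{\infty}(0,1;L^{q})}^{1-\vartheta},
\qquad \frac1a=\frac{\vartheta}{q},\qquad \frac1b=\frac{\vartheta}{\xi}+\frac{1-\vartheta}{q},
$$
holds by H\"older. Hence it suffices to find $\vartheta\in[0,1]$ with $b\geq r_0(h-1)$ and $a\geq 2(h-1)$. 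The conclusion then follows from $L^a(0,1)\embed L^{2(h-1)}(0,1)$ and $L^b(\T^d)\embed L^{r_0(h-1)}(\T^d)$.

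\emph{Case $d\geq3$.} Take $r_0=d$ and $\xi=\frac{dq}{d-2}$. Then
$$
\frac1b=\frac1q-\frac{2\vartheta}{dq}.
$$
The spatial requirement $b\geq d(h-1)$ reads $\frac1q-\frac{2\vartheta}{dq}\leq\frac{1}{d(h-1)}$, that is,
$$
\vartheta\geq \frac{d}{2}-\frac{q}{2(h-1)}.
$$
The time requirement $a\geq 2(h-1)$ reads
$$
\vartheta\leq \frac{q}{2(h-1)}.
$$
So an admissible $\vartheta\in[0,1]$ exists if and only if three conditions hold:
\begin{itemize}
\item[(a)] $\frac{d}{2}-\frac{q}{2(h-1)}\leq 1$, i.e.\ $q\geq (d-2)(h-1)$;
\item[(b)] $\frac{d}{2}-\frac{q}{2(h-1)}\leq \frac{q}{2(h-1)}$, i.e.\ $q\geq \frac{d(h-1)}{2}$;
\item[(c)] $\frac{q}{2(h-1)}\geq 0$, which is trivial.
\end{itemize}
Condition (b) is the standing assumption $q>\frac{d(h-1)}{2}$. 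Condition (a) is implied by (b) when $d\leq4$, since then $d-2\leq \frac d2$. For $d\geq5$, condition (a) is exactly the additional hypothesis $q>(d-2)(h-1)$. We then choose $\vartheta$ in the nonempty interval
$$
\Big[\max\Big\{0,\tfrac d2-\tfrac{q}{2(h-1)}\Big\},\ \min\Big\{1,\tfrac{q}{2(h-1)}\Big\}\Big].
$$
The only real bookkeeping is this; checking that the interval is nonempty is the main point, and it is here that the $d\geq5$ restriction enters.

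\emph{Case $d=2$.} Now $\xi$ is arbitrary. Letting $\xi\to\infty$, the spatial condition becomes $\frac{1-\vartheta}{q}\leq\frac{1}{r_0(h-1)}$ up to an arbitrarily small slack, and the time condition is again $\vartheta\leq\frac{q}{2(h-1)}$. Choose $\vartheta=\min\{1,\frac{q}{2(h-1)}\}$.
\begin{itemize}
\item If $\vartheta=1$, the spatial condition holds for any $r_0$ once $\xi$ is large, namely $\xi\geq r_0(h-1)$.
\item Otherwise $1-\vartheta=1-\frac{q}{2(h-1)}$, and we need $r_0(h-1)\big(1-\frac{q}{2(h-1)}\big)<q$ strictly. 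This is $r_0<\frac{2q}{2(h-1)-q}$. Since $q>h-1$, the right-hand side exceeds $2$. Hence some $r_0\in(2,\infty)$ close to $2$ works, after which $\xi$ is taken large enough to absorb the slack.
\end{itemize}
This gives the claimed choice of $r_0$ slightly above $2$ and $\xi$ large.
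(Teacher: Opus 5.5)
Your argument is correct and is essentially the paper's proof: both rest on H\"older interpolation between $L^{q}(0,1;L^{\xi})$ and $L^{\infty}(0,1;L^{q})$. The paper fixes the exponent $\vartheta=\min\{1,\frac{q}{2(h-1)}\}$ by splitting into the cases $q\geq 2(h-1)$ and $q<2(h-1)$. You instead parametrize all admissible $\vartheta$ and check that the resulting interval is nonempty, which is the same computation.
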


\begin{proof}
To prove \eqref{eq:interpolation_inequality_checking}, we distinguish the following cases:
\begin{itemize}
\item \emph{Case $q\geq 2(h-1)$}. In this case, it suffices to check $\xi\geq r_0(h-1)$ with $r_0$ as above. As $\xi\in [2,\infty)$ can be chosen arbitrarily large if $d=2$, it is enough to discuss the cases $d\geq 3$. Hence, $r_0=d$ and $\xi=\frac{dq}{d-2}\geq r_0(h-1)$ is equivalent to $q>(d-2)(h-1)$. As $q>\frac{d}{2}(h-1)$ by assumption, the latter is automatically satisfied if $d\leq 4$.
\item \emph{Case $q<2(h-1)$}. We begin with the case $d\geq 3$. By interpolation, the space on the left-hand side of \eqref{eq:interpolation_inequality_checking} embeds into $L^{2(h-1)}(0,t;L^{\zeta})$ where 
$
\frac{1-\varphi}{q}+\frac{\varphi}{\xi}=\frac{1}{\zeta} $ and 
$\varphi=\frac{q}{2(h-1)}.
$
Thus, $\zeta\geq d(h-1)$ if and only if 
$$
\frac{1}{\zeta}=\frac{1}{q}-\frac{1}{d(h-1)}\leq \frac{1}{d(h-1)}.
$$
The previous estimate holds as a strict inequality as we are assuming $q>\frac{d(h-1)}{2}\vee 2$. 
If $d=2$, then the previous argument follows similarly by noticing that $\lim_{\xi\to \infty}\frac{1}{\zeta}=\frac{1}{q}-\frac{1}{2(h-1)}<\frac{1}{2(h-1)}$ as $q>(h-1)\vee 2$ by assumption. Thus, the existence of $r_0\in (2,\infty)$ and $\xi<\infty$, for which $\zeta\geq r_0(h-1)$ holds, follows by continuity.
\end{itemize}
This finishes the proof of Lemma \ref{l:interpolation_Lr0}. 
\end{proof}

We conclude this subsection with the following observations.  

\begin{remark}\
\label{rem:criticality_pathwise_estimates}
\begin{itemize}
\item
The proof of Proposition \ref{prop:quenched_estimate_w} breaks down if $\overline{f(\vmod)}$ in \eqref{eq:reaction_diffusion_cut_off_Lp_w} is replaced by $f(v_\infty)$. This is consistent with the discussion in \eqref{eq:linear_PDE_intro}--\eqref{eq:oscillation_energy}. Indeed, transport noise cannot, in general, arbitrarily accelerate convergence to equilibrium, whereas it can arbitrarily enhance the decay of spatial fluctuations.
\item
It is worth noting that the space on the right-hand side of \eqref{eq:interpolation_inequality_checking} has critical scaling. Indeed, its space-time Sobolev index is
$
-\frac{2}{2(h-1)}-\frac{d}{d(h-1)}
=-\frac{2}{h-1},
$
which coincides with that of the critical space $L^{\frac{d}{2}(h-1)}$.
\end{itemize}
\end{remark}

\subsection{Proof of Proposition \ref{prop:iteration_lemma}}
\label{ss:proof_iteration_lemma}
We begin by collecting some facts. From the It\^o--Stratonovich correction \eqref{eq:Ito_stratonovich_change} and Definition \ref{def:solution}, it follows that $v$ satisfies
\begin{align}
\label{eq:mild_formulation_SPDE_wtv}
\wt{\vmod}_i(t)
&= e^{(\nu_i+\nu)(t-s)\Delta}\wt{\vmod}_i(s)+\int_s^t e^{(\nu_i+\nu)(t-t')\Delta} \big[ \phi_{R,r}(v_\infty,\vmod)\big( f_i(\vmod(t'))-\overline{f_i(\vmod(t'))}\big)\big]\,\dd t' \\
\nonumber
&+\sqrt{c_d \nu} \sum_{k,\alpha}\theta_k \int_s^t  e^{(\nu_i+\nu)(t-t')\Delta} [(\sigma_{k,\alpha}\cdot\nabla)\wt{\vmod}_i(t')]\,\dd W^{k,\alpha}_{t'}
\end{align}
a.s.\ for all $0\leq s<t<\infty$ and $i\in \{1,\dots,\ell\}$. 

\smallskip

For the proof of Proposition \ref{prop:iteration_lemma}, we need suitable estimates for the deterministic and stochastic convolutions appearing on the right-hand side of the identity \eqref{eq:mild_formulation_SPDE_wtv}. We start by looking at deterministic convolutions. As it will be clear in the proof of Proposition \ref{prop:iteration_lemma} below, for the deterministic convolutions, it is crucial to allow for $L^1$-integrability in time.

\begin{lemma}
\label{l:smr_L2_L1H}
Let $\mu>0$ and $f\in L^1(a,b;H^{-1})$ for some $0\leq a<b<\infty$. Suppose that $\langle f,\one_{\T^d}\rangle =0$ a.e.\ on $(a,b)$. Then, there exists a constant $C$ independent of $a$, $b$, $\mu$, and $f$ such that
$$
\int_a^b\Big\| \int_a^t e^{\mu (t -s)\Delta} f(s)\,\dd s\Big\|_{L^2}^2\,\dd t \leq \frac{C}{\mu}\Big(\int_a^b \|f(s)\|_{H^{-1}}\,\dd s \Big)^2.
$$
\end{lemma}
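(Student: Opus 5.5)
The plan is to reduce the estimate to a single-forcing bound by Minkowski's integral inequality, and then prove that bound with Fourier series on $\T^d$. Write
$$
u(t)=\int_a^t e^{\mu(t-s)\Delta}f(s)\,\dd s=\int_a^b \one_{\{s<t\}}e^{\mu(t-s)\Delta}f(s)\,\dd s .
$$
Minkowski's integral inequality in $L^2(a,b;L^2)$ gives
$$
\|u\|_{L^2(a,b;L^2)}\leq \int_a^b \big\|\one_{\{s<\cdot\}}e^{\mu(\cdot-s)\Delta}f(s)\big\|_{L^2(a,b;L^2)}\,\dd s .
$$
It therefore suffices to prove, for each fixed mean-zero $g\in H^{-1}$, the bound
$$
\int_0^\infty \|e^{\mu t\Delta}g\|_{L^2}^2\,\dd t\leq \frac{C}{\mu}\|g\|_{H^{-1}}^2.
$$
We apply it with $g=f(s)$ after extending the time interval to $(s,\infty)$. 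Squaring then yields the claim, with the constant $C$ taken from the single-forcing bound.

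The single-forcing bound is a direct Fourier computation. Since $\langle g,\one_{\T^d}\rangle=0$, the zero mode vanishes and $g=\sum_{k\neq0}\hat g_k e^{2\pi\im k\cdot x}$. Plancherel gives
$$
\int_0^\infty\|e^{\mu t\Delta}g\|_{L^2}^2\,\dd t=\sum_{k\neq 0}|\hat g_k|^2\int_0^\infty e^{-8\pi^2\mu|k|^2 t}\,\dd t=\sum_{k\neq0}\frac{|\hat g_k|^2}{8\pi^2\mu|k|^2}.
$$
Because $|k|\geq1$, the right-hand side is comparable to $\mu^{-1}\|g\|_{H^{-1}}^2$ when the $H^{-1}$ norm is taken with weight $(1+4\pi^2|k|^2)^{-1}$, since $(1+4\pi^2|k|^2)\leq (1+4\pi^2)|k|^2$. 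The resulting constant is absolute, depending only on $d$ through the norm convention, and is independent of $a$, $b$, $\mu$ and $f$.

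The only point needing care is the measurability and integrability needed to apply Minkowski's inequality with $f\in L^1(a,b;H^{-1})$. This is handled by approximating $f$ with simple functions taking mean-zero values, which preserves the mean-zero constraint. Alternatively, we may note directly that the integrand is strongly measurable into $L^2(a,b;L^2)$, because for $s<t$ the map $s\mapsto e^{\mu(\cdot-s)\Delta}f(s)$ is measurable. I do not expect any real obstacle. The mean-zero assumption is exactly what removes the non-decaying zero mode, which would otherwise make the time integral diverge.
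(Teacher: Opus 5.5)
Your proof is correct, but it takes a different route from the paper's. The paper argues by density (it takes $f\in C^1([a,b];L^2)$ with mean zero) and uses an energy identity for $\|(-\Delta)^{-1/2}w(t)\|_{L^2}^2$, where $w$ is the Duhamel integral. After integrating in time, it bounds the forcing term by $\sup_t\|(-\Delta)^{-1/2}w(t)\|_{L^2}\int_a^b\|(-\Delta)^{-1/2}f(s)\|_{L^2}\,\dd s$, and Young's inequality closes the estimate. In that argument, the $L^1$-in-time norm of $f$ enters through this $L^\infty$--$L^1$ pairing.

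You instead use Minkowski's integral inequality to write the Duhamel integral as a superposition of free evolutions. This reduces everything to the homogeneous smoothing bound $\int_0^\infty\|e^{\mu t\Delta}g\|_{L^2}^2\,\dd t\le \frac{1+4\pi^2}{8\pi^2\mu}\|g\|_{H^{-1}}^2$ for mean-zero $g$, which you compute exactly by Plancherel. Your route needs no density argument and no differentiability of $w$. It also gives an explicit constant and makes it transparent why $L^1$-integrability in time suffices.

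The paper's variational argument, on the other hand, additionally controls $\sup_t\|w(t)\|_{H^{-1}}$. It also does not use an explicit diagonalization of the operator, so it carries over directly to coercive operators without a known spectral decomposition (though your homogeneous bound could likewise be proved by an energy argument there).

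On measurability, the cleanest justification of your first step is to bound $\|u(t)\|_{L^2}\le\int_a^b \one_{\{s<t\}}\|e^{\mu(t-s)\Delta}f(s)\|_{L^2}\,\dd s$. Then apply the scalar Minkowski inequality to this jointly measurable nonnegative kernel, which is routine.
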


The proof is well-known to experts and can be thought of as a maximal regularity estimate for the heat equation in the usual variational setting (see, e.g., \cite[Theorem 2.1, p.\ 31]{pardoux1975equations} or \cite[Theorem 3.4]{TV_large}). As in our situation, it is important to keep track of the dependence on the diffusivity $\mu$, we include a brief proof here.

\begin{proof}
By density, it is enough to show the claimed estimate in the case $f\in C^1([a,b];L^2)$ with zero mean. 
In this case, it is well-known that $t\mapsto w(t) \coleq \int_a^t e^{\mu \Delta(t-s) }f(s)\,\dd s$ is continuously differentiable on $(a,b)$ and satisfies 
$$
\frac{\dd }{\dd t}\|I_{-1}w(t)\|_{L^2}^2 =-2\mu\| I_{-1} \nabla w(t)\|_{L^2}^2
+ \int_{\T^d} [I_{-1} f(t) ][I_{-1} w(t)]\,\dd x
$$
a.e.\ on $(a,b)$, where $I_{-1} \coleq (-\Delta)^{-1/2}$. Using $w(a)=0$ and integrating the above identity over $(a,b)$, we obtain
\begin{align*}
\sup_{a<t<b}\Big(\| I_{-1}w(t)\|_{L^2}^2 
+ 2 \mu\int_a^t \| I_{-1} \nabla w(s)\|_{L^2}^2\,\dd s \Big)
\leq \sup_{a<t<b}\| I_{-1} w(t)\|_{L^2} \int_a^b \| I_{-1} f(s)\|_{L^2}\,\dd s &\\
\leq \frac{1}{2}
\sup_{a<t<b}\| I_{-1} w(t)\|_{L^2}^2 + \frac{1}{2}\Big(\int_a^b \| I_{-1} f(s)\|_{L^2}\,\dd s\Big)^2&. 
\end{align*}
The claim now follows from standard Fourier methods and $\langle f,\one_{\T^d} \rangle=0$ a.e.\ on $(a,b)$.
\end{proof}

The following gives the required estimate on the stochastic convolution as in \eqref{eq:mild_formulation_SPDE_wtv}.

\begin{lemma}
\label{l:estimate_stoch_convol_linfty}
Let $0\leq a<b<\infty$, $\nu>0$, and $\mu\geq 0$. For a progressively measurable process $f\in L^2(\O\times (a,b);H^1(\T^d))$, let  
$$
\mathcal{Z}_{\mu,\nu}^f(t) \coleq 
\sqrt{c_d \nu} \sum_{k,\alpha}\theta_k \int_a^t  e^{(\nu+\mu)(t-s)\Delta} [(\sigma_{k,\alpha}\cdot\nabla) f]\,\dd W^{k,\alpha}_s.
$$
Then, for any $\g\in (0,\frac{2}{d+2})$, there exists a constant $C_\g$ independent of $a$, $b$, $\mu$, $\nu$, and $f$ such that
\begin{equation}
\label{eq:estimate_stoch_convol_linfty0}
\E\int_a^b \|\mathcal{Z}_{\mu,\nu}^f(t)\|_{L^2}^2\,\dd t 
\leq C_\g \|\theta\|_{\ell^\infty}^{2\g} \,\E\int_{a}^b \|\nabla f(t)\|_{L^2}^2\,\dd t .
\end{equation}
\end{lemma}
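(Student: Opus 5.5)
We prove \eqref{eq:estimate_stoch_convol_linfty0} in Fourier variables, following the quantitative arguments of \cite{FGL24_quantitative,L21}. By the It\^o isometry and independence of the Brownian motions (written through the real family $B^{k,\alpha}$, with the complex structure giving only harmless factors),
$$
\E\|\mathcal{Z}_{\mu,\nu}^f(t)\|_{L^2}^2 \lesssim \nu \sum_{k,\alpha}\theta_k^2\, \E\int_a^t \big\|e^{(\nu+\mu)(t-s)\Delta}[(\sigma_{k,\alpha}\cdot\nabla)f(s)]\big\|_{L^2}^2\,\dd s .
$$
We then integrate in $t\in(a,b)$ and use Fubini, which reduces everything to the deterministic heat semigroup.

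The time integral is explicit: $\int_s^b \|e^{\lambda(t-s)\Delta}g\|_{L^2}^2\,\dd t\le \frac{1}{8\pi^2\lambda}\sum_{j\neq0}|\hat g_j|^2|j|^{-2}=\frac{C}{\lambda}\|g\|_{\dot H^{-1}}^2$ with $\lambda=\nu+\mu\ge\nu$. The zero mode vanishes because $(\sigma_{k,\alpha}\cdot\nabla)f=\nabla\cdot(\sigma_{k,\alpha}f)$ is a divergence, hence mean zero. The factor $\nu$ cancels against $1/\lambda$, which gives uniformity in $\nu,\mu$. It remains to prove, pointwise in $s$ and $\omega$,
$$
\sum_{k,\alpha}\theta_k^2\,\|(\sigma_{k,\alpha}\cdot\nabla)f\|_{\dot H^{-1}}^2 \le C_\g\|\theta\|_{\ell^\infty}^{2\g}\|\nabla f\|_{L^2}^2 .
$$

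This multiplier estimate is the main step. Write $\widehat{(\sigma_{k,\alpha}\cdot\nabla)f}_j = 2\pi\im\,(a_{k,\alpha}\cdot(j-k))\hat f_{j-k}$. The left side is then bounded by $C\sum_{m}|m|^2|\hat f_m|^2\sum_k \theta_k^2 |m+k|^{-2}$, where we use $|a_{k,\alpha}\cdot m|\le|m|$ and $\sum_\alpha$ is finite. The claim therefore reduces to
$$
\sup_{m\in\Z^d}\sum_{k\neq -m}\theta_k^2|m+k|^{-2}\lesssim \|\theta\|_{\ell^\infty}^{2\g}.
$$
To prove this, we split the sum at $|m+k|\le M$. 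The near part is at most $\|\theta\|_{\ell^\infty}^2\#\{|n|\le M\}\lesssim\|\theta\|_{\ell^\infty}^2M^d$. The far part is at most $M^{-2}\|\theta\|_{\ell^2}^2=M^{-2}$, since $\theta$ is normalized. Optimizing with $M=\|\theta\|_{\ell^\infty}^{-2/(d+2)}$ gives $\|\theta\|_{\ell^\infty}^{4/(d+2)}$. For $\|\theta\|_{\ell^\infty}\le1$ this dominates $\|\theta\|_{\ell^\infty}^{2\g}$ whenever $\g<\frac{2}{d+2}$, and the borderline case is also admissible.

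Some care is needed because the It\^o isometry requires the integrand to lie in $L^2(\O\times(a,b);\ell^2(L^2))$. This holds since $f\in L^2(\O\times(a,b);H^1)$ and $\#\{k:\theta_k\neq0\}<\infty$. The Fourier computation is justified for such $f$ directly, or first for smooth $f$ and then by density.
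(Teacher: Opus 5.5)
Your argument is correct, but it takes a different route from the paper's.

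\emph{The paper's route.} The paper never computes the multiplier of $(\sigma_{k,\alpha}\cdot\nabla)$ explicitly. Instead it proves two separate moment bounds for $\mathcal{Z}^f_{\mu,\nu}$:
\begin{itemize}
\item an $H^1$ bound, from It\^o's formula for the linear SPDE with additive noise solved by $\mathcal{Z}^f_{\mu,\nu}$. This uses only $\|\sigma_{k,\alpha}\|_{L^\infty}\le 1$, $\|\theta\|_{\ell^2}=1$ and divergence-freeness.
\item an $H^{-\beta}$ bound with $\beta=(1-\gamma)/\gamma>d/2$, carrying the factor $\|\theta\|_{\ell^\infty}^2$. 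This comes from Bessel's inequality for the orthonormal system $(\sigma_{k,\alpha})$ in $L^2(\T^d;\mathbb{C}^d)$ together with the summability of $\sum_j(1+|j|^2)^{-\beta}$.
\end{itemize}
It then concludes with $\|g\|_{L^2}\lesssim\|g\|_{H^{-\beta}}^{\gamma}\|g\|_{H^1}^{1-\gamma}$ and H\"older's inequality on $\Omega\times(a,b)$. The restriction $\gamma<\frac{2}{d+2}$ is exactly the condition $\beta>d/2$.

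\emph{Your route.} You integrate the heat semigroup in time exactly, which reduces everything to a single $\dot H^{-1}$ multiplier estimate and then to the lattice sum $\sup_m\sum_k\theta_k^2|m+k|^{-2}$. Your splitting at $|m+k|=M$ plays the role of the paper's $H^{-\beta}$/$H^1$ interpolation: $\|\theta\|_{\ell^\infty}$ controls the near part, and the normalization $\|\theta\|_{\ell^2}=1$ controls the far part.

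\emph{What each buys.} Your argument is more direct and slightly sharper. It includes the endpoint $\gamma=\frac{2}{d+2}$. Replacing the crude count $\#\{|n|\le M\}\lesssim M^d$ by $\sum_{0<|n|\le M}|n|^{-2}\lesssim M^{d-2}$ (for $d\ge 3$) would even give the factor $\|\theta\|_{\ell^\infty}^{4/d}$. The price is that you use the explicit plane-wave form $\sigma_{k,\alpha}=a_{k,\alpha}e^{2\pi\mathrm{i}k\cdot x}$ to compute $\widehat{(\sigma_{k,\alpha}\cdot\nabla)f}_j$. The paper's argument needs only boundedness, divergence-freeness and orthonormality of the vector fields, so it is less tied to the specific noise, and its $H^1$ bound on $\mathcal{Z}^f_{\mu,\nu}$ is of independent use.
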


The above is an improved version of \cite[Theorem 3.4]{L21}, for which we give a self-contained and simplified proof below. It is worth noticing that the latter result is not directly applicable, as the assumptions there are not satisfied in our situation. 
The reader is also referred to \cite[Section 2]{FGL24_quantitative} and \cite{flandoli2026enhanced} for related results.

\begin{proof}
For simplicity, throughout the proof, we write $\mathcal Z$
instead of $\mathcal Z_{\mu,\nu}^f$. Fix $\gamma\in(0,\frac{2}{d+2})$ and set
$$
\beta=(1-\gamma)/\gamma>d/2.
$$
By interpolation, it suffices to prove the following estimates:
\begin{align}
\label{eq:estimate_stoch_convol_linfty1}
\E\int_a^b \| \mathcal{Z}(s)\|_{H^{1}}^2\,\dd s 
&\lesssim  \E\int_{a}^b \|\nabla f\|_{L^2}^2\,\dd s ,\\
\label{eq:estimate_stoch_convol_linfty2}
\E\int_a^b \|\mathcal{Z}(s)\|_{H^{-\beta}}^2\,\dd s 
&\lesssim_\beta \|\theta\|_{\ell^\infty}^2 \,\E\int_{a}^b \|\nabla f\|_{L^2}^2\,\dd s
\end{align}
with implicit constants independent of $a$, $b$, $\mu$, $\nu$, and $f$. Indeed, since
$
\|g\|_{L^2}
\lesssim
\|g\|_{H^{-\beta}}^\gamma
\|g\|_{H^1}^{1-\gamma},
$
H\"older's inequality on $\Omega\times(a,b)$ shows that \eqref{eq:estimate_stoch_convol_linfty1} and \eqref{eq:estimate_stoch_convol_linfty2} imply \eqref{eq:estimate_stoch_convol_linfty0}.

\smallskip

\emph{Step 1: Proof of \eqref{eq:estimate_stoch_convol_linfty1}.}  It is well-known that $\mathcal{Z}$ satisfies the following linear SPDE with additive noise:
$$
\dd \mathcal{Z} = (\nu+\mu)\Delta \mathcal{Z}\,\dd t + \sqrt{c_d \nu}
\sum_{k,\alpha} \theta_k (\sigma_{k,\alpha}\cdot \nabla) f\,\dd W^{k,\alpha}_t\ \  \text{ on } \ (a,b)
$$
with the initial condition $\mathcal{Z}(a)=0$. Now, by the It\^o formula (see, e.g., \cite[Theorem 4.2.5]{LR15})
\begin{align*}
\frac{1}{2}\E \|\mathcal{Z}(b)\|_{L^2}^2 + (\mu+\nu)\, \E\int_a^b \|\nabla \mathcal{Z}(t)\|_{L^2}^2\,\dd t 
&= c_d \nu \sum_{k,\alpha}\theta_k^2\,\E \int_a^b \|(\sigma_{k,\alpha}\cdot \nabla) f\|_{L^2}^2\,\dd t \\
&\lesssim_d \nu\, \E\int_a^b \|\nabla f\|_{L^2}^2\,\dd t,
\end{align*}
where, in the last step, we used $\|\sigma_{k,\alpha}\|_{L^\infty}\leq 1$ for all $k$, $\alpha$, and $\|\theta\|_{\ell^2}=1$.
Thus, \eqref{eq:estimate_stoch_convol_linfty1} clearly follows from the above noticing that $\overline{\mathcal{Z}} =0$ a.e.\ on $(a,b)\times \O$ as $\nabla \cdot \sigma_{k,\alpha}=0$.
 
\smallskip

\emph{Step 2: Proof of \eqref{eq:estimate_stoch_convol_linfty2}.} It\^o's isomorphism yields, for all $t\in (a,b)$,
\begin{align}
\label{eq:estimate_stoch_convol_linfty20}
\E \|\mathcal{Z} (t)\|_{H^{-\beta}}^2 
&\eqsim_\beta 
\sum_{k,\alpha} \nu\theta_k^2\,\E \int_a^t \|e^{(\nu+\mu)(t-s)\Delta}[(\sigma_{k,\alpha}\cdot \nabla)f]\|_{H^{-\beta}}^2\,\dd s \\
\nonumber
&\lesssim  \nu
\sum_{k,\alpha} \theta_k^2\, \E \int_a^t e^{-4(\nu+\mu)\pi^2 (t-s)} \|(\sigma_{k,\alpha}\cdot \nabla)f\|_{H^{-\beta}}^2\,\dd s ,
\end{align}
where we used again that $\overline{(\sigma_{k,\alpha}\cdot \nabla)f }=0$ as $\nabla \cdot \sigma_{k,\alpha}=0$.

Next, let $(e_j)_{j\in \Z^d}$ be the standard Fourier basis of $L^2(\T^d;\C)$, i.e., $e_j(x)=e^{2\pi \im j\cdot x}$. As $(\sigma_{k,\alpha})_{k,\alpha}$ is an orthonormal system of $L^2(\T^d;\C^d)$, by the Parseval inequality, we have
\begin{align*}
\sum_{k,\alpha} \theta_k^2 \|(\sigma_{k,\alpha}\cdot \nabla)f\|_{H^{-\beta}}^2
&\leq \|\theta\|_{\ell^\infty}^2\sum_{k,\alpha}\sum_{j\in \Z^d}(1+|j|^2)^{-\beta} |(e_j,[(\sigma_{k,\alpha}\cdot\nabla)f ])_{L^2}|^2\\
&= \|\theta\|_{\ell^\infty}^2 \sum_{k,\alpha}\sum_{j\in \Z^d}(1+|j|^2)^{-\beta} |(\sigma_{k,\alpha}, e_j \nabla f )_{L^2} |^2\\
&\leq  \|\theta\|_{\ell^\infty}^2
\sum_{j\in \Z^d}(1+|j|^2)^{-\beta} \|e_j \nabla f\|_{L^2}^2\\
&\leq \|\theta\|_{\ell^\infty}^2 \|\nabla f\|_{L^2}^2 \sum_{j\in \Z^d} (1+|j|^2)^{-\beta}\lesssim_\beta\|\theta\|_{\ell^\infty}^2 \|\nabla f\|_{L^2}^2,
\end{align*}
where the last inequality follows from $\beta>\frac{d}{2}$. Combining the above with \eqref{eq:estimate_stoch_convol_linfty20}, by Fubini's theorem (or Young's inequality for convolutions), we obtain
\begin{align*}
\int_a^b \E \|\mathcal{Z} (t)\|_{H^{-\beta}}^2\,\dd t 
&\lesssim_{d,\beta} \nu \|\theta\|_{\ell^\infty}^2 \E \int_a^b\int_a^t e^{-4\pi^2(\nu+\mu) (t-s)} \|\nabla f(s)\|_{L^2}^2\,\dd s\,\dd t \\&\lesssim_{d,\beta} \|\theta\|_{\ell^\infty}^2 \E \int_a^b \|\nabla f(s)\|_{L^2}^2\,\dd s ,
\end{align*}
where we used $\int_{0}^\infty e^{-4\pi^2 (\nu+\mu)t}\,\dd t \lesssim 1/\nu$.
\end{proof}

Finally, we are in a position to prove Proposition \ref{prop:iteration_lemma}.

\begin{proof}[Proof of Proposition \ref{prop:iteration_lemma}]
We begin by noticing that, by Proposition \ref{prop:quenched_estimate_w}, for $t\in [n,n+1]$, 
$$
\E\|\wt{\vmod}(n+1)\|_{L^2}^2 
\leq \E\sup_{s\in [t,n+1]}\|\wt{\vmod}(s)\|_{L^2}^2 
\leq C_0 \E\|\wt{\vmod}(t)\|_{L^2}^2 .
$$
Integrating over $t\in [n,n+1]$ entails 
\begin{equation}
\label{eq:C0_plays_an_improtant_role_independence_theta}
\E\|\wt{\vmod}(n+1)\|_{L^2}^2 \leq C_0\max_{1\leq i\leq \ell}\E\int_n^{n+1}\|\wt{\vmod}_i(t)\|_{L^2}^2\,\dd t.
\end{equation}
Fix $\g\in (0,\frac{2}{d+2})$.
Hence, to conclude the proof of Proposition \ref{prop:iteration_lemma}, it suffices to show
\begin{equation}
\label{eq:energy_estimate_wt_i_claim}
\max_{1\leq i\leq \ell} \,
\E\int_n^{n+1}\|\wt{\vmod}_i(t)\|_{L^2}^2\,\dd t\lesssim (\nu^{-1}+\|\theta\|_{\ell^\infty}^\g) \,\E \|\wt{\vmod}(n)\|_{L^2}^2.
\end{equation}
To prove \eqref{eq:energy_estimate_wt_i_claim}, note that, from the mild formulation \eqref{eq:mild_formulation_SPDE_wtv}, we have  
\begin{equation}
\label{eq:estimate_I_tilde_v}
\Big(\E\int_n^{n+1}\|\wt{\vmod}_i(t)\|_{L^2}^2\,\dd t\Big)^{1/2}\leq I_1 + I_2+ I_3,
\end{equation}
where 
\begin{align*}
I_1 &=\Big(\E \int_n^{n+1}\|e^{(\nu+\nu_i)(s-n)\Delta} \wt{\vmod}_i(n)\|_{L^2}^2\,\dd s\Big)^{1/2},\\
I_2 &= \Big(\E \int_n^{n+1}\Big\|\int_{n}^t e^{(\nu+\nu_i)(t-s)\Delta} \big[\phi_{R,r}(v_\infty,\vmod)\big(f_i(\vmod(s))-\overline{f_i(\vmod(s))}\, \big)\big]\,\dd s\Big\|_{L^2}^2\,\dd t\Big)^{1/2},\\
I_3 &= \Big(\E\int_n^{n+1} \| \mathcal{Z}^{f_i}_{\nu_i,\nu}(t)\|_{L^2}^2\,\dd t\Big)^{1/2} \ \ \  \text{ with }\ \ \ f_i = \one_{[n,n+1]} \wt{\vmod}_i.
\end{align*}
In the above, we used the notation introduced in Lemma \ref{l:estimate_stoch_convol_linfty} for the stochastic convolution.

Next, we estimate each term separately. First, as $\wt{\vmod}_i(n)$ has zero mean, we have 
$$
I_1^2\leq   \int_n^{n+1} e^{- 4\pi^2 (t-n)(\nu_i+\nu)}\E\|\wt{\vmod}_i(n)\|_{L^2}^2\,\dd t 
\lesssim \frac{1}{\nu} \,\E \|\wt{\vmod}_i(n)\|_{L^2}^2 .
$$
Second, by Lemma \ref{l:smr_L2_L1H} and $\phi_{R,r}(v_\infty,\vmod)\leq 1$, 
$$
I_2^2 \lesssim \frac{1}{\nu}\,\E \Big[\Big( \int_n^{n+1} \|f_i(\vmod(s))-\overline{f_i(\vmod(s))}\|_{H^{-1}}\,\dd s \Big)^{2}\Big].
$$
We estimate $ \|f_i(\vmod(s))-\overline{f_i(\vmod(s))}\|_{H^{-1}}$ similarly to the proof of Proposition \ref{prop:quenched_estimate_w}. 
Set $r_0=d$ and $r=\frac{2d}{d+2}$ if $d\geq3$. If $d=2$, choose $r_0>2$ such that \eqref{eq:interpolation_inequality_checking} holds and let $r\in(1,2)$ satisfy $\frac{1}{2}+\frac{1}{r_0}=\frac{1}{r}$.
With the previous choice, by Sobolev embedding $L^r\embed H^{-1}$, a.s., it holds that  
\begin{align}
\label{eq:estimate_f_overlinef_enhanced_dissip}
\int_n^{n+1}\|f_i(\vmod)-\overline{f_i(\vmod)}\|_{H^{-1}}\, \dd t' 
&\leq 
\int_n^{n+1}\|\nabla[f_i(\vmod)]\|_{H^{-1}}\,\dd t' \\
\nonumber
&\leq 
\int_n^{n+1}\|(1+|\vmod|^{h-1})|\nabla \wt{\vmod}|\|_{L^r}\,\dd t' \\
\nonumber
&\lesssim_K 
 \int_n^{n+1}\big[  (1+\|\vmod-v_\infty\|_{L^{r_0(h-1)}}^{h-1})\|\nabla \wt{\vmod}\|_{L^2}\big]\,\dd t' \\
\nonumber
&\lesssim
\big( 1+\|\vmod-v_\infty\|_{L^{2(h-1)}(n,n+1;L^{r_0(h-1)})}^{h-1} \big)
\|\nabla \wt{\vmod}\|_{L^2(n,n+1;L^2)}\\
\nonumber
&\stackrel{(i)}{\lesssim}_{R,N}
\|\nabla \wt{\vmod}\|_{L^2(n,n+1;L^2)} \stackrel{(ii)}{\lesssim} \|\wt{\vmod}(n)\|_{L^2},
\end{align}
where $(i)$ follows as in \eqref{eq:boundedness_Lr0_proof} by Proposition \ref{prop:global_cut_off_entropy} and Lemma \ref{l:interpolation_Lr0}, while $(ii)$ is a result of Proposition \ref{prop:quenched_estimate_w}. Therefore,
$$
I_2^2 \lesssim \nu^{-1 }\, \E \|\wt{\vmod}(n)\|_{L^2}^2.
$$
Finally, from Lemma \ref{l:estimate_stoch_convol_linfty} and Proposition \ref{prop:quenched_estimate_w}, we infer 
$$
I_3^2 \lesssim_\g \|\theta\|_{\ell^\infty}^\g \,\E \int_n^{n+1} \|\nabla \wt{\vmod}(s)\|_{L^2}^2\,\dd s  
\lesssim \|\theta\|_{\ell^\infty}^\g \,\E \|\wt{\vmod}(n)\|_{L^2}^2.
$$
Thus, \eqref{eq:energy_estimate_wt_i_claim} now follows by collecting the previous inequalities and taking $\max_{1\leq i\leq \ell}$.
\end{proof}

We conclude with some comments on the use of the $L^1$-integrability in Lemma \ref{l:smr_L2_L1H} and on the use of Proposition \ref{prop:quenched_estimate_w} in \eqref{eq:estimate_f_overlinef_enhanced_dissip}.

\begin{remark}\
\label{r:L1_estimate_usefulness}
\begin{itemize}
    \item 
Without additional assumptions, it does not seem to be possible to control the deterministic convolution $I_2$ defined below \eqref{eq:estimate_I_tilde_v} by using the analogous version of Lemma \ref{l:smr_L2_L1H} with time integrability $2$. Indeed, in this situation, by partially repeating the argument in \eqref{eq:estimate_f_overlinef_enhanced_dissip}, one needs a uniform in $\theta$ bound on $\|\vmod-v_\infty\|_{L^\infty(n,n+1; L^{d(h-1)})}$. From Proposition \ref{prop:global_cut_off_entropy}, this is only possible under the additional assumption $q>d(h-1)$.
\item It is worth pointing out that the $\theta$-independence of the estimates in Proposition \ref{prop:global_cut_off_entropy} is essentially used in  \eqref{eq:C0_plays_an_improtant_role_independence_theta} via Proposition \ref{prop:quenched_estimate_w}. In contrast, the application of these estimates in \eqref{eq:estimate_f_overlinef_enhanced_dissip} can be avoided by exploiting the presence of $\nu^{-1}$ to control explosion in $\|\theta\|_{\ell^\infty}$; see \cite[Theorem 2.3 and Lemma 4.1]{L21} for a related situation. 
\end{itemize}

\end{remark}

\appendix

\section{Proof of Lemma \ref{lem:Lebesgue_initial_data_continuity}}
\label{app:continuity_Lq}
In this appendix, we prove Lemma \ref{lem:Lebesgue_initial_data_continuity}. Before going into the proof, we first discuss the main strategy. Firstly, from the instantaneous regularization of solutions to \eqref{eq:reaction_diffusion} given in \eqref{eq:LWP2}, it suffices to discuss the regularity at time $t=0$. The latter case follows from an approximation argument together with the uniform convexity of $L^q$-spaces (see \cite[Proposition 5.3]{AV24_dissipative} for a similar situation). Indeed, at least for smooth initial data, applying It\^o's formula to $v_i\mapsto \|v_i\|_{L^q(\T^d)}^q$ for a fixed $i\in \{1,\dots,\ell\}$ yields (see Lemma \ref{l:Lq_estimate_small} for a similar situation)
\begin{align*}
\|v_i(t)\|_{L^q}^q +q(q-1)\nu_i \int_0^t \int_{\T^d} |v_i|^{q-2} |\nabla v_i|^2 \,\dd x \,\dd s 
= \|v_{i,0}\|_{L^q}^q+ q\int_0^t \int_{\T^d} |v_i|^{q-2} v_i f_i (v)  \,\dd x \,\dd s, 
\end{align*}
where we used $\nabla \cdot \sigma_{k,\alpha}=0$ for all $k,\alpha$. Recall from Subsection \ref{subsec:CRN} that $|f_i(v)|\lesssim 1+|v|^h$, then, by \eqref{eq:LWP1} to obtain $\lim_{t\downarrow 0}v_i=v_{i,0}$ in $L^q$, it suffices to show $|v|\in L^{q+h-1}_{\loc}([0,\tau)\times \O;\R^\ell)$. 
This motivates Step 1 in the proof below.

\begin{proof}[Proof of Lemma \ref{lem:Lebesgue_initial_data_continuity}]
For brevity, we focus on the critical case $q=\frac{d(h-1)}{2}\vee 2$; the subcritical case is simpler. 
For convenience, we set $h_0=1+\frac{2q}{d}\in [h,\infty)$. In particular, $q=\frac{d(h_0-1)}{2}$. For convenience, we set $q_0\coloneq q$. Finally, by compatibility of solutions to stochastic reaction--diffusion equations \eqref{eq:reaction_diffusion} (see \cite[Proposition 3.5]{AV23_RDE_local}), we may choose $\delta_0>1$ and $p_0$ such that
\begin{equation}
\label{eq:choice_p_proof_lebesgue_initial_data}
\frac{2}{2-\delta_0}<q_0 \qquad \text{ and }\qquad 
p_0=q+h_0-1.
\end{equation}
In particular, $p_0=(h_0-1)\frac{d+2}{2}>q$ and the conditions in \eqref{eq:pq_condition_local_WP} are satisfied with $(p,\delta,q,h)$ replaced by $(p_0,\delta_0,q_0,h_0)$. Finally, we set $\a_0\coloneq p_0(1-\frac{\delta_0}{2})-1$.

\smallskip

\emph{Step 1: For all $t>0$, it holds that}
\begin{equation}
\label{eq:embedding_Hp0}
H^{\kappa_0/p_0,p_0}(0,t,w_{\a_0};H^{2-\delta_0-2\a_0/p_0,q_0}(\T^d))\embed L^{q_0+h_0-1}(0,t;L^{q_0+h_0-1}(\T^d)).
\end{equation}
\emph{In particular, $v\in L^{q_0+h_0-1}_{\loc}([0,\tau)\times \T^d;\R^\ell)$ a.s.}
Clearly, the second assertion of Step 1 is a consequence of \eqref{eq:LWP1} with $\vartheta=\frac{\a_0}{p_0}<\frac{1}{2}$ as $\a_0\in [0,\frac{p_0}{2}-1)$. To prove \eqref{eq:embedding_Hp0}, note that  
$$
H^{\kappa_0/p_0,p_0}(0,t,w_{\a_0};H^{2-\delta_0-2\kappa_0/p_0,q_0})
\embed
L^{p_0}(0,t;H^{2-\delta_0-2\kappa_0/p_0,q_0})
$$
as it follows from vector-valued weighted Sobolev embeddings (see, e.g., \cite[Proposition 2.7]{AV19_QSEE_1}). It remains to show that 
$$
H^{2-\delta_0-2\kappa_0/p_0,q_0}(\T^d)\embed L^{q_0+h_0-1}(\T^d).
$$
The above follows by Sobolev embeddings and the fact that 
$$
2-\delta_0-2\frac{\a_0}{p_0}=\frac{2}{p_0} \qquad \text{ and }\qquad \frac{2}{p_0}-\frac{d}{q_0}\stackrel{\eqref{eq:choice_p_proof_lebesgue_initial_data}}{=}-\frac{d}{q_0+h_0-1},
$$
where we used $q_0=\frac{d(h_0-1)}{2}$.

\smallskip

\emph{Step 2: Conclusion.} 
By uniqueness of $(p_0,\a_0,\delta_0,q_0)$-solutions, it suffices to assume $v_0\in L^{p_0}_{\F_0}(\O;L^{q_0})$ (e.g., replace $v_0 $ by $\one_{\{\|v_0\|_{L^{q_0}}\leq N\}} v_0$ for some $ N\geq 1$).
Let $(v_{0}^n)\subseteq L^{p_0}_{\F_0}(\O;C^1(\T^d;\R^\ell))$ be such that $v_{0}^n \to v_0$ in $L^{p_0}(\O;L^{q_0})$ as $n\to \infty$.
From \cite[Theorem 2.7]{AV23_RDE_local} applied with $\delta=1$ and $(q,p)=(q_1,p_1)$, where $q_1,p_1$ are sufficiently large, for each $n\geq 1$, there exists a $(p_1,0,1,q_1)$-solution $(v^n,\tau^n)$ to \eqref{eq:reaction_diffusion} (see Definition \ref{def:solution}). In particular, 
$$
v^n \in C([0,\tau^n);L^\infty(\T^d;\R^\ell)) \cap L^2([0,\tau^n);H^1(\T^d;\R^\ell)) \ \text{ a.s. }
$$ 
Fix $r\in [2,q_0]$.
From the above and the It\^o formula in either \cite[Section 3]{Kry13} or \cite[Appendix A]{DHV16} (see also \cite[Theorem 4.1(2)]{A22} or \cite[Lemma 2]{DG15_boundedness} for details), it holds a.s.\ for all $n\geq 1$ and $t\in [0,\tau^n)$,
\begin{align}
\label{eq:Itoformula_Lq_continuity_proof}
\|v_i^n(t)\|_{L^r}^r
&+r(r-1) \nu_i \int_{0}^t \int_{\T^d} |v_i^n|^{r-2} |\nabla v_i^n|^2\,\dd x \, \dd t' \\
\nonumber
&=\|v_{0,i}^n\|_{L^r}^r
+ r\int_{0}^t  \int_{\T^d} |v_i^n|^{r-2} v_i^n f_i (v^n)\, \dd x \,\dd t'\\
\nonumber
&\leq \|v_{0,i}^n\|_{L^r}^r
+ r\int_{0}^t  \int_{\T^d} (1+|v^n|^{q_0+h_0-1})\, \dd x \,\dd t'.
\end{align}
From \cite[Proposition 3.3 and Remark 3.4(b)]{AV23_RDE_local} and Step 1, there exist constants $n_0\geq 1$, $C_0>0$ and stopping times $\tau_0,\tau_1\in(0,\tau]$, $(\tau_0^n)_n$ such that $\tau_0^n \in (0,\tau^n]$ and, for all $n\geq n_0$, $t>0$, $\g>0$, 
\begin{align}
\label{eq:local_continuity_1}
\P \Big(\sup_{s\in [0,t]}\|v(s)-v^n(s)\|_{B^0_{q_0,p_0}}\geq \g , \ \tau_0\wedge \tau_0^n >t \Big)
&\leq \frac{C_0}{\g^{p_0}}\E\|v_0-v_0^{n}\|_{L^{q_0}}^{p_0},\\
\label{eq:local_continuity_2}
\P \Big(\|v-v^n\|_{L^{q_0+h_0-1}(0,t;L^{q_0+h_0-1})}\geq \g , \ \tau_0\wedge \tau_0^n >t \Big)
&\leq \frac{C_0}{\g^{p_0}}\E\|v_0-v_0^{n}\|_{L^{q_0}}^{p_0},\\
\label{eq:local_continuity_3}
\P(\tau_0\wedge \tau_0^n\leq t)
&\leq C_0\big[ \E\|v_0-v_0^{n}\|_{L^{q_0}}^{p_0}+ \P(\tau_1\leq t )\big].
\end{align}
Next, we would like to take $n\to \infty$ in \eqref{eq:Itoformula_Lq_continuity_proof}. However, the latter identity holds on an $n$-dependent stochastic interval. To this end, by taking a subsequence, we can assume $\sum_{n\geq 1}2^{np_0}\E\|v_0-v_0^{n}\|_{L^{q_0}}^{p_0}<\infty$.
First, note that, as $\tau_1>0$ a.s., 
\begin{equation*}
\P\big(\liminf_{n\to \infty} (\tau_0\wedge \tau_0^n)= 0\big)
\leq C_0 \lim_{k\to \infty}\sum_{n\geq k}  \E\|v_0-v_0^{n}\|_{L^{q_0}}^{p_0}=0.
\end{equation*} 
Hence, $\tau_* \coleq \liminf_{n\to \infty} (\tau_0\wedge \tau_0^n)>0$ a.s.\ 
Without loss of generality, we assume $\tau_*\leq 1$. 
We claim that 
\begin{align}
\label{eq:claim_convergence_1_continuity_proof}
\P \big( \limsup_{n\to \infty}\sup_{s\in [0,\tau_*/2]}\|v^n(s)-v(s)\|_{B^0_{q_0,p_0}}>0 \big) &=0, \\ 
\label{eq:claim_convergence_2_continuity_proof}
\P \big( \limsup_{n\to \infty}\|v^n-v\|_{L^{q_0+h_0-1}(0,\tau_*/2;L^{q_0+h_0-1})}>0 \big) &=0.
\end{align}
We first show how the above yields the claim of Lemma \ref{lem:Lebesgue_initial_data_continuity}. 
Letting $n\to \infty$ in \eqref{eq:Itoformula_Lq_continuity_proof}, from Step 1 and \eqref{eq:LWP1} with $\vartheta=\kappa_0/p_0<1/2$, we obtain that, for $t\in [0 ,\tau_*/2]$ and a.s., 
\begin{align*}
\|v_i(t)\|_{L^r}^r
+r(r-1) \nu_i \int_{0}^t \int_{\T^d} |v_i|^{r-2} |\nabla v_i|^2\,\dd x \, \dd t' 
\leq \|v_{0,i}\|_{L^r}^r
+ r\int_{0}^t  \int_{\T^d} (1+|v|^{q_0+h_0-1})\, \dd x \,\dd t',
\end{align*}
In the above, we also used the lower semicontinuity of the $L^2$-norm as well as $\big|\nabla [|v_i|^{r/2}]\big|^2= \frac{r^2}{4}|v_i|^{r-2}|\nabla v_i|^2$. In particular,
$$
\lim_{t\downarrow 0}\|v_i(t)\|_{L^r}^r
\leq \|v_{0,i}\|_{L^r}^r \ \text{ a.s.\ }
$$
The latter together with \eqref{eq:LWP1}, Fatou's property, and uniform convexity of $L^r$ leads to $\lim_{t\downarrow 0}v_i=v_{0,i}$ in $L^r$. The claim of Lemma \ref{lem:Lebesgue_initial_data_continuity} now follows from the arbitrariness of $r\in [2,q_0]$. 

\smallskip

Finally, we turn to the proof of \eqref{eq:claim_convergence_1_continuity_proof}--\eqref{eq:claim_convergence_2_continuity_proof}. We only prove \eqref{eq:claim_convergence_2_continuity_proof} as \eqref{eq:claim_convergence_1_continuity_proof} is analogous. For each $k \geq 1$, let $\O_k = \{\tau_* > 1/k\}$. As $\tau_* > 0$ a.s., we have $\P(\cup_{k\geq 1} \O_k) = 1$. 

Fix $k \geq 1$. From \eqref{eq:local_continuity_2} with $t=1/k$ and $\g = 2^{-n}$, we obtain
$$
\sum_{n \geq 1} \P \Big(\|v-v^n\|_{L^{q_0+h_0-1}(0,1/k;L^{q_0+h_0-1})}\geq 2^{-n} , \ \tau_0\wedge \tau_0^n > 1/k \Big) \leq C_0 \sum_{n \geq 1} 2^{np_0} \E\|v_0-v_0^{n}\|_{L^{q_0}}^{p_0} < \infty.
$$
Borel--Cantelli's lemma ensures that for a.a.\ $\om \in \O$, there exists an integer $N_{1,k}(\om)$ such that, for all $n \geq N_{1,k}(\om)$, either
$$
\|v-v^n\|_{L^{q_0+h_0-1}(0,1/k;L^{q_0+h_0-1})} < 2^{-n} \quad \text{or} \quad \tau_0\wedge \tau_0^n \leq 1/k.
$$
The definition of $\O_k$ and the fact that $\tau_*>0$ a.s.\ ensure that, for a.a.\ $\om\in\O$, there exists $N_{2,k}(\om)\geq1$ such that $\tau_0(\om)\wedge \tau_0^n(\om) > 1/k$ for all $n\geq N_{2,k}(\om)$. Thus, for a.a.\ $\om \in \O_k$, we have 
$$
\limsup_{n\to \infty}\|v-v^n\|_{L^{q_0+h_0-1}(0,1/k;L^{q_0+h_0-1})} = 0.
$$ 
To conclude, note that for a.a. $\om \in \O$, we can choose $k$ large enough such that $\tau_*(\om)/2 \leq 1/k < \tau_*(\om)$ (here we used that $\tau_*\leq 1$). For the latter $k$ and $\om \in \O_k$, one has $[0,\tau_*(\om)/2]\subseteq[0,1/k]$ and, thus, the above yields \eqref{eq:claim_convergence_2_continuity_proof}.
\end{proof}

\subsection*{Acknowledgements}
Part of this work was completed during the visits of the second author to TU Delft and of the first author to the University of Graz. The authors gratefully acknowledge the hospitality of both institutions.

\def\polhk#1{\setbox0=\hbox{#1}{\ooalign{\hidewidth
  \lower1.5ex\hbox{`}\hidewidth\crcr\unhbox0}}} \def\cprime{$'$}

\end{document}